\documentclass[12pt,leqno]{amsart}
\usepackage{amsmath}
\usepackage{amssymb}
\usepackage[hidelinks]{hyperref}
\usepackage{orcidlink}
\usepackage{enumitem}
\usepackage{needspace}

\numberwithin{equation}{section} \newtheorem{thm}{Theorem}[section] \newtheorem{lem}[thm]{Lemma}  \newtheorem{prop}[thm]{Proposition} \newtheorem{cor}[thm]{Corollary} \newtheorem{defi}[thm]{Definition} \newtheorem{rmk}[thm]{Remark} 

\newcommand{\Ind}{\operatorname{Ind}}
\newcommand{\End}{\operatorname{End}}
\newcommand{\one}{\mathbf{1}}
\newcommand{\id}{\operatorname{id}}
\newcommand{\wt}{\operatorname{wt}}

\newcommand{\Res}{\operatorname{Res}}
\newcommand{\Aut}{\operatorname{Aut}}

\newcommand{\Vir}{\operatorname{Vir}}
\newcommand{\Rep}{\operatorname{Rep}}
\newcommand{\Sym}{\operatorname{Sym}}
\newcommand{\Spec}{\operatorname{Spec}}
\newcommand{\Hom}{\operatorname{Hom}}

\newcommand{\ch}{\operatorname{ch}}
\newcommand{\grch}{\operatorname{grch}}
\newcommand{\PSL}{\operatorname{PSL}}
\newcommand{\SO}{\operatorname{SO}}
\newcommand{\OO}{\mathcal O}
\newcommand{\Csq}{\mathcal C_{\mathrm{sq}}}
\newcommand{\Dk}{\mathcal D_k}

\newcommand{\Ovir}{\mathcal O_1}
\newcommand{\CAlg}{\operatorname{CAlg}}
\newcommand{\IndCMY}{\operatorname{Ind}_{\mathrm{CMY}}}
\newcommand{\PSLtwo}{\operatorname{PSL}_2(\C)}
\newcommand{\Otwo}{O_2(\C)}

\def\Z{\mathbb{Z}}   \def\C{\mathbb{C}}   

\allowdisplaybreaks

\begin{document}

\title[]{Classification of Strongly Rational Vertex Operator Algebras with Central and Effective Central Charge One}

\author{Shun Xu~\orcidlink{0009-0006-8080-8107}}

\address{School of Mathematical Sciences, Anhui University, Anhui, Hefei, 230601, China}

\email{shunxu@ahu.edu.cn}

\subjclass[2020]{Primary 17B69; Secondary 18M15, 81R10}

\keywords{vertex operator algebra, central charge one, orbifold, lattice vertex operator algebra, tensor category, invariant theory}
\hypersetup{pdftitle={Classification of Strongly Rational Vertex Operator Algebras with Central and Effective Central Charge One},pdfauthor={Shun Xu}}

\begin{abstract}
We classify the strongly rational vertex operator algebras whose central charge and effective central charge are both one. Every such algebra is isomorphic to one of \(V_L,V_L^+,V_{A_1}^{A_4},V_{A_1}^{S_4},V_{A_1}^{A_5},\) where $L$ ranges over the positive-definite even lattices of rank one and $A_1=\sqrt2\Z$.
\end{abstract}

\maketitle

\section{Introduction}

The classification of rational vertex operator algebras at central charge one is a basic rank-one problem in the structure theory of vertex operator algebras. The expected examples are the positive-definite rank-one lattice vertex operator algebras, their $\mathbb Z_2$-orbifolds, and the three polyhedral orbifolds of the $A_1$ lattice vertex operator algebra. Earlier characterizations of these models often assumed that the ambient vertex operator algebra decomposes as a direct sum of highest-weight modules for the Virasoro algebra $L(1,0)$; see, for example, \cite{DJplusI,DJplusII,DJA4}. This assumption is automatic in the unitary setting, but it is not a formal consequence of rationality of the ambient vertex operator algebra.

The aim of this paper is to remove that Virasoro complete-reducibility hypothesis. We call a vertex operator algebra \emph{strongly rational} if it is simple, rational, $C_2$-cofinite, self-contragredient, and of CFT type. For a rational vertex operator algebra $V$, we write $\widetilde c$ for its effective central charge. Let $A_1=\mathbb Z\alpha$ with $\langle\alpha,\alpha\rangle=2$, and identify $V_{A_1}\cong L_{\widehat{\mathfrak{sl}}_2}(1,0)$ and $\Aut(V_{A_1})\cong\PSLtwo\cong\SO_3(\C)$. We regard $A_4,S_4,A_5$ as the tetrahedral, octahedral, and icosahedral rotation subgroups of $\SO_3(\C)$.

Our main theorem is the following.

\begin{thm}\label{thm:main}
Let $V$ be a simple, rational, $C_2$-cofinite, self-contragredient vertex operator algebra of CFT type. Assume $c(V)=\widetilde c(V)=1$. Then there exists a positive-definite even lattice $L$ of rank one such that $V$ is isomorphic to one of $V_L,V_L^+,V_{A_1}^{A_4},V_{A_1}^{S_4},V_{A_1}^{A_5}$. In particular, no assumption that $V$ is completely reducible as an $L(1,0)$-module, or that $V$ is a direct sum of Virasoro highest-weight modules, is needed.
\end{thm}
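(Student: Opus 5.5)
The plan is to first obtain a Heisenberg or $\mathfrak{sl}_2$ structure inside $V$, and then to reduce to the known orbifold classification.

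\textbf{Step 1 (a Heisenberg vector).} Since $V$ is strongly rational, every simple module $M$ has conformal weight $h_M\ge h_{\min}$. By the definition of the effective central charge,
\[
\widetilde c=c-24h_{\min}.
\]
The hypothesis $\widetilde c=c$ therefore forces $h_{\min}=0$, so all simple modules have nonnegative weights. I would exploit this through the modular invariance of characters (Zhu) together with the asymptotics of the character as $\tau\to 0$. The character of $V$ then grows like that of a single free boson, i.e.\ like $\eta(\tau)^{-1}$.

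Next I would compare $\dim V_1$ with the Virasoro contribution. The Virasoro vacuum module $L(1,0)$ has character $(1-q)\,q^{-1/24}\eta^{-1}$, which is too small to account for this growth. My first attempt at extracting $V_1\neq 0$ would use this, via a positivity or Verlinde-type argument applied to the vector-valued modular form of characters with $h_{\min}=0$. If $V_1=0$ after all, I would instead show that $V$ is a $C_2$-cofinite extension of a rational Virasoro or $W$-algebra of central charge $1$, and that no such extension exists. The reason is that $c=1$ is not a minimal model value, and a $C_2$-cofinite VOA with $c=1$ and no weight-one space would contradict the growth estimate.

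This step is the main obstacle: producing $V_1\neq0$ without assuming Virasoro complete reducibility. I would try the route through the nonnegativity of conformal weights and the modular-form dimension formula first.

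\textbf{Step 2 (structure of $V_1$).} Once $V_1\neq 0$, it is a reductive Lie algebra, since $V$ is strongly rational (Dong--Mason). Its rank is at most the effective central charge, so the rank is $1$. The Lie algebra $V_1$ is therefore $\C$ or $\mathfrak{sl}_2$.

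In either case $V$ contains a Heisenberg vertex algebra $M(1)$ generated by a vector $h\in V_1$ with $\langle h,h\rangle\ne0$. The commutant $\mathrm{Com}(M(1),V)$ has central charge $0$ and is $C_2$-cofinite and unitary-like by the coset theory for lattice-type subalgebras. Hence it equals $\C\one$.

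It follows that $V$ decomposes as a sum of Fock modules indexed by a lattice $L$, so $V\cong V_L$.

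\textbf{Step 3 (the case $V_1=0$, if it survives).} If Step 1 only yields that $V$ has weight-one space zero, I would follow Dong--Jiang. Let $\omega$ be the conformal vector. I would show that the weight-four space $V_4$ contains a vector $J$ generating a $W(1,4)$-type subalgebra isomorphic to $V_{\Z\beta}^+$ with $\langle\beta,\beta\rangle$ determined by the norm of $J$.

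Rather than completely reducibility over $L(1,0)$, the key input would be that this subalgebra is rational, as are the $V_L^+$. Then $V$ is a simple current or orbifold extension of $V_L^+$ (or of $V_{A_1}^{\Gamma}$). The extensions can then be classified through the modular tensor category $\Rep(V_L^+)$ and commutative algebra objects in it. This recovers $V_L^+$ and the three polyhedral orbifolds, via the finite subgroups $\Gamma\subset \SO_3(\C)$ with no invariants in $\mathfrak{sl}_2$.

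Combining the three steps gives the list in Theorem~\ref{thm:main}.
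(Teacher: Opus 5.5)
Your Step~2 is sound, and you can drop the commutant argument. If $V_1\neq0$, Dong--Mason show that $V_1$ is reductive of rank $\ell$ with $1\le\ell\le\widetilde c=1$. Hence $\ell=\widetilde c=c$, and their equality theorem gives $V\cong V_K$ directly. This is exactly how the paper treats the lattice branch and the norm-two case $V_{A_1}^+\cong V_{2A_1}$.

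Step~1, however, aims at something false. The algebras $V_{\Z\alpha}^+$ with $\langle\alpha,\alpha\rangle\ge4$ all satisfy $c=\widetilde c=1$ and $V_1=0$; for instance $V_{\Z\alpha}^+\cong L(\tfrac12,0)\otimes L(\tfrac12,0)$ when $\langle\alpha,\alpha\rangle=4$. The three polyhedral orbifolds have the same property. Moreover, $(1-q)\eta^{-1}$ differs from $\eta^{-1}$ only by a polynomial factor as $\tau\to0$. So no growth or positivity argument can force $V_1\neq0$ or exclude these algebras. The entire content of the theorem therefore lies in your Step~3, and that step has a genuine gap, for four reasons.

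\emph{You do not know where the primaries sit.} The paper first restricts $\ch V$ to five families, using Dong--Jiang's modularity theorem together with Kiritsis's $c=1$ completeness. Without that character reduction you cannot assert that $V_4$ contains a suitable primary $J$. In fact, writing $\langle\alpha,\alpha\rangle=2k$, the first primary of $V_{\Z\alpha}^+$ has weight $k$ when $k\in\{2,3\}$, and $P_4$ is two-dimensional when $k=4$. The polyhedral orbifolds have no primary below weight $9$, $16$, $36$ respectively.

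\emph{The generated subalgebra is not rational.} In the branches where such a $J$ exists and is unique ($k=3$ or $k>4$), the algebra it generates with $\omega$ is $M(1)^+$, not a $V_{\Z\beta}^+$. $M(1)^+$ is not rational, since it has the continuous family $M(1,\lambda)$. So there is no modular tensor category in which to classify commutative algebras. The norm of $J$ also carries no lattice information: the paper reads $k$ off from the weight of the first $M(1)^+$-highest-weight vector in $(M(1)^+)^\perp$.

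\emph{The hypothesis you want to remove comes back in.} Even identifying $\langle\omega,J\rangle$ requires two facts. You need $U(\Vir)J\cong L(1,4)$ rather than some other quotient of the Verma module. You also need iterated products of the generators to stay in a semisimple category. This is precisely where earlier characterizations used Virasoro complete reducibility, so your plan silently reimports it. The paper supplies these facts through several ingredients:
\begin{itemize}
\item the first-singular-vector argument against the invariant form;
\item separate low-norm analyses, e.g.\ the first-singular map $S$ and the regular $D_2$-orbit for $k=4$;
\item the coefficient-image lemma, which places generated cores in $\Csq\simeq\Rep\PSLtwo$;
\item over $M(1)^+$, the Adamovi\'c--Lin--Yang $C_1$-cofinite category with its semisimple subcategory $\Dk\simeq\Rep\Otwo$, together with Creutzig--McRae--Yang direct limits.
\end{itemize}

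\emph{The polyhedral orbifolds are not extensions of $V_L^+$.} None of them has a weight-four primary, so none contains $M(1)^+$ or any $V_L^+$. The paper instead obtains them as extensions of $L(1,0)$ corresponding to $\OO(G/H)$. The subgroup $H$ is identified from the unique closed orbit and the first invariant degrees $3,4,6$.
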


The proof has two stages. First, the modularity theorem of Dong and Jiang, together with the classical completeness result at $c=1$, restricts the vacuum character to the five families appearing in Theorem~\ref{thm:main}; this is recorded in Theorem~\ref{thm:character-reduction}. We then prove rigidity for each character. The lattice character is handled by the effective-central-charge theorem of Dong--Mason. For the four remaining characters, the main issue is that the ambient $L(1,0)$-module need not be semisimple, so the reconstruction cannot begin by decomposing $V$ globally into irreducible Virasoro modules.

The replacement is to work with semisimple subcores generated by specific primary vectors. Put $X_m=L(1,m^2)$ for $m\in\mathbb Z_{\ge0}$. The finite direct sums of the $X_m$ form a symmetric tensor category $\Csq\simeq_{\mathrm{sym}}\Rep\PSLtwo$, with $X_m$ corresponding to the $(2m+1)$-dimensional irreducible representation $W_{2m}$. A coefficient-image argument shows that the Virasoro modules generated by the coefficients of the relevant intertwining operators remain in this square subcategory. Thus the particular subcores needed in the proof are semisimple even though $V\downarrow L(1,0)$ need not be.

For the three polyhedral characters, the first deviation from the vacuum character occurs in weights $9,16,36$, respectively. The invariant bilinear form produces a primary vector at that weight, and the first-singular-vector argument identifies its cyclic Virasoro module with $X_3,X_4,X_6$. The resulting square core corresponds to a finitely generated commutative $\PSLtwo$-algebra. Affine invariant theory then identifies the relevant homogeneous orbit, and comparison with the prescribed graded character yields the fixed-point algebra $V_{A_1}^{A_4}$, $V_{A_1}^{S_4}$, or $V_{A_1}^{A_5}$.

The branch with character $\ch V_L^+$ requires a second reconstruction. A weight-four primary first recovers the Heisenberg orbifold $M(1)^+$. The first lattice primary is then extracted as an irreducible $M(1)^+$-module, and the integral-charge subcategory is normalized by the compact $O(2)$-action on the corresponding rank-one lattice VOA. This gives a symmetric tensor category equivalent to $\Rep O_2(\C)$, in which the extension is controlled by the standard two-dimensional representation and its invariant quadratic form. This proves the $V_L^+$ rigidity for lattice norm greater than $8$. The norms $2,4,6,8$ require separate low-weight arguments: the first two are covered by known uniqueness theorems, while the norms $6$ and $8$ are treated by dimension saturation and the weight-four invariant theory developed in Section~\ref{sec:low}. In the last case a regular weight-four primary has orbit $G/D_2$, and the lattice identity $V_{A_1}^{D_2}\cong V_{2A_1}^+$ completes the reconstruction.

The paper is organized as follows. Section~\ref{sec:categorical} fixes the categorical conventions and records the rank-one lattice-VOA input. Section~\ref{sec:prelim} contains the character reduction, the square tensor category, the direct-limit extension machinery, and the homogeneous-coordinate-algebra dictionary. Section~\ref{sec:squarecore} proves the exceptional character-rigidity theorem. Section~\ref{sec:weight4} reconstructs the $M(1)^+$ core, Section~\ref{sec:o2} establishes the $O_2$ charge category and treats the generic rank-one orbifold branch, and Section~\ref{sec:low} handles the four low-norm cases. Section~\ref{sec:final} proves the lattice branch and assembles the preceding results to prove Theorem~\ref{thm:main}.

\section{Categorical conventions and vertex tensor categories}
\label{sec:categorical}

We collect here the categorical conventions and auxiliary results used throughout the paper. The module-theoretic conventions and the $C_1$-cofinite Virasoro package are taken from Creutzig--Jiang--Orosz Hunziker--Ridout--Yang \cite{CJORY}; the conventions and results concerning $P(z)$-tensor products, vertex tensor categories, vertex tensor functors, and compact orbifolds are taken from McRae \cite{McRae}, together with the functorial framework of Creutzig--Kanade--McRae \cite{CKM}. The $M(1)^+$ results used below are from Adamovi\'c--Lin--Yang \cite{ALY}; the standard rank-one lattice-VOA construction and its $\mathbb Z_2$-orbifold are recalled from Frenkel--Lepowsky--Meurman \cite[Chapter~8]{FLM}, Lepowsky--Li \cite[Sections~6.4--6.5]{LepowskyLi}, and Dong--Griess \cite[Section~2]{DG98}. The invariant-bilinear-form results are from Li \cite{LiInvariant}, the level-one $\mathfrak{sl}_2$ uniqueness results are from McRae--Yang \cite{MY}, and the direct-limit completion and extension--algebra correspondence are taken from Creutzig--McRae--Yang \cite{CMY}. Only the conventions and results needed in the sequel are recalled.

\subsection{Vertex-operator-algebra module conventions}
\label{subsec:module-conventions}

We first fix the module-theoretic terminology used throughout the paper, following the conventions of \cite[Section~2.1]{CJORY}. Let $(V,Y,\one,\omega)$ be a vertex operator algebra of central charge $c$. By a \emph{vertex operator algebra extension} of $V$ we mean a vertex operator algebra $U$ together with an injective vertex-operator-algebra homomorphism $\iota:V\hookrightarrow U$ preserving the conformal vector. We usually identify $V$ with its image under $\iota$, so that $V\subset U$ and $U$ is naturally a $V$-module via the restriction of its vertex operator map.

A \emph{weak $V$-module} is a vector space $W$ equipped with a linear vertex-operator map \(Y_W:V\longrightarrow(\End W)[[x,x^{-1}]], Y_W(v,x)=\sum_{n\in\Z}v_nx^{-n-1}\), satisfying the following axioms. For $v\in V$ and $w\in W$, one has $v_nw=0$ for all sufficiently large $n$; $Y_W(\one,x)=\id_W$; and, for $u,v\in V$,
\begin{align*}
&x_0^{-1}\delta\!\left(\frac{x_1-x_2}{x_0}\right)
 Y_W(u,x_1)Y_W(v,x_2)
-x_0^{-1}\delta\!\left(\frac{x_2-x_1}{-x_0}\right)
 Y_W(v,x_2)Y_W(u,x_1)\\
&\hspace{38mm}=
 x_2^{-1}\delta\!\left(\frac{x_1-x_0}{x_2}\right)
 Y_W\bigl(Y(u,x_0)v,x_2\bigr).
\end{align*}
Writing \(Y_W(\omega,x)=\sum_{n\in\Z}L(n)x^{-n-2}\), the operators $L(n)$ satisfy \([L(m),L(n)]=(m-n)L(m+n)+\frac{m^3-m}{12}\delta_{m+n,0}c\id_W\). The $L(-1)$-derivative property \(Y_W(L(-1)v,x)=\frac{d}{dx}Y_W(v,x)\) holds. No grading on $W$ is part of the definition of a weak module.

A \emph{generalized $V$-module} is a weak $V$-module with a $\C$-grading \(W=\bigoplus_{h\in\C}W_{[h]},W_{[h]}=\{w\in W\mid(L(0)-h)^Nw=0\text{ for some }N\ge1\},\) so that $W_{[h]}$ is the generalized $L(0)$-eigenspace of eigenvalue $h$. It is \emph{lower bounded} if, for every $h\in\C$, \(W_{[h+n]}=0\) for all sufficiently negative $n\in\Z$. It is \emph{grading restricted} if it is lower bounded and \(\dim_\C W_{[h]}<\infty\) for every $h\in\C$.

An \emph{ordinary $V$-module} is a grading-restricted generalized $V$-module on which $L(0)$ acts semisimply. Thus \(W=\bigoplus_{h\in\C}W_{(h)},W_{(h)}=\{w\in W\mid L(0)w=hw\},\) with finite-dimensional weight spaces and the preceding lower-bounded condition. A generalized $V$-module $W$ has \emph{length} $\ell$ if there is a filtration \(W=W_1\supset W_2\supset\cdots\supset W_{\ell+1}=0\) by generalized $V$-submodules such that every $W_i/W_{i+1}$ is irreducible; it has \emph{finite length} if such an $\ell$ is finite.

We call \(V\) \emph{rational} if every admissible \(V\)-module is completely reducible. Likewise, a category of ordinary $V$-modules is called \emph{semisimple} if each of its objects is a direct sum of simple ordinary modules.

Let $W=\bigoplus_{h\in\C}W_{[h]}$ be a lower-bounded generalized $V$-module. Its \emph{contragredient module} is the restricted dual \(W':=\bigoplus_{h\in\C}W_{[h]}^*,\) with vertex-operator action $Y_{W'}$ characterized by
\[
\langle Y_{W'}(v,x)w',w\rangle
=
\langle w',Y_W^{\circ}(v,x)w\rangle,
\qquad
Y_W^{\circ}(v,x)
:=Y_W\!\left(e^{xL(1)}(-x^{-2})^{L(0)}v,x^{-1}\right).
\]
Here $Y_W^{\circ}$ is the \emph{opposite vertex operator}. We also write \(\overline W:=\prod_{h\in\C}W_{[h]}\) for the formal completion of $W$ with respect to its generalized $L(0)$-grading. In particular, \(V'=\bigoplus_{n\in\Z}V_n^*\) for the adjoint module of an integrally graded vertex operator algebra.

For a weak $V$-module $W$, let \(V_+:=\bigoplus_{n\in\Z_{>0}}V_n\) and \(C_1(W):=\operatorname{span}_{\C}\{u_{-1}w\mid u\in V_+,\ w\in W\}\). The module $W$ is \emph{$C_1$-cofinite} if its $C_1$-quotient $W/C_1(W)$ is finite dimensional. For the vertex operator algebra itself, set \(C_2(V):=\operatorname{span}_{\C}\{u_{-2}v\mid u,v\in V\},\) and call $V$ \emph{$C_2$-cofinite} if \(\dim_\C V/C_2(V)<\infty\). A vertex operator algebra $V$ is \emph{simple} if it has no nonzero proper ideals. It is \emph{self-contragredient} if $V\cong V'$ as a $V$-module, and it is of \emph{CFT type} if \(V_n=0\) for $n<0$ and \(V_0=\C\one\). Throughout this paper, \emph{strongly rational} means simple, rational, $C_2$-cofinite, self-contragredient, and of CFT type.

\subsection{Tensor, braided, and symmetric categories}

A \emph{tensor category} in this paper is a monoidal category $\mathcal C$ with tensor product bifunctor \(\boxtimes:\mathcal C\times\mathcal C\longrightarrow\mathcal C\), tensor unit $\mathbf 1_{\mathcal C}$, associativity isomorphisms \(\mathcal A_{X,Y,Z}:X\boxtimes(Y\boxtimes Z)\overset{\sim}{\longrightarrow}(X\boxtimes Y)\boxtimes Z,\) and left and right unit isomorphisms, satisfying the usual pentagon and triangle coherence axioms. A \emph{braiding} is a natural family of isomorphisms \(\mathcal R_{X,Y}:X\boxtimes Y\overset{\sim}{\longrightarrow}Y\boxtimes X\) satisfying the two hexagon axioms. The braided tensor category is \emph{symmetric} if \(\mathcal R_{Y,X}\circ\mathcal R_{X,Y}=\id_{X\boxtimes Y}\text{for all }X,Y\in\mathcal C.\) A strong tensor functor $F:\mathcal C\to\mathcal D$ is equipped with natural isomorphisms \(J_{X,Y}:F(X)\boxtimes F(Y)\overset{\sim}{\longrightarrow}F(X\boxtimes Y)\) and an isomorphism \(\varphi:\mathbf 1_{\mathcal D}\overset{\sim}{\longrightarrow}F(\mathbf 1_{\mathcal C})\), compatible with the associativity and left and right unit constraints. If $\mathcal C$ and $\mathcal D$ are braided, write $\mathcal R^{\mathcal C}$ and $\mathcal R^{\mathcal D}$ for their braidings. The functor is \emph{braided} if, for all $X,Y\in\mathcal C$, \(F(\mathcal R^{\mathcal C}_{X,Y})\circ J_{X,Y}=J_{Y,X}\circ\mathcal R^{\mathcal D}_{F(X),F(Y)}.\) A braided tensor functor whose underlying functor is an equivalence of categories is called a \emph{braided tensor equivalence}. When both categories are symmetric, we also call such an equivalence a \emph{symmetric tensor equivalence}.

For a complex reductive algebraic group $G$, we write $\Rep G$ for the category of finite-dimensional rational algebraic $G$-modules. It is a symmetric tensor category with the usual tensor product, tensor unit $\C$, and flip \(m\otimes n\longmapsto n\otimes m\) as braiding. We write $\Rep^{\mathrm{rat}}G$ for the category of arbitrary rational algebraic $G$-modules. Equivalently, an object of $\Rep^{\mathrm{rat}}G$ is a $G$-module that is the union of its finite-dimensional rational $G$-submodules.

If $\mathcal D$ is a full subcategory of a tensor category $\mathcal C$, we call $\mathcal D$ a \emph{full tensor subcategory} when it contains the tensor unit, is closed under the tensor product and isomorphisms, and carries the corresponding restrictions of the associativity and unit constraints. The same terminology is used for braided or symmetric tensor subcategories when the braiding restricts.

\subsection{Basic algebraic-geometric conventions}
\label{subsec:AG-conventions}

All algebraic varieties, schemes, and algebraic groups in this paper are over $\C$, and all schemes that occur below are of finite type over $\C$. For a finitely generated commutative $\C$-algebra $R$, we write $\Spec R$ for the associated affine scheme. The algebra $R$ is \emph{reduced} if it has no nonzero nilpotent elements, equivalently if its nilradical $\sqrt{(0)}$ is zero. When $R$ is reduced, we regard $X=\Spec R$ as an affine variety and write $\OO(X)=\Gamma(X,\mathcal O_X)\cong R$ for its coordinate algebra. Thus, if $X\subset\C^n$ is an affine algebraic set, then $\OO(X)=\C[x_1,\ldots,x_n]/I(X)$, and every regular function is represented by a polynomial in the ambient coordinates. A map $\varphi:X\to Y$ of affine varieties is regular precisely when it induces a pullback homomorphism $\varphi^*:\OO(Y)\to\OO(X)$. For a finite-dimensional complex vector space $E$, in particular, $\OO(E)=\Sym(E^*)$.

Closed points of $\Spec R$ correspond to maximal ideals of $R$, and over $\C$ the weak Hilbert Nullstellensatz identifies them with the ordinary $\C$-points. We also use the strong form: if $J\subseteq\C[x_1,\ldots,x_n]$ is an ideal and $V(J)$ is its zero locus, then \(I(V(J))=\sqrt J.\) Consequently, two reduced closed subschemes of an affine space having the same $\C$-points have the same radical defining ideal and hence coincide scheme-theoretically. We use these conventions in both their variety and affine-scheme forms; see \cite[Chapters~I--II]{Hartshorne}.

We record the local notions used later. Let $X=\Spec R$ and let $x\in X(\C)$ correspond to the maximal ideal $\mathfrak m\subset R$. The local ring at $x$ is \(\mathcal O_{X,x}=R_{\mathfrak m},\) with maximal ideal $\mathfrak m_x=\mathfrak mR_{\mathfrak m}$, and we write \(\dim_xX:=\dim\mathcal O_{X,x}.\) The Zariski tangent space is \(T_xX:=\Hom_\C(\mathfrak m_x/\mathfrak m_x^2,\C)\), and \(\dim_\C T_xX=\dim_\C\mathfrak m_x/\mathfrak m_x^2\). For every Noetherian local ring one has \(\dim\mathcal O_{X,x}\leq\dim_\C\mathfrak m_x/\mathfrak m_x^2;\) equality is equivalent to $\mathcal O_{X,x}$ being a regular local ring. Since $\C$ is perfect, for a finite-type $\C$-scheme a closed point is smooth over $\C$ if and only if its local ring is regular. The smooth locus is open, and a scheme smooth over $\C$ is reduced. If $X\subseteq E$ is cut out in a finite-dimensional vector space by polynomials $f_1,\ldots,f_r$, then \(T_xX=\{v\in E:(df_i)_x(v)=0\text{ for }1\leq i\leq r\}.\) We use the usual Jacobian criterion in this form: if the differentials of $r$ defining equations are linearly independent at $x$, then the common zero scheme is smooth of codimension $r$ at $x$. Finite-type $\C$-schemes are Jacobson; in particular every nonempty closed subset, and more generally every nonempty constructible subset, contains a closed point. These standard local facts will be used without further comment; see \cite[Chapter~I, \S5 and Chapter~III, \S10]{Hartshorne}.

If an algebraic group $G$ acts algebraically on an affine variety $X$, the induced action on the coordinate algebra is \((g\cdot f)(x)=f(g^{-1}x)\). A \emph{commutative rational $G$-algebra} is a commutative $\C$-algebra $B$ whose underlying $G$-module belongs to $\Rep^{\mathrm{rat}}G$ and for which multiplication and the unit are $G$-equivariant; we write $B^G$ for its invariant subalgebra. When $B$ is finitely generated, $\Spec B$ is the corresponding affine $G$-scheme. For $x\in X$, the stabilizer is \(G_x=\{g\in G:gx=x\}\). Algebraic-group orbits are locally closed, the orbit map induces an isomorphism \(G/G_x\overset{\sim}{\longrightarrow}Gx,\) and \(\dim Gx=\dim G-\dim G_x.\) A \emph{closed orbit} means an orbit that is Zariski closed in $X$. If $H\leq G$ is a closed subgroup and $G/H$ is affine, $\OO(G/H)$ denotes its coordinate algebra.

For an algebraic group $H$, we write $H^\circ$ for its identity component. An algebraic \emph{torus} is a group isomorphic to $(\C^\times)^r$ for some $r$, and a \emph{maximal torus} is one maximal among algebraic tori. For a closed subgroup $H\leq G$, its normalizer is \(N_G(H):=\{g\in G:gHg^{-1}=H\}.\) If $G$ is connected reductive and $T$ is a maximal torus, then \(W_G(T):=N_G(T)/T\) is the finite Weyl group. We use these standard facts about algebraic groups and homogeneous spaces as in \cite[Chapters~1--2]{SpringerLAG}.

Finally, let $G$ be reductive and let $X=\Spec B$ be an affine $G$-variety of finite type. The affine GIT quotient is \(X/\!/G:=\Spec(B^G)\), with quotient morphism \(\pi:X\to X/\!/G\) induced by $B^G\hookrightarrow B$. We use the standard closed-orbit theorem: every fiber of $\pi$ contains a unique closed $G$-orbit. In particular, if $B^G=\C$, then $X$ has a unique closed $G$-orbit; see \cite[Chapter~1, \S1]{MFK}. We also use Matsushima's criterion in the following direction: if $H\leq G$ is closed and $G/H$ is affine, then $H$ is reductive; see \cite{Matsushima} and \cite[Theorem~3.5]{BHC}. These are the affine invariant-theoretic conventions used below.

\subsection{\texorpdfstring{$P(z)$}{P(z)}-tensor products}

Let $V$ be a vertex operator algebra and let $W_1,W_2,W_3$ be generalized $V$-modules. Following \cite[Section~2.1]{CJORY}, a \emph{logarithmic intertwining operator of type $\binom{W_3}{W_1\,W_2}$} is a linear map \(\mathcal Y:W_1\otimes W_2\longrightarrow W_3\{x\}[\log x]\) such that, for $w_1\in W_1$ and $w_2\in W_2$, \(\mathcal Y(w_1,x)w_2=\sum_{k=0}^{K}\sum_{n\in\C}(w_1)^{\mathcal Y}_{n,k}w_2 x^{-n-1}(\log x)^k\) for some $K\ge0$, and the following conditions hold:
\begin{enumerate}[label=\textnormal{(\roman*)}]
\item for every $n\in\C$ and $0\le k\le K$,
\((w_1)^{\mathcal Y}_{n+m,k}w_2=0\) for all sufficiently large $m\in\Z$;
\item for $v\in V$,
\begin{align*}
&x_0^{-1}\delta\!\left(\frac{x_1-x_2}{x_0}\right)
Y_{W_3}(v,x_1)\mathcal Y(w_1,x_2)w_2
-x_0^{-1}\delta\!\left(\frac{x_2-x_1}{-x_0}\right)
\mathcal Y(w_1,x_2)Y_{W_2}(v,x_1)w_2\\
&\hspace{27mm}=
 x_2^{-1}\delta\!\left(\frac{x_1-x_0}{x_2}\right)
\mathcal Y\bigl(Y_{W_1}(v,x_0)w_1,x_2\bigr)w_2;
\end{align*}
\item
\(\mathcal Y(L(-1)w_1,x)=\frac{d}{dx}\mathcal Y(w_1,x).\)
\end{enumerate}
A logarithmic intertwining operator is called an \emph{intertwining operator} if no positive power of $\log x$ occurs. We write \(\mathcal V^{W_3}_{W_1 W_2}\) for the vector space of logarithmic intertwining operators of this type and \(\mathcal N^{W_3}_{W_1 W_2}:=\dim\mathcal V^{W_3}_{W_1 W_2}\) for the corresponding fusion coefficient (fusion rule).

Following \cite[\S2]{CMY}, for a logarithmic intertwining operator $\mathcal Y$ of type $\binom{W_3}{W_1\,W_2}$ we write
\[
\operatorname{Im}\mathcal Y
:=
\operatorname{span}_{\C}
\left\{
(w_1)^{\mathcal Y}_{h,k}w_2
\ \middle|\
w_1\in W_1,\ w_2\in W_2,\ h\in\C,\ k\in\Z_{\ge0}
\right\}.
\]
Thus $\operatorname{Im}\mathcal Y$ is the linear span of all formal coefficients of the series $\mathcal Y(w_1,x)w_2$.

Let $\mathcal C$ be a full subcategory of the category of grading-restricted generalized $V$-modules, with morphisms the $V$-module homomorphisms. For \(W=\bigoplus_{h\in\C}W_{[h]}\in\mathcal C\), write \(\overline W:=\prod_{h\in\C}W_{[h]}\) for its algebraic completion. Fix $z\in\C^\times$. A $P(z)$-intertwining map of type \(\binom{W_3}{W_1 W_2}\) is, in the Huang--Lepowsky--Zhang sense, a linear map \(I:W_1\otimes W_2\longrightarrow\overline{W_3}\) satisfying the $P(z)$ lower-truncation and Jacobi conditions. Equivalently, after choosing a branch of $\log z$, such a map is obtained by evaluating a logarithmic intertwining operator at $x=e^{\log z}$; see \cite[\S2.1]{McRae}. Conversely, if $I$ is a $P(z)$-intertwining map, the corresponding logarithmic intertwining operator is recovered by the $L(0)$-conjugation formula
\begin{equation}\label{eq:Pz-inverse}
\mathcal Y_I(w_1,x)w_2
=
\left(\frac{e^{\log z}}{x}\right)^{-L(0)}
I\left(
\left(\frac{e^{\log z}}{x}\right)^{L(0)}w_1
\otimes
\left(\frac{e^{\log z}}{x}\right)^{L(0)}w_2
\right).
\end{equation}
We shall use only this correspondence, the resulting universal property, and functoriality.

A \emph{$P(z)$-tensor product} of $W_1,W_2\in\mathcal C$ is a pair \(\left(W_1\boxtimes_{P(z)}W_2,\ \boxtimes_{P(z)}\right),\) where $W_1\boxtimes_{P(z)}W_2$ is an object of $\mathcal C$ and \(\boxtimes_{P(z)}:W_1\otimes W_2\longrightarrow\overline{W_1\boxtimes_{P(z)}W_2}\) is a $P(z)$-intertwining map with the following universal property: for every $W_3\in\mathcal C$ and every $P(z)$-intertwining map \(I:W_1\otimes W_2\longrightarrow\overline{W_3}\), there is a unique morphism \(f_I:W_1\boxtimes_{P(z)}W_2\longrightarrow W_3\) such that \(I=\overline{f_I}\circ\boxtimes_{P(z)}.\) Conversely, every morphism \(f:W_1\boxtimes_{P(z)}W_2\longrightarrow W_3\) determines a $P(z)$-intertwining map \(I_f:=\overline f\circ\boxtimes_{P(z)}:W_1\otimes W_2\longrightarrow\overline{W_3}.\) These two assignments are inverse to each other. Thus, throughout the paper, by the $P(z)$-intertwining map \emph{associated with} a morphism $f:W_1\boxtimes_{P(z)}W_2\to W_3$ we mean precisely $I_f$ above. For vectors, we use the standard notation \(w_1\boxtimes_{P(z)}w_2:=\boxtimes_{P(z)}(w_1\otimes w_2)\in\overline{W_1\boxtimes_{P(z)}W_2}.\) The universal property is relative to the chosen category $\mathcal C$. Thus, even when the same modules lie in two different module categories, their $P(z)$-tensor products are not to be identified without justification. This point is emphasized in \cite[Remark~2.3]{McRae}. Whenever we identify a $P(z)$-tensor product in a full subcategory with one computed in a larger ambient category, we will either cite a result guaranteeing this independence or prove that the inclusion preserves the relevant $P(z)$-tensor product.

\subsection{Vertex tensor categories and braided structure}

Following McRae, we use \emph{vertex tensor category} in the abstract sense: the category need not itself be a category of modules for a vertex operator algebra. It is equipped with a distinguished unit object $\mathbf 1_{\mathcal C}$, $P(z)$-tensor-product bifunctors $\boxtimes_{P(z)}$ for all $z\in\C^\times$, and the natural isomorphisms listed below, subject to the usual coherence conditions; see \cite[\S2.1]{McRae}. For an abstract vertex tensor category these $P(z)$-tensor products are part of the categorical structure. When $\mathcal C$ is a suitable full subcategory of grading-restricted generalized $V$-modules, they are realized by the $P(z)$-tensor products of Huang--Lepowsky--Zhang defined above, and $\mathbf 1_{\mathcal C}=V$.

The structure includes the following natural isomorphisms:
\begin{enumerate}[label=\textnormal{(\roman*)}]
\item parallel transport isomorphisms
\(T_\gamma:\boxtimes_{P(z_1)}\overset{\sim}{\longrightarrow}\boxtimes_{P(z_2)}\) for paths $\gamma$ in $\C^\times$ from $z_1$ to $z_2$;

\item $P(z)$-left and right unit isomorphisms
\[
\ell_{P(z);X}:
\mathbf 1_{\mathcal C}\boxtimes_{P(z)}X
\overset{\sim}{\longrightarrow}
X,
\qquad
r_{P(z);X}:
X\boxtimes_{P(z)}\mathbf 1_{\mathcal C}
\overset{\sim}{\longrightarrow}
X;
\]

\item $P(z_1,z_2)$-associativity isomorphisms
\[
\mathcal A_{P(z_1),P(z_2);X,Y,Z}:
X\boxtimes_{P(z_1)}
\bigl(Y\boxtimes_{P(z_2)}Z\bigr)
\overset{\sim}{\longrightarrow}
\bigl(X\boxtimes_{P(z_1-z_2)}Y\bigr)
\boxtimes_{P(z_2)}Z,
\]
for \(|z_1|>|z_2|>|z_1-z_2|>0;\)
\item $P(z)$-braiding isomorphisms
\(\mathcal R_{P(z);X,Y}:X\boxtimes_{P(z)}Y\overset{\sim}{\longrightarrow}Y\boxtimes_{P(-z)}X.\)
\end{enumerate}
These data satisfy the corresponding unit, associativity, braiding, parallel-transport, pentagon, and hexagon coherence conditions. We refer to \cite[\S2.1]{McRae} for the complete formulation.

The vertex tensor structure canonically determines an ordinary braided tensor category structure. Following \cite[\S2.1]{McRae}, throughout this paper the unadorned tensor product means \(X\boxtimes Y:=X\boxtimes_{P(1)}Y.\) The tensor unit is $\mathbf 1_{\mathcal C}$ (equal to $V$ in the VOA-module realization), the unit isomorphisms are the $P(1)$-unit isomorphisms, and the ordinary braiding is \(\mathcal R_{X,Y}=T_{-1\to1;Y,X}\circ\mathcal R_{P(1);X,Y}.\) The associativity isomorphism is obtained from the $P(z_1,z_2)$-associativity isomorphism by parallel transport. Thus a formula involving $\boxtimes$ without a $P(z)$ subscript always refers to the braided tensor product induced from the $P(1)$-tensor product.

\subsection{Vertex tensor functors and the compact-orbifold theorem for ordinary VOAs}
\label{subsec:McRae-VOA}

Let $\mathcal C_1$ and $\mathcal C_2$ be vertex tensor categories. A \emph{vertex tensor functor} consists of a functor $F:\mathcal C_1\to\mathcal C_2$, an isomorphism of tensor units \(\varphi:\mathbf 1_{\mathcal C_2}\overset{\sim}{\longrightarrow}F(\mathbf 1_{\mathcal C_1})\), and, for every $z\in\C^\times$, natural isomorphisms \(J_{P(z);M,N}:F(M)\boxtimes_{P(z)}F(N)\overset{\sim}{\longrightarrow}F(M\boxtimes_{P(z)}N),\) compatible with parallel transport, unit, associativity, and braiding; see \cite[\S2.1]{McRae} and \cite[\S3.6]{CKM}. Such a functor induces, at $z=1$, a braided tensor functor between the underlying braided tensor categories. If its underlying functor is an equivalence, we call it a \emph{vertex tensor equivalence}. If, in addition, the induced braided tensor equivalence is symmetric, we call it a \emph{symmetric vertex tensor equivalence}. The adjective ``symmetric'' therefore refers to the underlying braided tensor category; it is not an additional kind of $P(z)$-tensor product.

We now record, in the form used later, the specialization of McRae's compact-orbifold results to an ordinary vertex operator algebra. We regard the ordinary VOA case as the specialization with trivial abelian grading group, so the abelian $3$-cocycle and the corresponding cocycle modification of the representation category disappear from the general statements of \cite{McRae}.

Let $U$ be a simple vertex operator algebra and let $K$ be a compact Lie group acting on $U$ by automorphisms. We say that the action is \emph{continuous} if the induced action on each finite-dimensional homogeneous subspace $U_n$ is continuous. Write $\widehat K$ for the set of isomorphism classes of irreducible finite-dimensional continuous complex $K$-modules, and choose a representative $M_\chi$ for each $\chi\in\widehat K$. The compact-orbifold Schur--Weyl decomposition is
\begin{equation}\label{eq:McRae-SW-VOA}
 U\cong
 \bigoplus_{\chi\in\widehat K}M_\chi\otimes U_\chi
\end{equation}
as a $K\times U^K$-module, where every $U_\chi$ is a nonzero irreducible $U^K$-module and the modules $U_\chi$ are pairwise nonisomorphic; see \cite[\S3.1]{McRae}. Let \(\mathcal C_U:=\left\{\text{finite direct sums of the }U_\chi,\ \chi\in\widehat K\right\}.\) Thus $\mathcal C_U$ is the full semisimple subcategory of $U^K$-modules generated by the multiplicity spaces in \eqref{eq:McRae-SW-VOA}.

For a finite-dimensional continuous $K$-module $M$, define \(\Phi_K(M):=(M\otimes U)^K\), where $K$ acts diagonally, and for a $K$-homomorphism $f:M\to N$ set \(\Phi_K(f):=(f\otimes\id_U)\big|_{(M\otimes U)^K}.\) Then \(U_\chi\cong\Phi_K(M_\chi^*)\) and \(\Phi_K:\Rep_{\mathrm{fd}}K\overset{\sim}{\longrightarrow}\mathcal C_U\) is an equivalence of semisimple abelian categories; this is \cite[Proposition~3.5]{McRae}. Here $\Rep_{\mathrm{fd}}K$ denotes the category of finite-dimensional continuous complex representations of $K$.

For finite-dimensional continuous $K$-modules $M_1,M_2,M_3$, there is also a canonical linear map
\begin{equation}\label{eq:McRae-intertwiner-map}
 J^{M_3}_{M_1,M_2}:
 \Hom_K(M_1\otimes M_2,M_3)
 \longrightarrow
 \mathcal V^{\Phi_K(M_3)}_{\Phi_K(M_1)\,\Phi_K(M_2)}.
\end{equation}
For $f\in\Hom_K(M_1\otimes M_2,M_3)$, the corresponding intertwining operator is the restriction to $K$-fixed points of \(\mathcal Y_f(m_1\otimes u_1,x)(m_2\otimes u_2)=f(m_1\otimes m_2)\otimes Y_U(u_1,x)u_2.\) The map \eqref{eq:McRae-intertwiner-map} is always injective \cite[\S3.2]{McRae}.

\begin{thm}
\label{thm:McRae-VOA-orbifold}
Retain the preceding assumptions on the simple vertex operator algebra $U$ and the compact Lie group $K$.

\begin{enumerate}[label=\textnormal{(\roman*)}]
\item
If the maps \eqref{eq:McRae-intertwiner-map} are surjective for all finite-dimensional continuous $K$-modules $M_1,M_2,M_3$, then $\mathcal C_U$ admits a vertex tensor category structure. In this structure the maps \eqref{eq:McRae-intertwiner-map} are isomorphisms; see \cite[Theorem~4.1]{McRae}.

\item
Suppose that there is a vertex tensor category $\mathcal C$ of grading-restricted generalized $U^K$-modules containing every multiplicity module $U_\chi$. Then, for every $z\in\C^\times$, $\mathcal C_U$ is closed under the $P(z)$-tensor product computed in $\mathcal C$. Since $\mathcal C_U$ is full and contains the tensor unit $U^K=\Phi_K(\C)$, the $P(z)$-tensor products and the unit, parallel-transport, associativity, and braiding isomorphisms of $\mathcal C$ restrict to $\mathcal C_U$. Thus $\mathcal C_U$ is a full vertex tensor subcategory of $\mathcal C$.

McRae first proves that \(\bigl(\Phi_K,\{J_{P(z)}\}_{z\in\C^\times},\varphi_1\bigr)\) is a lax vertex tensor functor; see \cite[Theorem~4.5]{McRae}. By \cite[Corollary~4.8]{McRae}, the constraint at $z=1$ is an isomorphism and the induced functor is a braided tensor equivalence onto $\mathcal C_U$. Compatibility with parallel transport then implies that, for every $z\in\C^\times$ and $M,N\in\Rep_{\mathrm{fd}}K$, the canonical map \(J_{P(z);M,N}: \Phi_K(M)\boxtimes^{\mathcal C}_{P(z)}\Phi_K(N) \overset{\sim}{\longrightarrow} \Phi_K(M\otimes N)\) is an isomorphism; this is precisely the observation in \cite[Remark~4.10]{McRae}. Thus the $J_{P(z)}$ are natural isomorphisms compatible with the unit, parallel transport, associativity, and braiding, and \(\Phi_K:\Rep_{\mathrm{fd}}K\overset{\sim}{\longrightarrow}\mathcal C_U\) is a vertex tensor equivalence.

In the ordinary-VOA specialization the abelian grading group is trivial and $F=\Omega=1$. McRae's vertex tensor structure on $\Rep_{\mathrm{fd}}K$ therefore has \(M\boxtimes_{P(z)}N=M\otimes N\) for every $z\in\C^\times$, trivial parallel transport, and the usual unit, associativity, and braiding isomorphisms of finite-dimensional $K$-modules. Its underlying braided tensor category is consequently the usual symmetric category $\Rep_{\mathrm{fd}}K$. Hence the vertex tensor equivalence above induces a symmetric braided tensor equivalence; in the terminology fixed above, it is a symmetric vertex tensor equivalence.

Moreover, this vertex tensor structure on $\mathcal C_U$ is independent of the choice of the ambient vertex tensor category. More precisely, if $\mathcal C$ and $\mathcal D$ are two admissible ambient vertex tensor categories containing the same multiplicity modules, then, for $X=\Phi_K(M)$ and $Y=\Phi_K(N)$, the two $P(z)$-tensor products are canonically identified through
\[
 X\boxtimes^{\mathcal C}_{P(z)}Y
 \xrightarrow{\ J^{\mathcal C}_{P(z);M,N}\ }
 \Phi_K(M\otimes N)
 \xleftarrow{\ J^{\mathcal D}_{P(z);M,N}\ }
 X\boxtimes^{\mathcal D}_{P(z)}Y.
\]
The independence statement at the level of tensor products is \cite[Remark~4.9]{McRae}, while the full $P(z)$-tensor equivalence is summarized in \cite[Remark~4.10]{McRae}.
\end{enumerate}
\end{thm}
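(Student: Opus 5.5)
This theorem is mostly a repackaging of McRae's results specialized to trivial grading group, so the plan is to verify that each clause follows from the cited statements and to supply the short formal arguments connecting them.

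For (i), I would invoke \cite[Theorem~4.1]{McRae} directly. The only point to check is that the ordinary-VOA specialization satisfies its hypotheses: take the abelian grading group trivial, so the 3-cocycle is trivial. Then surjectivity of \eqref{eq:McRae-intertwiner-map} is exactly the input McRae requires. Combined with the injectivity recorded before the theorem, it makes these maps isomorphisms.

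For (ii), I would proceed in four steps.

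\textbf{Closure.} I would show $\mathcal C_U$ is closed under $\boxtimes^{\mathcal C}_{P(z)}$. Since $\mathcal C$ is a vertex tensor category containing the $U_\chi$, each $X=\Phi_K(M)$, $Y=\Phi_K(N)$ has a tensor product $X\boxtimes^{\mathcal C}_{P(z)}Y$. McRae's lax structure map $J_{P(z);M,N}$ goes from this product to $\Phi_K(M\otimes N)$; it is induced by the universal property from the $P(z)$-intertwining map attached to $\mathcal Y_{\id}$ via \eqref{eq:Pz-inverse}. By \cite[Theorem~4.5, Corollary~4.8]{McRae}, $J_{P(1)}$ is an isomorphism.

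\textbf{Transport to all $z$.} Naturality of $J$ with respect to parallel transport $T_\gamma$ gives $J_{P(z)}\circ T_\gamma=\Phi_K(T_\gamma^{\Rep})\circ J_{P(1)}$. The transport on the $\Rep_{\mathrm{fd}}K$ side is the identity, so every $J_{P(z)}$ is an isomorphism. Hence $X\boxtimes^{\mathcal C}_{P(z)}Y\cong\Phi_K(M\otimes N)\in\mathcal C_U$, and closure under isomorphism is built into the definition of $\mathcal C_U$ as finite direct sums.

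\textbf{Restriction of structure.} Fullness of $\mathcal C_U$, together with $U^K=\Phi_K(\C)\in\mathcal C_U$, lets the unit, associativity, braiding and parallel-transport isomorphisms of $\mathcal C$ restrict. This yields a full vertex tensor subcategory and a vertex tensor equivalence $\Phi_K$.

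\textbf{Symmetry.} Because the source category $\Rep_{\mathrm{fd}}K$ has flip braiding with trivial transport, the braided compatibility of $J$ gives $\mathcal R_{Y,X}\circ\mathcal R_{X,Y}=\id$ on $\mathcal C_U$. So the induced equivalence is symmetric in the sense fixed above.

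For independence of the ambient category, I would observe that both $J^{\mathcal C}_{P(z)}$ and $J^{\mathcal D}_{P(z)}$ are isomorphisms onto the same module $\Phi_K(M\otimes N)$. Each is characterized by carrying the universal intertwining map to the one attached to $\mathcal Y_{\id}$. The composite $(J^{\mathcal D})^{-1}\circ J^{\mathcal C}$ is then the canonical identification, and it intertwines all the structure isomorphisms because both sides are transported from the same structure on $\Rep_{\mathrm{fd}}K$; this is \cite[Remarks~4.9--4.10]{McRae}.

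The main obstacle is the second step of (ii): making sure the isomorphism statement at $z=1$ really propagates to all $z$. This needs compatibility of the lax structure maps with parallel transport exactly as formulated in \cite[Theorem~4.5]{McRae}, and care with branches of $\log z$ in \eqref{eq:Pz-inverse}.
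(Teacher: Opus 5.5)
Your proposal is correct and follows essentially the same route as the paper. You use \cite[Theorem~4.1]{McRae} for (i). For (ii) you take the lax functor of \cite[Theorem~4.5]{McRae} and get invertibility at $z=1$ from \cite[Corollary~4.8]{McRae}, then propagate it to all $z$ via parallel-transport compatibility, using that transport on $\Rep_{\mathrm{fd}}K$ is trivial (so $J_{P(z)}=J_{P(1)}\circ T_\gamma^{-1}$). The rest also matches: restriction of the structure by fullness, symmetry from the flip braiding, and ambient independence via the universal characterization of $J^{\mathcal C}_{P(z)}$ and $J^{\mathcal D}_{P(z)}$ as in \cite[Remarks~4.9--4.10]{McRae}.
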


\begin{cor}\label{cor:McRae-VOA-fusion}
Under the hypotheses of Theorem~\ref{thm:McRae-VOA-orbifold}(ii), for all $M_1,M_2,M_3\in\Rep_{\mathrm{fd}}K$ one has \(\mathcal V^{\Phi_K(M_3)}_{\Phi_K(M_1) \Phi_K(M_2)}\cong\Hom_K(M_1\otimes M_2,M_3).\) Hence tensor-product multiplicities in $\mathcal C_U$ are exactly the Clebsch--Gordan multiplicities for $K$. In particular, if \(M_1\otimes M_2\cong\bigoplus_i N_i^{\oplus a_i}\), then \(\Phi_K(M_1)\boxtimes\Phi_K(M_2)\cong\bigoplus_i\Phi_K(N_i)^{\oplus a_i}.\)
\end{cor}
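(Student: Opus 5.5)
The corollary should follow from Theorem~\ref{thm:McRae-VOA-orbifold}(ii) by translating the equivalence $\Phi_K$ into statements about intertwining operators. The plan has three steps: identify the space of intertwining operators with a Hom space out of a tensor product, transport that Hom space along $\Phi_K$, and then read off multiplicities by semisimplicity.

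First, by the universal property of the $P(1)$-tensor product in the ambient category $\mathcal C$, together with the correspondence \eqref{eq:Pz-inverse} between $P(z)$-intertwining maps and logarithmic intertwining operators (after fixing the branch $\log 1=0$), we get a linear isomorphism
\[
\mathcal V^{W_3}_{W_1W_2}\cong\Hom_{\mathcal C}(W_1\boxtimes W_2,W_3)
\]
for objects $W_i$ of $\mathcal C$. We apply this with $W_i=\Phi_K(M_i)$; each of these lies in $\mathcal C$ because $\mathcal C_U$ is a full subcategory of $\mathcal C$.

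Second, Theorem~\ref{thm:McRae-VOA-orbifold}(ii) supplies the isomorphism $J_{P(1);M_1,M_2}:\Phi_K(M_1)\boxtimes\Phi_K(M_2)\xrightarrow{\sim}\Phi_K(M_1\otimes M_2)$. Composing with it gives
\[
\Hom_{\mathcal C}(\Phi_K(M_1)\boxtimes\Phi_K(M_2),\Phi_K(M_3))\cong\Hom_{U^K}(\Phi_K(M_1\otimes M_2),\Phi_K(M_3)).
\]
Since $\Phi_K$ is an equivalence onto the full subcategory $\mathcal C_U$, hence fully faithful, the right-hand side is $\Hom_K(M_1\otimes M_2,M_3)$. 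This proves the first assertion. It is worth checking that the resulting isomorphism agrees with the map $J^{M_3}_{M_1,M_2}$ of \eqref{eq:McRae-intertwiner-map}, which is consistent with part (i); the statement, however, only asks for an isomorphism, so this check is not strictly needed.

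Third, for the multiplicity statement, suppose $M_1\otimes M_2\cong\bigoplus_iN_i^{\oplus a_i}$. The functor $\Phi_K$ is additive and preserves isomorphisms, so
\[
\Phi_K(M_1)\boxtimes\Phi_K(M_2)\cong\Phi_K(M_1\otimes M_2)\cong\bigoplus_i\Phi_K(N_i)^{\oplus a_i}.
\]
Taking $N_i$ irreducible, say $N_i=M_\chi$, the objects $\Phi_K(N_i)$ are the simple modules $U_{\chi^*}$ of $\mathcal C_U$, using $U_\chi\cong\Phi_K(M_\chi^*)$. Hence the tensor-product multiplicities in $\mathcal C_U$ are exactly the Clebsch--Gordan multiplicities $a_i$.

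The only delicate point is the first step: the identification of intertwining operators with tensor-product morphisms must be made in the same category $\mathcal C$ in which $\boxtimes$ is computed. As the paper stresses, the universal property is relative to the chosen category. Here this causes no difficulty, because each $\Phi_K(M_3)$ lies in $\mathcal C$ and the tensor product in part (ii) is the one computed in $\mathcal C$. By the independence statement at the end of Theorem~\ref{thm:McRae-VOA-orbifold}, the answer also does not depend on which admissible ambient category is chosen.
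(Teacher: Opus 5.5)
Your proof is correct, but it takes a different route from the paper's. The paper works directly with McRae's canonical map $J^{M_3}_{M_1,M_2}$ of \eqref{eq:McRae-intertwiner-map}. This map is always injective by \cite[\S3.2]{McRae}, and under the hypotheses of Theorem~\ref{thm:McRae-VOA-orbifold}(ii) it is surjective by \cite[\S4]{McRae}, so it is an isomorphism. The multiplicity statements are then read off from the symmetric tensor equivalence, much as in your third step.

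You instead derive the isomorphism formally from data already packaged in Theorem~\ref{thm:McRae-VOA-orbifold}(ii). The universal property of $\boxtimes_{P(1)}$ in $\mathcal C$, together with the correspondence \eqref{eq:Pz-inverse}, identifies the intertwining-operator space with a Hom space out of $\Phi_K(M_1)\boxtimes\Phi_K(M_2)$. The isomorphism $J_{P(1);M_1,M_2}$ and full faithfulness of $\Phi_K$ then finish the argument.

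Your route needs nothing beyond the stated tensor equivalence and the correspondence between $P(1)$-intertwining maps and logarithmic intertwining operators. It also makes clear that the fusion-rule statement is a purely formal consequence of the equivalence.

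The paper's route gives a stronger, explicit conclusion: the specific map $J^{M_3}_{M_1,M_2}$, which sends $f$ to the restriction of $\mathcal Y_f$ to $K$-fixed points, is itself the isomorphism. So every intertwining operator among the modules $\Phi_K(M_i)$ arises from $f\otimes Y_U$ in this concrete way. Your abstract isomorphism does not give that description without the compatibility check you mention. As you note, though, the corollary only asks for an isomorphism of vector spaces, so either argument suffices.
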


\begin{proof}
Under the hypotheses of Theorem~\ref{thm:McRae-VOA-orbifold}(ii), the canonical maps \(J^{M_3}_{M_1,M_2}: \Hom_K(M_1\otimes M_2,M_3) \longrightarrow \mathcal V^{\Phi_K(M_3)}_{\Phi_K(M_1)\,\Phi_K(M_2)}\) are surjective; see \cite[\S4]{McRae}. Since these maps are always injective by \cite[\S3.2]{McRae}, they are isomorphisms. This proves the first assertion. The statements about tensor-product multiplicities and decompositions follow immediately from the symmetric tensor equivalence in Theorem~\ref{thm:McRae-VOA-orbifold}(ii).
\end{proof}

\begin{rmk}\label{rmk:compact-complexification}
We use the standard notation $SO(3)$ and $O(2)$ for the compact real orthogonal groups, and $SO_3(\C)$ and $O_2(\C)$ for their complex algebraic counterparts. If $G$ is a complex reductive algebraic group and $K\subset G$ is a maximal compact subgroup, restriction gives the usual symmetric tensor equivalence $ \Res^G_K:\Rep G\overset{\sim}{\longrightarrow}\Rep_{\mathrm{fd}}K. $ Transport McRae's vertex tensor structure on $\Rep_{\mathrm{fd}}K$ along this equivalence. Then the composite of $\Res^G_K$ with the compact-orbifold functor is a symmetric vertex tensor equivalence from $\Rep G$. We shall use this convention without further comment for $ SO(3)\subset SO_3(\C)\cong\PSLtwo $ and $ O(2)\subset O_2(\C). $
\end{rmk}

\subsection{The \texorpdfstring{$C_1$}{C1}-cofinite Virasoro tensor category}
\label{subsec:CJORY-package}

We record here the Virasoro conventions and results from Creutzig--Jiang--Orosz Hunziker--Ridout--Yang \cite[Section~2.2]{CJORY} that will be used later. The Virasoro algebra is \(\Vir=\bigoplus_{n\in\Z}\C L_n\oplus\C\mathbf c,\) with \([L_m,L_n]=(m-n)L_{m+n}+\frac{m^3-m}{12}\delta_{m+n,0}\mathbf c,[\Vir,\mathbf c]=0.\) Put \(\Vir_{\ge0}:=\bigoplus_{n\ge0}\C L_n\oplus\C\mathbf c\). For $c,h\in\C$, let $\C\one_{c,h}$ be the one-dimensional $\Vir_{\ge0}$-module on which \(L_0\one_{c,h}=h\one_{c,h},\mathbf c\one_{c,h}=c\one_{c,h},L_n\one_{c,h}=0(n>0).\) The \emph{Virasoro Verma module} of central charge $c$ and highest weight $h$ is \(V(c,h):=U(\Vir)\otimes_{U(\Vir_{\ge0})}\C\one_{c,h}.\) It has a unique maximal proper submodule; its irreducible quotient is denoted by $L(c,h)$. More generally, a highest-weight Virasoro module of central charge $c$ and highest weight $h$ is a cyclic $\Vir$-module generated by a vector $v$ satisfying \(\mathbf c v=cv\), \(L_0v=hv\), and \(L_nv=0\) for all $n>0$. A nonzero $L_0$-eigenvector killed by all $L_n$, $n>0$, is called a \emph{singular vector}; in a highest-weight module it is \emph{nontrivial} if it is not proportional to the chosen cyclic highest-weight vector.

When $h=0$, $L_{-1}\one_{c,0}$ is singular and \(\Vir_c:=V(c,0)/\langle L_{-1}\one_{c,0}\rangle\) is the universal Virasoro vertex operator algebra of central charge $c$. For $t\in\C^\times$ and $r,s\in\Z_{\ge1}$, set \(c(t):=13-6t-6t^{-1},h_{r,s}(t):=\frac{r^2-1}{4}t-\frac{rs-1}{2}+\frac{s^2-1}{4}t^{-1}.\) For $c=c(t)$, define \(H_c:=\{h\in\C\mid V(c,h)\text{ is reducible}\}.\) By the Feigin--Fuchs reducibility criterion recalled in \cite[Section~2.2]{CJORY}, \(H_c=\{h_{r,s}(t)\mid r,s\ge1\}\). Let $\mathcal O_c^{\mathrm{fin}}$ denote the category of finite-length $\Vir_c$-modules whose irreducible composition factors are $L(c,h)$ with $h\in H_c$.

\begin{prop}
\label{prop:CJORY-package}
The following statements hold.
\begin{enumerate}[label=\textnormal{(\roman*)}]
\item Every highest-weight Virasoro module of central charge $c$ is an
$\Vir_c$-module. Moreover, \(\Vir_c=L(c,0)\) unless \(c=c_{p,q}=1-\frac{6(p-q)^2}{pq}\) for coprime integers $p,q\ge2$; see \cite[Section~2.2 and Corollary~2.2.5(4)]{CJORY}.

\item The irreducible module $L(c,h)$ is $C_1$-cofinite if and only if
$h\in H_c$; more generally, a highest-weight $\Vir_c$-module of highest weight $h$, necessarily a quotient of $V(c,h)$, is $C_1$-cofinite if and only if it is a proper quotient of $V(c,h)$; see \cite[Corollary~2.2.7]{CJORY}.

\item The category $\mathcal O_c^{\mathrm{fin}}$ coincides with the category of lower-bounded $C_1$-cofinite generalized $\Vir_c$-modules; see \cite[Theorem~3.1.4]{CJORY}. Since its objects have finite length with composition factors $L(c,h)$, they are grading restricted. Moreover, $\mathcal O_c^{\mathrm{fin}}$ is abelian.

\item The Huang--Lepowsky--Zhang construction carried out in
\cite[Sections~4.1--4.2]{CJORY} endows $\mathcal O_c^{\mathrm{fin}}$ with a vertex tensor category structure. In particular, $\mathcal O_c^{\mathrm{fin}}$ is closed under $P(z)$-tensor products for every $z\in\mathbb C^\times$, and the associated ordinary tensor category is braided; see \cite[Corollary~4.1.8 and Theorem~4.2.6]{CJORY}.
\end{enumerate}
\end{prop}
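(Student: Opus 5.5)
This proposition is a compilation of results from \cite{CJORY}, so the proof is mainly a matter of checking that each item is correctly extracted and that the paper's conventions agree with those of \cite{CJORY}. I would take the four items in order.

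For (i), let $M$ be a highest-weight module generated by $v$ with $\mathbf c v=cv$. I would show that $M$ carries a weak $\Vir_c$-module structure by checking the defining property of $\Vir_c$. This property is the universal one: a $\Vir$-module of central charge $c$ on which $L_n$ acts locally nilpotently for $n\gg0$ is a weak $\Vir_c$-module.

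\begin{itemize}
\item Lower truncation holds because $M$ is spanned by PBW monomials in the $L_{-n}$ applied to $v$, so its $L_0$-spectrum lies in $h+\Z_{\ge0}$.
\item The Jacobi identity is then the standard fact that the generating field $\sum L_nx^{-n-2}$ of a restricted $\Vir$-module is local with itself and generates a vertex algebra module for $\Vir_c$.
\end{itemize}

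For the simplicity of $\Vir_c$ away from the minimal central charges, I would cite the Feigin--Fuchs structure of $V(c,0)/\langle L_{-1}\one_{c,0}\rangle$. Its maximal submodule is nonzero exactly when $c=c_{p,q}$ with $p,q\ge2$ coprime. I would only check that the normalization $c(t)=13-6t-6t^{-1}$ with $t=p/q$ reproduces $c_{p,q}$.

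For (ii), I would use the identification of the $C_1$-quotient $W/C_1(W)$ for a highest-weight module. Since $C_1(W)$ contains $L_{-n}W$ for all $n\ge1$ (using $\omega_{-1}$ together with the $L(-1)$-derivative), one gets $W/C_1(W)=W/L_{-1}W+L_{-2}W+\cdots$. For the Verma module this quotient is $\C[L_0$-eigenvalues$]$-free of infinite dimension; for a proper quotient it becomes finite-dimensional. This is the content of \cite[Corollary~2.2.7]{CJORY}, and the irreducible case follows because $L(c,h)$ is a proper quotient of $V(c,h)$ precisely when $V(c,h)$ is reducible, i.e.\ when $h\in H_c$.

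For (iii), I would quote \cite[Theorem~3.1.4]{CJORY}. I would then note that finite length with composition factors $L(c,h)$ forces finite-dimensional generalized weight spaces and lower-boundedness, so the objects are grading restricted. The category is abelian because it is closed under submodules, quotients and finite direct sums inside generalized modules, each of which preserves finite length.

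For (iv), I would cite \cite[Corollary~4.1.8, Theorem~4.2.6]{CJORY}. The one point to verify is that their construction satisfies the hypotheses of Huang--Lepowsky--Zhang for all $z$, which is what makes the $P(z)$-tensor products of the previous subsection land in $\mathcal O_c^{\mathrm{fin}}$.

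I expect the main obstacle to be the convention matching in (ii) and (iii), not any new argument. The $C_1$ notions and the term ``lower bounded'' must coincide with those of \cite{CJORY}; once this is confirmed, each item is a direct citation.
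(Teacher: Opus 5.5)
Like the paper, you rest each item on a direct citation of \cite[Corollary~2.2.7, Theorem~3.1.4, Corollary~4.1.8, Theorem~4.2.6]{CJORY}, and that is sufficient. However, the explanatory argument you give for (ii) is wrong, and as written it would prove something false.

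\textbf{The error in (ii).} The $L(-1)$-derivative property gives $(L_{-1}u)_{-1}=u_{-2}$. Starting from $\omega$ this yields $\omega_{-1-k}=L_{-2-k}$ for $k\ge0$, but never $L_{-1}=\omega_0$. In fact $L_{-1}W\not\subseteq C_1(W)$. Since $(\Vir_c)_1=0$, every homogeneous $u\in V_+$ has weight at least $2$, so $u_{-1}$ raises weight by at least $2$. Hence for a highest-weight module $W$ of highest weight $h$, $C_1(W)$ lies in weights $\ge h+2$, and $L_{-1}v\notin C_1(W)$ whenever $L_{-1}v\ne0$. With your formula $W/C_1(W)=W/\sum_{n\ge1}L_{-n}W$, the $C_1$-quotient of any highest-weight module would be at most one-dimensional, namely the image of $\C v$. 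Every Verma module would then be $C_1$-cofinite, contradicting the ``only if'' half of (ii) and the infinite-dimensionality you assert in the next sentence.

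\textbf{The correct version.} The identification is $C_1(W)=\sum_{n\ge2}L_{-n}W$. Ordering PBW monomials so that $L_{-1}$ acts first, $V(c,h)/C_1(V(c,h))$ has basis the images of $L_{-1}^kv$, $k\ge0$. This quotient is infinite-dimensional, which gives the ``only if'' direction.

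The ``if'' direction needs an input your sketch omits.
\begin{itemize}
\item A nonzero submodule $N\subset V(c,h)$ contains a singular vector: take a nonzero vector of minimal level.
\item Every singular vector of $V(c,h)$, say of level $d$, has a nonzero coefficient of the pure power $L_{-1}^{d}v$ in the PBW basis.
\item $C_1(V(c,h))$ is $L_{-1}$-stable, since $[L_{-1},L_{-n}]=(n-1)L_{-n-1}$.
\end{itemize}
Together these give $L_{-1}^kv\in C_1(V(c,h))+N$ for all $k\ge d$. Hence $W/C_1(W)\cong V(c,h)/(C_1(V(c,h))+N)$ is finite-dimensional. Either drop the sketch and simply cite \cite[Corollary~2.2.7]{CJORY}, or replace it with this argument.

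A smaller point in (i): the relevant hypothesis is restrictedness ($L_nw=0$ for $n\gg0$, with the bound depending on $w$), not local nilpotence. Your lower-truncation bullet is what actually verifies restrictedness.
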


At central charge one, set \(\mathcal O_1:=\mathcal O_1^{\mathrm{fin}}\). Since $c_{p,q}=1$ forces $p=q$, whereas the parameters $p,q\ge2$ in Proposition~\ref{prop:CJORY-package}(i) are required to be coprime, central charge one is not exceptional. Hence \(\Vir_1=L(1,0)\). For every vertex operator algebra $V$ of central charge one, the universal property of \(\Vir_1\) gives a canonical homomorphism \(\Vir_1\longrightarrow V,\one_{1,0}\longmapsto\one,\omega\longmapsto\omega_V\), whose image is the Virasoro subVOA generated by \(\omega_V\). Since \(\Vir_1=L(1,0)\) is simple and this homomorphism is nonzero, it is injective. We therefore identify \(\operatorname{Vir}(\omega_V)=L(1,0)\subset V\) whenever \(V\) has central charge one. Thus \(\Ovir\) denotes the vertex tensor category of grading-restricted $C_1$-cofinite generalized $L(1,0)$-modules. Also, in the standard parametrization \(h_{r,s}(1)=\frac{(r-s)^2}{4},\) so for every $m\ge0$, \(m^2=h_{2m+1,1}(1)\in H_1.\) Hence
\begin{equation}\label{eq:square-C1-cofinite}
L(1,m^2)\in\Ovir\qquad(m\ge0),
\end{equation}
and every $L(1,m^2)$ is $C_1$-cofinite.

For later use, put \(X_m:=L(1,m^2)(m\ge0).\) We shall also use the following standard structure of the degenerate central-charge-one Verma modules. For $N\in\Z_{\ge0}$, the weight-$h+N$ subspace of $V(c,h)$ is called its \emph{level-$N$} subspace. A bilinear form $\langle\ ,\ \rangle$ on a Virasoro module is \emph{contravariant} if \(\langle L(n)x,y\rangle=\langle x,L(-n)y\rangle\ (n\in\Z).\) On $V(c,h)$ there is a unique contravariant bilinear form $\langle\ ,\ \rangle_{\rm Sh}$ normalized by $\langle\one_{c,h},\one_{c,h}\rangle_{\rm Sh}=1$; this is the normalized \emph{Shapovalov form}. Its radical is the unique maximal proper submodule of $V(c,h)$. Consequently, every contravariant bilinear form on $V(c,h)$ that is nonzero on the highest-weight line is a nonzero scalar multiple of $\langle\ ,\ \rangle_{\rm Sh}$.

\begin{prop}\label{prop:c1-verma}
At central charge one, $V(1,h)$ is reducible if and only if \(h=\frac{n^2}{4}\) for some \(n\in\Z_{\ge0}.\) Moreover, for every $m\ge0$, the Verma module $V(1,m^2)$ has, up to scalar, a unique singular vector at level $2m+1$. This is its first nontrivial singular vector, and it generates the unique maximal proper submodule, which is isomorphic to $V(1,(m+1)^2)$. Equivalently, there is an exact sequence \(0\longrightarrow V(1,(m+1)^2)\longrightarrow V(1,m^2)\longrightarrow X_m\longrightarrow0\). The embedded Verma module is also the radical of the normalized Shapovalov form on $V(1,m^2)$.
\end{prop}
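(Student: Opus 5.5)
The plan is to read everything off the Kac determinant at $c=1$ together with the Feigin--Fuchs description of $H_c$ recalled in Proposition~\ref{prop:CJORY-package}.

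\textbf{Step 1: reducibility.} With $c(t)=13-6t-6t^{-1}$, the value $c=1$ corresponds to $t=1$. Then $h_{r,s}(1)=\frac{(r-s)^2}{4}$, so
\[
H_1=\Bigl\{\tfrac{(r-s)^2}{4}:r,s\ge1\Bigr\}=\Bigl\{\tfrac{n^2}{4}:n\in\Z_{\ge0}\Bigr\}.
\]
This gives the first assertion.

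\textbf{Step 2: singular vectors in $V(1,m^2)$.} Fix $h=m^2=\frac{(2m)^2}{4}$. The Kac determinant at level $N$ is, up to a nonzero constant,
\[
\prod_{rs\le N}\bigl(h-h_{r,s}(1)\bigr)^{p(N-rs)}.
\]
The factor $h-h_{r,s}(1)$ vanishes exactly when $|r-s|=2m$. The smallest $rs$ with this property is $rs=2m+1$, attained at $(r,s)=(2m+1,1)$ and $(1,2m+1)$. Hence the Shapovalov form is nondegenerate below level $2m+1$ and degenerate at level $2m+1$.

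At level $2m+1$ the determinant vanishes to order $2$, since both pairs contribute with $p(0)=1$. So I need a separate argument that the radical there is one-dimensional. I would take this from Feigin--Fuchs: at $c=1$ the Verma modules with $h=m^2$ fall into the "braid/chain" case, and the submodule structure is a chain
\[
V(1,m^2)\supset V(1,(m+1)^2)\supset V(1,(m+2)^2)\supset\cdots.
\]
Here $h_{2m+1,1}(1)=m^2$ and $m^2+2m+1=(m+1)^2$.

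\textbf{Step 3: the maximal submodule.} Concretely, any singular vector $v$ at level $2m+1$ generates a highest-weight module of weight $(m+1)^2$. Verma modules embed (since $U(\Vir_{<0})$ has no zero divisors), so this submodule is isomorphic to $V(1,(m+1)^2)$. Its character is $q^{(m+1)^2}/\eta$-type.

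By the Jantzen filtration, or equivalently by the known character
\[
\ch L(1,m^2)=\frac{q^{m^2}-q^{(m+1)^2}}{\prod_{n\ge1}(1-q^n)},
\]
the radical has exactly the character of $V(1,(m+1)^2)$. This is where I would invoke the standard result rather than recompute. The radical of the Shapovalov form is the maximal proper submodule and contains $U(\Vir)v$. Comparing characters, the two coincide.

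It follows that the level-$(2m+1)$ radical is one-dimensional, which gives uniqueness of the singular vector up to scalar. It is the first nontrivial singular vector because a singular vector at a lower level would lie in the radical, which vanishes below level $2m+1$. The exact sequence then follows immediately.

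\textbf{Main obstacle.} The hard step is justifying the character of $L(1,m^2)$, equivalently showing that the radical is exactly one Verma module rather than larger. The determinant's double zero at level $2m+1$ alone does not settle this. I would cite Feigin--Fuchs (or Kac--Raina's $c=1$ analysis) for the embedding structure. Alternatively, I would realize $L(1,m^2)$ inside the Heisenberg or $V_{A_1}$ decomposition, whose $\mathfrak{sl}_2\times\Vir$ branching gives its character directly.
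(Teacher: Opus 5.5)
Your argument is correct, but it is organized differently from the paper's. The paper's proof is essentially a citation. After the same computation $h_{r,s}(1)=(r-s)^2/4$ for reducibility, it imports the full embedding statement (first singular vector at level $2m+1$, maximal submodule $\cong V(1,(m+1)^2)$) from \cite[Proposition~2.1]{Milas} and the Feigin--Fuchs description in \cite{KR}. It then reads off the Shapovalov-radical assertion from the fact that the radical is the maximal proper submodule.

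You instead derive the submodule structure from two inputs. The first is the Kac determinant, which shows the form is nondegenerate below level $2m+1$ (because $|r-s|=2m$ forces $rs\ge 2m+1$) and degenerate at that level. The second is the irreducible graded character $\grch L(1,m^2)=(q^{m^2}-q^{(m+1)^2})\prod_{n\ge1}(1-q^n)^{-1}$. After that, the equality of the radical with $U(\Vir)v\cong V(1,(m+1)^2)$ is a character comparison. Uniqueness of the singular vector, its being the first one, and the exact sequence then follow formally.

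Your route isolates exactly one nonformal input. You are right that the double zero of the determinant does not bound the radical, and the Jantzen sum formula alone gives only $\sum_i\grch M^i$, not $\grch M^1$. That input can even be produced inside the paper without circularity. Use the decomposition $V_{A_1}\cong\bigoplus_{m\ge0}W_{2m}\otimes L(1,m^2)$ of Corollary~\ref{cor:McRae-A1-SO3}, which does not use the present proposition. There the charge-$j$ sector $M(1,j\alpha)$ is the weight-$2j$ space for the Cartan subalgebra of $\mathfrak{sl}_2$ and has graded character $q^{j^2}\prod_{n\ge1}(1-q^n)^{-1}$. Differencing consecutive charges gives the multiplicity space of $W_{2m}$ exactly the required character. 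The paper's route is shorter but treats the whole embedding theorem as a black box.

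Two small points to tighten:
\begin{enumerate}
\item State explicitly why a singular vector exists at level $2m+1$. Any nonzero radical vector there is singular, since positive modes send it into the radical at lower levels, which is zero.
\item For $m=0$ the pairs $(2m+1,1)$ and $(1,2m+1)$ coincide, so the zero at level one is simple rather than double.
\end{enumerate}
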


\begin{proof}
The reducibility criterion follows from $H_1=\{h_{r,s}(1):r,s\ge1\}$ and $h_{r,s}(1)=(r-s)^2/4$, recalled above from the Feigin--Fuchs criterion. The more precise embedding statement is \cite[Proposition~2.1]{Milas}, with the parameter there specialized to the even integer $2m$; see also the Feigin--Fuchs embedding description in \cite{KR}. The singular level is $2m+1$, so its absolute conformal weight is \((m+1)^2.\) The final assertion follows from the Shapovalov-form convention above and the description of the unique maximal proper submodule.
\end{proof}

\begin{cor}\label{cor:c1-verma-low-weight}
Let $m\ge1$, and let $W$ be a nonzero highest-weight quotient of $V(1,m^2)$ whose highest-weight vector is nonzero. Then the canonical map \(V(1,m^2)\twoheadrightarrow W\) is injective in every absolute conformal weight $n<(m+1)^2$. Consequently, \(\dim W_n=\dim(X_m)_n\bigl(n<(m+1)^2\bigr).\)
\end{cor}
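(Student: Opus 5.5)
The plan is to compare $W$ with the Verma module through the kernel of the canonical surjection. Let $\pi:V(1,m^2)\twoheadrightarrow W$ be the canonical map sending $\one_{1,m^2}$ to the chosen highest-weight vector. Its kernel $K$ is a submodule of $V(1,m^2)$. Since the highest-weight vector of $W$ is nonzero, $K$ does not contain $\one_{1,m^2}$. The level-zero subspace of $V(1,m^2)$ is one-dimensional, so $K$ is a proper submodule. By Proposition~\ref{prop:c1-verma}, every proper submodule lies in the unique maximal proper submodule, which is the embedded copy of $V(1,(m+1)^2)$. Hence $K\subseteq V(1,(m+1)^2)$.

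This embedded Verma module is generated by a singular vector of conformal weight $(m+1)^2$. It is therefore graded by absolute conformal weights $n\ge(m+1)^2$, and its weight spaces vanish for $n<(m+1)^2$. It follows that $K_n=0$ for $n<(m+1)^2$, which means $\pi$ is injective in those weights. Since $\pi$ is surjective and graded, we get $\dim W_n=\dim V(1,m^2)_n$ for $n<(m+1)^2$.

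Applying the same argument to $W=X_m$ gives $\dim(X_m)_n=\dim V(1,m^2)_n$ in that range. This is also immediate from the exact sequence in Proposition~\ref{prop:c1-verma}. Comparing the two equalities yields $\dim W_n=\dim(X_m)_n$ for $n<(m+1)^2$.

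The only point needing care is the claim that every proper submodule of a Verma module is contained in the unique maximal proper submodule. This holds because a proper submodule is $L_0$-graded and cannot contain the one-dimensional top weight space, since that space generates the whole module. The sum of all proper submodules is therefore proper, and it is the maximal submodule. No real obstacle is expected. The hypothesis $m\ge1$ plays no essential role; the argument also works for $m=0$.
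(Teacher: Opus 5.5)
Your proof is correct, but it takes a different route from the paper's. You use the structural half of Proposition~\ref{prop:c1-verma}. Since $\ker\pi$ is proper, it lies in the largest proper submodule. That submodule is the image of $V(1,(m+1)^2)$, generated by a singular vector of weight $(m+1)^2$, so it vanishes below that weight.

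The paper uses only the singular-vector half of the proposition. It assumes $\ker\pi$ is nonzero below weight $(m+1)^2$ and takes a nonzero homogeneous kernel vector $x$ of least positive level $N$. Each $L(n)x$ with $n>0$ lands in the kernel at a strictly smaller level, or at level zero, where the kernel vanishes because the highest-weight vector survives. Hence $x$ is a nontrivial singular vector at level $N<2m+1$, contradicting Proposition~\ref{prop:c1-verma}.

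The two inputs are interchangeable here. The minimal-level argument is exactly what shows that having no singular vectors in levels $1,\dots,2m$ forces every proper submodule to vanish below weight $(m+1)^2$. Your version is shorter once the description of the maximal submodule is taken as given. The paper's version needs only the weaker singular-vector statement, and the same minimal-kernel device recurs later in Lemmas~\ref{lem:first-singular} and~\ref{lem:k4-saturation8}, including for maps out of the direct sum $V(1,4)\otimes P_4(V)$.

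Your remark that the hypothesis $m\ge1$ is not needed is also correct.
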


\begin{proof}
Let $ \pi:V(1,m^2)\twoheadrightarrow W $ be the quotient map. Suppose, for contradiction, that \(\ker\pi\) is nonzero in some conformal weight strictly below \((m+1)^2\). Since the highest-weight vector survives in \(W\), the level-zero part of \(\ker\pi\) is zero. Choose a nonzero homogeneous vector $ x\in\ker\pi $ of minimal positive level \(N\). Since \(x\) has conformal weight \(m^2+N<(m+1)^2\), we have $ 0<N<2m+1. $ We claim that \(x\) is singular. Since \(\ker\pi\) is a Virasoro submodule, \(L(n)x\in\ker\pi\) for every \(n>0\). If \(0<n<N\), then \(L(n)x\) has level \(N-n\), which is positive and strictly smaller than \(N\); hence \(L(n)x=0\) by the minimality of \(N\). If \(n=N\), then \(L(N)x\) lies in the level-zero part of \(\ker\pi\), which is zero. Finally, if \(n>N\), then $ L(n)x\in V(1,m^2)_{N-n}=0. $ Thus $L(n)x=0(n>0),$ so \(x\) is a nontrivial singular vector of level \(N<2m+1\). This contradicts Proposition~\ref{prop:c1-verma}, according to which the first nontrivial singular vector in \(V(1,m^2)\) occurs at level \(2m+1\), equivalently at conformal weight $ (m+1)^2. $ Therefore \(\pi\) is injective in every conformal weight strictly below \((m+1)^2\). Applying this conclusion to the canonical quotient $ V(1,m^2)\twoheadrightarrow X_m=L(1,m^2) $ shows that both \(W\) and \(X_m\) have the same graded dimensions as \(V(1,m^2)\) below conformal weight \((m+1)^2\). Hence $ \dim W_n=\dim (X_m)_n$ for $n<(m+1)^2. $
\end{proof}

When we say below that a compact-orbifold functor is a \emph{symmetric vertex tensor equivalence}, we use the definition fixed above: it is a vertex tensor equivalence whose induced braided tensor equivalence is symmetric. In particular, this records the full $P(z)$-tensor data and not merely an isomorphism of fusion rings.

\begin{cor}
\label{cor:McRae-A1-SO3}
Let \(Q=\Z\alpha\) with \(\langle\alpha,\alpha\rangle=2\), so that \(V_Q=V_{A_1}\), and let \(K=SO(3)\) be the standard compact automorphism group obtained from the \(SU(2)\)-action on the \(\mathfrak{sl}_2\) weight-lattice abelian intertwining algebra. Then \(V_{A_1}^{K}=L(1,0).\) If \(W_{2m}\) denotes the irreducible \(SO(3)\)-module of dimension \(2m+1\), then \(V_{A_1}\cong\bigoplus_{m\ge0}W_{2m}\otimes L(1,m^2)\) as a \(K\times L(1,0)\)-module. The full semisimple category \(\mathcal C_{V_{A_1}}=\left\{\text{finite direct sums of }L(1,m^2),\ m\ge0\right\}\) carries the compact-orbifold vertex tensor category structure, and its underlying braided tensor category is symmetric. Under the compact-orbifold functor there is a symmetric tensor equivalence \(\Rep_{\mathrm{fd}}SO(3) \overset{\sim}{\longrightarrow} \mathcal C_{V_{A_1}}, W_{2m}\longmapsto L(1,m^2)\).
\end{cor}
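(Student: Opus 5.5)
The plan is to apply Theorem~\ref{thm:McRae-VOA-orbifold} with $U=V_{A_1}$ and $K=SO(3)$, and then identify the multiplicity spaces with the modules $X_m$.

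\emph{Step 1: the action and the fixed points.} The compact group $SU(2)$ acts on $V_{A_1}\cong L_{\widehat{\mathfrak{sl}}_2}(1,0)$ through inner automorphisms $\exp(\operatorname{ad}x_0)$, $x\in\mathfrak{sl}_2=(V_{A_1})_1$. These are generated by zero modes of weight-one vectors, so they preserve $\omega$ and every $(V_{A_1})_n$. The center $-1$ acts trivially on the VOA, because it acts only on odd-charge sectors of the weight lattice intertwining algebra and $V_{A_1}$ is the even-charge part. Hence the action factors through a continuous action of $SO(3)$. Next I would use the Frenkel--Kac/GKO description, or equivalently the classical fact that the coset of $\mathfrak{sl}_2$ at level one inside $V_{A_1}$ is the Virasoro VOA with $c=1$ generated by $\omega$. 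From it one gets $V_{A_1}^{SO(3)}=V_{A_1}^{\mathfrak{sl}_2}=\operatorname{Vir}(\omega)=L(1,0)$, where the last identification uses $\Vir_1=L(1,0)$ from Proposition~\ref{prop:CJORY-package}(i).

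\emph{Step 2: the multiplicity spaces.} By \eqref{eq:McRae-SW-VOA}, $V_{A_1}\cong\bigoplus_m W_{2m}\otimes U_m$ with the $U_m$ irreducible and pairwise nonisomorphic $L(1,0)$-modules. I would compute characters. The character of $V_{A_1}$ refined by the $\mathfrak{h}$-weight is $\sum_{n}q^{n^2}z^{2n}/\eta(q)$ (up to $q^{-1/24}$). A standard manipulation then gives
\[
\ch V_{A_1}=\sum_{m\ge0}\ch W_{2m}\cdot\bigl(q^{m^2}-q^{(m+1)^2}\bigr)/\prod_{n\ge1}(1-q^n).
\]
By Proposition~\ref{prop:c1-verma}, the factor $\bigl(q^{m^2}-q^{(m+1)^2}\bigr)/\prod(1-q^n)$ is exactly $\ch X_m$. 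Matching coefficients of the $SO(3)$-characters gives $\ch U_m=\ch X_m$. An irreducible lower-bounded $L(1,0)$-module with this character has lowest weight $m^2$ with a one-dimensional lowest space, so it is a highest-weight irreducible module, hence isomorphic to $L(1,m^2)=X_m$.

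\emph{Step 3: the tensor structure.} The $X_m$ lie in $\Ovir$ by \eqref{eq:square-C1-cofinite}, and $\Ovir$ is a vertex tensor category by Proposition~\ref{prop:CJORY-package}(iv). Thus Theorem~\ref{thm:McRae-VOA-orbifold}(ii) applies with $\mathcal C=\Ovir$. It shows that $\mathcal C_{V_{A_1}}$ is a full vertex tensor subcategory and that $\Phi_K$ is a symmetric vertex tensor equivalence. Moreover $\Phi_K(W_{2m}^*)\cong U_m\cong X_m$, and $W_{2m}^*\cong W_{2m}$ since $SO(3)$-modules are self-dual, which gives $W_{2m}\mapsto L(1,m^2)$. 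Remark~\ref{rmk:compact-complexification} transfers the equivalence to $\Rep\PSLtwo$ if desired.

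The main obstacle is Step 1: justifying $V_{A_1}^{SO(3)}=L(1,0)$, and checking that the action really factors through $SO(3)$ continuously. I would cite the known coset/Frenkel--Kac decomposition and keep the character identity of Step 2 as the consistency check.
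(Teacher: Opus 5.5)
Your proposal is correct, and Step~3 is exactly the paper's argument: apply Theorem~\ref{thm:McRae-VOA-orbifold}(ii) with $U=V_{A_1}$, $K=SO(3)$ and ambient category $\Ovir$, using \eqref{eq:square-C1-cofinite}.

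\textbf{Where you differ.} For the fixed points and the multiplicity spaces, the paper computes nothing. It cites McRae's Example~4.11, which works in the weight-lattice abelian intertwining algebra $V_P$. That example identifies $V_Q^{SO(3)}=L(1,0)$ and the multiplicity space of the $(n+1)$-dimensional $SU(2)$-module with $L(1,n^2/4)$; the paper then keeps only even $n$. You instead re-derive this from the telescoping identity $\sum_{n\in\Z}q^{n^2}z^{2n}=\sum_{m\ge0}\chi_{2m}(z)\bigl(q^{m^2}-q^{(m+1)^2}\bigr)$ and the Verma-module structure in Proposition~\ref{prop:c1-verma}. This is self-contained relative to what the paper recalls. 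It also makes explicit the step $\Phi_K(W_{2m}^*)\cong U_m$ together with self-duality, which the paper leaves implicit.

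\textbf{The fixed points follow from Step~2.} The point you flag as the main obstacle is already settled there, so no Frenkel--Kac or coset citation is needed. Since $V_{A_1}^K$ is the trivial-isotypic component, your character identity gives $\ch V_{A_1}^K=\ch X_0=\ch L(1,0)$. Because $K$ fixes $\omega$, $\Vir(\omega)\cong L(1,0)$ lies inside $V_{A_1}^K$. Both have finite-dimensional graded pieces, so $V_{A_1}^K=L(1,0)$.

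\textbf{Order the argument this way.} Do this fixed-point computation first. McRae's Schur--Weyl decomposition only asserts that the $U_m$ are irreducible as $U^K$-modules. Once $U^K=L(1,0)$ is known, your lowest-weight argument giving $U_m\cong X_m$ goes through as written.

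\textbf{A wording correction.} The phrase ``coset of $\mathfrak{sl}_2$ at level one'' is misleading: the commutant of the level-one affine subalgebra in $V_{A_1}$ is just $\C\one$. What you need, and what your formula $V_{A_1}^{\mathfrak{sl}_2}$ expresses, is the subspace annihilated by the horizontal zero modes $x_0$, $x\in(V_{A_1})_1$.
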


\begin{proof}
McRae's central-charge-one example identifies \(V_Q^{SO(3)}=L(1,0)\) and, using the weight-lattice abelian intertwining algebra \(V_P\), identifies the \((n+1)\)-dimensional irreducible \(SU(2)\)-module \(V(n)\) with the multiplicity module \(L(1,n^2/4)\); see \cite[Example~4.11]{McRae}. The irreducible \(SU(2)\)-representations occurring in \(V_Q\) are precisely those of even highest weight \(n=2m\), equivalently the odd-dimensional ones. Hence \(V_Q\cong\bigoplus_{m\ge0}W_{2m}\otimes L(1,m^2)\) as an \(SO(3)\times L(1,0)\)-module.

By \eqref{eq:square-C1-cofinite}, every multiplicity module \(L(1,m^2)\) belongs to the ambient vertex tensor category \(\Ovir\). We may therefore apply Theorem~\ref{thm:McRae-VOA-orbifold}(ii) with \(U=V_Q\), \(K=SO(3)\), and \(\mathcal C=\Ovir\). It follows that \(\mathcal C_{V_{A_1}}\) is a full vertex tensor subcategory of \(\Ovir\) and that the compact-orbifold functor is a symmetric vertex tensor equivalence \(\Rep_{\mathrm{fd}}SO(3) \overset{\sim}{\longrightarrow} \mathcal C_{V_{A_1}}, W_{2m}\longmapsto L(1,m^2)\). In particular, the induced equivalence of the underlying braided tensor categories is symmetric.
\end{proof}

\subsection{The \texorpdfstring{$C_1$}{C1}-cofinite \texorpdfstring{$M(1)^+$}{M(1)+} tensor category}
\label{subsec:ALY-package}

We record the rank-one Heisenberg-orbifold notation and the results of Adamovi\'c--Lin--Yang that will be used later; compare \cite[Section~2.1]{ALY}. Let $\mathfrak h=\C h$ be a one-dimensional complex vector space equipped with a nondegenerate symmetric bilinear form $\langle\,\cdot\,,\,\cdot\,\rangle$, normalized by $\langle h,h\rangle=1$. Put
\[
\widehat{\mathfrak h}
=
\mathfrak h\otimes\C[t,t^{-1}]\oplus\C K,
\qquad
[a(m),b(n)]
=
m\langle a,b\rangle\delta_{m+n,0}K,
\]
where $a(n):=a\otimes t^n$. We identify $\mathfrak h$ with $\mathfrak h^*$ by the bilinear form. For $\lambda\in\mathfrak h$, let $\C e^\lambda$ be the one-dimensional $\mathfrak h\otimes\C[t]\oplus\C K$-module on which \(a(n)e^\lambda=0\) for \(n>0\), \(a(0)e^\lambda=\langle a,\lambda\rangle e^\lambda\), and \(Ke^\lambda=e^\lambda\). For $\lambda\ne0$, define the Heisenberg Fock module \(M(1,\lambda):= U(\widehat{\mathfrak h}) \otimes_{U(\mathfrak h\otimes\C[t]\oplus\C K)} \C e^\lambda\). For zero charge, the same induced construction is the rank-one Heisenberg vertex operator algebra, which we denote by $M(1)$; its vacuum is $\one$ and its conformal vector is $\omega=\frac12h(-1)^2\one$. For $\lambda\ne0$, the vector $e^\lambda$ is the lowest conformal-weight vector of $M(1,\lambda)$, of conformal weight $\langle\lambda,\lambda\rangle/2$.

The isometry $-\id_{\mathfrak h}$ lifts to an involutive automorphism $\theta$ of $M(1)$, explicitly
\[
\theta\bigl(a_1(-n_1)\cdots a_r(-n_r)\one\bigr)=(-1)^r a_1(-n_1)\cdots a_r(-n_r)\one
\]
for $a_i\in\mathfrak h$ and $n_i\in\Z_{>0}$. We write \(M(1)^+:=M(1)^{\langle\theta\rangle}\) and \(M(1)^-:=\{v\in M(1)\mid\theta v=-v\}\). Thus $M(1)^+$ is the $\mathbb Z_2$-orbifold vertex operator algebra and $M(1)^-$ is its $(-1)$-eigenspace. For $\lambda\ne0$, restriction of the Fock module $M(1,\lambda)$ to $M(1)^+$ is irreducible, and $M(1,\lambda)\cong M(1,-\lambda)$ as $M(1)^+$-modules.

For completeness, we also fix the twisted-module notation used in the classification of irreducible $M(1)^+$-modules. Let \(\widehat{\mathfrak h}[\theta]=\mathfrak h\otimes t^{1/2}\C[t,t^{-1}]\oplus\C K\), with the same Heisenberg commutation relation, now for $m,n\in\frac12+\Z$, and put $\widehat{\mathfrak h}[\theta]_{\ge0} :=\mathfrak h\otimes t^{1/2}\C[t]\oplus\C K$. Let $\C\one_{\rm tw}$ be the one-dimensional $\widehat{\mathfrak h}[\theta]_{\ge0}$-module on which all positive half-integer modes act as zero and $K$ acts as the identity. The canonical $\theta$-twisted Heisenberg module is
\[
M(1)(\theta)
:=
U(\widehat{\mathfrak h}[\theta])
\otimes_{U(\widehat{\mathfrak h}[\theta]_{\ge0})}
\C\one_{\rm tw}
\cong
S\!\left(\mathfrak h\otimes t^{-1/2}\C[t^{-1}]\right)
\]
as a vector space. The parity operator
\[
\theta\bigl(a_1(-r_1)\cdots a_s(-r_s)\one_{\rm tw}\bigr)
=
(-1)^s a_1(-r_1)\cdots a_s(-r_s)\one_{\rm tw},
\qquad r_i\in\tfrac12+\Z_{\ge0},
\]
commutes with the $M(1)^+$-action; its $\pm1$ eigenspaces are denoted by $M(1)(\theta)^\pm$.

The irreducible grading-restricted $M(1)^+$-modules are precisely \(M(1)^\pm\), \(M(1,\lambda)\) for \(\lambda\in\mathfrak h\setminus\{0\}\), and \(M(1)(\theta)^\pm\); see the original rank-one classification \cite{DongNagatomo} and the summary in \cite[Section~2.1]{ALY}. Their lowest conformal weights are, respectively, \(0\), \(1\), \(\frac{\langle\lambda,\lambda\rangle}{2}\), \(\frac1{16}\), and \(\frac9{16}\). In particular, the only non-Fock irreducible modules have lowest conformal weights $0$, $1$, $1/16$, and $9/16$.

We finally make explicit the highest-weight convention used below. For a grading-restricted generalized $M(1)^+$-module $M$, its Zhu top space is
\[
\Omega(M):=\left\{w\in M\middle|a_nw=0\text{ for every homogeneous }a\in M(1)^+\text{ and }n>\wt(a)-1\right\}.
\] By Zhu's top-space theorem \cite{Zhu}, the Zhu algebra $A(M(1)^+)$ acts on $\Omega(M)$ by zero modes $o(a):=a_{\wt(a)-1}$. Following the usage in \cite[Section~3.4]{ALY}, we call $M$ a \emph{highest-weight $M(1)^+$-module} if it is generated by a nonzero vector $v\in\Omega(M)$ such that $\C v$ is a one-dimensional irreducible $A(M(1)^+)$-module. We call $v$ a \emph{highest-weight vector}; its $L(0)$-eigenvalue is the conformal weight of the highest weight.

\begin{prop}\label{prop:ALY-package}
Let $\mathcal C_1(M(1)^+)$ be the category of grading-restricted $C_1$-cofinite generalized $M(1)^+$-modules. Then:
\begin{enumerate}[label=\textnormal{(\roman*)}]
\item The irreducible $C_1$-cofinite $M(1)^+$-modules are precisely
\(M(1)^\pm\text{ and }M(1,\lambda)\ (\lambda\ne0)\). In particular, every module of these three types is an object of $\mathcal C_1(M(1)^+)$; see \cite[Corollary~3.4 and Theorems~3.5--3.6]{ALY}.

\item The category $\mathcal C_1(M(1)^+)$ is exactly the category of
finite-length $M(1)^+$-modules whose simple composition factors are of the types in \textnormal{(i)}; see \cite[Theorem~4.2]{ALY}. Moreover, it carries a vertex tensor category structure and the induced braided tensor category structure; see \cite[Theorem~4.3]{ALY}.

\item Let $M$ be a highest-weight $M(1)^+$-module and let $\Delta$ be
the conformal weight of a highest-weight vector. If $\Delta\ne0$, then $M$ is irreducible. If $\Delta=0$, then either $M\cong M(1)^+$ or $M$ has length two and fits into a non-split exact sequence \(0\longrightarrow M(1)^-\longrightarrow M\longrightarrow M(1)^+\longrightarrow0.\) This is \cite[Theorem~3.8]{ALY}.

\item For nonzero charges $\lambda,\mu\in\mathfrak h$ one has
\begin{align*}
M(1)^-\boxtimes M(1)^-&\cong M(1)^+,\\
M(1)^-\boxtimes M(1,\lambda)&\cong M(1,\lambda),\\
M(1,\lambda)\boxtimes M(1,\mu)
&\cong M(1,\lambda+\mu)\oplus M(1,\lambda-\mu),
\end{align*}
where, when $\lambda+\mu=0$ or $\lambda-\mu=0$, the corresponding zero-charge summand is the Heisenberg VOA $M(1)$, which restricts to $M(1)^+\oplus M(1)^-$ as an $M(1)^+$-module. These are the fusion-product decompositions of \cite[Theorem~4.6]{ALY}.
\end{enumerate}
\end{prop}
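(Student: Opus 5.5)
This proposition is a compilation of results of Adamovi\'c--Lin--Yang, so the plan is to match each item with the cited statement and to check that our conventions agree with those of \cite{ALY}. There are three points to check: grading-restricted generalized modules, $C_1$-cofiniteness as defined in Section~\ref{subsec:module-conventions}, and highest-weight modules via the Zhu top space $\Omega(M)$. I would first confirm that the definition of $C_1(W)$ recalled from \cite{CJORY} is the one used in \cite[Section~2]{ALY}; both use $V_+=\bigoplus_{n>0}V_n$ and the $(-1)$-modes. I would also confirm that the list of irreducible grading-restricted $M(1)^+$-modules recalled from \cite{DongNagatomo} is the one used there.

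For (i), I would start from this classification of irreducibles and then invoke \cite[Corollary~3.4 and Theorems~3.5--3.6]{ALY}. These show that $M(1)^\pm$ and the Fock modules $M(1,\lambda)$ with $\lambda\neq0$ are $C_1$-cofinite, while the twisted modules $M(1)(\theta)^\pm$ are not. Being irreducible with finite-dimensional weight spaces, the three surviving types are grading restricted, so they lie in $\mathcal C_1(M(1)^+)$.

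For (ii), the identification of $\mathcal C_1(M(1)^+)$ with finite-length modules whose composition factors are as in (i) is \cite[Theorem~4.2]{ALY}. The vertex tensor structure is \cite[Theorem~4.3]{ALY}, obtained by verifying the Huang--Lepowsky--Zhang conditions. The induced braided tensor structure then comes from the general passage, recalled in the subsection on vertex tensor categories, from a vertex tensor category to its $P(1)$ braided tensor category.

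For (iii), I would check that the notion of highest-weight module in \cite[Section~3.4]{ALY} is the one fixed above: the module is generated by a top-space vector spanning a one-dimensional irreducible $A(M(1)^+)$-module. With that settled, the dichotomy $\Delta\neq0$ versus $\Delta=0$ is \cite[Theorem~3.8]{ALY} verbatim.

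For (iv), I would quote the fusion rules \cite[Theorem~4.6]{ALY}. The only extra remark needed is the degenerate case $\lambda=\pm\mu$. There $M(1,0)$ is the Heisenberg VOA $M(1)$, which decomposes under $\theta$ as $M(1)^+\oplus M(1)^-$ as an $M(1)^+$-module, by the definition of $M(1)^\pm$ as $\theta$-eigenspaces.

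The main obstacle, such as it is, is purely bookkeeping. One must make sure the tensor product in (iv) is computed in $\mathcal C_1(M(1)^+)$ rather than in some other ambient category, in line with the caution after the definition of $P(z)$-tensor products. This holds because \cite{ALY} computes these fusion products inside the same $C_1$-cofinite category.
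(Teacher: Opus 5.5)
Your proposal is correct and follows the paper's own approach. The paper gives no argument beyond the inline citations to \cite[Corollary~3.4, Theorems~3.5, 3.6, 3.8, 4.2, 4.3, 4.6]{ALY}, which you invoke item by item; your extra checks (the $C_1$ and highest-weight conventions, and reading the zero-charge summand for $\lambda=\pm\mu$ as $M(1)=M(1)^+\oplus M(1)^-$) agree with how the paper uses these results later.
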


\subsection{Rank-one lattice vertex operator algebras}
\label{subsec:rank-one-lattice}

We record the concrete rank-one lattice realization and isolate the precise consequences used later. We follow the standard lattice-VOA construction of Frenkel--Lepowsky--Meurman \cite[Chapter~8]{FLM}; for the rank-one specialization, including the involution induced by the isometry $-1$, see \cite[Section~2]{DG98}. See also \cite[Sections~6.4--6.5]{LepowskyLi} for a textbook account of the lattice construction.

Let \(L=\Z\alpha,\ \langle\alpha,\alpha\rangle=2k,\ k\in\Z_{>0}.\) Fix a normalized lattice $2$-cocycle $\varepsilon$ for the standard central extension of $L$, so that \(\varepsilon(\beta,\gamma)\varepsilon(\gamma,\beta)^{-1} =(-1)^{\langle\beta,\gamma\rangle} (\beta,\gamma\in L)\), and let $\C_\varepsilon[L]$ be the corresponding twisted group algebra, with basis $e^\beta$ ($\beta\in L$). Since $L$ has rank one and is even, $\langle L,L\rangle\subset 2\Z$; in particular, the cocycle may be chosen bimultiplicative. The associated lattice vertex operator algebra is \(V_L=M(1)\otimes\C_\varepsilon[L]=\bigoplus_{r\in\Z}M(1,r\alpha)\), where \(M(1,r\alpha):=M(1)\otimes e^{r\alpha}\), where, for $r\ne0$, the last notation agrees with the Heisenberg Fock-module notation of Subsection~\ref{subsec:ALY-package}. Its conformal vector is the Heisenberg conformal vector, and hence \(\wt(e^{r\alpha})=\frac{\langle r\alpha,r\alpha\rangle}{2}=kr^2.\) On a lattice top vector, the standard lattice vertex-operator formula specializes to
\begin{equation*}
Y(e^\beta,z)e^\gamma
=
\varepsilon(\beta,\gamma)
 z^{\langle\beta,\gamma\rangle}
 \exp\!\left(\sum_{n\ge1}\frac{\beta(-n)}{n}z^n\right)
 e^{\beta+\gamma}
\qquad(\beta,\gamma\in L).
\end{equation*}
This is the usual lattice formula of \cite[Chapter~8]{FLM}; it is the only part of the explicit vertex-operator construction used below.

Let $\theta$ be the standard lift of the lattice isometry $-1$. In the rank-one normalization of \cite[Section~2]{DG98}, it acts by
\[
\theta(e^\beta)=e^{-\beta},\qquad
\theta\bigl(h_1(-n_1)\cdots h_s(-n_s)e^\beta\bigr)
=(-1)^s h_1(-n_1)\cdots h_s(-n_s)e^{-\beta},
\]
for $h_i\in\mathfrak h$ and $n_i>0$. Thus the restriction of $\theta$ to $M(1)$ is exactly the involution fixed in Subsection~\ref{subsec:ALY-package}. Put \(V_L^+:=V_L^{\langle\theta\rangle},\ F_r:=e^{r\alpha}+e^{-r\alpha}\ (r\ge1),\) and \(V_L^+[r]:=\bigl(M(1,r\alpha)\oplus M(1,-r\alpha)\bigr)^\theta\ (r\ge1).\)

\begin{prop}[Rank-one lattice package]\label{prop:rank-one-lattice-package}
With the notation above, the following hold.
\begin{enumerate}[label=\textnormal{(\roman*)}]
\item There is an $M(1)^+$-module decomposition
\(V_L^+=M(1)^+\oplus\bigoplus_{r\ge1}V_L^+[r].\) For every $r\ge1$, the map \(\varphi_r:M(1,r\alpha)\longrightarrow V_L^+[r],\ u\longmapsto u+\theta u,\) is an $M(1)^+$-module isomorphism. Consequently, \(V_L^+\cong M(1)^+\oplus\bigoplus_{r\ge1}M(1,r\alpha)\) as an $M(1)^+$-module. The lowest-weight space of $V_L^+[r]$ is $\C F_r$, and its conformal weight is $kr^2$.

\item As an $L(1,0)$-module,
\(M(1)^+\cong\bigoplus_{j\ge0}L(1,4j^2).\) In particular, the first nonvacuum Virasoro summand begins in conformal weight $4$. This is exactly \cite[Theorem~2.7(1)]{DG98}.

\item The vertex operator algebra $V_L^+$ is generated by $M(1)^+$ together
with its first nonzero charge sector:
\begin{equation}\label{eq:VLplus-generated-first-charge}
V_L^+=\langle M(1)^+,F_1\rangle_{\mathrm{VOA}}
     =\langle M(1)^+,V_L^+[1]\rangle_{\mathrm{VOA}}.
\end{equation}
More precisely, under the identification $L\cong L_{2k}=\Z\sqrt{2k}\,\beta$ used in \cite[Section~2]{DG98}, the vector denoted $e^k$ there is $F_1$ here. \cite[Theorem~2.7(2)]{DG98} shows that $M(1)^+$ is generated by $\omega$ and the weight-four primary $u^4$, while \cite[Theorem~2.9]{DG98} shows that $V_{L_{2k}}^+$ is generated by $\omega,u^4,e^k$. Hence \eqref{eq:VLplus-generated-first-charge} follows.

\item Put
\(P(q):=\prod_{n\ge1}(1-q^n)^{-1},\ T(q):=\prod_{n\ge1}(1+q^n)^{-1},\) and for a nonnegatively $\Z$-graded vector space $W=\bigoplus_{n\ge0}W_n$ with finite-dimensional homogeneous pieces write $\grch W:=\sum_{n\ge0}(\dim W_n)q^n$. Then
\begin{equation}\label{eq:rank-one-character-interface}
\grch M(1)^+=\frac{P(q)+T(q)}2,
\qquad
\grch V_L^+
=\frac{P(q)+T(q)}2+P(q)\sum_{r\ge1}q^{kr^2}.
\end{equation}

\item Let
$ U(1):=\{z\in\mathbb C:|z|=1\}, $ viewed as a group under multiplication, and set $ T_L:=\Hom(L,U(1)). $ Since $L=\mathbb Z\alpha$, evaluation at $\alpha$ identifies $T_L$ with $U(1)$. We let $T_L$ act on $V_L$ by \(\chi\cdot(u\otimes e^\beta) =\chi(\beta)\,u\otimes e^\beta (\chi\in T_L,\ u\in M(1),\ \beta\in L)\). Then $V_L^{T_L}=M(1)$ and $\theta\chi\theta^{-1}=\chi^{-1}$ for every $\chi\in T_L$. Hence the subgroup \(K_L:=\langle T_L,\theta\rangle\le\Aut(V_L)\) is the semidirect product $T_L\rtimes\langle\theta\rangle\cong O(2)$, and $V_L^{K_L}=M(1)^+$. For $r\ge1$, let $\rho_r^{\mathrm{cpt}}$ be the two-dimensional irreducible complex representation of $K_L\cong O(2)$ whose restriction to $T_L$ is the sum of the two characters $\chi\mapsto\chi(r\alpha)$ and $\chi\mapsto\chi(-r\alpha)$. Via the identification \(K_L\cong O(2)\), let \(\det\) denote the determinant character of \(K_L\). Then, with respect to the commuting actions of \(K_L\) and \(M(1)^+\), where \(K_L\) acts on the first tensor factors and \(M(1)^+\) on the second tensor factors,
\begin{equation}\label{eq:rank-one-O2-Schur-Weyl}
V_L\cong
\C\otimes M(1)^+
\oplus\det\otimes M(1)^-
\oplus\bigoplus_{r\ge1}
\rho_r^{\mathrm{cpt}}\otimes M(1,r\alpha).
\end{equation}
\end{enumerate}
\end{prop}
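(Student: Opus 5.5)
The proposition has five parts, and I would prove each one directly from the explicit lattice construction $V_L=\bigoplus_{r\in\Z}M(1,r\alpha)$ together with the definition of $\theta$.

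\textbf{Part (i).} Since $\theta$ maps $M(1,r\alpha)$ onto $M(1,-r\alpha)$ and preserves $M(1)=M(1,0)$, the $\theta$-fixed space splits according to the $\theta$-stable blocks $M(1)$ and $M(1,r\alpha)\oplus M(1,-r\alpha)$ for $r\ge1$. This gives $V_L^+=M(1)^+\oplus\bigoplus_{r\ge1}V_L^+[r]$.

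Each summand is $M(1)^+$-stable, because $M(1)^+$ acts through Heisenberg modes and these preserve charge. For $r\ge1$, the map $\varphi_r(u)=u+\theta u$ is a linear bijection $M(1,r\alpha)\to V_L^+[r]$: it is injective because $u$ and $\theta u$ have distinct charges, and it is surjective because a fixed vector $u+u'$ must satisfy $u'=\theta u$. It is $M(1)^+$-linear because $\theta$ commutes with $Y(a,x)$ for $a\in M(1)^+$, since $\theta$ is an automorphism fixing $a$.

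The lowest weight space of $M(1,r\alpha)$ is $\C e^{r\alpha}$, of weight $kr^2$. Its image under $\varphi_r$ is $F_r$.

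\textbf{Parts (ii) and (iii).} These are citations of \cite[Theorem~2.7]{DG98} and \cite[Theorem~2.9]{DG98}. The only thing to check is the dictionary between the two normalizations: $e^k$ there equals $e^\alpha+e^{-\alpha}$ here. Once that is in place, the second equality in \eqref{eq:VLplus-generated-first-charge} follows because $F_1\in V_L^+[1]\subset V_L^+$.

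\textbf{Part (iv).} I would compute the graded characters directly.

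For $M(1)$, the character is $P(q)$. On the PBW monomial basis, $\theta$ acts by $(-1)^{\text{length}}$, so its trace is $\prod_{n\ge1}(1+q^n)^{-1}=T(q)$. Averaging the identity and $\theta$ gives $\grch M(1)^+=\tfrac12\bigl(P(q)+T(q)\bigr)$.

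For the charge sectors, $\grch M(1,r\alpha)=q^{kr^2}P(q)$. Combining this with part (i) gives the formula for $\grch V_L^+$.

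\textbf{Part (v).} This is the step requiring the most care.

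First, $T_L$ acts by automorphisms. Using the lattice vertex operator formula, $Y(u\otimes e^\beta,x)$ shifts the charge by $\beta$, so $\chi$ is compatible with $Y$ by multiplicativity of $\chi$. The fixed points of $T_L$ are exactly the charge-zero sector, so $V_L^{T_L}=M(1)$.

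Next, applying $\theta\chi\theta^{-1}$ to $u\otimes e^\beta$ gives the scalar $\chi(-\beta)$. Hence $\theta\chi\theta^{-1}=\chi^{-1}$, and the subgroup $K_L$ is $U(1)\rtimes\Z_2\cong O(2)$. Taking $\theta$-invariants of $M(1)$ then gives $V_L^{K_L}=M(1)^+$.

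For the Schur--Weyl decomposition, I would treat each block separately:
\begin{itemize}
\item On $M(1)$, $T_L$ acts trivially and $\theta$ acts by $\pm1$ on $M(1)^\pm$. This gives $\C\otimes M(1)^+\oplus\det\otimes M(1)^-$, since $\det$ is the character trivial on $T_L$ and equal to $-1$ on the reflection $\theta$.
\item On the block $M(1,r\alpha)\oplus M(1,-r\alpha)$, I would use the explicit map $\rho_r^{\mathrm{cpt}}\otimes M(1,r\alpha)\to V_L$ given by $(a e_+ + b e_-)\otimes u\mapsto a\,u+b\,\theta u$. Here $e_\pm$ is the basis in which $T_L$ acts by $\chi(\pm r\alpha)$ and $\theta$ swaps $e_+$ and $e_-$. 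This map is $K_L$-equivariant, it is $M(1)^+$-linear by the argument in part (i), and it is bijective.
\end{itemize}

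The main subtlety is making sure this choice of basis realizes $\rho_r^{\mathrm{cpt}}$ with the correct action of $\theta$. With $\theta e_\pm=e_\mp$ this holds, and any other choice differs from it by an isomorphism.
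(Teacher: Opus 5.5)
Your proposal is correct and follows the paper's proof essentially step for step. It uses the same charge-block decomposition with $\varphi_r$ inverted by charge projection, the same graded-trace computation giving $T(q)$, the same citations to \cite{DG98}, and the same explicit map $e_+\otimes u\mapsto u$, $e_-\otimes u\mapsto\theta u$ for the $O(2)$ Schur--Weyl decomposition. The one detail the paper adds that you leave implicit is the check that $\theta\notin T_L$ (it acts by $-1$ on $M(1)_1$, whereas $T_L$ fixes $M(1)$ pointwise); this is what makes $K_L$ an internal semidirect product $T_L\rtimes\langle\theta\rangle$.
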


\begin{proof}
For \textnormal{(i)}, the charge decomposition of $V_L$ is preserved by $M(1)^+$, while $\theta$ preserves the zero-charge summand and interchanges the $r\alpha$ and $-r\alpha$ summands. Hence \(V_L^+=M(1)^+\oplus\bigoplus_{r\ge1}\bigl(M(1,r\alpha)\oplus M(1,-r\alpha)\bigr)^\theta\). Because every element of $M(1)^+$ is fixed by $\theta$, the map $\varphi_r$ is $M(1)^+$-linear. Projection onto the $r\alpha$ charge summand is an inverse to $\varphi_r$, so $\varphi_r$ is an isomorphism. Since the lowest conformal-weight space of $M(1,r\alpha)$ is $\C e^{r\alpha}$, its image is $\C F_r$ and has weight $kr^2$. This is the rank-one fixed-point decomposition used in \cite[Section~2]{DG98}.

Part~\textnormal{(ii)} is \cite[Theorem~2.7(1)]{DG98}, and the precise translation of \cite[Theorems~2.7(2) and~2.9]{DG98} giving part~\textnormal{(iii)} was recorded in the statement.

For \textnormal{(iv)}, since $ M(1)\cong S\!\left(\bigoplus_{n\ge1}\mathfrak h\otimes t^{-n}\right) $ as a graded vector space and $\dim\mathfrak h=1$, we have $\grch M(1)=P(q)$. On the one-dimensional oscillator space in mode $-n$, the involution $\theta$ acts by $-1$; hence its graded trace on the corresponding polynomial algebra is $ \sum_{m\ge0}(-1)^m q^{nm}=(1+q^n)^{-1}. $ Taking the product over all $n\ge1$ therefore gives, as an identity of formal power series, $ \operatorname{tr}_{M(1)}\!\left(\theta q^{L(0)}\right) =\prod_{n\ge1}(1+q^n)^{-1}=T(q). $ For each homogeneous subspace $M(1)_N$, the involution $\theta$ has eigenvalues $\pm1$, so $ \dim M(1)_N^+ =\frac12\left(\dim M(1)_N+ \operatorname{tr}\!\left(\theta|_{M(1)_N}\right)\right). $ Summing over $N\ge0$ yields $ \grch M(1)^+=\frac{P(q)+T(q)}2. $ For $r\ge1$, part~\textnormal{(i)} gives a grading-preserving $M(1)^+$-module isomorphism $V_L^+[r]\cong M(1,r\alpha)$. Since $M(1,r\alpha)=M(1)\otimes e^{r\alpha}$ and $\wt(e^{r\alpha})=kr^2$, it follows that $ \grch V_L^+[r] =\grch M(1,r\alpha) =q^{kr^2}P(q). $ Using the decomposition in part~\textnormal{(i)} and summing over $r\ge1$ gives $ \grch V_L^+ =\frac{P(q)+T(q)}2+ P(q)\sum_{r\ge1}q^{kr^2}, $ which is the second formula in \eqref{eq:rank-one-character-interface}.

For \textnormal{(v)}, the lattice charge decomposition $ V_L=\bigoplus_{\beta\in L}M(1,\beta) $ is compatible with the vertex operator in the sense that the charge of a product is the sum of the charges. Since both \(\one\) and \(\omega\) have charge zero, the displayed action fixes the vacuum and conformal vector; hence it is by vertex-operator-algebra automorphisms. Since \(L=\mathbb Z\alpha\), evaluation at \(\alpha\) identifies \(T_L\) with \(U(1)\). If \(\beta=m\alpha\ne0\), one may choose \(z\in U(1)\) with \(z^m\ne1\) and then the character determined by \(\chi(\alpha)=z\) satisfies \(\chi(\beta)\ne1\). Thus the characters in \(T_L\) separate the elements of \(L\). It follows from the charge decomposition that a \(T_L\)-fixed vector has only charge zero, and hence $ V_L^{T_L}=M(1). $ For \(u\in M(1)\) and \(\beta\in L\), using \(\theta^{-1}=\theta\) and the standard lift formula above gives $$
\begin{aligned}
\theta\chi\theta^{-1}(u\otimes e^\beta)
&=\theta\chi(\theta u\otimes e^{-\beta})=\chi(-\beta)\,\theta(\theta u\otimes e^{-\beta})\\
&=\chi(-\beta)\,u\otimes e^\beta=\chi(\beta)^{-1}u\otimes e^\beta.
\end{aligned}
$$ Therefore $ \theta\chi\theta^{-1}=\chi^{-1} (\chi\in T_L). $ Moreover, \(\theta\notin T_L\), since \(T_L\) fixes \(M(1)\) pointwise, whereas \(\theta\) acts as \(-1\) on \(M(1)_1\). Since \(\theta^2=1\), we obtain the internal semidirect product $ K_L=T_L\rtimes\langle\theta\rangle. $ Under the identification \(T_L\cong U(1)\) given by evaluation at \(\alpha\), conjugation by \(\theta\) is inversion. Hence $ K_L\cong U(1)\rtimes_{\mathrm{inv}}\mathbb Z_2\cong O(2), $ with \(T_L\) corresponding to \(SO(2)\) and \(\theta\) to a reflection. Consequently, \(V_L^{K_L}=(V_L^{T_L})^{\langle\theta\rangle}=M(1)^{\langle\theta\rangle}=M(1)^+\). On the zero-charge sector, $ M(1)=M(1)^+\oplus M(1)^-. $ The torus \(T_L\) acts trivially on both summands, while \(\theta\) acts as \(+1\) on \(M(1)^+\) and as \(-1\) on \(M(1)^-\). Thus their \(K_L\)-multiplicity representations are respectively the trivial character and the determinant character. Fix \(r\ge1\) and set \(M_r:=M(1,r\alpha)\). Let \(E_r\) be the two-dimensional vector space with basis \(e_+,e_-\), on which $ \chi e_+=\chi(r\alpha)e_+, \chi e_-=\chi(-r\alpha)e_- (\chi\in T_L), $ and $ \theta e_+=e_-, \theta e_-=e_+. $ The relation \(\theta\chi\theta^{-1}=\chi^{-1}\) shows that these formulas define a representation of \(K_L\). Its two \(T_L\)-weights are \(\chi\mapsto\chi(\pm r\alpha)\), and these weights are distinct for \(r\ge1\). Since \(\theta\) interchanges the corresponding weight lines, \(E_r\) is irreducible. Hence, by the definition in the statement, $ E_r\cong\rho_r^{\mathrm{cpt}}. $ Regard \(E_r\otimes M_r\) as a module for the commuting actions of \(K_L\) and \(M(1)^+\), with \(K_L\) acting on \(E_r\) and trivially on \(M_r\), and with \(M(1)^+\) acting trivially on \(E_r\) and in the usual way on \(M_r\). Now define \(\Phi_r:E_r\otimes M_r\longrightarrow M(1,r\alpha)\oplus M(1,-r\alpha)\) by $ \Phi_r(e_+\otimes u)=u, \Phi_r(e_-\otimes u)=\theta u. $ Because \(\theta\) fixes \(M(1)^+\) pointwise, \(\Phi_r\) is \(M(1)^+\)-linear; the formulas above also show that it is \(K_L\)-equivariant. It is plainly bijective. Therefore \(M(1,r\alpha)\oplus M(1,-r\alpha)\cong\rho_r^{\mathrm{cpt}}\otimes M(1,r\alpha)\) as a \(K_L\times M(1)^+\)-module. Combining this with the zero-charge decomposition gives $$ V_L\cong \mathbb C\otimes M(1)^+ \oplus\det\otimes M(1)^- \oplus\bigoplus_{r\ge1} \rho_r^{\mathrm{cpt}}\otimes M(1,r\alpha), $$ which is \eqref{eq:rank-one-O2-Schur-Weyl}.
\end{proof}

\subsection{Invariant bilinear forms}
\label{subsec:Li-invariant}

We also record the invariant-form results used throughout the reconstruction arguments. Recall that a bilinear form $(\,\cdot\,,\,\cdot\,)$ on a vertex operator algebra $V$ is \emph{invariant} if \(\bigl(Y(a,z)u,v\bigr)=\bigl(u,Y(e^{zL(1)}(-z^{-2})^{L(0)}a,z^{-1})v\bigr)\) for all $a,u,v\in V$. Let $V'$ denote the contragredient module of $V$.

\begin{prop}
\label{prop:Li-invariant-package}
The following statements hold.
\begin{enumerate}[label=\textnormal{(\roman*)}]
\item Every invariant bilinear form on a vertex operator algebra is
symmetric, and the space of invariant bilinear forms on $V$ is naturally isomorphic to \(\bigl(V_0/L(1)V_1\bigr)^*.\) Under this isomorphism a form $(\,\cdot\,,\,\cdot\,)$ corresponds to the functional $v\mapsto(\one,v)$ on $V_0$; see \cite[Proposition~2.6 and Theorem~3.1]{LiInvariant}.

\item Suppose that $V$ is simple and of CFT type.  Then \(V\cong V'\Longleftrightarrow L(1)V_1=0.\)
When these equivalent conditions hold, there is a unique invariant bilinear form normalized by \((\one,\one)=1,\) and this form is symmetric and nondegenerate.

\item For an invariant bilinear form one has \((L(n)u,v)=(u,L(-n)v)(n\in\mathbb Z).\)
Consequently $L(0)$ is self-adjoint and distinct conformal-weight spaces are orthogonal. If the form is nondegenerate, then its restriction to every homogeneous space $V_m$ is nondegenerate.
\end{enumerate}
\end{prop}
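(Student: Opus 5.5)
The statement is essentially a repackaging of Li's results, so the plan is to cite \cite{LiInvariant} for part~(i) and derive parts~(ii) and~(iii) from it by short formal arguments.

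For \textnormal{(i)}, I invoke \cite[Proposition~2.6 and Theorem~3.1]{LiInvariant} directly. To record the mechanism: an invariant form $(\,\cdot\,,\,\cdot\,)$ is the same as a $V$-module map $\Psi:V\to V'$, via $\langle\Psi(u),v\rangle=(u,v)$. Such a map is determined by $\Psi(\one)\in V'$, which must be annihilated by the relevant creation conditions. Li identifies the admissible values $\Psi(\one)$ with functionals on $V_0$ vanishing on $L(1)V_1$, namely $v\mapsto(\one,v)$. Symmetry is \cite[Proposition~2.6]{LiInvariant}.

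For \textnormal{(iii)}, I specialize the invariance identity to $a=\omega$. Since $\omega$ is quasi-primary of weight $2$, we have $L(1)\omega=0$, hence
\[
e^{zL(1)}(-z^{-2})^{L(0)}\omega=z^{-4}\omega .
\]
The identity therefore reads
\[
\sum_n (L(n)u,v)\,z^{-n-2}=z^{-4}\sum_n (u,L(n)v)\,z^{n+2}.
\]
Comparing coefficients of $z^{-n-2}$ gives $(L(n)u,v)=(u,L(-n)v)$. Taking $n=0$ yields $(m-m')(u,v)=0$ for $u\in V_m$, $v\in V_{m'}$, so distinct weight spaces are orthogonal. If the form is nondegenerate and $u\in V_m$ pairs to zero with $V_m$, then $u$ pairs to zero with all of $V$ by orthogonality, so $u=0$.

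For \textnormal{(ii)}, CFT type gives $V_0=\C\one$ and $L(1)V_1\subset V_0$, so either $L(1)V_1=0$ or $L(1)V_1=V_0$. By \textnormal{(i)}, the space of invariant forms is therefore one-dimensional in the first case and zero in the second.

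If $L(1)V_1=0$, take the nonzero form with $(\one,\one)=1$; this is possible because the form is determined by its value on $\one$. Its radical is an ideal of $V$: it is the kernel of the corresponding module map $V\to V'$, and invariance together with skew-symmetry makes it closed under left multiplication. The radical is proper since $(\one,\one)\neq0$, so simplicity forces it to vanish. Thus $\Psi:V\to V'$ is injective. It is also surjective, because $V'$ is irreducible whenever $V$ is (the contragredient of an irreducible grading-restricted module is irreducible) and the image of $\Psi$ is a nonzero submodule. Hence $V\cong V'$.

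Conversely, an isomorphism $V\to V'$ defines a nonzero invariant form, which forces $L(1)V_1=0$ by the dichotomy above. Uniqueness of the normalized form follows from one-dimensionality, and symmetry follows from \textnormal{(i)}.

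The only step needing care is showing that the radical is a two-sided ideal, i.e.\ closed under all modes $a_n$. The invariance formula gives closure under the adjoint operators $Y(e^{zL(1)}(-z^{-2})^{L(0)}a,z^{-1})$, and the substitution $a\mapsto e^{zL(1)}(-z^{-2})^{L(0)}a$ is invertible. This is standard, and I would cite it from \cite{LiInvariant} rather than reprove it.
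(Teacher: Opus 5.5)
Your proposal is correct and follows the paper's route: cite Li for (i), use $V_0=\mathbb{C}\mathbf{1}$ with Li's identification of invariant forms for (ii), and use Virasoro adjointness for (iii). The only differences are that you prove in-line what the paper cites. You obtain nondegeneracy from the radical being a proper ideal, where the paper cites Li's Corollary~3.2, and you get adjointness by setting $a=\omega$, where the paper cites Li's (2.21)--(2.23). For the converse in (ii), you use the dichotomy $L(1)V_1\in\{0,V_0\}$ rather than nondegeneracy of the form on $V_0$.
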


\begin{proof}
Part~\textnormal{(i)} follows from \cite[Proposition~2.6 and Theorem~3.1]{LiInvariant}. For a simple VOA of CFT type, $V_0=\C\one$. If $L(1)V_1=0$, then \cite[Corollary~3.2]{LiInvariant} gives a nondegenerate symmetric invariant bilinear form on $V$, and hence $V\cong V'$.

Conversely, invariant bilinear forms on $V$ are naturally identified with $V$-module homomorphisms $V\to V'$; see \cite[p.~284]{LiInvariant}. Thus $V\cong V'$ gives a nondegenerate invariant bilinear form. By \cite[Theorem~3.1]{LiInvariant}, its associated functional \(f:V_0\longrightarrow\C, f(v)=(\one,v)\), annihilates $L(1)V_1$. Since the form is nondegenerate and $V_0=\C\one$, one has $f\ne0$, hence $\ker f=0$. Therefore $L(1)V_1=0$.

Under these equivalent conditions, \(\bigl(V_0/L(1)V_1\bigr)^*=V_0^*\) is one-dimensional, so the normalization $(\one,\one)=1$ determines the invariant bilinear form uniquely; it is symmetric and nondegenerate.

Finally, the Virasoro adjointness relation and the orthogonality of distinct conformal-weight spaces are \cite[(2.21)--(2.23)]{LiInvariant}. If the form is nondegenerate and $u\in V_m$ is orthogonal to $V_m$, then weight-space orthogonality shows that $u$ is orthogonal to all of $V$; hence $u=0$. Thus the restriction of the form to each $V_m$ is nondegenerate.
\end{proof}

\subsection{Uniqueness of the level-one \texorpdfstring{$\mathfrak{sl}_2$}{sl2} extension}
\label{subsec:MY-L1-uniqueness}

We record the two uniqueness results from McRae--Yang \cite[Appendix~B]{MY} that will be used later. Recall that their irreducible module \(\mathcal L^{(1)}_{2m+1,1}\) has lowest conformal weight \(m^2\); hence, in the notation used in this paper, \(\mathcal L^{(1)}_{2m+1,1}\cong L(1,m^2).\) We use the standard identification \(L_1(\mathfrak{sl}_2)\cong V_{A_1}.\)
\begin{thm}
\label{thm:MY-L1-uniqueness}
Let \(U\) be a simple vertex operator algebra extension of \(L(1,0)\).
\begin{enumerate}[label=\textnormal{(\roman*)}]
\item
Suppose that, as an \(L(1,0)\)-module, \(U\cong\bigoplus_{m\ge0}\C^{2m+1}\otimes L(1,m^2)\). Then \(U\cong L_1(\mathfrak{sl}_2)\cong V_{A_1}\) as vertex operator algebras.

\item
Let \(G=\PSLtwo\), and for \(m\ge0\) let \(W_{2m}\) denote the irreducible \(G\)-module of dimension \(2m+1\). Suppose, in addition, that \(G\) acts on \(U\) by vertex operator algebra automorphisms and that \(U\cong\bigoplus_{m\ge0}W_{2m}\otimes L(1,m^2)\) as a \(G\times L(1,0)\)-module. Then the isomorphism in \textnormal{(i)} may be chosen \(G\)-equivariantly, where \(V_{A_1}\cong L_1(\mathfrak{sl}_2)\) carries its standard \(G\)-action.
\end{enumerate}
\end{thm}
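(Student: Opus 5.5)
This theorem is attributed to McRae--Yang \cite[Appendix~B]{MY}. The plan is to reduce to the known uniqueness of simple current type extensions inside the symmetric category $\Csq=\mathcal C_{V_{A_1}}\simeq\Rep\PSLtwo$ of Corollary~\ref{cor:McRae-A1-SO3}. The guiding principle is the extension--algebra correspondence of Creutzig--McRae--Yang \cite{CMY}. A simple VOA extension $U\supset L(1,0)$ whose $L(1,0)$-decomposition lies in the direct-limit completion of $\Csq$ is the same as a commutative, haploid algebra object $A$ in $\Ind\Csq$ with trivial twist. Under the symmetric equivalence with $\Rep^{\mathrm{rat}}\PSLtwo$, such an $A$ is a commutative rational $\PSLtwo$-algebra $B$ with $B^{G}=\C$.

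\textbf{Part (i).} The hypothesis says that, as a $G$-module, $B\cong\bigoplus_{m\ge0}W_{2m}$, each $W_{2m}$ occurring once. This is exactly the multiplicity-free decomposition of $\OO(G)$ under the left regular action restricted to $\OO(G/T)$ for a maximal torus $T$. The multiplicity of $W_{2m}$ in $\OO(G/H)$ is $\dim W_{2m}^H$, which is $1$ for all $m$ precisely when $H=T$. I would then show $B$ is reduced and finitely generated, so that $\Spec B$ is an affine $G$-variety with $B^G=\C$. By the closed-orbit theorem it then has a unique closed orbit $G/H$. Matsushima's criterion makes $H$ reductive, and a comparison of multiplicities forces $H=T$ and $\Spec B=G/T$.

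Two facts should give reducedness and finite generation. First, a nilpotent ideal would be a $G$-stable sum of some $W_{2m}$. Second, the product $W_2\cdot W_2\to W_4$ must be nonzero: otherwise the VOA would fail to be simple, by nondegeneracy of the invariant form of Proposition~\ref{prop:Li-invariant-package}. By Clebsch--Gordan this propagates to show $W_2$ generates $B$. With $B$ generated by $W_2\cong\mathfrak{sl}_2$, $B$ is a quotient of $\Sym(W_2)$ by an invariant ideal. Multiplicity freeness then forces this ideal to be generated by $q-\lambda$, where $q$ is the Killing form and $\lambda\ne0$ (the case $\lambda=0$ gives the nilpotent cone, whose multiplicities match but which is not compatible with nondegeneracy). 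Thus $B\cong\OO(G/T)$, and the same holds for $V_{A_1}$. Since $V_{A_1}$ realizes this algebra, CMY yields $U\cong V_{A_1}$.

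\textbf{Part (ii).} The equivariant version follows because the isomorphism of algebra objects above is, by construction, an isomorphism of $G$-algebras $B\cong\OO(G/T)$ up to a $G$-automorphism. The compact-orbifold equivalence then transports this to a $G$-equivariant VOA isomorphism.

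\textbf{Main obstacle.} I expect the hardest step to be the first: verifying that the $L(1,0)$-module structure of $U$ actually lies in $\Ind\Csq$ with the correct braided structure, so that CMY applies. The hypothesis gives this directly, since $U$ is a direct sum of the $X_m$. One still has to check that the vertex operator of $U$ induces a commutative multiplication in $\Ind\Csq$. I would argue this by noting that intertwining operators among the $X_m$ are identified with $G$-homomorphisms via Corollary~\ref{cor:McRae-VOA-fusion}.
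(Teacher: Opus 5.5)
Your argument breaks at its first concrete step. Under $\widehat\Phi$ one has $\Hom_G(W_{2m},B)\cong\Hom_{L(1,0)}(X_m,U)\cong\C^{2m+1}$. So $W_{2m}$ occurs in $B$ with multiplicity $2m+1=\dim W_{2m}$, not once. This is the left-regular module structure of $\OO(G)\cong\bigoplus_m W_{2m}^*\otimes W_{2m}$, not that of $\OO(G/T)$.

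The algebra you arrive at, $\Sym(W_2)/(q-\lambda)\cong\OO(G/T)$, corresponds by Corollary~\ref{cor:fixed-point-dictionary} to $V_{A_1}^{T}=M(1)$, the rank-one Heisenberg VOA. Its Virasoro decomposition $\bigoplus_{m\ge0}L(1,m^2)$ is exactly the multiplicity-one sum you used. So ``the same holds for $V_{A_1}$'' is false. The space $(V_{A_1})_1$ is three-dimensional, $V_{A_1}$ contains three copies of $X_1$, and it corresponds to $\widehat\Phi(\OO(G))$. That identification is Theorem~\ref{thm:regular-algebra}, which the paper derives \emph{from} the present statement, so you cannot quote it here; you would have to run your own uniqueness argument on $V_{A_1}$ as well. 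Carried through, your argument would identify $U$ with $M(1)$, which has the wrong character.

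After correcting the multiplicities you must show $B\cong\OO(G)$, and the needed ingredients are missing.

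Multiplicities together with $B^G=\C$ do not determine $B$. For example, the associated graded of $\OO(G)$ for the polynomial-degree filtration of $SO_3(\C)\subset M_3(\C)$ has the same $G$-module structure and invariants $\C$, yet it is not $G$-simple. So you need simplicity of $U$ in a precise form. A $G$-stable ideal of $B$ transports to a VOA ideal of $U$ (Lemma~\ref{lem:regular-simple} read backwards), so $B$ is $G$-simple and hence reduced. If $B$ is also finitely generated, the ideal of a closed orbit vanishes, so $B\cong\OO(G/H)$. Then $\dim W_{2m}^H=2m+1$ for all $m$ forces $H=1$.

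Finite generation, however, is not established. Knowing that one product channel is nonzero does not, by Clebsch--Gordan, force $W_2\cdot W_{2m}\to W_{2m+2}$ to be nonzero for every $m$. With the correct multiplicities you would really have to show that $U$ is generated by $U_1$.

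Part~(ii) has a separate gap. The $G$ in the hypothesis acts by VOA automorphisms on the multiplicity spaces. Under $\widehat\Phi^{-1}$ this becomes a second action on $B$ commuting with the categorical one (Lemma~\ref{lem:ind-equivariance}). An isomorphism of $G$-algebras for the categorical action therefore says nothing about it. You need $B\cong\OO(G)$ to intertwine this second action with right translations.

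The paper reproves none of this; it simply cites \cite[Theorems~B.1 and~B.2]{MY}.
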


\begin{proof}
After identifying \(\mathcal L^{(1)}_{2m+1,1}\cong L(1,m^2),\) part~\textnormal{(i)} is exactly \cite[Theorem~B.1]{MY}, while part~\textnormal{(ii)} is \cite[Theorem~B.2]{MY}.
\end{proof}

\subsection{Direct-limit completions and commutative algebra objects}

Let $V$ be a vertex operator algebra and let $\mathcal C$ be a full subcategory of grading-restricted generalized $V$-modules satisfying the standing hypotheses for the direct-limit construction of Creutzig--McRae--Yang; see \cite[Assumption~4.1]{CMY}. Following \cite[Definition~4.4]{CMY}, we write \(\IndCMY(\mathcal C)\) for the concrete direct-limit completion of $\mathcal C$ in the category of weak $V$-modules: its objects are the weak $V$-modules isomorphic to direct limits of direct systems in $\mathcal C$. Such direct limits exist in the weak-module category by \cite[Proposition~3.5]{CMY}, and \cite[Proposition~3.6]{CMY} shows that a direct limit is the union of the images of its structure maps.

By \cite[Proposition~4.6]{CMY}, equivalently, \(\IndCMY(\mathcal C)\) is the full subcategory of generalized $V$-modules $X$ satisfying
\[
X=\bigcup_{\substack{M\subseteq X\\ M\in\mathcal C}}M.
\]
For such an $X$, let \(I_X:=\{X_0\subseteq X\mid X_0\text{ is a $V$-submodule and }X_0\in\mathcal C\}\), ordered by inclusion. Then $X=\bigcup_{X_0\in I_X}X_0$, and $I_X$ is directed: if $X_0,X_1\in I_X$, then $X_0+X_1$ is a quotient of $X_0\oplus X_1$, hence belongs to $\mathcal C$, and contains both $X_0$ and $X_1$.

Whenever $\mathcal C$ carries the tensor product bifunctor used below, we use its CMY direct-limit extension on the completion. Thus, for $X,Y\in\IndCMY(\mathcal C)$,
\[
X\boxtimes_{\IndCMY(\mathcal C)}Y:=\varinjlim_{(X_0,Y_0)\in I_X\times I_Y}\bigl(X_0\boxtimes_{\mathcal C}Y_0\bigr),
\]
where the direct limit is taken in the category of weak $V$-modules. Here $I_X\times I_Y$ has the componentwise inclusion order, and the transition morphism associated to $X_0\subseteq X_1$ and $Y_0\subseteq Y_1$ is \(i_{X_0,X_1}\boxtimes_{\mathcal C}i_{Y_0,Y_1}: X_0\boxtimes_{\mathcal C}Y_0\longrightarrow X_1\boxtimes_{\mathcal C}Y_1\). Thus every vector of $X$ lies in a $V$-submodule belonging to $\mathcal C$, and morphisms in $\IndCMY(\mathcal C)$ are $V$-module homomorphisms. When the ordinary categorical $\operatorname{Ind}(\mathcal C)$ is needed, its identification with this concrete completion will be proved separately.

A functor between cocomplete categories is called \emph{cocontinuous} if it preserves all small colimits. For functors between abelian categories, \emph{exact} means preserving short exact sequences.

For a braided tensor category $\mathcal C$, we write $\CAlg(\mathcal C)$ for the category of commutative associative algebra objects in the sense of \cite[Definition~7.4]{CMY}. An object \((A,\mu_A,\iota_A)\in\CAlg(\mathcal C)\) consists of $A\in\mathcal C$, a multiplication and a unit \(\mu_A:A\boxtimes A\longrightarrow A,\iota_A:\mathbf 1_{\mathcal C}\longrightarrow A\), satisfying associativity and the left and right unit axioms, together with braided commutativity \(\mu_A\circ\mathcal R_{A,A}=\mu_A.\) Morphisms in $\CAlg(\mathcal C)$ are morphisms in $\mathcal C$ that preserve the multiplication and the unit. These are the algebra objects to which the extension--algebra correspondences used later apply.

\subsection{The Creutzig--McRae--Yang direct-limit package}
\label{subsec:CMY-package}

We shall use the following results of Creutzig--McRae--Yang \cite{CMY}. They are stated here once so that later applications need only verify their hypotheses.

\begin{thm}
\label{thm:CMY-direct-limit}
Let $V$ be a vertex operator algebra and let $\mathcal C$ be a category of grading-restricted generalized $V$-modules. Assume:
\begin{enumerate}[label=\textnormal{(\arabic*)}]
\item $V\in\mathcal C$;
\item $\mathcal C$ is closed under submodules, quotients, and finite
direct sums;
\item every object of $\mathcal C$ is finitely generated;
\item $\mathcal C$ carries the required $P(z)$-vertex and braided
tensor category structures;
\item for every logarithmic intertwining operator
\(\mathcal Y:\ W_1\otimes W_2\longrightarrow X\{x\}[\log x], W_1,W_2\in\mathcal C,X\in\IndCMY(\mathcal C)\), the coefficient image $\operatorname{Im}\mathcal Y$ is an object of $\mathcal C$.
\end{enumerate}
Then the direct-limit bifunctor defined above makes $\IndCMY(\mathcal C)$ into a category carrying $P(z)$-vertex and braided tensor category structures extending those on $\mathcal C$. Moreover, the inclusion \(\mathcal C\hookrightarrow\IndCMY(\mathcal C)\) is a braided tensor functor.
\end{thm}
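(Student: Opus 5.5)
The statement is essentially a packaging of the main results of Creutzig--McRae--Yang, so I plan to reduce each conclusion to a cited result in \cite{CMY}. The real work is checking that hypotheses (1)--(5) are exactly the standing hypotheses those results require.

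\emph{Step 1: the standing assumptions.} I first match (1)--(4) with \cite[Assumption~4.1]{CMY}. The relevant points are these: $\mathcal C$ contains the unit $V$; $\mathcal C$ is an abelian full subcategory closed under submodules and quotients; its objects are finitely generated, which is used in \cite{CMY} to show that a morphism from an object of $\mathcal C$ into a direct limit factors through some stage; and $\mathcal C$ carries the Huang--Lepowsky--Zhang $P(z)$-tensor structure. With this in place, \cite[Proposition~4.6]{CMY} applies, so $\IndCMY(\mathcal C)$ is the union-closed subcategory described above. The index posets $I_X$ are directed, as already noted, so the direct-limit bifunctor displayed above is well defined on objects. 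On morphisms it is defined by restricting to the members of $I_X$ and $I_Y$ and passing to the limit.

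\emph{Step 2: the tensor product is a $P(z)$-tensor product in the completion.} The key claim is that $\varinjlim(X_0\boxtimes_{P(z)}Y_0)$, with the induced intertwining map, satisfies the $P(z)$ universal property relative to $\IndCMY(\mathcal C)$. Take a $P(z)$-intertwining map $I:X\otimes Y\to\overline{Z}$ with $Z\in\IndCMY(\mathcal C)$. For each pair $(X_0,Y_0)$, the corresponding logarithmic intertwining operator (via \eqref{eq:Pz-inverse}) restricted to $X_0\otimes Y_0$ has coefficient image in $\mathcal C$ by hypothesis (5). The restricted map therefore factors uniquely through $X_0\boxtimes_{P(z)}Y_0$ by the universal property in $\mathcal C$. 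These factorizations are compatible with the transition maps, so they assemble to a unique map out of the limit.

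This step is the main obstacle. Without (5), a map into the huge module $Z$ need not land in $\mathcal C$, and the universal property in $\mathcal C$ would not apply; this is precisely the content of \cite[Theorem~5.?/Section~5]{CMY}. I would cite that theorem directly, after verifying that its hypothesis on coefficient images is (5).

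\emph{Step 3: structure and functoriality.} The unit, associativity, braiding and parallel-transport isomorphisms are then obtained as direct limits of those in $\mathcal C$. Coherence follows stagewise, since every diagram is a colimit of coherent diagrams. For associativity one needs that the triple tensor products commute with the limits, that is, that $\boxtimes$ on the completion is cocontinuous in each variable. This is \cite[Sections~6--7]{CMY}, where the braided tensor structure on the completion is constructed. Finally, for $X,Y\in\mathcal C$ the poset $I_X\times I_Y$ has the terminal element $(X,Y)$, so the limit is canonically $X\boxtimes_{\mathcal C}Y$. This identification gives the structure isomorphisms making the inclusion $\mathcal C\hookrightarrow\IndCMY(\mathcal C)$ a braided tensor functor, with identity constraints up to these canonical isomorphisms.
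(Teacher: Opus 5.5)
Your proposal is correct and follows the paper's route. The paper's proof simply notes that (1)--(5) are the hypotheses of \cite[Theorem~1.1]{CMY}, which supplies the $P(z)$-vertex and braided tensor structures on $\IndCMY(\mathcal C)$, and cites \cite[Theorem~5.4]{CMY} for the inclusion $\mathcal C\hookrightarrow\IndCMY(\mathcal C)$ being a braided tensor functor. Your Steps~2--3 are a fair sketch of what those results contain, but you should replace the placeholder pointers (``Theorem~5.?'', ``Sections~6--7'') with these precise citations; the universal property in your Step~2 is the one the paper later quotes as \cite[Theorem~6.1]{CMY}.
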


\begin{proof}
The existence of the $P(z)$-vertex and braided tensor category structures on the direct-limit completion is \cite[Theorem~1.1]{CMY}. The embedding $\mathcal C\hookrightarrow\IndCMY(\mathcal C)$ is a braided tensor functor by \cite[Theorem~5.4]{CMY}; hence the resulting structure restricts to the original tensor structure on $\mathcal C$.
\end{proof}

\begin{prop}
\label{prop:CMY-coefficient-image}
Let $V$ be a vertex operator algebra, let $W_1,W_2$ be $C_1$-cofinite grading-restricted generalized $V$-modules, let $X$ be an arbitrary generalized $V$-module, and let $\mathcal Y$ be a logarithmic intertwining operator of type \(\binom{X}{W_1 W_2}.\) Then \(\operatorname{Im}\mathcal Y\) is a $C_1$-cofinite grading-restricted generalized $V$-module.
\end{prop}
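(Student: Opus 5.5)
The plan is to establish three properties of $\operatorname{Im}\mathcal Y$ in turn: it is a generalized $V$-submodule of $X$; it is $C_1$-cofinite; and it is lower bounded and hence grading restricted. Nothing from the preceding sections of the paper is used beyond the definitions, since this is a general VOA statement. I would follow Creutzig--McRae--Yang and Miyamoto's $C_1$-cofiniteness technique, citing \cite{CMY} where they carry it out.

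\textbf{Submodule.} First I show that $\operatorname{Im}\mathcal Y$ is a $V$-submodule. Take the Jacobi identity (ii) for $\mathcal Y$ and extract the coefficient of $x_0^{-n-1}x_1^{-m-1}$. This expresses $v_m\,(w_1)^{\mathcal Y}_{h,k}w_2$ as a finite combination of two kinds of terms. The first kind is $(v_jw_1)^{\mathcal Y}_{h',k}w_2$. The second kind is $(w_1)^{\mathcal Y}_{h'',k}(v_iw_2)$. The sums are finite by lower truncation in $W_1$ and $W_2$ and by condition (i). Hence $\operatorname{Im}\mathcal Y$ is stable under all modes $v_m$. It is stable under $L(0)$ as part of this. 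It also inherits the generalized $L(0)$-eigenspace decomposition from $X$: the standard $L(0)$-commutator formula for logarithmic intertwining operators shows that $(w_1)_{h,k}w_2$ with $w_1,w_2$ generalized eigenvectors of weights $\Delta_1,\Delta_2$ lies in finitely many generalized eigenspaces of weight $\Delta_1+\Delta_2-h-1$.

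\textbf{$C_1$-cofiniteness.} This is the main step. Choose finite-dimensional graded subspaces $S_i\subset W_i$ with $W_i=S_i+C_1(W_i)$. I would prove by induction on weight that every coefficient $(w_1)_{h,k}w_2$ lies in
\[
C_1(\operatorname{Im}\mathcal Y)+\operatorname{span}\{(s_1)_{h,k}s_2 \mid s_i\in S_i\}.
\]
The induction uses the weight of $w_1$ plus the weight of $w_2$, or rather the filtration by total degree. There are two reductions.
\begin{enumerate}[label=\textnormal{(\alph*)}]
\item If $w_1=u_{-1}w_1'$ with $u\in V_+$, the associator form of the Jacobi identity rewrites $(u_{-1}w_1')_{h,k}w_2$ in two parts. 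One part is $u_{-1-j}(w_1')_{\cdot}w_2$, whose $j=0$ term lies in $C_1$. The other part is $u_{j}$-terms with $j\ge0$ acting on $w_2$, together with $u_{-1-j}$ for $j\ge1$. These last pieces are rewritten via $u_{-1-j}=\tfrac{1}{j!}(L(-1)^ju)_{-1}$ and so again lie in $C_1$ modulo lower-weight terms.
\item If $w_2=u_{-1}w_2'$, the commutator formula moves $u_{-1}$ to the left, producing $C_1$ terms plus $(u_jw_1)_{\cdot}w_2'$ terms of smaller total weight.
\end{enumerate}

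The main obstacle is that the index $h$ ranges over infinitely many values. Finite dimensionality therefore needs a further reduction. The $L(-1)$-derivative property gives $(L(-1)w_1)_{h,k}=-h(w_1)_{h-1,k}+(k+1)(w_1)_{h-1,k+1}$. Together with $L(-1)=\omega_0$ and the fact that $\omega_{-1}\cdot\in C_1$ for weight-$2$ elements of $V_+$, this lets all but finitely many indices $h$, and all $k$ up to the bounded log order, be absorbed into $C_1(\operatorname{Im}\mathcal Y)$. Concretely, I would show that the coefficients with $\operatorname{Re}(h)$ sufficiently negative lie in $C_1$, while those with $\operatorname{Re}(h)$ large vanish by truncation. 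This step is exactly where I would lean on the argument in \cite{CMY}, adapting Miyamoto's proof for the tensor-product case if it is not directly quotable.

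\textbf{Lower bounded and grading restricted.} The previous step shows that $\operatorname{Im}\mathcal Y/C_1(\operatorname{Im}\mathcal Y)$ is spanned by finitely many generalized homogeneous vectors. Every element of $C_1$ raises weight relative to its inputs, so $\operatorname{Im}\mathcal Y$ is generated by finitely many homogeneous vectors, and its weights lie in finitely many cosets $\Delta+\Z$, bounded below. For lower boundedness I would argue by minimal weights. A vector of minimal real part of weight in a coset cannot lie in $C_1$ unless it comes from lower weights. A standard induction on weights then shows that the weights of $\operatorname{Im}\mathcal Y$ are bounded below by the minimum over the finitely many spanning vectors. Finally, a lower-bounded $C_1$-cofinite generalized module has finite-dimensional generalized weight spaces; this is Huang's result, also used in \cite{CJORY,CMY}. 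Hence $\operatorname{Im}\mathcal Y$ is grading restricted, which completes the proof.
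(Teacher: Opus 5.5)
The paper's proof is only the citation \cite[Corollary~2.14]{CMY}. Quoting that result outright would therefore have sufficed. Read as a self-contained argument, however, your sketch has a genuine gap at the step you yourself call the main obstacle. Your reductions (a)--(b) are correct. They show that $\operatorname{Im}\mathcal Y$ is spanned modulo $C_1(\operatorname{Im}\mathcal Y)$ by coefficients $(s_1)_{h',k}s_2$, with $s_i$ in finite bases of $S_i$. Note that $h'$ runs over whole cosets $h+\Z$, not a fixed $h$ as your displayed span suggests, because (a) shifts the index.

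The mechanism you propose for discarding all but finitely many $h'$ does not work. $L(-1)=\omega_0$ is not a $C_1$-type mode: only the $u_{-p}$ with $u\in V_+$ and $p\ge1$ are, and $\omega_{-1}=L(-2)$ is one of those. In general $L(-1)W\not\subseteq C_1(W)$. For example, for $X_m$ with $m\ge1$, $L(-1)$ of the highest-weight vector is not in $C_1(X_m)$, because $L(1,0)_1=0$. The identity $(L(-1)w_1)_{h,k}=-h(w_1)_{h-1,k}+(k+1)(w_1)_{h-1,k+1}$ only relates coefficients of the same output weight, so by itself it puts nothing into $C_1$.

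The missing idea is the following.
\begin{itemize}
\item Since $W_1/C_1(W_1)$ is finite dimensional and graded, $L(-1)^N s\in C_1(W_1)$ for every homogeneous basis vector $s$ of $S_1$ once $N\gg0$.
\item Running (a)--(b) on $\mathcal Y(L(-1)^N s,x)t$ gives, modulo $C_1(\operatorname{Im}\mathcal Y)$, a relation $\frac{d^N}{dx^N}\mathcal Y(s,x)t\equiv\sum c\,x^{-\beta}\,\mathcal Y(s',x)t'$. Here $s',t'$ run over the bases, the $\beta$ are positive integers, and the constants $c$ do not depend on the index.
\item Fix an output weight $\mu$. These relations form a square linear system in the finitely many coefficients of weight $\mu$, triangular in the log degree. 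Its diagonal blocks are $D(\mu)-C$, with $C$ constant and $D(\mu)$ diagonal. The entries of $D(\mu)$ are degree-$N$ polynomials in $\mu$ with nonzero leading coefficient, coming from the $N$-fold derivative.
\item The determinant is therefore a nonzero polynomial in $\mu$. So for all but finitely many $\mu$, every such coefficient lies in $C_1(\operatorname{Im}\mathcal Y)$, and this is what gives $\dim\operatorname{Im}\mathcal Y/C_1(\operatorname{Im}\mathcal Y)<\infty$.
\end{itemize}
Without this argument, or an explicit appeal to \cite{CMY}, the $C_1$-cofiniteness step is unproved.

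Your lower-boundedness paragraph is also circular. Passing from a finite-dimensional $C_1$-quotient to finite generation, and picking a vector of minimal weight in a coset, both already presuppose lower boundedness. It is also unnecessary. Run (a)--(b) as an induction on total input weight. This shows that $\operatorname{Im}\mathcal Y$ is spanned by vectors $u^{(1)}_{-p_1}\cdots u^{(r)}_{-p_r}(s_1)_{h',k}s_2$ with $u^{(i)}\in V_+$, $p_i\ge1$, and $s_i$ in finite bases. Each such mode raises weight by at least one, and the truncation condition bounds $h'$ above in each coset. Together with $\dim V_n<\infty$, this gives lower boundedness, in the per-coset sense used in the paper, and finite-dimensional weight spaces directly, without invoking Huang's result.
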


\begin{proof}
This is \cite[Corollary~2.14]{CMY}.
\end{proof}

\begin{prop}
\label{prop:CMY-tensor-universal}
Under the hypotheses of Theorem~\ref{thm:CMY-direct-limit}, for $X_1,X_2\in\IndCMY(\mathcal C)$ there is a canonical tensor-product logarithmic intertwining operator \(\mathcal Y_{X_1,X_2}:X_1\otimes X_2\longrightarrow(X_1\boxtimes X_2)\{x\}[\log x]\) such that, for every $X_3\in\IndCMY(\mathcal C)$, composition with $\mathcal Y_{X_1,X_2}$ gives an isomorphism \(\Hom_V(X_1\boxtimes X_2,X_3)\overset{\sim}{\longrightarrow}\mathcal V^{X_3}_{X_1 X_2}\).
\end{prop}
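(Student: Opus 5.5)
The plan is to realize $\mathcal Y_{X_1,X_2}$ as the direct limit of the $P(1)$-tensor-product intertwining operators of $\mathcal C$, and then to check the universal property one pair of submodules at a time. The transition to arbitrary $X_3\in\IndCMY(\mathcal C)$ should be handled by hypothesis~(5) of Theorem~\ref{thm:CMY-direct-limit}; this is essentially the content of \cite[Section~5]{CMY}, so I would follow that argument and verify its steps in this concrete setting.

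\emph{Construction.} For $(X_0,Y_0)\in I_{X_1}\times I_{X_2}$, let $\mathcal Y_{X_0,Y_0}$ be the logarithmic intertwining operator of type $\binom{X_0\boxtimes_{\mathcal C}Y_0}{X_0\,Y_0}$ obtained from $\boxtimes_{P(1)}$ via \eqref{eq:Pz-inverse} with $\log 1=0$. Compose it with the canonical map $\iota_{X_0,Y_0}:X_0\boxtimes_{\mathcal C}Y_0\to X_1\boxtimes X_2$ into the direct limit. Naturality of $\boxtimes_{P(1)}$ gives
\[
(i\boxtimes_{\mathcal C}j)\circ\mathcal Y_{X_0,Y_0}=\mathcal Y_{X_0',Y_0'}\circ(i\otimes j)
\]
for inclusions $i:X_0\subseteq X_0'$ and $j:Y_0\subseteq Y_0'$. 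Since $I_{X_1}$ and $I_{X_2}$ are directed and $X_i=\bigcup I_{X_i}$, the rule $x_1\otimes x_2\mapsto\iota_{X_0,Y_0}\mathcal Y_{X_0,Y_0}(x_1,x)x_2$ is then independent of the choice of $(X_0,Y_0)$ containing $(x_1,x_2)$. The intertwining axioms (truncation, Jacobi identity, $L(-1)$-derivative) involve only finitely many vectors at a time, so they hold because they hold for each $\mathcal Y_{X_0,Y_0}$. This defines $\mathcal Y_{X_1,X_2}$.

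\emph{Injectivity of composition.} Suppose $f\circ\mathcal Y_{X_1,X_2}=0$. For each pair $(X_0,Y_0)$, the morphism $f\circ\iota_{X_0,Y_0}$ corresponds to the zero intertwining map, so by uniqueness in the universal property of $X_0\boxtimes_{\mathcal C}Y_0$ it vanishes. Since the direct limit is the union of the images of the $\iota_{X_0,Y_0}$ \cite[Proposition~3.6]{CMY}, it follows that $f=0$.

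\emph{Surjectivity.} Let $\mathcal Y'\in\mathcal V^{X_3}_{X_1X_2}$. Its restriction $\mathcal Y'|_{X_0\otimes Y_0}$ takes coefficients in $\operatorname{Im}(\mathcal Y'|_{X_0\otimes Y_0})$, which lies in $\mathcal C$ by hypothesis~(5). Regarded as an intertwining operator into this object of $\mathcal C$, it corresponds under \eqref{eq:Pz-inverse} to a $P(1)$-intertwining map. The universal property in $\mathcal C$ then yields a unique morphism
\[
f_{X_0,Y_0}:X_0\boxtimes_{\mathcal C}Y_0\to\operatorname{Im}(\mathcal Y'|_{X_0\otimes Y_0})\subseteq X_3 .
\]
By the uniqueness clause, these morphisms are compatible with the transition maps $i\boxtimes_{\mathcal C}j$: both sides induce the same intertwining map into the larger coefficient image, which is again in $\mathcal C$ by (5). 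Hence they assemble to $f:X_1\boxtimes X_2\to X_3$ with $f\circ\mathcal Y_{X_1,X_2}=\mathcal Y'$.

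\emph{Main obstacle.} The delicate point is the surjectivity step. The universal property in $\mathcal C$ only applies to targets in $\mathcal C$, while $X_3$ is merely in the completion, so everything hinges on hypothesis~(5). A secondary technical issue is the $\log x$ and $P(1)$ bookkeeping: one must ensure the branch choice in \eqref{eq:Pz-inverse} is the same for all pairs, and that the finitely many $\log$-powers relevant to each pair of vectors are controlled within a single $X_0\boxtimes_{\mathcal C}Y_0$. I expect both points to follow by unwinding \cite[Theorem~5.4]{CMY}.
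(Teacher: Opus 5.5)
Your argument is correct. It is the direct-limit argument behind \cite[Theorem~6.1]{CMY}, which the paper does not reprove; it simply cites that result for both the construction of $\mathcal Y_{X_1,X_2}$ and its universal property.

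Unwinding the citation as you do has one advantage: it shows exactly where each hypothesis of Theorem~\ref{thm:CMY-direct-limit} is used. Hypothesis~(5) is what makes the surjectivity step work, because the universal property of $X_0\boxtimes_{\mathcal C}Y_0$ only applies to targets in $\mathcal C$.

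That same restriction also affects your injectivity step, and there you pass over it. The morphism $f\circ\iota_{X_0,Y_0}$ lands in $X_3\in\IndCMY(\mathcal C)$, not in $\mathcal C$, so you cannot invoke uniqueness directly. Either of the following fixes it:
\begin{itemize}
\item Corestrict $f\circ\iota_{X_0,Y_0}$ to its image. This is a quotient of $X_0\boxtimes_{\mathcal C}Y_0$, so it lies in $\mathcal C$ by hypothesis~(2), and uniqueness then applies.
\item Observe that the coefficients of $\mathcal Y_{X_0,Y_0}$ span $X_0\boxtimes_{\mathcal C}Y_0$. This follows from the usual argument: closure under submodules puts the coefficient image in $\mathcal C$, and uniqueness in the universal property forces the inclusion of that image to be onto. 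Then $f\circ\iota_{X_0,Y_0}\circ\mathcal Y_{X_0,Y_0}=0$ gives $f\circ\iota_{X_0,Y_0}=0$ immediately.
\end{itemize}
With either one-line repair the proof is complete.
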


\begin{proof}
The canonical tensor-product logarithmic intertwining operator is \cite[Theorem~6.1(1)]{CMY}, and the displayed universal property is \cite[Theorem~6.1(2)]{CMY}.
\end{proof}

\begin{thm}
\label{thm:CMY-extension}
Assume Theorem~\ref{thm:CMY-direct-limit} applies. Then the following two categories are naturally isomorphic:
\begin{enumerate}[label=\textnormal{(\arabic*)}]
\item vertex operator algebras $(A,Y,\one_A,\omega_A)$ such that:
\begin{itemize}
\item $A$ is a $V$-module in $\IndCMY(\mathcal C)$;
\item $Y_A(v,x)=Y(v_{-1}\one_A,x)$ for $v\in V$; and
\item $\omega_A=L(-2)\one_A(=\omega_{-1}\one_A)$.
\end{itemize}
\item commutative associative algebra objects
\((A,\mu_A,\iota_A)\in\CAlg(\IndCMY(\mathcal C))\) for which $L(0)$ acts semisimply with integral eigenvalues, the eigenvalues are bounded below, and every $L(0)$-eigenspace is finite dimensional.
\end{enumerate}
\end{thm}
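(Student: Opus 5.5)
This statement is the Creutzig--McRae--Yang extension theorem, recorded in the same way as the preceding package results. The plan is therefore to invoke \cite[Theorem~7.5]{CMY} (the direct-limit form of the Huang--Kirillov--Lepowsky correspondence) and to spell out why its hypotheses and conventions match ours. By assumption, Theorem~\ref{thm:CMY-direct-limit} applies, so $\IndCMY(\mathcal C)$ carries the braided tensor structure obtained from the direct-limit bifunctor. Proposition~\ref{prop:CMY-tensor-universal} supplies the universal tensor-product intertwining operator $\mathcal Y_{X_1,X_2}$. This is exactly the input CMY use.

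The two functors are built as follows. Given a vertex operator algebra $A$ as in (1), its vertex operator $Y_A$ restricted to $A\otimes A$ is an intertwining operator of type $\binom{A}{A\,A}$. Here we use that $A$ is a $V$-module via $v\mapsto v_{-1}\one_A$: the Jacobi identity for $A$ specializes to the intertwining Jacobi identity, and the $L(-1)$-derivative property holds because $\omega_A=L(-2)\one_A$. By Proposition~\ref{prop:CMY-tensor-universal}, $Y_A$ corresponds to a unique morphism $\mu_A:A\boxtimes A\to A$. We define $\iota_A:V\to A$ by $v\mapsto v_{-1}\one_A$. Associativity of $\mu_A$ follows from associativity of $Y_A$ transported through the associativity isomorphism. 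Braided commutativity $\mu_A\circ\mathcal R_{A,A}=\mu_A$ follows from skew-symmetry $Y_A(a,x)b=e^{xL(-1)}Y_A(b,-x)a$, since the braiding is defined through $\mathcal R_{P(1)}$ and parallel transport. The unit axioms follow from $Y_A(\one_A,x)=\id$ and the creation property.

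Conversely, given $(A,\mu_A,\iota_A)$ as in (2), we set $Y_A:=\mu_A\circ\mathcal Y_{A,A}$ and $\one_A:=\iota_A(\one)$. Integral $L(0)$-eigenvalues with no logarithms, which are guaranteed by the semisimplicity assumption, show that $Y_A$ is a genuine vertex operator. Commutativity and associativity of $\mu_A$ give locality and associativity of $Y_A$, hence the Jacobi identity. The grading conditions in (2) provide the lower-bounded, finite-dimensional grading required of a VOA. The two constructions are mutually inverse on objects. Morphisms correspond because a $V$-module map commutes with the vertex operators if and only if it intertwines $\mu$, by the uniqueness part of the universal property.

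The main obstacle is verifying associativity and commutativity in the infinite (Ind) setting, where $A$ is not itself in $\mathcal C$. This requires that the associativity isomorphisms of $\IndCMY(\mathcal C)$ be realized by convergent products and iterates of intertwining operators, not only on $\mathcal C$. This is precisely what CMY establish using hypothesis (5), the coefficient-image condition, which is why it appears in Theorem~\ref{thm:CMY-direct-limit}. I would not reprove this analytic step, and would cite \cite[Theorem~7.5]{CMY} for it.
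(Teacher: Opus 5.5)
Your proposal is correct and takes the same approach as the paper, whose proof consists only of citing \cite[Theorem~7.5]{CMY} (see also \cite[Theorem~1.3]{CMY}). Your sketch of the two functors through the universal intertwining operator of Proposition~\ref{prop:CMY-tensor-universal} is the standard Huang--Kirillov--Lepowsky argument that Creutzig--McRae--Yang carry out in the direct-limit setting, so it adds explanation but does not give a different route.
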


\begin{proof}
This is \cite[Theorem~7.5]{CMY}; see also \cite[Theorem~1.3]{CMY}.
\end{proof}

\section{The \texorpdfstring{$c=1$}{c=1} input and the square category}\label{sec:prelim}

\subsection{The classical character reduction}

We use the module-theoretic and finiteness conventions of Subsection~\ref{subsec:module-conventions} and the rank-one lattice notation of Subsection~\ref{subsec:rank-one-lattice}. For a subgroup $H\leq\Aut(V_L)$, we use the fixed-point notation \(V_L^H:=\{v\in V_L\mid hv=v\text{ for all }h\in H\}.\) For the root lattice $A_1=\Z\alpha$, $\langle\alpha,\alpha\rangle=2$, we use the standard tetrahedral, octahedral, and icosahedral rotation subgroups \(A_4,\ S_4,\ A_5\subset SO(3)\subset\SO_3(\C)\cong\Aut(V_{A_1}),\) so that $V_{A_1}^{A_4}$, $V_{A_1}^{S_4}$, and $V_{A_1}^{A_5}$ denote the corresponding fixed-point vertex operator algebras.

For a rational vertex operator algebra $V$, let $\lambda_{\min}$ be the least conformal weight among its irreducible modules and recall \(\widetilde c(V)=c(V)-24\lambda_{\min}.\) The hypothesis $c=\widetilde c=1$ is essential in the character classification used below.

\begin{thm}\label{thm:DJ-modularity}
Let $V$ be simple, rational, $C_2$-cofinite and self-contragredient. Let $M^1,\dots,M^r$ be the irreducible ordinary $V$-modules and write \(Z_i(\tau)=\operatorname{tr}_{M^i}q^{L(0)-c/24}.\) Then every $Z_i$ is a modular function for a congruence subgroup of $SL_2(\Z)$ and \(\sum_{i=1}^r|Z_i(\tau)|^2\) is $SL_2(\Z)$-invariant.
\end{thm}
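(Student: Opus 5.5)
The plan is to assemble this from three known results rather than prove it from scratch: Zhu's modular invariance, Huang's modular tensor category theorem, and the congruence-subgroup theorem of Dong--Lin--Ng. The $SL_2(\Z)$-invariance of $\sum_i|Z_i|^2$ will then reduce to unitarity of the modular data.

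\emph{Step 1 (Zhu).} Since $V$ is rational and $C_2$-cofinite, Zhu's theorem shows that the span of $Z_1,\dots,Z_r$ is invariant under $SL_2(\Z)$. More precisely, there is a representation $\rho:SL_2(\Z)\to GL_r(\C)$ with
\[
Z_i(\gamma\tau)=\sum_j\rho(\gamma)_{ij}Z_j(\tau).
\]
The $Z_i$ are linearly independent as functions of $\tau$ because the modules $M^i$ are pairwise nonisomorphic. Hence $\rho$ is well defined.

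\emph{Step 2 (Huang).} The hypotheses simple, rational, $C_2$-cofinite and self-contragredient are exactly those of Huang's Verlinde and rigidity theorems, together with the standard positive-grading convention implicit in the statement. Under these hypotheses, $\Rep V$ is a modular tensor category, and its $S$-matrix coincides with $\rho(S)$ up to normalization.

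The first consequence concerns $T$. The matrix $\rho(T)$ is diagonal with entries $e^{2\pi i(h_i-c/24)}$. Since $h_i$ and $c$ are rational (Anderson--Moore, Dong--Lin--Ng), these entries are roots of unity and in particular have modulus one.

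The second consequence concerns $S$. The modular tensor category structure gives $S^2=C$, where $C$ is the charge-conjugation permutation, and $\overline{S}=S^{-1}$; thus $S$ is symmetric and unitary. Since $SL_2(\Z)$ is generated by $S$ and $T$, every $\rho(\gamma)$ is unitary.

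Step 2 is where I expect the real obstacle. Unitarity must come from the categorical modular data, and one has to check carefully that Zhu's analytic $S$-transformation agrees with Huang's categorical $S$. Only then does it follow that $\rho(S)$ is unitary and not merely invertible.

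\emph{Step 3 (congruence property).} Modularity for a congruence subgroup is the Dong--Lin--Ng theorem. It states that $\ker\rho$ is a congruence subgroup of level equal to the order of $\rho(T)$. Granting this, each $Z_i$ is a modular function for $\ker\rho$: it is holomorphic on the upper half-plane and has $q$-expansions at the cusps with finitely many negative powers.

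\emph{Step 4 (invariance of $\sum_i|Z_i|^2$).} For any $\gamma\in SL_2(\Z)$, write $Z=(Z_1,\dots,Z_r)^t$. Then
\[
\sum_i|Z_i(\gamma\tau)|^2=\|\rho(\gamma)Z(\tau)\|^2=\|Z(\tau)\|^2,
\]
where the last equality uses the unitarity of $\rho(\gamma)$ from Step 2. This proves the asserted $SL_2(\Z)$-invariance.
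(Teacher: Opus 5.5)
The paper gives no argument of its own here: its proof is the citation \cite[Theorem~2.6]{DJA4}, plus the remark that ``self-dual'' there means $V\cong V'$. You instead reconstruct what lies behind that citation, namely Zhu's modular invariance, Huang's modular tensor category together with his comparison of the analytic $S$-transformation with the categorical $S$-matrix, and the Dong--Lin--Ng congruence theorem. You then reduce the invariance of $\sum_i|Z_i|^2$ to unitarity of $\rho$. This is a legitimate and more transparent route.

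Your Step~2 concern can be resolved. The normalized $S$-matrix of any modular tensor category over $\C$ is unitary, because categorical dimensions in a spherical fusion category over $\C$ are real (Etingof--Nikshych--Ostrik) and twists are roots of unity. The relation $\rho(S)^2=C$ then forces Huang's proportionality constant to be $\pm(\dim\mathcal C)^{-1/2}$.

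Two points must be repaired, however.

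First, the claim in Step~1 that the characters $Z_i$ are linearly independent is false, and your own Step~2 refutes it. Since $S^2=-I$ acts trivially on $\mathbb H$ while $\rho(S)^2=C$, you get $Z_i=Z_{i'}$ whenever $M^{i'}\cong(M^i)'$. Concretely, for a rank-one lattice VOA, $V_{L+\lambda}$ and $V_{L-\lambda}$ with $\lambda$ in the dual lattice and $2\lambda\notin L$ are nonisomorphic but have equal characters. What Zhu actually proves is linear independence of the full trace functions $(v,\tau)\mapsto Z_{M^i}(v,\tau)$. So $\rho$ must be defined on that $r$-dimensional space of conformal blocks; this is also the matrix Huang compares with the categorical one. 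Specializing to $v=\one$ then gives the transformation law you use. Step~4 goes through unchanged, since it only needs some unitary matrix realizing that law.

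Second, Huang's modular tensor category theorem and the Dong--Lin--Ng theorem both require $V$ to be of CFT type. That hypothesis is not among those stated, so it should be added explicitly rather than described as implicit. This costs nothing for the paper, since Theorem~\ref{thm:DJ-modularity} is used only in Theorem~\ref{thm:character-reduction}, where CFT type is assumed.
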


\begin{proof}
This is \cite[Theorem~2.6]{DJA4}. Its hypotheses are rationality, $C_2$-cofiniteness, self-duality, and simplicity. In the terminology of that paper, self-duality means $V\cong V'$ as a $V$-module, which is precisely self-contragredience.
\end{proof}

For a vertex operator algebra $V$, we write \(\ch V:=\operatorname{tr}_V q^{L(0)-c(V)/24}\) for its vacuum character.

\begin{thm}\label{thm:character-reduction}
Let $V$ satisfy the hypotheses of Theorem~\ref{thm:main}. Then there is a positive-definite even rank-one lattice $L$ such that \(\ch V\in \left\{ \ch V_L,\ \ch V_L^+,\ \ch V_{A_1}^{A_4},\ \ch V_{A_1}^{S_4},\ \ch V_{A_1}^{A_5} \right\}\).
\end{thm}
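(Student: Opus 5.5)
The plan is to reduce to the classical character classification at $c=1$. By Theorem~\ref{thm:DJ-modularity}, each $Z_i(\tau)$ is a modular function for a congruence subgroup, and $\sum_i|Z_i|^2$ is $SL_2(\Z)$-invariant. The hypothesis $\widetilde c=c=1$ means $\lambda_{\min}=0$. Since $V$ is of CFT type and self-contragredient, the vacuum module attains this minimum, so no irreducible module has negative conformal weight.

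The key analytic consequence concerns growth. The growth of $\sum_i|Z_i(iy)|^2$ as $y\to0^+$ is governed, through $S$-invariance, by $e^{2\pi\widetilde c/(24y)}$. Hence the asymptotic growth of $\ch V$ is that of a $c=1$ free boson. More precisely, $\eta(\tau)\ch V(\tau)$ is a holomorphic modular form of weight $1/2$ for a congruence subgroup, possibly with multiplier. Its $q$-expansion has nonnegative integer coefficients after a suitable shift: it equals $\sum_n a_nq^{n}$ with $a_n=\dim V_n-\dim V_{n-1}-\cdots$ taken relative to the Virasoro Verma count.

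The next step is to invoke the Serre--Stark theorem. It says that holomorphic weight-$1/2$ modular forms are linear combinations of unary theta series $\sum_n\psi(n)q^{tn^2}$. I would combine this with the constraints that $V_0=\C\one$ and that all coefficients of $\ch V$ are nonnegative integers. This forces $\eta\ch V$ into a finite explicit list. The list comprises theta series of rank-one lattices $\sum_{r}q^{kr^2}$, giving $V_L$. It also comprises the half-combinations $\tfrac12(\sum_r q^{kr^2}+\text{twisted theta})$, which give $V_L^+$, using \eqref{eq:rank-one-character-interface}, since $\eta T(q)$ is itself a theta series with character. Finally, it comprises the three exceptional combinations of characters $\ch L(1,m^2)$ weighted by multiplicities $\dim W_{2m}^{H}$ for $H=A_4,S_4,A_5$. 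This last case is exactly where the classical completeness result for $c=1$ (the Ginsparg/Kiritsis-type list, as used in \cite{DJA4} and \cite{DJplusI,DJplusII}) enters. I would cite it in the form \textquotedblleft a vacuum character of a $c=\widetilde c=1$ strongly rational VOA with the modular properties of Theorem~\ref{thm:DJ-modularity} is one of the five listed\textquotedblright, as established in those papers without using Virasoro complete reducibility. Their argument uses only the character and modular invariance.

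The main obstacle is checking that the cited classification really depends only on the character data delivered by Theorem~\ref{thm:DJ-modularity} together with $\widetilde c=1$. It must not secretly use the decomposition of $V$ into Virasoro highest-weight modules. If that dependence appears, I would redo the step directly. The graded dimensions $\dim V_n$ determine the multiplicities $\mu_m=[V:L(1,m^2)]$ in the Grothendieck sense, since the characters $\ch L(1,h)$ for $h=m^2$ and $h$ non-square are linearly independent. Modularity then forces non-square-type $h=n^2/4$ or generic contributions to vanish or to assemble into theta series. Matching against the $SO(3)$-invariant multiplicities $\dim W_{2m}^{H}$ for finite $H\subset SO(3)$ then leaves cyclic, dihedral, and the three polyhedral cases. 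The cyclic and dihedral cases reproduce $\ch V_L$ and $\ch V_L^+$ respectively.
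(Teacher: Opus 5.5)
Your proposal rests on the same step as the paper's proof, and that step is correct. Theorem~\ref{thm:DJ-modularity}, together with CFT type and $c=\widetilde c=1$, supplies exactly the hypotheses of Dong--Jiang's character theorem \cite[Theorem~2.9]{DJA4}, whose input is the classical $c=1$ completeness result \cite{Kiritsis}, and that citation alone proves the statement.

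The surrounding heuristics should be dropped rather than relied on. First, $\eta\,\ch V=1+(\dim V_1-1)q+\cdots$ need not have nonnegative coefficients: it begins $1-q+\cdots$ for $V_L^+$ with $k\ge2$ and for the polyhedral orbifolds. Second, your fallback argument assumes, rather than proves, that the square multiplicities have the form $\dim W_{2m}^H$. It also cannot produce $\ch V_L$ or $\ch V_L^+$ for non-square $k$, because their Virasoro content is generic.
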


\begin{proof}
Theorem~\ref{thm:DJ-modularity} supplies exactly the two modularity assumptions in \cite[Theorem~2.9]{DJA4}. Since $V$ is of CFT type and $c=\widetilde c=1$, that theorem gives the five vacuum characters displayed above. Its character-theoretic input is the classical $c=1$ completeness result of Kiritsis \cite{Kiritsis}. As Dong and Jiang note immediately after Theorem~2.9, their Theorem~2.6 supplies the required modularity.
\end{proof}

\subsection{The square Virasoro category}

Let $L(1,h)$ denote the irreducible highest-weight module for the Virasoro vertex operator algebra $L(1,0)$ of central charge one and highest weight $h$. We retain the notation $X_m=L(1,m^2)$ fixed in Subsection~\ref{subsec:CJORY-package}. Recall from Corollary~\ref{cor:McRae-A1-SO3} that \(\Csq:=\mathcal C_{V_{A_1}}=\left\{\text{finite direct sums of }X_m,\ m\ge0\right\}\) is a full vertex tensor subcategory of $\Ovir$. Thus, for $A,B\in\Csq$ and $z\in\C^\times$, the $P(z)$-tensor product in $\Csq$ is the one computed in $\Ovir$, with the corresponding unit, parallel-transport, associativity, and braiding isomorphisms obtained by restriction.

Set $G:=\PSLtwo\cong\SO_3(\C)$. We write $W_{2m}$ for the irreducible algebraic representation of $G$ of highest weight $2m$, equivalently $\Sym^{2m}(\C^2)$, of dimension $2m+1$. All tensor-category terminology and the conventions for $\boxtimes_{P(z)}$ and $\boxtimes=\boxtimes_{P(1)}$ are those of Section~\ref{sec:categorical}.

\begin{cor}
\label{thm:square-category}
There is a symmetric vertex tensor equivalence \(\Phi:\Rep G\overset{\sim}{\longrightarrow}\Csq, W_{2m}\longmapsto X_m\). For every $z\in\C^\times$ and $M,N\in\Rep G$, its vertex-tensor constraints are natural isomorphisms \(J_{P(z);M,N}:\Phi(M)\boxtimes_{P(z)}\Phi(N)\overset{\sim}{\longrightarrow}\Phi(M\otimes N)\) compatible with the unit, parallel transport, associativity, and braiding. In particular, for all $a,b\ge0$, \(X_a\boxtimes X_b\cong\bigoplus_{j=|a-b|}^{a+b}X_j\).
\end{cor}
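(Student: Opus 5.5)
The corollary follows by composing Corollary~\ref{cor:McRae-A1-SO3} with the restriction equivalence of Remark~\ref{rmk:compact-complexification}, and then reading off the fusion rules from Clebsch--Gordan. First, Corollary~\ref{cor:McRae-A1-SO3} applies Theorem~\ref{thm:McRae-VOA-orbifold}(ii) with $U=V_{A_1}$, $K=SO(3)$ and ambient category $\Ovir$. That application is legitimate because every multiplicity module $X_m$ lies in $\Ovir$ by \eqref{eq:square-C1-cofinite}, and $\Ovir$ is a vertex tensor category by Proposition~\ref{prop:CJORY-package}(iv). The output is a symmetric vertex tensor equivalence
\[
\Phi_K:\Rep_{\mathrm{fd}}SO(3)\overset{\sim}{\longrightarrow}\Csq,\qquad W_{2m}\longmapsto X_m,
\]
whose constraints $J_{P(z)}$ are natural isomorphisms for every $z\in\C^\times$, compatible with unit, parallel transport, associativity and braiding. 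These $P(z)$-tensor products are the ones computed in $\Ovir$, as the theorem states.

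Next, since $SO(3)$ is a maximal compact subgroup of $G=\SO_3(\C)\cong\PSLtwo$, restriction $\Res^G_K:\Rep G\to\Rep_{\mathrm{fd}}SO(3)$ is a symmetric tensor equivalence. It sends the algebraic irreducible $W_{2m}=\Sym^{2m}(\C^2)$ to the $(2m+1)$-dimensional irreducible of $SO(3)$. By Remark~\ref{rmk:compact-complexification}, we transport McRae's vertex tensor structure along $\Res^G_K$ and set $\Phi:=\Phi_K\circ\Res^G_K$. The constraints $J_{P(z);M,N}$ for $\Phi$ are those of $\Phi_K$ evaluated at $\Res M,\Res N$. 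Naturality and all the coherence compatibilities are therefore inherited, and the induced braided equivalence is symmetric because both constituent functors are.

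The only point needing care is the identification of the irreducibles: $\Res^G_K W_{2m}$ must be the module labelled $W_{2m}$ in Corollary~\ref{cor:McRae-A1-SO3}. This holds because $SO(3)$ has, up to isomorphism, exactly one irreducible representation in each odd dimension. Consequently $\Phi(W_{2m})\cong X_m$.

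Finally, the fusion rule follows from Corollary~\ref{cor:McRae-VOA-fusion}, or directly from the constraint $J_{P(1)}$. The classical Clebsch--Gordan decomposition gives
\[
W_{2a}\otimes W_{2b}\cong\bigoplus_{j=|a-b|}^{a+b}W_{2j}.
\]
Applying $\Phi$ and using $J_{P(1)}$ together with additivity of $\Phi$ then yields $X_a\boxtimes X_b\cong\bigoplus_{j=|a-b|}^{a+b}X_j$.

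I expect no genuine obstacle: every step is bookkeeping, and the substantive input (surjectivity of the intertwiner maps and the independence of the ambient category) is already contained in Theorem~\ref{thm:McRae-VOA-orbifold}(ii).
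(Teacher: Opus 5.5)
Your proposal is correct and matches the paper's proof: you compose the compact-orbifold equivalence of Corollary~\ref{cor:McRae-A1-SO3} with the restriction equivalence of Remark~\ref{rmk:compact-complexification}, then read off $X_a\boxtimes X_b$ from the Clebsch--Gordan rule. Your added check that $\Res^G_K W_{2m}$ is the unique $(2m+1)$-dimensional irreducible of $SO(3)$ spells out a step the paper leaves implicit.
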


\begin{proof}
Corollary~\ref{cor:McRae-A1-SO3} gives a symmetric vertex tensor equivalence \(\Rep_{\mathrm{fd}}SO(3)\overset{\sim}{\longrightarrow}\Csq\) sending \(W_{2m}\) to \(X_m\). Composing with the complexification equivalence of Remark~\ref{rmk:compact-complexification} gives the asserted symmetric vertex tensor equivalence from $\Rep G$. The Clebsch--Gordan rule \(W_{2a}\otimes W_{2b}\cong\bigoplus_{j=|a-b|}^{a+b}W_{2j}\) then gives the displayed tensor-product decomposition.
\end{proof}

\subsection{The full \texorpdfstring{$C_1$}{C1}-cofinite Virasoro category and direct limits}

We use the notation and results of Subsection~\ref{subsec:CJORY-package}. In particular, the universal Virasoro vertex operator algebra at central charge one is $L(1,0)$, and $\Ovir$ is the ambient vertex tensor category of grading restricted $C_1$-cofinite generalized $L(1,0)$-modules. By \eqref{eq:square-C1-cofinite}, every simple object $X_m$ of $\Csq$ belongs to $\Ovir$ and is $C_1$-cofinite.

The VOA extensions considered below need not be finite-length Virasoro modules, so we shall work in a direct-limit completion of $\Csq$. If $A,B\in\Csq$ and a logarithmic intertwining operator takes values in an arbitrary generalized module $X$, one cannot invoke the ambient tensor-product universal property before placing the relevant target inside $\Ovir$. The coefficient-image lemma below resolves this point: it replaces $X$ by the smallest Virasoro submodule generated by the coefficients of the logarithmic intertwining operator and proves that this submodule actually belongs to $\Csq$. This is the technical input needed later for the Creutzig--McRae--Yang direct-limit completion.

\begin{lem}\label{lem:coeff-image}
Let $A,B\in\Csq$, let $X$ be any generalized $L(1,0)$-module, and let $\mathcal Y$ be a logarithmic intertwining operator of type \(\binom{X}{A B}.\) Then \(I:=\operatorname{Im}\mathcal Y\) belongs to $\Csq$. More precisely, $I\in\Ovir$, and for every $z\in\C^\times$ the corresponding $P(z)$-intertwining map induces an epimorphism \(A\boxtimes^{\Ovir}_{P(z)}B\twoheadrightarrow I.\)
\end{lem}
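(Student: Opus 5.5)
The plan has three steps: show that $I$ lies in $\Ovir$, realize $I$ as a quotient of the ambient tensor product, and then use that the tensor product of $A$ and $B$ lies in $\Csq$ and is semisimple.

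\emph{Step 1: $I\in\Ovir$.} Since $A,B\in\Csq$ are finite direct sums of the $X_m$, each of which is $C_1$-cofinite by \eqref{eq:square-C1-cofinite}, both $A$ and $B$ are $C_1$-cofinite grading-restricted generalized $L(1,0)$-modules. Proposition~\ref{prop:CMY-coefficient-image} then shows that $I=\operatorname{Im}\mathcal Y$ is a $C_1$-cofinite grading-restricted generalized $L(1,0)$-module. I would also check from the Jacobi identity that $I$ is an $L(1,0)$-submodule of $X$: modes $v_n$ applied to coefficients of $\mathcal Y(a,x)b$ are combinations of coefficients of $\mathcal Y(v_ja,x)b$ and $\mathcal Y(a,x)v_jb$. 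Since $I$ is lower bounded and $C_1$-cofinite, Proposition~\ref{prop:CJORY-package}(iii) gives $I\in\mathcal O_1^{\mathrm{fin}}=\Ovir$.

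\emph{Step 2: the epimorphism.} The codomain of $\mathcal Y$ can be restricted to $I$, giving a logarithmic intertwining operator $\mathcal Y'$ of type $\binom{I}{A\,B}$ with $A,B,I\in\Ovir$. Fix $z$ and a branch of $\log z$, and evaluate $\mathcal Y'$ at $x=e^{\log z}$ to obtain a $P(z)$-intertwining map $A\otimes B\to\overline I$. The universal property of $\boxtimes^{\Ovir}_{P(z)}$ yields a morphism $f:A\boxtimes^{\Ovir}_{P(z)}B\to I$. To see that $f$ is surjective, note that by \eqref{eq:Pz-inverse} the operator $\mathcal Y'$ is recovered from $\overline f\circ\boxtimes_{P(z)}$ by $L(0)$-conjugation. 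Conjugation by $(e^{\log z}/x)^{\pm L(0)}$ preserves the $L(0)$-stable submodule $\operatorname{Im} f$, and homogeneous components of $\overline{\operatorname{Im} f}$ lie in $\operatorname{Im} f$. Every coefficient of $\mathcal Y'$ therefore lies in $\operatorname{Im} f$, so $\operatorname{Im} f=I$.

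\emph{Step 3: $I\in\Csq$.} By Theorem~\ref{thm:McRae-VOA-orbifold}(ii), applied via Corollary~\ref{cor:McRae-A1-SO3}, $\Csq$ is closed under $\boxtimes^{\Ovir}_{P(z)}$. Corollary~\ref{thm:square-category} identifies the tensor product as a finite direct sum of the $X_j$. So $I$ is a quotient of a finite direct sum of simple modules $X_j$. Such a quotient is semisimple, hence a direct sum of some of the $X_j$, and therefore $I\in\Csq$.

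\emph{Main obstacle.} I expect the delicate point to be Step 2: matching logarithmic intertwining operators with $P(z)$-intertwining maps, and verifying that the image of $f$ exhausts all coefficients, including the $\log$ components. The argument must use the full coefficient span from $L(0)$-conjugation, not just the values at $x=z$. I would handle this by working one generalized weight space at a time, where $L(0)$ acts by finite Jordan blocks and conjugation is a polynomial in $L(0)-h$ times a scalar power of $z$.
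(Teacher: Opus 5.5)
Your proposal is correct and follows the paper's proof essentially step for step: Proposition~\ref{prop:CMY-coefficient-image} places $I$ in $\Ovir$; the universal property of $A\boxtimes^{\Ovir}_{P(z)}B$, applied to the corestricted operator, together with \eqref{eq:Pz-inverse} gives $\mathcal Y^I=f\circ\mathcal Y_{\boxtimes}$ and hence surjectivity of $f$; and closure of $\Csq$ under $P(z)$-tensor products plus semisimplicity puts the quotient $I$ in $\Csq$. Your extra checks are more explicit versions of steps the paper takes for granted, namely that $I$ is a submodule via the commutator formula and that the $\log$ terms can be handled one generalized weight space at a time.
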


\begin{proof}
By \eqref{eq:square-C1-cofinite}, the simple summands $X_m$ are $C_1$-cofinite grading-restricted $L(1,0)$-modules. Hence the finite direct sums $A$ and $B$ are $C_1$-cofinite grading-restricted $L(1,0)$-modules. Proposition~\ref{prop:CMY-coefficient-image} therefore gives \(I=\operatorname{Im}\mathcal Y\in\Ovir.\) Since all coefficients of $\mathcal Y(a,x)b$ lie in $I$, the same formal series corestricts to a logarithmic intertwining operator \(\mathcal Y^I:A\otimes B\longrightarrow I\{x\}[\log x]\), and, by the definition of $\operatorname{Im}\mathcal Y$, its coefficients span $I$.

Fix $z\in\C^\times$ and a branch of $\log z$, and let \(I_{\mathcal Y^I}:A\otimes B\longrightarrow\overline I\) be the corresponding $P(z)$-intertwining map. Since $I\in\Ovir$, the universal property of $A\boxtimes^{\Ovir}_{P(z)}B$ gives a unique $L(1,0)$-module morphism \(f:A\boxtimes^{\Ovir}_{P(z)}B\longrightarrow I\) such that \(I_{\mathcal Y^I}=\overline f\circ\boxtimes_{P(z)}\). Applying the inverse correspondence \eqref{eq:Pz-inverse} between $P(z)$-intertwining maps and logarithmic intertwining operators gives \(\mathcal Y^I=f\circ\mathcal Y_{\boxtimes}\). Therefore every coefficient of $\mathcal Y^I(a,x)b$ lies in $\operatorname{Im}f$. Since these coefficients span $I$, the map $f$ is surjective.

Finally, Corollary~\ref{cor:McRae-A1-SO3} says that $\Csq$ is a full vertex tensor subcategory of $\Ovir$, so $A\boxtimes^{\Ovir}_{P(z)}B$ is an object of $\Csq$. Equivalently, by Corollary~\ref{thm:square-category}, it is a finite direct sum of simple square modules $X_m$. Hence it is a semisimple $L(1,0)$-module, and every quotient is a finite direct sum of the same simple types. Since $I$ is a quotient of this source, \(I\in\Csq.\)
\end{proof}

\begin{lem}\label{lem:square-ambient-Serre}
The full subcategory $\Csq\subset\Ovir$ is closed under ambient $L(1,0)$-submodules and quotients, and the inclusion $\Csq\hookrightarrow\Ovir$ is exact.
\end{lem}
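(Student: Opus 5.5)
We argue directly from the semisimplicity of the objects of $\Csq$, using the abelian structure of $\Ovir$ from Proposition~\ref{prop:CJORY-package}(iii). Let $A\in\Csq$, so $A\cong\bigoplus_{i=1}^n X_{m_i}$ with each $X_{m_i}=L(1,m_i^2)$ irreducible. Let $S\subseteq A$ be an $L(1,0)$-submodule. Then $S$ is a submodule of a semisimple module, so it is semisimple. Every simple submodule of $A$ is isomorphic to some $X_{m_i}$, by the standard Jordan--H\"older/Schur argument applied to the projections onto the summands. Since $A$ has finite length $n$, $S$ is a finite direct sum of simple modules of the types $X_{m_i}$. Hence $S\in\Csq$.

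For a quotient $A/S$, complete reducibility of $A$ gives a complement $S'$ with $A=S\oplus S'$. Then $A/S\cong S'$ is again a submodule, and the submodule case shows $A/S\in\Csq$. These statements involve only $L(1,0)$-submodules of $A$ and their quotients computed in the ambient category of generalized $L(1,0)$-modules. Every such submodule is automatically grading restricted and $C_1$-cofinite, because it has finite length with composition factors $L(1,m^2)$ and these lie in $\Ovir$ by \eqref{eq:square-C1-cofinite} and Proposition~\ref{prop:CJORY-package}(iii). So submodules and quotients taken in $\Ovir$ coincide with those above.

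For exactness, kernels and cokernels of morphisms in $\Csq$ are computed as ambient submodules and quotients in $\Ovir$. By the closure just proved, they lie in $\Csq$. Hence $\Csq$ is an abelian full subcategory whose kernels and cokernels agree with those of $\Ovir$. A short exact sequence in $\Csq$ is therefore exact as a sequence of $L(1,0)$-modules, and the inclusion functor is exact.

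The only step that needs any care is the identification of simple submodules of $A$ with the $X_{m_i}$. Take a simple $T\subseteq A$. Some projection $T\to X_{m_i}$ is nonzero, and it is an isomorphism by Schur's lemma since both modules are irreducible. The weights $m_i^2$ are distinct for distinct $m_i$, so isotypic components are well defined. No Ext-vanishing in $\Ovir$ is needed, because everything happens inside the fixed semisimple object $A$.
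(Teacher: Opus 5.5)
Your proof is correct and follows essentially the same route as the paper. Both arguments use the semisimplicity of the finite direct sum of the simple modules $X_m$ to make every ambient submodule a direct summand of the same simple types, and you add the explicit projection-plus-Schur step that the paper leaves implicit. Exactness then follows in both because kernels and cokernels computed in $\Ovir$ already lie in $\Csq$.
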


\begin{proof}
Let \(M=\bigoplus_{m\in F}X_m^{\oplus n_m}\in\Csq,F\subset\Z_{\ge0}\text{ finite}.\) As an ordinary $L(1,0)$-module, $M$ is a finite direct sum of simple modules and is therefore semisimple. Every ambient $L(1,0)$-submodule of $M$ is consequently a direct summand and a finite direct sum of simple modules isomorphic to some of the $X_m$. The same is true of every ambient quotient. Hence $\Csq$ is closed under ambient submodules and quotients. Now let $f:M\to N$ be a morphism in $\Csq$. Its kernel in $\Ovir$ is an ambient submodule of $M$, and its cokernel in $\Ovir$ is an ambient quotient of $N$; hence both belong to $\Csq$. Since $\Csq$ is a full subcategory of $\Ovir$, these ambient kernel and cokernel objects satisfy the same universal properties in $\Csq$. Thus the inclusion $\Csq\hookrightarrow\Ovir$ preserves kernels and cokernels and is therefore exact.
\end{proof}

We use the notation of Section~\ref{sec:categorical}. In particular, $\IndCMY(\Csq)$ denotes the Creutzig--McRae--Yang direct-limit completion of $\Csq$, and $\CAlg(\IndCMY(\Csq))$ denotes its category of commutative associative algebra objects. Once the tensor structure on $\IndCMY(\Csq)$ is established below, its tensor unit is $L(1,0)$, so the unit of an algebra object has the form \(\iota_A:L(1,0)\longrightarrow A\).
\begin{thm}\label{thm:CMY-square}
The category $\Csq$ satisfies the hypotheses of Theorem~\ref{thm:CMY-direct-limit}. Consequently, $\IndCMY(\Csq)$ carries the $P(z)$-vertex tensor category and braided tensor category structures constructed by Creutzig--McRae--Yang, extending those on $\Csq$.

Moreover, by Theorem~\ref{thm:CMY-extension}, the following two categories are isomorphic:
\begin{enumerate}[label=\textnormal{(\arabic*)}]
\item Vertex operator algebras $(A,Y_A,\one_A,\omega_A)$ such that
\begin{itemize}
\item $A$ is an $L(1,0)$-module in $\IndCMY(\Csq)$;
\item writing $Y_A^{L(1,0)}$ for the given $L(1,0)$-module vertex
operator on $A$, \(Y_A^{L(1,0)}(v,x)=Y_A(v_{-1}\one_A,x),v\in L(1,0);\)
\item \(\omega_A=L(-2)\one_A=\omega_{-1}\one_A.\)
\end{itemize}

\item Commutative associative algebra objects \((A,\mu_A,\iota_A)\in\CAlg\bigl(\IndCMY(\Csq)\bigr)\)
such that \(A=\bigoplus_{n\in\Z}A_n,A_n=\{a\in A\mid L(0)a=na\},\) with $A_n=0$ for all sufficiently negative $n$ and $\dim_{\C}A_n<\infty$ for every $n\in\Z$.
\end{enumerate}
\end{thm}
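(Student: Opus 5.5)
The plan is to verify hypotheses (1)--(5) of Theorem~\ref{thm:CMY-direct-limit} for $V=L(1,0)$ and $\mathcal C=\Csq$ one at a time, and then to read off the extension correspondence directly from Theorem~\ref{thm:CMY-extension}.

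Hypothesis (1) holds because $L(1,0)=X_0\in\Csq$. For (2), finite direct sums are built into the definition of $\Csq$. Closure under submodules and quotients is Lemma~\ref{lem:square-ambient-Serre}: every object is a finite direct sum of the simple modules $X_m$, so every submodule or quotient is again such a sum. For (3), each $X_m$ is cyclic on its highest-weight vector, so a finite direct sum of them is generated by finitely many vectors. For (4), Corollary~\ref{cor:McRae-A1-SO3} and Corollary~\ref{thm:square-category} show that $\Csq$ is a full vertex tensor subcategory of $\Ovir$. It therefore carries $P(z)$-tensor products, together with unit, parallel-transport, associativity and braiding isomorphisms, and an induced braided (indeed symmetric) tensor structure. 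The standing assumptions of \cite[Assumption~4.1]{CMY} are also met: objects are grading-restricted, and $L(0)$ acts semisimply with eigenvalues in $\Z$. I would make this last point explicit.

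Hypothesis (5) is the only genuinely new input, and Lemma~\ref{lem:coeff-image} supplies it. That lemma applies to any generalized $L(1,0)$-module $X$, in particular to any $X\in\IndCMY(\Csq)$, and gives $\operatorname{Im}\mathcal Y\in\Csq$ for every logarithmic intertwining operator of type $\binom{X}{A\,B}$ with $A,B\in\Csq$. The one subtlety is that the tensor product used by CMY must agree with the one used in that lemma. The lemma computes the tensor product in $\Ovir$, whereas CMY use the structure on $\Csq$ itself. These coincide because $\Csq$ is a full vertex tensor subcategory of $\Ovir$, closed under $P(z)$-products (Theorem~\ref{thm:McRae-VOA-orbifold}(ii)). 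Moreover, a morphism from the $\Ovir$-product into $I\in\Csq$ is the same thing as a morphism out of the $\Csq$-product. Theorem~\ref{thm:CMY-direct-limit} therefore applies, and it gives the vertex and braided tensor structures on $\IndCMY(\Csq)$ extending those on $\Csq$.

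For the final assertion, I would specialize Theorem~\ref{thm:CMY-extension} to $V=L(1,0)$ and translate its conditions. Condition (1) is stated verbatim, with $\omega_{-1}\one_A=L(-2)\one_A$. For condition (2), semisimplicity of $L(0)$ with integral eigenvalues is exactly the decomposition $A=\bigoplus_{n\in\Z}A_n$. Boundedness below becomes $A_n=0$ for sufficiently negative $n$, and the finite-dimensionality requirement carries over unchanged.

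I expect the main obstacle to be the bookkeeping that identifies the tensor product of $\Csq$ with the ambient one in $\Ovir$. This has to be done without tacitly assuming independence, and the paper explicitly warns against such an identification being made unjustified. The McRae independence statement in Theorem~\ref{thm:McRae-VOA-orbifold}(ii), together with fullness, is what I would invoke to close that gap.
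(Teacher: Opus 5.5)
Your proposal is correct and follows the paper's proof essentially step by step. You verify hypotheses (1)--(4) via $X_0\in\Csq$, Lemma~\ref{lem:square-ambient-Serre}, cyclicity of the $X_m$, and Corollary~\ref{cor:McRae-A1-SO3}, obtain hypothesis (5) from Lemma~\ref{lem:coeff-image}, and then specialize Theorem~\ref{thm:CMY-extension}, exactly as the paper does; your extra remark reconciling the $\Csq$- and $\Ovir$-tensor products is already built into the proof of Lemma~\ref{lem:coeff-image}, which uses that $\Csq$ is a full vertex tensor subcategory of $\Ovir$.
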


\begin{proof}
We verify the five hypotheses of Theorem~\ref{thm:CMY-direct-limit}.

For condition~(1), the tensor unit \(X_0=L(1,0)\) is an object of $\Csq$ by definition.

For condition~(2), Lemma~\ref{lem:square-ambient-Serre} shows that $\Csq$ is closed under $L(1,0)$-submodules and quotients in the ambient generalized-module category. Closure under finite direct sums is part of the definition of $\Csq$.

For condition~(3), every object of $\Csq$ is a finite direct sum of the irreducible highest-weight modules \(X_m=L(1,m^2),m\ge0.\) Each $X_m$ is generated by a highest-weight vector, and hence every object of $\Csq$ is finitely generated.

For condition~(4), Corollary~\ref{cor:McRae-A1-SO3} already shows that $\Csq=\mathcal C_{V_{A_1}}$ is a full vertex tensor subcategory of $\Ovir$. Hence $\Csq$ carries the required $P(z)$-vertex tensor and braided tensor category structures, and these are precisely the ambient Virasoro structures restricted from $\Ovir$.

It remains to verify condition~(5). Let $A,B\in\Csq$, let $X\in\IndCMY(\Csq)$, and let $\mathcal Y$ be a logarithmic intertwining operator of type \(\binom{X}{A B}.\) By Lemma~\ref{lem:coeff-image}, \(\operatorname{Im}\mathcal Y\in\Csq,\) which is exactly condition~(5) of Theorem~\ref{thm:CMY-direct-limit}.

All five hypotheses are therefore satisfied, so Theorem~\ref{thm:CMY-direct-limit} gives the asserted $P(z)$-vertex tensor and braided tensor category structures on $\IndCMY(\Csq)$, extending those on $\Csq$.

Finally, Theorem~\ref{thm:CMY-extension} identifies the two categories appearing in the statement. This proves the asserted extension--algebra correspondence.
\end{proof}

\begin{lem}
\label{lem:ind-equivalence}
Let $G$ be a reductive algebraic group over $\C$, let $V$ be a vertex operator algebra, and let $\mathcal C$ be a semisimple full vertex tensor subcategory of generalized $V$-modules such that every object of $\mathcal C$ is a finite direct sum of simple objects. Assume that $\mathcal C$ satisfies the hypotheses of Theorem~\ref{thm:CMY-direct-limit}, and let \(\Psi:\Rep G\overset{\sim}{\longrightarrow}\mathcal C\) be a symmetric vertex tensor equivalence.

Then $\Psi$ extends to an exact cocontinuous symmetric monoidal equivalence \(\widehat\Psi:\Rep^{\mathrm{rat}}G\overset{\sim}{\longrightarrow}\IndCMY(\mathcal C).\) More explicitly, choose representatives ${S_\lambda}(\lambda\in\Lambda)$ of the isomorphism classes of simple objects of $\Rep G$ and put $ T_\lambda:=\Psi(S_\lambda). $ For $T\in\mathcal C$ and a complex vector space $E$, let $T\otimes E$ denote the multiplicity-space tensor product, with $V$ acting only on the first factor; this is not the vertex-tensor product $\boxtimes$. Then, for every $M\in\Rep^{\mathrm{rat}}G$, \(M\cong\bigoplus_{\lambda\in\Lambda}S_\lambda\otimes\Hom_G(S_\lambda,M),\) and \(\widehat\Psi(M)\cong\bigoplus_{\lambda\in\Lambda}T_\lambda\otimes\Hom_G(S_\lambda,M).\)
\end{lem}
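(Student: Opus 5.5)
The plan is to define $\widehat\Psi$ by the displayed formula, check that it lands in $\IndCMY(\mathcal C)$ and is an equivalence of abelian categories, and then transport the monoidal and symmetric structure along direct limits.

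\emph{Step 1: semisimple decompositions on both sides.} First, every $M\in\Rep^{\mathrm{rat}}G$ is the union of its finite-dimensional rational submodules. Each of these is completely reducible because $G$ is reductive, so $M$ is semisimple and equals the sum of its isotypic components. The canonical evaluation map $\bigoplus_\lambda S_\lambda\otimes\Hom_G(S_\lambda,M)\to M$ is then an isomorphism by Schur's lemma ($\End_G(S_\lambda)=\C$).

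On the other side, take $X\in\IndCMY(\mathcal C)$. By \cite[Proposition~4.6]{CMY}, $X$ is the directed union of its submodules lying in $\mathcal C$, and each of these is a finite direct sum of the $T_\lambda$. A union of semisimple submodules is semisimple, so $X$ is a sum of simple submodules isomorphic to various $T_\lambda$. Moreover, $\Hom_V(T_\lambda,T_\mu)=\delta_{\lambda\mu}\C$, because $\Psi$ is fully faithful. The analogous evaluation map $\bigoplus_\lambda T_\lambda\otimes\Hom_V(T_\lambda,X)\to X$ is therefore an isomorphism. The one point to check is that $\Hom_V(T_\lambda,-)$ commutes with the directed union. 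This holds because $T_\lambda$ is finitely generated (hypothesis (3)), so every morphism from $T_\lambda$ factors through some finite stage.

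\emph{Step 2: the functor and the equivalence.} Define
\[
\widehat\Psi(M):=\varinjlim_{M_0\subseteq M,\ \dim M_0<\infty}\Psi(M_0),
\]
with the limit taken in weak $V$-modules. Since $\Psi$ is additive and preserves simples, this is isomorphic to $\bigoplus_\lambda T_\lambda\otimes\Hom_G(S_\lambda,M)$, which lies in $\IndCMY(\mathcal C)$ as a union of finite sums. On morphisms, $\widehat\Psi$ acts through the multiplicity spaces.

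With this description, $\widehat\Psi$ is fully faithful: both Hom spaces equal $\prod_\lambda\Hom_\C\bigl(\Hom_G(S_\lambda,M),\Hom_G(S_\lambda,N)\bigr)$. It is essentially surjective by Step 1, with multiplicity spaces $\Hom_V(T_\lambda,X)$. It is exact and cocontinuous, because on both sides it is the functor $\{E_\lambda\}\mapsto\bigoplus_\lambda(\text{simple})\otimes E_\lambda$ from families of vector spaces, which preserves all colimits and exact sequences.

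\emph{Step 3: monoidal and symmetric structure (the main obstacle).} By construction, $\boxtimes_{\IndCMY(\mathcal C)}$ is the direct limit of $X_0\boxtimes_{\mathcal C}Y_0$ over $I_X\times I_Y$. Likewise, $M\otimes N=\varinjlim M_0\otimes N_0$ over finite-dimensional submodules, and $\widehat\Psi$ maps the finite-dimensional submodules of $M$ cofinally onto $I_{\widehat\Psi(M)}$. We therefore define
\[
\widehat J_{M,N}:=\varinjlim J_{M_0,N_0}:\widehat\Psi(M)\boxtimes\widehat\Psi(N)\longrightarrow\widehat\Psi(M\otimes N).
\]
Naturality of $J$ makes this compatible with the transition maps, and a directed colimit of isomorphisms is an isomorphism.

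The coherence conditions are what require care. For the associativity, unit and braiding isomorphisms on $\IndCMY(\mathcal C)$, we will use that Theorem~\ref{thm:CMY-direct-limit} constructs them as direct limits of those on $\mathcal C$, together with the fact that the inclusion $\mathcal C\hookrightarrow\IndCMY(\mathcal C)$ is braided tensor. Each coherence diagram for $\widehat J$ then restricts, on finite stages, to the corresponding diagram for $J$, which commutes because $\Psi$ is symmetric. Since every element lies in a finite stage, the diagram commutes in the limit. The same restriction argument shows that the braiding is carried to the flip, so $\widehat\Psi$ is symmetric.

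The delicate point is matching the index systems. The CMY associator on triple products is defined via the canonical intertwining operators of Proposition~\ref{prop:CMY-tensor-universal}, so we will verify the compatibility through the universal property: two morphisms out of $X\boxtimes Y$ agree if they agree on the canonical intertwining operator. That check reduces to the finite stages.
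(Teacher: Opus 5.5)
Your proposal is correct and follows essentially the same route as the paper. Both arguments use semisimple isotypic decompositions on both sides, with finite generation of the $T_\lambda$ letting $\Hom$ pass through the directed union; both define the Ind-extension by the multiplicity-space formula; and both take the monoidal constraint as the direct limit of the $J_{M_0,N_0}$, with coherence checked on finite stages. Your remark that the images of the $\Psi(M_0)$ are cofinal in $I_{\widehat\Psi(M)}$ makes explicit a point the paper uses silently when it writes $\widehat\Psi(M)\boxtimes\widehat\Psi(N)$ as a limit over pairs $(M_0,N_0)$.
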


\begin{proof}
We first identify the two categories in the statement as ordinary Ind-completions.

Since $G$ is reductive over $\C$, every finite-dimensional rational $G$-module is semisimple; this is the usual complete-reducibility theorem for reductive algebraic groups over a field of characteristic zero (see, for example, \cite{SpringerLAG}). If $M\in\Rep^{\mathrm{rat}}G$, then every vector of $M$ is contained in a finite-dimensional rational $G$-submodule and hence in a finite direct sum of simple rational $G$-modules. Thus $M$ is the sum of its simple rational $G$-submodules. A module that is a sum of simple submodules is semisimple, so $M$ is a direct sum of simple rational $G$-modules. Equivalently, the canonical evaluation map \(\bigoplus_{\lambda\in\Lambda} S_\lambda\otimes\Hom_G(S_\lambda,M) \longrightarrow M, s\otimes f\longmapsto f(s)\), is an isomorphism. Consequently \(\Rep^{\mathrm{rat}}G\simeq\Ind(\Rep G).\) We next prove the analogous statement for $\mathcal C$. Let $X\in\IndCMY(\mathcal C)$. By the definition of the CMY completion, $X$ is the directed union of its submodules belonging to $\mathcal C$. Consider the evaluation morphism
\[
\operatorname{ev}_X:
\bigoplus_{\lambda\in\Lambda}
T_\lambda\otimes
\Hom_{\IndCMY(\mathcal C)}(T_\lambda,X)
\longrightarrow X,
\qquad
t\otimes f\longmapsto f(t).
\]
It is surjective: every $x\in X$ belongs to some $X_0\subseteq X$ with $X_0\in\mathcal C$, and $X_0$ is a finite direct sum of the simple objects $T_\lambda$.

It is also injective. Any relation in the source involves only finitely many vectors and finitely many morphisms $T_\lambda\to X$. Because the objects $T_\lambda$ are finitely generated and $X$ is a directed union of its $\mathcal C$-subobjects, the images of these finitely many morphisms are contained in a common subobject $X_0\in\mathcal C$. The relation therefore already occurs in \(\bigoplus_{\lambda\in\Lambda} T_\lambda\otimes\Hom_{\mathcal C}(T_\lambda,X_0) \longrightarrow X_0\), which is injective by the semisimplicity of $\mathcal C$ and Schur's lemma. Hence \(X\cong\bigoplus_{\lambda\in\Lambda}T_\lambda\otimes\Hom_{\IndCMY(\mathcal C)}(T_\lambda,X)\). Thus \(\IndCMY(\mathcal C)\simeq\Ind(\mathcal C).\) In particular, for any complex vector space $E$,
\[
T\otimes E
=
\bigcup_{\substack{E_0\subseteq E\\ \dim E_0<\infty}}
T\otimes E_0
\in\IndCMY(\mathcal C),
\]
since $T\otimes E_0\cong T^{\oplus\dim E_0}\in\mathcal C$. The equivalence $\Psi:\Rep G\to\mathcal C$ therefore has its usual Ind-extension \(\widehat\Psi:\Ind(\Rep G)\longrightarrow\Ind(\mathcal C).\) Under the preceding identifications, this is precisely \(\widehat\Psi(M)=\bigoplus_{\lambda\in\Lambda}T_\lambda\otimes\Hom_G(S_\lambda,M),\) with morphisms acting naturally on the multiplicity spaces. It follows immediately from the isotypic decompositions on the two sides that $\widehat\Psi$ is fully faithful and essentially surjective. Hence it is an equivalence. Both sides are semisimple abelian categories, so $\widehat\Psi$ is exact; and, being an equivalence between cocomplete categories, it preserves all small colimits. Thus it is cocontinuous.

It remains only to extend the tensor structure. Write $M$ and $N$ as directed unions of their finite-dimensional rational $G$-submodules. Since the tensor product of rational $G$-modules commutes with these directed unions, \(M\otimes N\cong\varinjlim_{M_0,N_0}(M_0\otimes N_0),\) where $M_0\subseteq M$ and $N_0\subseteq N$ range over finite-dimensional rational $G$-submodules. On the CMY side, the definition gives \(\widehat\Psi(M)\boxtimes\widehat\Psi(N) = \varinjlim_{M_0,N_0} \bigl(\Psi(M_0)\boxtimes\Psi(N_0)\bigr)\). The tensor constraints of $\Psi$ therefore induce a natural isomorphism \(\widehat\Psi(M)\boxtimes\widehat\Psi(N) \overset{\sim}{\longrightarrow} \widehat\Psi(M\otimes N)\). The unit, associativity, and symmetry diagrams commute because, after restriction to finite-dimensional submodules, they are exactly the corresponding coherence diagrams for $\Psi$. Hence $\widehat\Psi$ is a symmetric monoidal equivalence extending $\Psi$.
\end{proof}

By Corollary~\ref{thm:square-category} and Theorem~\ref{thm:CMY-square}, Lemma~\ref{lem:ind-equivalence} applies to \(\Phi:\Rep G\overset\sim\longrightarrow\Csq\). We henceforth write \(\widehat\Phi:\Rep^{\mathrm{rat}}G\overset\sim\longrightarrow\IndCMY(\Csq)\) for the resulting exact cocontinuous symmetric monoidal equivalence.

\begin{lem}\label{lem:ind-equivariance}
Let $H$ be a reductive algebraic group over $\C$, and let $M$ be a rational $G\times H$-module. Then the commuting $H$-action on $M$ induces a rational $H$-action on $\widehat\Phi(M)$ by morphisms in $\IndCMY(\Csq)$, and there is a canonical isomorphism \(\widehat\Phi(M^H)\cong\widehat\Phi(M)^H\), where the fixed-point object on the right is taken with respect to this induced action.

If, moreover, $M$ is a commutative rational $G$-algebra and $H$ acts by algebra automorphisms, then $H$ acts on $\widehat\Phi(M)$ by commutative-algebra-object automorphisms, and the displayed isomorphism is an isomorphism of commutative algebra objects.
\end{lem}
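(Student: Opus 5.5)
The plan is to transport the $H$-action along the explicit isotypic formula of Lemma~\ref{lem:ind-equivalence}. Since $M$ is a rational $G\times H$-module and the two actions commute, each multiplicity space $\Hom_G(S_\lambda,M)$ inherits a rational $H$-action by $(h\cdot f)(s)=h\,f(s)$. It is rational because $\Hom_G(S_\lambda,M)$ is a union of the spaces $\Hom_G(S_\lambda,M_0)$, where $M_0$ runs over finite-dimensional $G\times H$-stable submodules. We then define the action on
\[
\widehat\Phi(M)\cong\bigoplus_{\lambda\in\Lambda}T_\lambda\otimes\Hom_G(S_\lambda,M)
\]
by letting $h$ act as $\id_{T_\lambda}\otimes h$. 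Each such map is an $L(1,0)$-module endomorphism, because $L(1,0)$ acts only on the first factor, so it is a morphism in $\IndCMY(\Csq)$. More invariantly, $h\in H$ acts on $M$ by a $G$-module automorphism $h_M$, and we set $h\mapsto\widehat\Phi(h_M)$. Functoriality of $\widehat\Phi$ gives a group action, and the two descriptions agree because $\widehat\Phi$ acts naturally on multiplicity spaces.

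For the fixed points we use $\Hom_G(S_\lambda,M)^H=\Hom_G(S_\lambda,M^H)$, which is immediate from the definition of the $H$-action. Fixed points of $\id\otimes h$ on $T_\lambda\otimes E$ are $T_\lambda\otimes E^H$; to see this, choose a basis of $T_\lambda$. Together these give a canonical isomorphism
\[
\widehat\Phi(M^H)\cong\bigoplus_\lambda T_\lambda\otimes\Hom_G(S_\lambda,M^H)\cong\widehat\Phi(M)^H .
\]
Equivalently, this map is $\widehat\Phi$ applied to the inclusion $M^H\hookrightarrow M$, which is injective because $\widehat\Phi$ is exact. Its image is the $H$-fixed subobject. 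Here $M^H$ is a $G$-submodule since the actions commute, and the rational-$H$ Reynolds projection $M\to M^H$ is $G$-equivariant and splits the inclusion.

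For the algebra statement, the multiplication $\mu:M\otimes M\to M$ and unit $\C\to M$ are $G$-maps. The algebra object structure on $\widehat\Phi(M)$ is $\widehat\Phi(\mu)\circ J_{M,M}$, where $J$ is the monoidal constraint of $\widehat\Phi$, together with the unit $\widehat\Phi(\iota)\circ\varphi$. Commutativity and associativity transfer because $\widehat\Phi$ is symmetric monoidal. If $h$ acts by algebra automorphisms, then $\mu\circ(h_M\otimes h_M)=h_M\circ\mu$ and $h_M\iota=\iota$. Applying $\widehat\Phi$ and using naturality of $J$ in both variables, we get
\[
\widehat\Phi(h_M)\circ\widehat\Phi(\mu)\circ J_{M,M}=\widehat\Phi(\mu)\circ J_{M,M}\circ\bigl(\widehat\Phi(h_M)\boxtimes\widehat\Phi(h_M)\bigr),
\]
and the analogous identity holds for the unit, so $H$ acts by algebra-object automorphisms. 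Similarly, $M^H\hookrightarrow M$ is a $G$-equivariant algebra map. Applying $\widehat\Phi$ and using naturality of $J$ shows that its image $\widehat\Phi(M)^H$ is a subalgebra object, and that the isomorphism in the previous paragraph is multiplicative and unital.

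The main obstacle is bookkeeping rather than mathematics. Rationality of the induced $H$-action must be taken as a statement about the multiplicity spaces, since $\widehat\Phi(M)$ is only an Ind-object. In addition, the fixed-point object must be identified as a subobject inside $\IndCMY(\Csq)$. Both steps reduce, via exactness and cocontinuity of $\widehat\Phi$, to the finite-dimensional $G\times H$-stable submodules, where everything is semisimple.
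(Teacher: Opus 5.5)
Your proposal is correct and follows essentially the same route as the paper: decompose $M$ isotypically with $H$ acting on the multiplicity spaces $\Hom_G(S_\lambda,M)$, use $\Hom_G(S_\lambda,M)^H=\Hom_G(S_\lambda,M^H)$ to identify the fixed-point object, and obtain the algebra statement by applying $\widehat\Phi$ to the inclusion $M^H\hookrightarrow M$. Your intrinsic description of the action as $h\mapsto\widehat\Phi(h_M)$, together with the naturality-of-$J$ check that $H$ acts by algebra-object automorphisms, spells out a step the paper leaves implicit.
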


\begin{proof}
Write \(M\cong\bigoplus_{m\ge0}W_{2m}\otimes E_m\), where \(E_m:=\Hom_G(W_{2m},M)\). Since the $G$- and $H$-actions on $M$ commute, each $E_m$ carries the rational $H$-action \((h\cdot f)(w):=h\cdot f(w)\) for $h\in H$, $f\in E_m$, and $w\in W_{2m}$. With this action, the canonical evaluation isomorphism above is $G\times H$-equivariant. Hence \(M^H\cong\bigoplus_{m\ge0}W_{2m}\otimes E_m^H\). By Lemma~\ref{lem:ind-equivalence}, \(\widehat\Phi(M)\cong\bigoplus_{m\ge0}X_m\otimes E_m\), and the $H$-actions on the multiplicity spaces induce a rational $H$-action on $\widehat\Phi(M)$ by morphisms in $\IndCMY(\Csq)$. Under this decomposition its fixed-point object is \(\widehat\Phi(M)^H\cong\bigoplus_{m\ge0}X_m\otimes E_m^H\). Applying the same formula to $M^H$ therefore gives the canonical isomorphism \(\widehat\Phi(M^H)\cong\widehat\Phi(M)^H\). Now suppose that $M$ is a commutative rational $G$-algebra and that $H$ acts by algebra automorphisms. Then $M^H$ is a commutative rational $G$-subalgebra of $M$, and the inclusion \(i:M^H\hookrightarrow M\) is a morphism of commutative algebra objects in $\Rep^{\mathrm{rat}}G$. Since $\widehat\Phi$ is symmetric monoidal, $\widehat\Phi(i)$ is a morphism of commutative algebra objects. Under the multiplicity-space decompositions above, its image is precisely \(\bigoplus_{m\ge0}X_m\otimes E_m^H=\widehat\Phi(M)^H\). Consequently the canonical isomorphism \(\widehat\Phi(M^H)\cong\widehat\Phi(M)^H\) is an isomorphism of commutative algebra objects.
\end{proof}

\subsection{Extension-admissible algebras and ideals}

The algebra-object/extension correspondence will be used for infinite direct sums. Motivated by Theorem~\ref{thm:CMY-extension}, we isolate the commutative algebra objects that reconstruct genuine CFT-type vertex operator algebra extensions.

\begin{defi}\label{def:extension-admissible}
Let $V_{\mathrm{base}}$ be a vertex operator algebra and $\mathcal C$ a vertex tensor category of $V_{\mathrm{base}}$-modules satisfying the hypotheses of Theorem~\ref{thm:CMY-direct-limit}. We equip $\IndCMY(\mathcal C)$ with the braided tensor category structure supplied by that theorem; its tensor unit is $V_{\mathrm{base}}$. A commutative algebra object $(A,\mu_A,\iota_A)\in\CAlg(\IndCMY(\mathcal C))$ will be called \emph{extension-admissible} if:
\begin{enumerate}[label=(\roman*),leftmargin=2.1em]
\item the unit \(\iota_A:V_{\mathrm{base}}\to A\) is injective and
\(\Hom_{V_{\mathrm{base}}}(V_{\mathrm{base}},A)=\C\iota_A\);
\item $L(0)$ acts semisimply on $A$ with integral eigenvalues;
\item the $L(0)$-spectrum of $A$ is bounded below and every $L(0)$-eigenspace is finite dimensional.
\end{enumerate}
\end{defi}

By Theorem~\ref{thm:CMY-extension}, conditions~(ii)--(iii) are precisely the grading conditions needed for the commutative algebra object $A$ to reconstruct a grading-restricted vertex operator algebra with the same conformal vector as $V_{\mathrm{base}}$. Under this correspondence, the algebra unit $\iota_A:V_{\mathrm{base}}\to A$ is the structural vertex-operator-algebra homomorphism from the base VOA. Thus the injectivity required in~(i) identifies $V_{\mathrm{base}}$ with a vertex operator subalgebra of the reconstructed VOA, so that the latter is a genuine extension of $V_{\mathrm{base}}$.

In the two direct-limit categories used below, the tensor unit $V_{\mathrm{base}}$ is the unique simple object of lowest conformal weight zero, while every other simple object has strictly positive lowest conformal weight. Consequently, $\Hom_{\IndCMY(\mathcal C)}(V_{\mathrm{base}},A)=\C$ implies that the weight-zero space of the reconstructed extension is $\C\one_A$. Moreover, all simple summands occurring in these categories have nonnegative conformal weights. Hence an extension-admissible algebra reconstructs a vertex operator algebra of CFT type.

\begin{prop}\label{prop:subalgebra-interface}
Let $V_{\mathrm{base}}$ be a vertex operator algebra and let $\mathcal C$ be a full vertex tensor subcategory of grading-restricted generalized $V_{\mathrm{base}}$-modules satisfying the hypotheses of Theorem~\ref{thm:CMY-direct-limit}. Assume moreover that $\mathcal C$ is closed under ambient $V_{\mathrm{base}}$-submodules. Let $A\in\CAlg(\IndCMY(\mathcal C))$ correspond under Theorem~\ref{thm:CMY-extension} to a grading-restricted VOA extension $U$ of $V_{\mathrm{base}}$. If $i:B\to A$ is a morphism of commutative algebra objects whose underlying morphism in $\IndCMY(\mathcal C)$ is a monomorphism, then $i$ is injective as a $V_{\mathrm{base}}$-module map and identifies $B$ with a grading-restricted vertex operator subalgebra of $U$ containing $V_{\mathrm{base}}$.
\end{prop}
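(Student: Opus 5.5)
The plan is to reduce everything to the concrete description of $\IndCMY(\mathcal C)$ as generalized $V_{\mathrm{base}}$-modules that are directed unions of their $\mathcal C$-submodules, and then to transport the algebra structure of $B$ through the vertex-operator dictionary of Theorem~\ref{thm:CMY-extension}.

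\emph{Step 1: the monomorphism $i$ is injective.} Let $K:=\ker i$, computed as an ordinary $V_{\mathrm{base}}$-submodule of $B$. For any $\mathcal C$-submodule $B_0\subseteq B$, the intersection $K\cap B_0$ is an ambient submodule of $B_0$. By the closure hypothesis it lies in $\mathcal C$. Hence $K$ is the directed union of objects of $\mathcal C$, so $K\in\IndCMY(\mathcal C)$. The inclusion $K\hookrightarrow B$ is then a morphism of $\IndCMY(\mathcal C)$ whose composite with $i$ is zero, and so is the zero map. Since $i$ is a monomorphism, the inclusion is zero, so $K=0$. Consequently $i(B)\subseteq A=U$ is a $V_{\mathrm{base}}$-submodule isomorphic to $B$. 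It is graded by $L(0)$, and its $L(0)$-eigenspaces inject into those of $A$. So $L(0)$ acts semisimply on $B$ with integral, lower-bounded eigenvalues and finite-dimensional eigenspaces.

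\emph{Step 2: closure under the vertex operator.} By Theorem~\ref{thm:CMY-extension}, the vertex operator $Y_U$ corresponds to $\mu_A$ via the canonical tensor-product intertwining operator of Proposition~\ref{prop:CMY-tensor-universal}:
\[
Y_U=\mu_A\circ\mathcal Y_{A,A}.
\]
Likewise $B$, being an object of $\CAlg(\IndCMY(\mathcal C))$ satisfying the grading conditions of Step 1, corresponds to a VOA structure
\[
Y_B=\mu_B\circ\mathcal Y_{B,B}.
\]
Naturality of the tensor-product intertwining operator gives
\[
(i\boxtimes i)\circ\mathcal Y_{B,B}=\mathcal Y_{A,A}\circ(i\otimes i).
\]
The algebra-morphism property gives $\mu_A\circ(i\boxtimes i)=i\circ\mu_B$. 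Combining these,
\[
Y_U(i(b_1),x)\,i(b_2)=i\bigl(Y_B(b_1,x)b_2\bigr).
\]
Similarly, $i\circ\iota_B=\iota_A$ shows that $i$ maps $\one_B$ to $\one_U$. It also matches the $V_{\mathrm{base}}$-actions, and hence $\omega_B=L(-2)\one_B\mapsto\omega_U$. Therefore $i(B)$ is a vertex operator subalgebra of $U$ with the same vacuum and conformal vector. It contains $\iota_A(V_{\mathrm{base}})=i(\iota_B(V_{\mathrm{base}}))$, and it is grading-restricted by Step 1.

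\emph{Main obstacle.} The delicate point is the naturality of the canonical intertwining operator $\mathcal Y_{X_1,X_2}$ in $\IndCMY(\mathcal C)$ with respect to morphisms in both variables. I would derive it from the universal property in Proposition~\ref{prop:CMY-tensor-universal}. Both $(i\boxtimes i)\circ\mathcal Y_{B,B}$ and $\mathcal Y_{A,A}\circ(i\otimes i)$ are intertwining operators of type $\binom{A\boxtimes A}{B\,B}$. The morphism $i\boxtimes i$ is by definition the morphism classified by the second one, which gives the identity. A secondary point is the kernel argument in Step 1: it needs the ambient-submodule closure hypothesis, which is exactly why that hypothesis is imposed.
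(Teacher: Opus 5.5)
Your proposal is correct and essentially matches the paper's proof. Step~1 is the paper's kernel argument, which uses closure of $\mathcal C$ under ambient submodules. For Step~2, the paper simply cites that the correspondence of Theorem~\ref{thm:CMY-extension} is an isomorphism of categories preserving underlying module maps; your identity $Y_U(i(b_1),x)\,i(b_2)=i\bigl(Y_B(b_1,x)b_2\bigr)$, derived from naturality of the tensor-product intertwining operator, is that same fact worked out by hand.
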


\begin{proof}
Let $K$ be the kernel of the underlying $V_{\mathrm{base}}$-module map in the ambient module category. Since $B$ is a directed union of $\mathcal C$-subobjects and $\mathcal C$ is closed under ambient $V_{\mathrm{base}}$-submodules, \(K=\bigcup_{F\subset B,F\in\mathcal C}(K\cap F)\in\IndCMY(\mathcal C)\). Thus $K\hookrightarrow B$ is a morphism in $\IndCMY(\mathcal C)$ whose composite with $i$ is zero. Since the underlying morphism $i$ is a monomorphism in $\IndCMY(\mathcal C)$, $K=0$.

Identify $B$ with its image in $U$. The grading hypotheses in Theorem~\ref{thm:CMY-extension} are inherited from the grading-restricted VOA $U$. Moreover $i\circ\iota_B=\iota_A$, so the algebra unit of $B$ is injective and identifies the same copy of $V_{\mathrm{base}}$. The natural extension--algebra correspondence of Theorem~\ref{thm:CMY-extension} therefore sends $i$ to a VOA homomorphism into $U$. Under this correspondence the underlying $V_{\mathrm{base}}$-module map is unchanged. Hence this VOA homomorphism is injective and has image $B$.
\end{proof}

\begin{lem}\label{lem:G-simple-homogeneous}
Let $G$ be a complex algebraic group acting transitively on a nonempty reduced affine variety $X$. Then the coordinate algebra $\OO(X)$ is $G$-simple: its only $G$-stable ideals are $0$ and $\OO(X)$. In particular, if $H\le G$ is a closed subgroup and the homogeneous space $G/H$ is affine and reduced, then $\OO(G/H)$ has no nonzero proper $G$-stable ideals.
\end{lem}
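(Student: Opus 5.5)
The argument is a direct application of the Nullstellensatz together with transitivity. Let $I\subseteq\OO(X)$ be a $G$-stable ideal with $I\neq\OO(X)$; we show $I=0$.

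The first step is to find a point of $X$ on which $I$ vanishes. Since $X$ is of finite type over $\C$, $\OO(X)$ is a finitely generated $\C$-algebra, hence Noetherian. A proper ideal $I$ is therefore contained in a maximal ideal $\mathfrak m$. By the weak Nullstellensatz (Subsection~\ref{subsec:AG-conventions}), $\mathfrak m$ is the ideal $\mathfrak m_x$ of functions vanishing at some $\C$-point $x\in X$. So every $f\in I$ satisfies $f(x)=0$.

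Next we spread this vanishing over all of $X$ using $G$-stability. For $g\in G$ and $f\in I$ we have $g^{-1}\cdot f\in I$, so
\[
0=(g^{-1}\cdot f)(x)=f(gx).
\]
By transitivity every point of $X(\C)$ has the form $gx$, so each $f\in I$ vanishes at every $\C$-point of $X$.

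The last step uses reducedness. By the strong Nullstellensatz (equivalently, the Jacobson property), a regular function on a reduced finite-type $\C$-scheme that vanishes at all closed points lies in the nilradical. The nilradical is zero because $X$ is reduced, so $f=0$ and hence $I=0$. The "in particular" statement is the special case $X=G/H$, on which $G$ acts transitively by left translation. The only point requiring care is that "transitive" must be read on $\C$-points, and that reducedness is exactly what makes pointwise vanishing imply vanishing; beyond this there is no real obstacle.
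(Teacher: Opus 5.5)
Your proposal is correct and follows the paper's proof: locate a common zero of $I$ via the weak Nullstellensatz, use $G$-stability and transitivity to show that every $f\in I$ vanishes on all of $X$, and conclude $I\subseteq\sqrt{(0)}=0$ by the strong Nullstellensatz and reducedness. One small aside: passing through Noetherianity to get $I\subseteq\mathfrak m$ is harmless but not needed, since every proper ideal of a unital ring lies in a maximal ideal.
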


\begin{proof}
Let $I\subsetneq\OO(X)$ be a $G$-stable ideal. By the Nullstellensatz conventions of Subsection~\ref{subsec:AG-conventions}, its zero locus \(V_X(I):=\{x\in X\mid f(x)=0\text{ for every }f\in I\}\) is nonempty. The induced action on regular functions recalled there shows that $V_X(I)$ is $G$-stable: if $x\in V_X(I)$, $g\in G$, and $f\in I$, then $g^{-1}\cdot f\in I$ and hence \(f(gx)=(g^{-1}\cdot f)(x)=0\). Transitivity therefore gives $V_X(I)=X$. The Nullstellensatz now yields \(\sqrt I=\sqrt{(0)}=0\), because $X$ is reduced. Hence $I=0$.
\end{proof}

\subsection{The regular algebra and the fixed-point dictionary}

For $G=\PSLtwo$, let $\OO(G)$ be its coordinate algebra. For $g,x\in G$ and $f\in\OO(G)$, set \((\lambda(g)f)(x)=f(g^{-1}x)\) and \((\rho(g)f)(x)=f(xg)\). We use $\lambda$ for the categorical $G$-action. The right action $\rho$ commutes with $\lambda$ and acts by algebra automorphisms. It will become the automorphism action on the reconstructed VOA.

\begin{lem}\label{lem:admissible-homogeneous}
Let $H\le G=\PSLtwo$ be a closed reductive algebraic subgroup and suppose that $G/H$ is affine. Then \(A:=\widehat\Phi(\OO(G/H))\), viewed as a commutative algebra object of \(\IndCMY(\Csq)\), is extension-admissible. More explicitly, under the left regular $G$-action, \(\OO(G/H) \cong \bigoplus_{m\ge0}W_{2m}^*\otimes W_{2m}^H \cong \bigoplus_{m\ge0}(\dim W_{2m}^H)\,W_{2m}\), and hence \(A\cong\bigoplus_{m\ge0}(\dim W_{2m}^H) X_m.\)
\end{lem}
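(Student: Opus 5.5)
First I will compute the $G$-module structure of $\OO(G/H)$, then transport it along $\widehat\Phi$, and finally check the three conditions of Definition~\ref{def:extension-admissible} one at a time. For the decomposition I will use algebraic Peter--Weyl, which applies because $G$ is reductive: under $\lambda\times\rho$ one has $\OO(G)\cong\bigoplus_{m\ge0}W_{2m}^*\otimes W_{2m}$. Since $G/H$ is affine, $\OO(G/H)=\OO(G)^{\rho(H)}$, and taking right $H$-invariants factorwise gives $\bigoplus_m W_{2m}^*\otimes W_{2m}^H$. This agrees with the formula in Lemma~\ref{lem:ind-equivariance} when the commuting group there is taken to be $H$ acting through $\rho$. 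The irreducible $G$-modules $W_{2m}$ are self-dual (they are orthogonal representations of $\SO_3(\C)$), so $W_{2m}^*\cong W_{2m}$. Moreover $\dim W_{2m}^H\le 2m+1<\infty$. Together these give $\OO(G/H)\cong\bigoplus_m(\dim W_{2m}^H)W_{2m}$. Lemma~\ref{lem:ind-equivalence} then yields $A\cong\bigoplus_m X_m\otimes\Hom_G(W_{2m},\OO(G/H))\cong\bigoplus_m(\dim W_{2m}^H)X_m$.

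Next I will verify that $A$ is a commutative algebra object. Multiplication and the unit $\C\to\OO(G/H)$ are $G$-equivariant, and $\OO(G/H)$ is commutative, so $\OO(G/H)$ lies in $\CAlg(\Rep^{\mathrm{rat}}G)$. Because $\widehat\Phi$ is a symmetric monoidal equivalence, $A$ lies in $\CAlg(\IndCMY(\Csq))$, with unit $\widehat\Phi(\C\to\OO(G/H))$. Here $\widehat\Phi(\C)=X_0=L(1,0)$.

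Condition (i) is where I have to be most careful. Because $\widehat\Phi$ is an equivalence, $\Hom(L(1,0),A)\cong\Hom_G(\C,\OO(G/H))=\OO(G/H)^G$. The functions in $\OO(G/H)^G$ are constant on the single dense orbit $G/H$ and are regular, so $\OO(G/H)^G=\C$, spanned by the unit. To see that $\iota_A$ is injective, I will note that $\C\to\OO(G/H)$ is a nonzero map out of a simple object, so its image under the exact equivalence $\widehat\Phi$ is a monomorphism. It remains to pass from a categorical monomorphism to an honest injective $L(1,0)$-map. For this I will use that $L(1,0)$ is simple and $\iota_A\ne0$; alternatively, the image is the $X_0$-summand, which occurs with multiplicity $\dim W_0^H=1$ in the decomposition above.

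Conditions (ii) and (iii) I will read off the decomposition. Each $X_m$ is an ordinary module with integral weights $\ge m^2\ge0$, and $L(0)$ acts semisimply on direct sums of such modules. This gives (ii) and the lower bound in (iii). For finite dimensionality in (iii), the weight-$n$ space receives contributions only from the finitely many $m$ with $m^2\le n$. Each such contribution has dimension $(\dim W_{2m}^H)\dim(X_m)_n<\infty$, so every eigenspace is finite dimensional. I expect the only genuine obstacle to be the Peter--Weyl/invariants identification $\OO(G/H)=\OO(G)^{\rho(H)}$ together with self-duality. Both are standard, and I will cite \cite{SpringerLAG} for them.
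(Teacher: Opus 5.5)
Your proposal is correct and follows essentially the paper's argument. Peter--Weyl together with $\OO(G/H)=\OO(G)^{H_{\rm right}}$ and self-duality of $W_{2m}$ gives the decomposition, and the multiplicity-space formula of Lemma~\ref{lem:ind-equivalence} transports it to $A\cong\bigoplus_m(\dim W_{2m}^H)X_m$. Conditions (i)--(iii) are then read off as in the paper: for (i), the trivial isotypic part is one-dimensional and $L(1,0)$ is simple; for (ii)--(iii), use the direct-sum decomposition. Your justification of $\OO(G/H)^G=\C$ via transitivity of the $G$-action is equally valid and agrees with the paper's count $\dim W_0^H=1$.
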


\begin{proof}
The left translation action makes $\OO(G/H)$ a commutative rational $G$-algebra. Since $\widehat\Phi$ is a symmetric monoidal equivalence, $A=\widehat\Phi(\OO(G/H))$ is therefore a commutative algebra object in $\IndCMY(\Csq)$.

With the conventions above, algebraic Peter--Weyl gives \(\OO(G)\cong\bigoplus_{m\ge0}W_{2m}^*\otimes W_{2m},\) where the first factor carries the left regular action and the second the right regular action. Pullback along the quotient morphism $G\to G/H$ identifies the regular functions on the affine homogeneous space with the right $H$-invariants: \(\OO(G/H)\cong\OO(G)^{H_{\rm right}}\cong\bigoplus_{m\ge0}W_{2m}^*\otimes W_{2m}^H\) as left $G$-modules. Since each $W_{2m}$ is self-dual, this yields, noncanonically as a left $G$-module, \(\OO(G/H)\cong\bigoplus_{m\ge0}(\dim W_{2m}^H) W_{2m}.\) The multiplicity-space formula in Lemma~\ref{lem:ind-equivalence} therefore gives \(A\cong\bigoplus_{m\ge0}(\dim W_{2m}^H) X_m.\) We verify Definition~\ref{def:extension-admissible}. Since $W_0=\C$ and $W_0^H=\C$, the trivial representation occurs in $\OO(G/H)$ with multiplicity one. By the multiplicity-space description of $\widehat\Phi$ in Lemma~\ref{lem:ind-equivalence}, \(\Hom_{L(1,0)}(L(1,0),A)=\Hom_{\IndCMY(\Csq)}(X_0,A)\cong\C.\) The algebra unit of $\OO(G/H)$ is the nonzero morphism $\eta:\C\to\OO(G/H)$, $1\mapsto1$. Under the monoidal unit isomorphism $X_0\cong\widehat\Phi(\C)$, the algebra unit $\iota_A:X_0\to A$ is the morphism induced by $\widehat\Phi(\eta)$. Because $\widehat\Phi$ is faithful, $\iota_A\ne0$. Hence it spans the preceding one-dimensional Hom-space. Since $X_0=L(1,0)$ is simple, this nonzero module homomorphism is injective. Thus \(\Hom_{L(1,0)}(L(1,0),A)=\C\iota_A\), and condition~\textnormal{(i)} holds.

For every $m\ge0$, the ordinary module $X_m=L(1,m^2)$ is $L(0)$-semisimple and has spectrum contained in $m^2+\Z_{\ge0}$. The displayed direct-sum decomposition therefore shows that $L(0)$ acts semisimply on $A$ with nonnegative integral eigenvalues. If $N\in\Z_{\ge0}$, only summands with $m^2\le N$ can contribute to the $N$-eigenspace. There are only finitely many such $m$; moreover, $W_{2m}^H$ is finite dimensional and every homogeneous subspace of the ordinary module $X_m$ is finite dimensional. Hence every $L(0)$-eigenspace of $A$ is finite dimensional, and the spectrum is bounded below by $0$. This proves conditions~\textnormal{(ii)}--\textnormal{(iii)}, so $A$ is extension-admissible.
\end{proof}

\begin{lem}\label{lem:regular-simple}
Let $U_{\mathrm{reg}}$ be the vertex operator algebra extension corresponding to $\widehat\Phi(\OO(G))$; it is well defined by Lemma~\ref{lem:admissible-homogeneous} with $H=\{1\}$. Then $U_{\mathrm{reg}}$ is simple.
\end{lem}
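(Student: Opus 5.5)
We reduce simplicity of $U_{\mathrm{reg}}$ to the $G$-simplicity of $\OO(G)$ established in Lemma~\ref{lem:G-simple-homogeneous}, transported through the equivalence $\widehat\Phi$. The steps are as follows.

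First, let $I\subseteq U_{\mathrm{reg}}$ be a nonzero ideal. Since $I$ is stable under all vertex operators, it is in particular an $L(1,0)$-submodule of $U_{\mathrm{reg}}$. Because $U_{\mathrm{reg}}=\widehat\Phi(\OO(G))\cong\bigoplus_m X_m\otimes E_m$ is a directed union of $\Csq$-subobjects, and $\Csq$ is closed under ambient submodules by Lemma~\ref{lem:square-ambient-Serre}, $I$ lies in $\IndCMY(\Csq)$. Semisimplicity then gives $I=\bigoplus_m X_m\otimes E'_m$ with $E'_m\subseteq E_m$.

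Second, we show that being a VOA ideal is equivalent to being an ideal of the algebra object. Under the correspondence of Theorem~\ref{thm:CMY-extension}, the multiplication $\mu_A$ is induced from the vertex operator $Y_{U}$ through the universal property of Proposition~\ref{prop:CMY-tensor-universal}. The submodule $I$ is closed under all modes $u_n$ with $u\in U$. Hence the corestricted intertwining operator $U\otimes I\to I\{x\}$ factors through $A\boxtimes I\to I$. This means that $\mu_A\circ(\id_A\boxtimes i_I)$ factors through $I$, so $I$ is an ideal of $A$ in $\IndCMY(\Csq)$. The key point here is that $\boxtimes$ is right exact, or simply that the tensor product preserves the monomorphism $I\hookrightarrow A$ up to image. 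Consequently $\mu_A(A\boxtimes I)$ equals the image of the coefficient span, which is contained in $I$.

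Third, we transport the ideal back. Since $\widehat\Phi$ is an exact symmetric monoidal equivalence, $J:=\widehat\Phi^{-1}(I)$ is a $G$-submodule of $\OO(G)$ for the left action $\lambda$. The ideal condition becomes $\OO(G)\cdot J\subseteq J$, so $J$ is a $\lambda(G)$-stable ideal of $\OO(G)$. The group $G$ acts transitively on itself by left translation, and $G$ is a reduced affine variety. Lemma~\ref{lem:G-simple-homogeneous} therefore gives $J=0$ or $J=\OO(G)$, hence $I=0$ or $I=U_{\mathrm{reg}}$. Our $I$ is nonzero, so $I=U_{\mathrm{reg}}$.

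The main obstacle is the second step: making precise that VOA ideals correspond exactly to subobjects closed under $\mu_A$ in the Ind-category. This requires tracking how $\mu_A$ arises from $Y_U$. The correspondence is of the kind in \cite{CMY} relating ideals to sub-$A$-modules of $A$. If a citation is unavailable, we would argue directly. The composite $A\boxtimes I\to A\boxtimes A\to A$ corresponds under Proposition~\ref{prop:CMY-tensor-universal} to the restricted intertwining operator $Y_U|_{U\otimes I}$. Its image is the span of coefficients $u_n w$ with $w\in I$, which lies in $I$. Only this direction is needed, since we only use that a VOA ideal gives an algebra ideal.
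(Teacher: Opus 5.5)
Your proposal is correct and follows the paper's proof essentially step for step. You place the ideal in $\IndCMY(\Csq)$ via Lemma~\ref{lem:square-ambient-Serre}, corestrict the vertex operator to get $\mu_I: A\boxtimes I\to I$ with $i\circ\mu_I=\mu_A\circ(\id_A\boxtimes i)$ by uniqueness in Proposition~\ref{prop:CMY-tensor-universal}, transport this to a $G$-stable ideal of $\OO(G)$, and conclude with Lemma~\ref{lem:G-simple-homogeneous}. The only slip is your aside about right exactness of $\boxtimes$: it is irrelevant. The factorization comes purely from the uniqueness clause of the universal bijection applied to the corestricted operator, as your final paragraph correctly indicates.
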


\begin{proof}
Put $A=\widehat\Phi(\OO(G))$ and identify its underlying $L(1,0)$-module with $U_{\mathrm{reg}}$. Let $J$ be a VOA ideal of $U_{\mathrm{reg}}$. Then $J$ is, in particular, an $L(1,0)$-submodule. For every $\Csq$-subobject $F\subset U_{\mathrm{reg}}$, the intersection $J\cap F$ is an ambient $L(1,0)$-submodule of $F$, hence belongs to $\Csq$ by Lemma~\ref{lem:square-ambient-Serre}. Since $U_{\mathrm{reg}}\in\IndCMY(\Csq)$ is the union of its $\Csq$-subobjects, \(J=\bigcup_F(J\cap F).\) Thus the concrete characterization of $\IndCMY(\Csq)$ gives $J\in\IndCMY(\Csq)$, and the inclusion $i:J\hookrightarrow A$ is a morphism in $\IndCMY(\Csq)$.

Because $J$ is an ideal, the vertex operator of $U_{\mathrm{reg}}$ corestricts to an intertwining operator \(Y^J:A\otimes J\longrightarrow J\{x\},Y^J(a,x)u:=Y_{U_{\mathrm{reg}}}(a,x)u,\) of type $\binom{J}{A\,J}$. Proposition~\ref{prop:CMY-tensor-universal} therefore gives a unique morphism \(\mu_J:A\boxtimes J\to J\) corresponding to $Y^J$. Likewise, under the extension--algebra correspondence, the multiplication $\mu_A:A\boxtimes A\to A$ corresponds to the vertex operator of $U_{\mathrm{reg}}$. The identity of intertwining operators \(i\circ Y^J=Y_{U_{\mathrm{reg}}}\circ(\id_A\otimes i)\) and functoriality of the tensor-product intertwining operator show that both morphisms \(i\circ\mu_J\) and \(\mu_A\circ(\id_A\boxtimes i)\) correspond, under the universal bijection of Proposition~\ref{prop:CMY-tensor-universal}, to the same intertwining operator of type $\binom{A}{A\,J}$. Hence \(i\circ\mu_J=\mu_A\circ(\id_A\boxtimes i).\) Choose a symmetric monoidal quasi-inverse $\mathcal F$ of $\widehat\Phi$, together with a monoidal natural isomorphism \(\varepsilon:\mathcal F\circ\widehat\Phi \overset{\sim}{\Longrightarrow}\id_{\Rep^{\mathrm{rat}}G}\). Set \(I=\mathcal F(J)\) and \(j:=\varepsilon_{\OO(G)}\circ\mathcal F(i):I\longrightarrow\OO(G).\) Since $\mathcal F$ is an equivalence, $j$ is a monomorphism in $\Rep^{\mathrm{rat}}G$. Let \(\widetilde I:=j(I)\subseteq\OO(G)\); because $j$ is $G$-equivariant, $\widetilde I$ is a $G$-stable subspace. Since $\varepsilon$ is monoidal, $\mu_A$ is transported to the ordinary multiplication $m:\OO(G)\otimes\OO(G)\to\OO(G)$. Applying $\mathcal F$ to $\mu_J$ and to the preceding identity, and transporting along the monoidal constraints and $\varepsilon$, gives a $G$-equivariant map \(\nu:\OO(G)\otimes I\to I\) such that \(j\circ\nu=m\circ(\id_{\OO(G)}\otimes j).\) Therefore \(m(\OO(G)\otimes\widetilde I)\subseteq\widetilde I\), so $\widetilde I$ is a $G$-stable ideal of $\OO(G)$.

By Lemma~\ref{lem:G-simple-homogeneous}, applied to the left translation action of $G$ on the reduced affine variety $G$, one has \(\widetilde I=0\) or \(\widetilde I=\OO(G)\). In the first case, monicity of $j$ gives $I=0$, and hence $J=0$ because $\mathcal F$ is an equivalence. In the second case, $j$ is surjective as well as injective, hence is an isomorphism; consequently $\mathcal F(i)$ and then $i$ are isomorphisms. Thus $J=U_{\mathrm{reg}}$. Therefore $U_{\mathrm{reg}}$ has no nonzero proper ideals and is simple.
\end{proof}

\begin{thm}\label{thm:regular-algebra}
Let $U_{\mathrm{reg}}$ be the vertex operator algebra extension of $L(1,0)$ corresponding to the commutative algebra object $\widehat\Phi(\OO(G))$. Then there is a $G$-equivariant isomorphism of VOA extensions \(U_{\mathrm{reg}}\overset\sim\longrightarrow V_{A_1}\), where the $G$-action on $U_{\mathrm{reg}}$ is obtained, via $\widehat\Phi$ and the extension--algebra correspondence, from right translations on $\OO(G)$, and $V_{A_1}$ carries its standard automorphism action.
\end{thm}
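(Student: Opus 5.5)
The plan is to reduce to Theorem~\ref{thm:MY-L1-uniqueness}(ii). Three things must be checked about $U_{\mathrm{reg}}$: it is a simple VOA extension of $L(1,0)$; $G$ acts on it by VOA automorphisms; and as a $G\times L(1,0)$-module it is isomorphic to $\bigoplus_{m\ge0}W_{2m}\otimes L(1,m^2)$.

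Simplicity is Lemma~\ref{lem:regular-simple}. The extension property and the CFT-type grading come from Lemma~\ref{lem:admissible-homogeneous} with $H=\{1\}$, together with Theorem~\ref{thm:CMY-square}. That lemma gives $A=\widehat\Phi(\OO(G))\cong\bigoplus_m(2m+1)X_m$, since $\dim W_{2m}^{\{1\}}=2m+1$. In particular the unit $\iota_A$ is injective, so $L(1,0)\subset U_{\mathrm{reg}}$ with the same conformal vector.

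For the group action, apply Lemma~\ref{lem:ind-equivariance} with $H=G$ acting on $\OO(G)$ by right translations $\rho$. This action commutes with $\lambda$ and is by algebra automorphisms. Hence each $\rho(g)$ induces an automorphism of the commutative algebra object $A$ in $\IndCMY(\Csq)$. Because the correspondence of Theorem~\ref{thm:CMY-extension} is an isomorphism of categories, such an automorphism is the same thing as a VOA automorphism of $U_{\mathrm{reg}}$ fixing the image of $L(1,0)$, and hence fixing $\omega$. Functoriality turns this into a group action of $G$ on $U_{\mathrm{reg}}$ by VOA automorphisms.

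For the decomposition, use algebraic Peter--Weyl: $\OO(G)\cong\bigoplus_m W_{2m}^*\otimes W_{2m}$ as a $G_\lambda\times G_\rho$-module. The multiplicity-space description of $\widehat\Phi$ in Lemmas~\ref{lem:ind-equivalence} and~\ref{lem:ind-equivariance} then gives
\[
U_{\mathrm{reg}}\cong\bigoplus_{m\ge0}X_m\otimes\Hom_G(W_{2m},\OO(G))\cong\bigoplus_{m\ge0}W_{2m}\otimes L(1,m^2)
\]
as a $G\times L(1,0)$-module. Here $G$ acts through $\rho$ on the multiplicity space $\Hom_G(W_{2m},\OO(G))\cong W_{2m}$, and I use that $W_{2m}^*\cong W_{2m}$. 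Theorem~\ref{thm:MY-L1-uniqueness}(ii) then yields a $G$-equivariant VOA isomorphism $U_{\mathrm{reg}}\cong V_{A_1}$. It is an isomorphism of extensions, since a VOA isomorphism preserves $\omega$ and hence the Virasoro subalgebra $L(1,0)$.

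The main obstacle is making rigorous that the $G$-action on $U_{\mathrm{reg}}$ is algebraic and rational on each homogeneous space, and that it agrees, as a module, with the action on multiplicity spaces. McRae--Yang's hypothesis needs the $G$-action and the $G\times L(1,0)$-isomorphism to match exactly. I would handle this by noting two facts. First, each weight space $(U_{\mathrm{reg}})_n$ is a finite sum $\bigoplus_{m^2\le n}(X_m)_n\otimes\Hom_G(W_{2m},\OO(G))$. Second, $G$ acts only on the second factor, through the rational representation $\rho$. So rationality and the identification of $G$-modules are immediate from the explicit formula for $\widehat\Phi$.

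A lesser check is that the automorphisms produced by Theorem~\ref{thm:CMY-extension} act on the underlying module exactly as $\widehat\Phi(\rho(g))$. This holds because the correspondence does not change the underlying $L(1,0)$-module maps, as already used in the proof of Proposition~\ref{prop:subalgebra-interface}.
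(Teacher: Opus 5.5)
Your proposal is correct and follows the paper's proof essentially step for step. Algebraic Peter--Weyl together with the multiplicity-space formula of Lemma~\ref{lem:ind-equivalence} gives the $G\times L(1,0)$-decomposition. Lemma~\ref{lem:admissible-homogeneous} with $H=\{1\}$ gives admissibility, and Lemma~\ref{lem:regular-simple} gives simplicity. Lemma~\ref{lem:ind-equivariance} for the right-translation copy of $G$, combined with the isomorphism of categories in Theorem~\ref{thm:CMY-extension}, produces the automorphism action. The paper then applies Theorem~\ref{thm:MY-L1-uniqueness}(ii) and observes that conformal vectors are preserved, exactly as you do. Your additional remarks on rationality of the action on each weight space, and on the correspondence leaving the underlying module maps unchanged, are harmless elaborations of points the paper leaves implicit.
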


\begin{proof}
As a $G_{\rm left}\times G_{\rm right}$-module, algebraic Peter--Weyl gives \(\OO(G)\cong\bigoplus_{m\ge0}W_{2m}^*\otimes W_{2m},\) where the first factor carries the left regular action and the second the right regular action. Since every $W_{2m}$ is self-dual and $\widehat\Phi$ extends $\Phi$ with $\Phi(W_{2m})=X_m$, the multiplicity-space description in Lemma~\ref{lem:ind-equivalence} gives \(\widehat\Phi(\OO(G))\cong\bigoplus_{m\ge0}X_m\otimes W_{2m}\) as an $L(1,0)\times G_{\rm right}$-module. In particular, after forgetting the right action, its underlying Virasoro module is \(\bigoplus_{m\ge0}(2m+1)L(1,m^2).\) Taking $H=\{1\}$ in Lemma~\ref{lem:admissible-homogeneous} shows that $\widehat\Phi(\OO(G))$ is extension-admissible, and hence reconstructs the grading-restricted VOA extension $U_{\mathrm{reg}}$. By Lemma~\ref{lem:regular-simple}, $U_{\mathrm{reg}}$ is simple.

Apply Lemma~\ref{lem:ind-equivariance} with the second copy $G_{\rm right}$ as the commuting group. The right translations on $\OO(G)$ therefore induce a rational algebraic $G$-action on $\widehat\Phi(\OO(G))$ by commutative-algebra-object automorphisms. Since Theorem~\ref{thm:CMY-extension} is an isomorphism of categories, this action corresponds to an action of $G$ on $U_{\mathrm{reg}}$ by VOA automorphisms. Moreover, each algebra-object automorphism preserves the unit morphism $L(1,0)\to\widehat\Phi(\OO(G))$, so the resulting $G$-action fixes the embedded copy of $L(1,0)$ pointwise. Reordering the preceding bicovariant decomposition, we thus have \(U_{\mathrm{reg}}\cong\bigoplus_{m\ge0}W_{2m}\otimes L(1,m^2)\) as a $G\times L(1,0)$-module.

Consequently the hypotheses of Theorem~\ref{thm:MY-L1-uniqueness}\textnormal{(i)--(ii)} are satisfied. Thus there is a $G$-equivariant vertex operator algebra isomorphism \(U_{\mathrm{reg}}\overset\sim\longrightarrow L_1(\mathfrak{sl}_2)\cong V_{A_1},\) where $V_{A_1}$ carries the standard $G$-action. A vertex operator algebra isomorphism preserves the conformal vector, and hence restricts to the identity on the Virasoro subalgebra generated by that conformal vector, namely the distinguished copy of $L(1,0)$ in both extensions. Therefore the displayed map is an isomorphism of VOA extensions. We fix one such isomorphism for the remainder of the paper.
\end{proof}

\begin{cor}\label{cor:fixed-point-dictionary}
Let $H\le G$ be a closed reductive algebraic subgroup and suppose that $G/H$ is affine. Let $U_H$ be the VOA extension of $L(1,0)$ corresponding to the commutative algebra object \(\widehat\Phi(\OO(G/H))\). Then \(U_H\cong V_{A_1}^{H}\) as VOA extensions of $L(1,0)$. Consequently, any VOA extension of $L(1,0)$ whose corresponding commutative algebra object is isomorphic, as a commutative algebra object in $\IndCMY(\Csq)$, to \(\widehat\Phi(\OO(G/H))\) is isomorphic to $V_{A_1}^{H}$ as a VOA extension of $L(1,0)$.
\end{cor}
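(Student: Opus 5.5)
The plan is to realize $\OO(G/H)$ as the right-$H$-invariants of $\OO(G)$ and then transport this invariant-taking through $\widehat\Phi$ and the extension--algebra correspondence. By Lemma~\ref{lem:ind-equivariance}, applied to $M=\OO(G)$ with the commuting group $H$ acting by right translations $\rho|_H$, we obtain an isomorphism of commutative algebra objects
\[
\widehat\Phi\bigl(\OO(G)^{H_{\rm right}}\bigr)\cong\widehat\Phi(\OO(G))^{H}.
\]
Pullback along the quotient $G\to G/H$ identifies $\OO(G/H)$ with $\OO(G)^{H_{\rm right}}$ as commutative rational $G$-algebras for the left action; this is where affineness of $G/H$ enters. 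Hence $\widehat\Phi(\OO(G/H))\cong\widehat\Phi(\OO(G))^H$ in $\CAlg(\IndCMY(\Csq))$.

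Next I identify the fixed-point object with a subalgebra. The inclusion $\OO(G)^{H}\hookrightarrow\OO(G)$ is a monomorphism of commutative algebras in $\Rep^{\mathrm{rat}}G$, so its image under the equivalence $\widehat\Phi$ is a monomorphism of commutative algebra objects $\widehat\Phi(\OO(G)^H)\to\widehat\Phi(\OO(G))$. Since $\Csq$ is closed under ambient submodules (Lemma~\ref{lem:square-ambient-Serre}), Proposition~\ref{prop:subalgebra-interface} shows that this identifies $U_H$ with a vertex operator subalgebra of $U_{\mathrm{reg}}$ containing $L(1,0)$. Concretely, this subalgebra is the subspace $\bigoplus_m X_m\otimes W_{2m}^H$.

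Then I compare this subalgebra with an $H$-fixed-point subalgebra. By Theorem~\ref{thm:regular-algebra}, the right-translation $G$-action on $U_{\mathrm{reg}}$, which Lemma~\ref{lem:ind-equivariance} and the categorical isomorphism of Theorem~\ref{thm:CMY-extension} turn into VOA automorphisms, is carried by the fixed $G$-equivariant isomorphism $U_{\mathrm{reg}}\cong V_{A_1}$ to the standard action. The subalgebra $\bigoplus_m X_m\otimes W_{2m}^H$ is exactly the set of $H$-fixed vectors of $U_{\mathrm{reg}}$ under this action. Therefore it maps isomorphically onto $V_{A_1}^H$, and since the isomorphism preserves the conformal vector, this is an isomorphism of extensions of $L(1,0)$. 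The second sentence of the corollary then follows because Theorem~\ref{thm:CMY-extension} is an isomorphism of categories: isomorphic algebra objects give isomorphic VOA extensions.

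The main obstacle is checking that the subspace cut out inside $\widehat\Phi(\OO(G))$ really coincides with the pointwise $H$-fixed vectors of the VOA $U_{\mathrm{reg}}$. This requires that the action induced on multiplicity spaces in Lemma~\ref{lem:ind-equivariance} agrees with the VOA-automorphism action obtained from the extension--algebra correspondence. The underlying $L(1,0)$-module map is unchanged under the correspondence, and $H$ acts by algebra-object automorphisms, which are the same linear maps. I would therefore verify this agreement directly on the decomposition $\bigoplus_m X_m\otimes W_{2m}$.
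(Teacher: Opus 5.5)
Your proposal is correct and follows essentially the paper's route: identify $\OO(G/H)=\OO(G)^{H_{\rm right}}$, use Lemma~\ref{lem:ind-equivariance} together with Proposition~\ref{prop:subalgebra-interface} (via Lemma~\ref{lem:square-ambient-Serre}) to realize $U_H$ as the fixed-point subalgebra $U_{\mathrm{reg}}^{H}$, restrict the $G$-equivariant isomorphism of Theorem~\ref{thm:regular-algebra} to $H$-fixed points, and obtain the last assertion from the isomorphism of categories in Theorem~\ref{thm:CMY-extension}. The compatibility you flag as the main obstacle is immediate in the paper, because the automorphism action in Theorem~\ref{thm:regular-algebra} is by definition the multiplicity-space action of Lemma~\ref{lem:ind-equivariance} transported through that categorical isomorphism; note also that well-definedness of $U_H$ is supplied by Lemma~\ref{lem:admissible-homogeneous}.
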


\begin{proof}
By Lemma~\ref{lem:admissible-homogeneous}, \(A_H:=\widehat\Phi(\OO(G/H))\) is extension-admissible, so the extension $U_H$ is well defined. Pullback along the quotient morphism $G\to G/H$ identifies \(\OO(G/H)=\OO(G)^{H_{\rm right}}\) as a commutative rational $G$-subalgebra of $\OO(G)$. Let \(j:\OO(G/H)\hookrightarrow\OO(G)\) denote this inclusion. Since $\widehat\Phi$ is a symmetric monoidal equivalence, \(\widehat\Phi(j):A_H\longrightarrow A_{\rm reg}:=\widehat\Phi(\OO(G))\) is a morphism of commutative algebra objects, and its underlying morphism in $\IndCMY(\Csq)$ is a monomorphism. By Lemma~\ref{lem:ind-equivariance}, under the canonical identification \(A_H\cong A_{\rm reg}^{H}\), the image of $\widehat\Phi(j)$ is exactly the fixed subobject $A_{\rm reg}^{H}$ for the induced right $H$-action.

By Lemma~\ref{lem:square-ambient-Serre}, $\Csq$ is closed under ambient $L(1,0)$-submodules. Hence Proposition~\ref{prop:subalgebra-interface} applies to $\widehat\Phi(j)$ and identifies $U_H$ with the vertex operator subalgebra of $U_{\rm reg}$ whose underlying $L(1,0)$-module is $A_{\rm reg}^{H}$. The right $H$-action on $A_{\rm reg}$ corresponds, under Theorem~\ref{thm:CMY-extension}, to the action of $H$ on $U_{\rm reg}$ by VOA automorphisms constructed in Theorem~\ref{thm:regular-algebra}. Therefore the fixed subobject $A_{\rm reg}^{H}$ is precisely the underlying $L(1,0)$-module of the fixed-point vertex operator subalgebra $U_{\rm reg}^{H}$, and hence \(U_H=U_{\rm reg}^{H}.\) The $G$-equivariant isomorphism of VOA extensions in Theorem~\ref{thm:regular-algebra} restricts to the $H$-fixed subalgebras, giving \(U_H=U_{\rm reg}^{H}\cong V_{A_1}^{H}\) as VOA extensions of $L(1,0)$.

Finally, let $U$ be a VOA extension of $L(1,0)$ whose corresponding commutative algebra object $B\in\CAlg(\IndCMY(\Csq))$ is isomorphic to $A_H$. Since Theorem~\ref{thm:CMY-extension} is an isomorphism of categories, an isomorphism $B\cong A_H$ of commutative algebra objects corresponds to an isomorphism $U\cong U_H$ of VOA extensions. Combining this with the preceding identification of $U_H$ proves the last assertion.
\end{proof}

\section{Square cores and the exceptional character branches}\label{sec:squarecore}

\subsection{The invariant form and the first primary}

Throughout this section $V$ is a simple self-contragredient VOA of CFT type and central charge one. By Proposition~\ref{prop:Li-invariant-package}, there is a unique normalized nondegenerate symmetric invariant bilinear form on $V$; we fix the normalization $(\one,\one)=1$. The same proposition implies that $L(0)$ is self-adjoint, distinct conformal-weight spaces are orthogonal, and the form restricts nondegenerately to every $V_n$.

\begin{defi}
For $h\in\Z_{\ge0}$ set \(P_h(V)=\{v\in V_h\mid L(n)v=0\text{ for all }n>0\}.\) A nonzero vector in \(P_h(V)\) is called a primary vector of weight $h$. A primary vector $v$ is called nonisotropic if \((v,v)\ne0\). The first positive primary weight of $V$ is the least $h>0$ such that \(P_h(V)\ne0\), whenever such an $h$ exists.
\end{defi}

\begin{lem}\label{lem:primary-complement}
Let $h>0$ and suppose that \(V_n=L(1,0)_n(0\le n<h).\) Then \(V_h=L(1,0)_h\mathbin{\perp}P_h(V).\) In particular, the restriction of the invariant form to $P_h(V)$ is nondegenerate and \(\dim P_h(V)=\dim V_h-\dim L(1,0)_h.\)
\end{lem}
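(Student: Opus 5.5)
The plan is to establish two facts: $L(1,0)_h$ and $P_h(V)$ are orthogonal, and $V_h$ is the sum of $L(1,0)_h$ and $P_h(V)$. Nondegeneracy and the dimension count then follow from linear algebra together with nondegeneracy of the form on $L(1,0)_h$. Throughout, we use Proposition~\ref{prop:Li-invariant-package}: the form is symmetric, $(L(n)u,v)=(u,L(-n)v)$, and its restriction to each $V_n$ is nondegenerate.

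First, orthogonality. The space $L(1,0)_h$ for $h>0$ is spanned by monomials $L(-n_1)\cdots L(-n_k)\one$ with all $n_i\ge 2$ and $k\ge1$. For $v\in P_h(V)$, adjointness gives
\[
(L(-n_1)\cdots L(-n_k)\one,v)=(L(-n_2)\cdots L(-n_k)\one,L(n_1)v)=0,
\]
since $n_1>0$. Hence $L(1,0)_h\perp P_h(V)$.

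Next, nondegeneracy on $L(1,0)_h$. Let $u\in L(1,0)_h$ satisfy $(u,L(1,0)_h)=0$. For every $n>0$ the vector $L(n)u$ lies in $L(1,0)_{h-n}$, which equals $V_{h-n}$ by hypothesis. For $w\in V_{h-n}=L(1,0)_{h-n}$ we have $L(-n)w\in L(1,0)_h$, so $(L(n)u,w)=(u,L(-n)w)=0$. Nondegeneracy on $V_{h-n}$ then forces $L(n)u=0$ for all $n>0$. Thus $u$ is a singular vector of weight $h>0$ in the submodule generated by $\one$, which is $L(1,0)$ because $\Vir_1=L(1,0)$ is simple. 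A simple highest-weight module has no nontrivial singular vectors in positive degree, so $u=0$. (Alternatively, this step follows because $L(1,0)$ is simple and its Shapovalov form restricts to the given form up to the normalization $(\one,\one)=1$, and the radical of the Shapovalov form is the maximal submodule, which is zero.) So the form is nondegenerate on $L(1,0)_h$.

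Now the decomposition. Set $P':=L(1,0)_h^{\perp}\cap V_h$. Since the form is nondegenerate on both $V_h$ and $L(1,0)_h$, we get $V_h=L(1,0)_h\oplus P'$ orthogonally, and the form is nondegenerate on $P'$. It remains to show $P'=P_h(V)$. We already have $P_h(V)\subseteq P'$ from the orthogonality step. Conversely, take $v\in P'$ and $n>0$, so $L(n)v\in V_{h-n}=L(1,0)_{h-n}$. For $w\in L(1,0)_{h-n}$, $(L(n)v,w)=(v,L(-n)w)=0$ because $L(-n)w\in L(1,0)_h$. Nondegeneracy on $V_{h-n}$ gives $L(n)v=0$, so $v\in P_h(V)$. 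The dimension formula follows.

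The main step to handle carefully is nondegeneracy on $L(1,0)_h$. It relies on the hypothesis $V_n=L(1,0)_n$ for $n<h$ and on the simplicity of $\Vir_1$. Everything else is adjointness bookkeeping.
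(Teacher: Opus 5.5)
Your proof is correct and follows the paper's argument. You show that $L(1,0)_h^{\perp}\cap V_h$ equals $P_h(V)$, using adjointness of the Virasoro modes and nondegeneracy of the form on $V_{h-n}=L(1,0)_{h-n}$, and then split $L(1,0)_h$ off orthogonally.

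The only variation is the proof of nondegeneracy on $L(1,0)_h$. The paper notes that the radical of the restricted invariant form is an ideal of the simple VOA $L(1,0)$, so that step needs no hypothesis on $V$. Your main argument instead uses the hypothesis to make a radical vector singular. Your parenthetical Shapovalov argument is essentially the paper's route.
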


\begin{proof}
The restriction of the invariant form to the conformal subVOA \(L(1,0)\) is again invariant and is nonzero since \((\one,\one)=1\). Its radical is therefore an ideal of \(L(1,0)\); as \(L(1,0)\) is simple, the restriction is nondegenerate. Moreover, \(L(0)\) is self-adjoint, so distinct homogeneous subspaces of \(L(1,0)\) are mutually orthogonal. Hence the form restricts nondegenerately to each \(L(1,0)_n\). Let $Q=L(1,0)_h^\perp\cap V_h$. If $v\in Q$, then, for $1\le n\le h$ and $w\in V_{h-n}=L(1,0)_{h-n}$, invariance gives \((L(n)v,w)=(v,L(-n)w)=0.\) The form on $V_{h-n}$ is nondegenerate, so $L(n)v=0$. For $n>h$ the target has negative conformal weight and vanishes. Thus $Q\subseteq P_h(V)$.

Conversely, let $v\in P_h(V)$. Every vector in $L(1,0)_h$ is a linear combination of vacuum descendants \(L(-n_1)\cdots L(-n_r)\one,n_i>0.\) Conversely, let \(v\in P_h(V)\). Every vector in \(L(1,0)_h\) is a linear combination of vacuum descendants \(L(-n_1)\cdots L(-n_r)\one\), where \(r\ge1\) and \(n_i>0\). By invariance, \(\bigl(L(-n_1)\cdots L(-n_r)\one,v\bigr)=\bigl(L(-n_2)\cdots L(-n_r)\one,L(n_1)v\bigr)=0\), since \(L(n_1)v=0\). Thus \(v\) is orthogonal to \(L(1,0)_h\), and hence \(P_h(V)\subseteq Q\). We have therefore proved $ Q=P_h(V). $ Since the form on \(V_h\) is nondegenerate and its restriction to \(L(1,0)_h\) is nondegenerate, we have $ V_h=L(1,0)_h\mathbin{\perp}P_h(V). $ It follows that the restriction of the form to \(P_h(V)\) is nondegenerate, and hence $ \dim P_h(V)=\dim V_h-\dim L(1,0)_h. $
\end{proof}

\begin{lem}\label{lem:first-primary}
Let $h>0$. Assume \(V_n=L(1,0)_n(0\le n<h),\dim V_h=\dim L(1,0)_h+1.\) Then there is, uniquely up to nonzero scalar, a vector $v\in P_h(V)$ such that \(V_h=L(1,0)_h\oplus\C v,(v,v)\ne0.\)
\end{lem}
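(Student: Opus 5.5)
This is an immediate consequence of Lemma~\ref{lem:primary-complement}. The hypothesis $V_n=L(1,0)_n$ for $0\le n<h$ is exactly the hypothesis of that lemma, so we obtain the orthogonal decomposition $V_h=L(1,0)_h\perp P_h(V)$. We also get $\dim P_h(V)=\dim V_h-\dim L(1,0)_h=1$. Hence $P_h(V)=\C v$ for some nonzero $v$, and $V_h=L(1,0)_h\oplus\C v$.

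Next, nonisotropy. Lemma~\ref{lem:primary-complement} says the invariant form restricts nondegenerately to $P_h(V)=\C v$. A bilinear form on a one-dimensional space is nondegenerate precisely when it is nonzero on a spanning vector, so $(v,v)\neq0$.

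Finally, uniqueness up to scalar. Any $v'\in P_h(V)$ with $V_h=L(1,0)_h\oplus\C v'$ is nonzero and lies in the one-dimensional space $P_h(V)=\C v$. Therefore $v'$ is a nonzero multiple of $v$.

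No step presents a real obstacle. The only point to keep straight is that the statement requires $v\in P_h(V)$, so uniqueness is taken inside the primary space. Without that requirement, the complement $\C v$ would not be unique, since any vector $v+u$ with $u\in L(1,0)_h$ would also complement $L(1,0)_h$.
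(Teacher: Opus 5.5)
Your proposal is correct and matches the paper's proof: both apply Lemma~\ref{lem:primary-complement} to get the orthogonal decomposition $V_h=L(1,0)_h\perp P_h(V)$ with $\dim P_h(V)=1$ and a nondegenerate form on $P_h(V)$. From this, $(v,v)\ne0$ and uniqueness up to scalar follow at once, exactly as you wrote.
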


\begin{proof}
By Lemma~\ref{lem:primary-complement}, \(V_h=L(1,0)_h\mathbin{\perp}P_h(V),\dim P_h(V)=1\), and the restriction of the invariant form to \(P_h(V)\) is nondegenerate. Hence any \(0\ne v\in P_h(V)\) satisfies \((v,v)\ne0\) and $ V_h=L(1,0)_h\oplus\C v. $ Since \(P_h(V)\) is one-dimensional, such \(v\) is unique up to nonzero scalar.
\end{proof}

\begin{lem}\label{lem:vacuum-channel}
Let $u,v\in P_h(V)$, where $h\in\Z_{\ge0}$. Then \(u_{2h-1}v=(-1)^h(u,v)\one.\) In particular, the weight-zero (vacuum) coefficient of the product of two primary vectors is nonzero whenever their invariant pairing is nonzero.
\end{lem}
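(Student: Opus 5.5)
The plan is to pair the coefficient $u_{2h-1}v$ against the vacuum using the invariant form, and then use the CFT-type hypothesis to identify it. First, $u_{2h-1}v$ has weight $h+h-(2h-1)-1=0$, so it lies in $V_0=\C\one$. Hence $u_{2h-1}v=c\one$ with $c=(\one,u_{2h-1}v)$, because $(\one,\one)=1$. So everything reduces to computing this single pairing.

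Next I will evaluate $(\one,u_{2h-1}v)$ with the invariance identity of Subsection~\ref{subsec:Li-invariant}, applied in its adjoint form $(Y(a,z)w_1,w_2)=(w_1,Y(e^{zL(1)}(-z^{-2})^{L(0)}a,z^{-1})w_2)$ together with symmetry of the form. Since $u$ is primary, $e^{zL(1)}u=u$, and $(-z^{-2})^{L(0)}u=(-1)^hz^{-2h}u$. This gives
\[
(Y(u,z)w_1,w_2)=(-1)^hz^{-2h}(w_1,Y(u,z^{-1})w_2).
\]

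I then compare coefficients. On the right, the relevant term is $(\one,u_{2h-1}v)$, multiplied by $z^{-2h}\cdot z^{2h}=z^0$. On the left, I take $w_1=\one$ and $w_2=v$ and read off $(u_{-1}\one,v)z^0=(u,v)$, since $Y(u,z)\one$ has constant term $u$. Equating the two sides gives $(u,v)=(-1)^h(\one,u_{2h-1}v)$, and therefore $u_{2h-1}v=(-1)^h(u,v)\one$. The step that needs the most care is matching the exponents exactly, specifically checking that the $z^{-n-1}$ coefficient with $n=2h-1$ is the one that maps to $z^0$. The final sentence of the statement then follows immediately, since the coefficient is nonzero whenever $(u,v)\neq0$.

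If the adjoint convention turns out to be reversed, I would instead apply the invariance identity with the roles of $w_1$ and $w_2$ swapped, using the symmetry from Proposition~\ref{prop:Li-invariant-package}(i). The sign $(-1)^h$ does not change under this swap, because $(-1)^{h}$ equals its own inverse.
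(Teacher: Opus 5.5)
Your argument is correct and is essentially the paper's proof: one application of the invariance identity with $a=u$, primarity to produce the factor $(-1)^hz^{-2h}$, the vacuum-creation property $Y(u,z)\one=e^{zL(-1)}u$, comparison of a single coefficient, and $V_0=\C\one$. The only difference is that you put $\one$ in the first slot and $v$ in the second, whereas the paper pairs $(Y(u,z)v,\one)$. Your placement removes the need for the symmetry of the form and for the weight-orthogonality step. Your exponent bookkeeping is right: $u_{2h-1}$ carries $z^{2h}$ in $Y(u,z^{-1})$. The identity you wrote is exactly the paper's definition of invariance, so your hedge about a reversed convention is not needed.
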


\begin{proof}
By invariance of the bilinear form, \((Y(u,z)v,\one)=\bigl(v,Y(e^{zL(1)}(-z^{-2})^{L(0)}u,z^{-1})\one\bigr)\). Since \(u\in P_h(V)\), we have \(L(n)u=0\) for every \(n>0\), in particular \(L(1)u=0\), while \(L(0)u=hu\). Hence \(e^{zL(1)}(-z^{-2})^{L(0)}u=(-z^{-2})^h u=(-1)^hz^{-2h}u\), where we use \(h\in\Z_{\ge0}\). Therefore, by the standard identity \(Y(u,x)\one=e^{xL(-1)}u\), \((Y(u,z)v,\one)=(-1)^hz^{-2h}\bigl(v,e^{z^{-1}L(-1)}u\bigr)\). Expanding the exponential gives \(\bigl(v,e^{z^{-1}L(-1)}u\bigr)=\sum_{k\ge0}\frac{z^{-k}}{k!}\bigl(v,L(-1)^ku\bigr)\). Now \(v\in V_h\), whereas \(L(-1)^ku\in V_{h+k}\). Since distinct conformal-weight spaces are orthogonal with respect to the invariant form, every term with \(k>0\) vanishes. Thus \((Y(u,z)v,\one)=(-1)^hz^{-2h}(v,u)=(-1)^hz^{-2h}(u,v)\), the last equality following from symmetry of the invariant form. Comparing the coefficient of \(z^{-2h}\) gives $ (u_{2h-1}v,\one)=(-1)^h(u,v). $ Since $ \wt(u_{2h-1}v) = 0, $ we have \(u_{2h-1}v\in V_0=\C\one\). As the form is normalized by \((\one,\one)=1\), the preceding equality therefore implies $ u_{2h-1}v=(-1)^h(u,v)\one. $
\end{proof}

\subsection{The first singular vector}

We use the Verma-module, level, singular-vector, and Shapovalov-form conventions fixed in Subsection~\ref{subsec:CJORY-package}, in particular Proposition~\ref{prop:c1-verma}.

\begin{lem}\label{lem:first-singular}
Fix $m\ge1$ and set \(h=m^2,H=(m+1)^2.\) Assume
\begin{enumerate}[label=(\roman*),leftmargin=2.1em]
\item $V_n=L(1,0)_n$ for $0\le n<h$;
\item $\dim V_h=\dim L(1,0)_h+1$;
\item for some $\varepsilon\in\{0,1\}$, \(\dim V_H=\dim L(1,0)_H+\dim(X_m)_H+\varepsilon.\)
\end{enumerate}
Then the vector $v$ from Lemma~\ref{lem:first-primary} generates \(U(\Vir)v\cong X_m.\)
\end{lem}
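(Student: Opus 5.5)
The module $W:=U(\Vir)v$ is a highest-weight Virasoro module of central charge one and highest weight $h=m^2$, since $v$ is primary by Lemma~\ref{lem:first-primary}. Hence it is a quotient of $V(1,m^2)$ with $v\ne0$ surviving. By Proposition~\ref{prop:c1-verma}, the maximal proper submodule of $V(1,m^2)$ is generated by the singular vector $s$ of level $2m+1$, that is, of weight $H$. So it suffices to show that the image $s'\in W_H\subset V_H$ of $s$ is zero.

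First, the invariant form restricted to $W$ is contravariant by Proposition~\ref{prop:Li-invariant-package}(iii) and nonzero on $v$, because $(v,v)\ne0$. Pulled back to $V(1,m^2)$, it is therefore a nonzero multiple of the Shapovalov form. Its radical, the embedded $V(1,H)$, consequently pairs trivially with all of $V(1,m^2)$, so $s'$ is orthogonal to $W_H$. Second, $s'$, if nonzero, is a primary vector of weight $H>0$. The argument in Lemma~\ref{lem:primary-complement} (move $L(-n_1)$ across the form) shows $s'\perp L(1,0)_H$. Third, $L(1,0)\cap W=0$: the intersection is a submodule of the simple module $L(1,0)$ with no weight-zero part. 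So $S:=L(1,0)_H\oplus W_H\subseteq V_H$ is a direct sum. At level $2m+1$ the maximal submodule of $V(1,m^2)$ is one-dimensional (spanned by $s$), so $\dim W_H=\dim(X_m)_H+\delta$, where $\delta=1$ if $s'\ne0$ and $\delta=0$ otherwise.

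Now suppose $s'\ne0$. Then $\dim S=\dim L(1,0)_H+\dim(X_m)_H+1$. By hypothesis (iii) this forces $\varepsilon=1$ and $S=V_H$. Since $s'$ is orthogonal to all of $S$, nondegeneracy of the form on $V_H$ gives $s'=0$, a contradiction. So the case $\varepsilon=0$ is handled outright, and, provided the count above is right, the case $\varepsilon=1$ is also handled by the same contradiction. (If $\dim S$ is smaller than expected, the case $\varepsilon=1$ leaves room for a one-dimensional complement $\C u$ pairing nontrivially with $s'$. That would need a separate argument using $u$ and the pairing between $U(\Vir)s'$ and $U(\Vir)u$.)

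Having $s'=0$, the quotient map $V(1,m^2)\to W$ kills the maximal proper submodule and is nonzero, so $W\cong X_m$.

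The main obstacle is the dimension bookkeeping in $V_H$. It needs the one-dimensionality of $V(1,m^2)_H\cap V(1,H)$ and the directness of $L(1,0)_H+W_H$, which I will check carefully, so that the hypothesis on $\dim V_H$ leaves no room for a nonzero $s'$.
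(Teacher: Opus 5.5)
Your proposal is correct and matches the paper's proof essentially step by step: you pull the invariant form back to a nonzero multiple of the Shapovalov form to get $s'\perp W_H$, use primarity to get $s'\perp L(1,0)_H$, and use $L(1,0)\cap W=0$ with hypothesis (iii) to either reach a contradiction ($\varepsilon=0$) or force $V_H=L(1,0)_H\oplus W_H$, whence $s'=0$ by nondegeneracy. Your parenthetical worry about the count is unnecessary, because the kernel of $V(1,m^2)\to W$ is a proper submodule and so lies in the embedded $V(1,(m+1)^2)$, whose weight-$H$ space is $\C s$; hence $s'\ne0$ gives exactly $\dim W_H=\dim(X_m)_H+1$, the same count the paper obtains from its minimal-kernel argument.
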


\begin{proof}
Let $C=U(\Vir)v$ and let \(\phi:V(1,m^2)\twoheadrightarrow C\) be the canonical Virasoro homomorphism sending the Verma highest-weight vector to $v$. Let $\widetilde s\in V(1,m^2)_H$ be a nonzero first singular vector and put $s=\phi(\widetilde s)$. If $s=0$, Proposition~\ref{prop:c1-verma} shows that $\phi$ factors through $X_m$. The induced map $X_m\to C$ is nonzero and surjective; since $X_m$ is simple, it is an isomorphism. Suppose therefore that $s\ne0$.

By Proposition~\ref{prop:c1-verma}, there is no nontrivial singular vector of positive level below weight $H$, and the singular space at weight $H$ is one-dimensional. Since $\phi(\one_{1,m^2})=v\ne0$, the level-zero part of $\ker\phi$ vanishes. If $\ker\phi$ were nonzero at some positive level below $2m+1$, choose a nonzero homogeneous vector $x$ in $\ker\phi$ of minimal positive level. Since $\ker\phi$ is a Virasoro submodule, if $x$ has level $N>0$, then $L(n)x\in\ker\phi$ has level $N-n$. For $0<n<N$ this vanishes by the minimality of $N$, while for $n=N$ it vanishes because the level-zero part of $\ker\phi$ is zero; for $n>N$ there is no negative-level subspace. Hence $L(n)x=0$ for every $n>0$, by the minimality of its level, so $x$ would be a nontrivial singular vector, a contradiction. Moreover, if $x\in(\ker\phi)_H$, then $L(n)x\in(\ker\phi)_{H-n}=0$ for every $n>0$, so every nonzero such $x$ is singular. Hence \((\ker\phi)_H\subseteq\C\widetilde s\). Since $\phi(\widetilde s)=s\ne0$, we have $(\ker\phi)_H=0$. Thus $\phi$ is injective through weight $H$. Since the maximal proper submodule is $V(1,H)$ and its weight-$H$ space is its one-dimensional highest-weight line, \(\dim C_H=\dim V(1,m^2)_H=\dim(X_m)_H+1.\)

Let $\langle\ ,\ \rangle_{\rm Sh}$ be the normalized Shapovalov form on $V(1,m^2)$. Its radical is the unique maximal proper submodule and, by Proposition~\ref{prop:c1-verma}, contains $\widetilde s$. Pull the invariant form on $V$ back along $\phi$ by setting \(\langle x,y\rangle_\phi=(\phi(x),\phi(y)).\) By Proposition~\ref{prop:Li-invariant-package}(iii), $\langle\ ,\ \rangle_\phi$ is contravariant, and \(\langle\one_{1,m^2},\one_{1,m^2}\rangle_\phi=(v,v)\ne0.\) Hence it is a nonzero scalar multiple of $\langle\ ,\ \rangle_{\rm Sh}$. Therefore, for every $x\in V(1,m^2)$, \((s,\phi(x))=(\phi(\widetilde s),\phi(x))=\langle\widetilde s,x\rangle_\phi=0.\) Surjectivity of $\phi$ gives $(s,C)=0$. Since $\widetilde s$ is singular and $\phi$ is a Virasoro-module homomorphism, for every $n>0$ we have $ L(n)s=L(n)\phi(\widetilde s) =\phi(L(n)\widetilde s)=0. $ Thus, if $s\ne0$, then $s$ is a primary vector of weight $H$. Moreover $C\cap L(1,0)=0$. Indeed, the intersection is a Virasoro submodule of the simple module $L(1,0)$, so it is either zero or all of $L(1,0)$. The latter is impossible because $C$ is supported in conformal weights at least $m^2>0$, whereas $L(1,0)$ contains the vacuum in weight zero.

If $\varepsilon=0$, then, since $C_H\cap L(1,0)_H=0$ and $\dim C_H=\dim(X_m)_H+1$, we have \(\dim V_H\ge\dim L(1,0)_H+\dim C_H=\dim L(1,0)_H+\dim(X_m)_H+1,\) contradicting condition~\textnormal{(iii)}.

Assume $\varepsilon=1$. Together with $C\cap L(1,0)=0$, the displayed dimension formula and condition~\textnormal{(iii)} force \(V_H=L(1,0)_H\oplus C_H.\) We already know $s\perp C_H$. Since $s$ is primary, it is orthogonal to every vacuum descendant. Indeed, if $ w=L(-n_1)\cdots L(-n_r)\one\in L(1,0)_H, $ then invariance gives $ (s,w) =(L(n_1)s,L(-n_2)\cdots L(-n_r)\one)=0, $ since $n_1>0$ and $s$ is primary. Thus $s\perp V_H$, contradicting nondegeneracy of the invariant form on $V_H$. Hence $s=0$.
\end{proof}

\subsection{Semisimple square cores}

The following closure statement replaces any appeal to global Virasoro semisimplicity. For a subset $T\subset V$, write \(\langle T\rangle_{\mathrm{VOA}}\) for the vertex operator subalgebra of $V$ generated by $T$.

\begin{thm}\label{thm:square-core}
Let $V$ be a VOA extension of $L(1,0)$. Let \(S=S_0\oplus S_1\oplus\cdots\oplus S_r\subset V\) be a finite direct sum of Virasoro submodules such that \(S_0=L(1,0)\) and \(S_i\cong X_{m_i}\) for \(1\le i\le r\). Set \(U=\langle S\rangle_{\mathrm{VOA}}\). Then \(U\in\IndCMY(\Csq)\). Equivalently, $U$ is an algebraic direct sum of copies of square simple modules as a Virasoro module. No nonsplit square-block extension is created by the vertex-algebra multiplication.
\end{thm}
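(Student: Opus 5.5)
The plan is to build $U$ as an increasing union of Virasoro submodules of $V$, each lying in $\Csq$, using the coefficient-image Lemma~\ref{lem:coeff-image} at every stage. The vertex subalgebra generated by $S$ is spanned by iterated products $a^{(1)}_{n_1}a^{(2)}_{n_2}\cdots a^{(k)}_{n_k}\one$ with $a^{(j)}\in S$. This follows from the standard description of generated subalgebras via the Jacobi identity (skew-symmetry and associativity), since $S$ is stable under $L(-1)$. Define $U^{(0)}:=S$ and
\[
U^{(k+1)}:=U^{(k)}+\operatorname{span}_{\C}\{a_n u\mid a\in S,\ u\in U^{(k)},\ n\in\Z\}.
\]
Then $U=\bigcup_k U^{(k)}$. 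It therefore suffices to show, by induction, that each $U^{(k)}$ is a Virasoro submodule belonging to $\Csq$.

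The base case holds because $S\in\Csq$. For the inductive step, suppose $U^{(k)}\in\Csq$. Restrict the vertex operator $Y_V$ to $S\otimes U^{(k)}$. Because $S$ and $U^{(k)}$ are $L(1,0)$-submodules of $V$ and $\omega\in S_0$, the Jacobi identity for $Y_V$ shows that this restriction is an intertwining operator $\mathcal Y_k$ of type $\binom{V}{S\,U^{(k)}}$, with $V$ regarded as a weak (indeed generalized) $L(1,0)$-module. The $L(-1)$-derivative property holds because $L(-1)=\omega_0$. Its coefficient image is exactly $\operatorname{span}\{a_nu\}$, which by Lemma~\ref{lem:coeff-image} lies in $\Csq$.

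Then $U^{(k+1)}$ is the sum of $U^{(k)}$ and $\operatorname{Im}\mathcal Y_k$. This sum is a quotient of their direct sum, so it lies in $\Csq$ by Lemma~\ref{lem:square-ambient-Serre}, or directly because a sum of two semisimple submodules is semisimple with the same simple types. Hence $U$ is a directed union of $\Csq$-subobjects. By the concrete characterization of $\IndCMY(\Csq)$ (\cite[Proposition~4.6]{CMY}), we get $U\in\IndCMY(\Csq)$.

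For the ``equivalently'' statement, note that a module which is a union of semisimple submodules of square type is a sum of simple submodules $X_m$. It is therefore semisimple, a direct sum of copies of the $X_m$, exactly as in the proof of Lemma~\ref{lem:ind-equivalence}.

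The main obstacle is checking that $V$ may serve as the target generalized module in Lemma~\ref{lem:coeff-image}, since $V$ need not be $L(1,0)$-semisimple. $V$ is $L(0)$-graded with finite-dimensional weight spaces, so it is a generalized $L(1,0)$-module, and the lemma allows an arbitrary generalized target. The hypotheses are thus met. The only remaining care is to verify lower truncation and the Jacobi identity for $\mathcal Y_k$ directly from those of $Y_V$, which is routine.
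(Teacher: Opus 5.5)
Your proof is correct and follows essentially the paper's route: you build an increasing chain of $\Csq$-submodules from coefficient images via Lemma~\ref{lem:coeff-image} and quotient closure (Lemma~\ref{lem:square-ambient-Serre}), show its union is $U$, and conclude $U\in\IndCMY(\Csq)$ by the CMY directed-union characterization. The only difference is that you iterate left multiplication by the fixed generating set $S$ and invoke the standard monomial-spanning description of a generated subalgebra (valid for any generating subset, so $L(-1)$-stability is not actually needed), whereas the paper adjoins all pairwise products of the simple summands of $F_n$, which makes closure of the union under all mode products immediate without that spanning lemma.
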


\begin{proof}
Set \(F_0=S\). Suppose that \(F_n\in\Csq\) has been constructed and choose an internal decomposition \(F_n=\bigoplus_{\lambda\in\Lambda_n}S_\lambda,S_\lambda\cong X_{m(\lambda)},\) where $\lambda$ records the actual embedded copy, not merely its isomorphism class. For each ordered pair \(\lambda,\mu\in\Lambda_n\), the restriction \(Y_V|_{S_\lambda\otimes S_\mu}\) is an $L(1,0)$-intertwining operator of type \(\binom{V}{S_\lambda S_\mu},\) because $V$ is a VOA extension of $L(1,0)$. Let \(I_{\lambda,\mu}:=\operatorname{Im}\bigl(Y_V|_{S_\lambda\otimes S_\mu}\bigr)\) denote its coefficient image. By Lemma~\ref{lem:coeff-image}, \(I_{\lambda,\mu}\in\Csq\).

Define \(F_{n+1}=F_n+\sum_{\lambda,\mu\in\Lambda_n}I_{\lambda,\mu}\subset V.\) Consider the natural Virasoro-module epimorphism
\[
\psi_n:
F_n\oplus
\bigoplus_{\lambda,\mu\in\Lambda_n}I_{\lambda,\mu}
\twoheadrightarrow F_{n+1},
\qquad
\psi_n\bigl(x,(y_{\lambda,\mu})\bigr)
=
x+\sum_{\lambda,\mu}y_{\lambda,\mu}.
\]
Its source belongs to $\Csq$, since $\Lambda_n$ is finite and each summand belongs to $\Csq$. By the quotient-closure in Lemma~\ref{lem:square-ambient-Serre}, \(F_{n+1}\in\Csq\). Thus we obtain an increasing sequence of Virasoro submodules \(F_0\subseteq F_1\subseteq F_2\subseteq\cdots.\) Put \(F_\infty:=\bigcup_{n\ge0}F_n.\) We claim that $F_\infty$ is a vertex operator subalgebra of $V$. Indeed, let \(a,b\in F_\infty\). Choose \(n\) such that \(a,b\in F_n\), and write them with respect to the above decomposition of \(F_n\) as finite sums of vectors from the embedded simple summands \(S_\lambda\). By bilinearity of $Y_V$ and the definition of \(I_{\lambda,\mu}\), every coefficient of \(Y_V(a,x)b\) belongs to \(F_n+\sum_{\lambda,\mu\in\Lambda_n}I_{\lambda,\mu}=F_{n+1}\subseteq F_\infty.\) Thus $F_\infty$ is closed under all mode products. Moreover, \(F_\infty\supseteq F_0=S\supseteq L(1,0)\), so it contains the vacuum and the conformal vector. Therefore $F_\infty$ is a vertex operator subalgebra of $V$ containing $S$. By the definition of $U$, \(U\subseteq F_\infty.\) Conversely, \(F_0=S\subseteq U\). If \(F_n\subseteq U\), then every summand \(S_\lambda\) is contained in $U$, and since $U$ is a vertex operator subalgebra, every coefficient of \(Y_V(S_\lambda,x)S_\mu\) lies in $U$. Hence \(I_{\lambda,\mu}\subseteq U\) for all \(\lambda,\mu\in\Lambda_n\), and therefore \(F_{n+1}\subseteq U\). By induction, \(F_n\subseteq U\) for all \(n\), so \(F_\infty\subseteq U.\) Consequently, \(U=F_\infty=\bigcup_{n\ge0}F_n.\) Hence \(U=\bigcup_{\substack{M\subseteq U\\M\in\Csq}}M,\) by the characterization of \(\IndCMY(\Csq)\) recalled above, \(U\in\IndCMY(\Csq)\). Finally, Lemma~\ref{lem:ind-equivalence} gives complex vector spaces \(E_m\) such that \(U\cong\bigoplus_{m\ge0}X_m\otimes E_m\) as an $L(1,0)$-module. Hence $U$ is an algebraic direct sum of copies of square simple modules, and in particular no nonsplit square-block extension occurs.
\end{proof}

\subsection{A closed-orbit reconstruction lemma}

We shall transport square cores through the equivalence \(\widehat\Phi^{-1}:\IndCMY(\Csq)\longrightarrow\Rep^{\mathrm{rat}}G.\) We first record a general finite-generation lemma, using the notation $\Psi$ and $\widehat\Psi$ of Lemma~\ref{lem:ind-equivalence}; it will then be applied to $\Phi$ (and later to other tensor equivalences).

\begin{lem}\label{lem:finite-generation}
In the setting of Lemma~\ref{lem:ind-equivalence}, let $U$ be a VOA extension of the base VOA $V$ whose underlying $V$-module lies in $\IndCMY(\mathcal C)$. Suppose that there is a subobject $E\subseteq U$ with $E\in\mathcal C$ such that $U$ is generated as a vertex operator algebra by $V$ and $E$; equivalently, $U$ is the smallest vertex operator subalgebra of itself containing both $V$ and $E$. Via the extension--algebra correspondence of Theorem~\ref{thm:CMY-extension}, regard $U$ as a commutative algebra object in $\IndCMY(\mathcal C)$, and put $ B:=\widehat\Psi^{-1}(U). $ Then $B$ is generated, as an ordinary commutative $\C$-algebra, by the finite-dimensional $G$-module $\Psi^{-1}(E)$. In particular, $B$ is finitely generated.
\end{lem}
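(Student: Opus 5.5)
Let $B_0\subseteq B$ denote the ordinary $\C$-subalgebra generated by the finite-dimensional $G$-submodule $E':=\Psi^{-1}(E)$. Here $E'$ is identified with a $G$-submodule of $B$ via $\widehat\Psi^{-1}$ applied to the inclusion $E\hookrightarrow U$. The goal is to show $B_0=B$. Since $B_0$ is the image of the symmetric algebra $\Sym(E')\to B$, finite generation then follows at once.

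The first step is to check that $B_0$ is a commutative rational $G$-subalgebra of $B$. It is $G$-stable because $E'$ is $G$-stable and multiplication is $G$-equivariant. It is a subalgebra containing the unit by construction. Being a $G$-submodule of a rational module, it lies in $\Rep^{\mathrm{rat}}G$. Concretely, $B_0=\sum_{k\ge0}\mu^{(k)}(E'^{\otimes k})$, where $\mu^{(k)}$ is iterated multiplication and $k=0$ gives the unit line $\C$. The inclusion $j:B_0\hookrightarrow B$ is therefore a monomorphism of commutative algebra objects in $\Rep^{\mathrm{rat}}G$.

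Next I would apply $\widehat\Psi$. Since $\widehat\Psi$ is an exact symmetric monoidal equivalence, $\widehat\Psi(j)$ is a monomorphism of commutative algebra objects into $U$. Two facts about its image are needed.

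First, the image contains $V$, because the unit $\iota_U$ factors through it.

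Second, the image contains $E$, because $E'\subseteq B_0$ and $\widehat\Psi$ restricted to $\Rep G$ is $\Psi$.

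The image is an object of $\IndCMY(\mathcal C)$, and $\mathcal C$ is closed under ambient submodules, as hypothesis (2) of Theorem~\ref{thm:CMY-direct-limit} gives. So Proposition~\ref{prop:subalgebra-interface} applies and identifies $\widehat\Psi(B_0)$ with a vertex operator subalgebra of $U$ containing $V$ and $E$. By the generation hypothesis this subalgebra is all of $U$. Hence $\widehat\Psi(j)$ is an isomorphism, and because $\widehat\Psi$ is an equivalence, $j$ is surjective. This gives $B_0=B$.

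The main obstacle is the precise meaning of "contains $E$" across the equivalence. One must check that the subobject $\widehat\Psi(E')$ of $\widehat\Psi(B_0)$, composed with $\widehat\Psi(j)$, has exactly the image $E\subseteq U$. This requires that the identification $\widehat\Psi\circ\widehat\Psi^{-1}\cong\id$ be compatible with the chosen inclusion of $E$. The identification holds because $E'$ is defined as the subobject $\widehat\Psi^{-1}(E\hookrightarrow U)$ transported through the counit isomorphism. I would handle this by fixing the quasi-inverse together with its monoidal counit $\varepsilon$, as in Lemma~\ref{lem:regular-simple}. With these fixed, every inclusion is transported compatibly, and the subobject of $U$ determined by $E'\subseteq B_0\subseteq B$ is exactly $E$.
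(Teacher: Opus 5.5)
Your proposal is correct and follows essentially the same route as the paper: form the unital subalgebra generated by $\Psi^{-1}(E)$, push its inclusion through $\widehat\Psi$, use Proposition~\ref{prop:subalgebra-interface} to identify the image with a vertex operator subalgebra of $U$ containing $V$ and $E$, and conclude from the generation hypothesis and the fact that $\widehat\Psi$ is an equivalence. You also check two points explicitly that the paper passes over quickly: closure of $\mathcal C$ under ambient submodules, and compatibility of the fixed counit with the embedding $E\subseteq U$.
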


\begin{proof}
Put $W:=\Psi^{-1}(E)\subseteq B$. Since $E\in\mathcal C$ and $\Psi:\Rep G\overset{\sim}{\longrightarrow}\mathcal C$ is an equivalence, $W$ is a finite-dimensional rational $G$-module. Let $B_W\subseteq B$ be the ordinary unital commutative $\C$-subalgebra generated by $W$. Since the multiplication and unit of $B$ are $G$-equivariant and $W$ is $G$-stable, $B_W$ is $G$-stable. Hence \(B_W\) is a commutative algebra object in \(\Rep^{\mathrm{rat}}G\), and the inclusion $ B_W\hookrightarrow B $ is a monomorphism of commutative algebra objects. Since \(\widehat\Psi\) is a symmetric monoidal equivalence, the inclusion \(B_W\hookrightarrow B\) is sent to a monomorphism of commutative algebra objects $ C:=\widehat\Psi(B_W)\hookrightarrow \widehat\Psi(B)=U. $ Moreover, \(B_W\) contains both the tensor unit \(\C\) and \(W=\Psi^{-1}(E)\). Since \(\widehat\Psi\) extends \(\Psi\) and preserves the tensor unit, it follows that \(C\) contains \(V\cong\widehat\Psi(\C)\) and \(E\cong\widehat\Psi(W)\). As a $V$-submodule of the grading-restricted VOA $U$, the object $C$ is itself grading restricted. Proposition~\ref{prop:subalgebra-interface} therefore identifies $C$ with a vertex operator subalgebra of $U$. Since $C$ contains both $V$ and $E$, while $U$ is generated as a vertex operator algebra by $V$ and $E$, we obtain $C=U$. Because $\widehat\Psi$ is an equivalence, it follows that $B_W=B$. Thus $W$ generates $B$ as an ordinary commutative $\C$-algebra.
\end{proof}

\begin{lem}\label{lem:first-invariant-degrees}
Let $G=\PSLtwo\cong\SO_3(\C)$, let $T\le G$ be a maximal torus, and write \(N(T):=N_G(T)\) for its normalizer. For $m\ge0$, the first positive $m$ for which $W_{2m}^H\ne0$ is
\[
\begin{array}{c|ccccccc}
H&T&N(T)&C_n&D_n&A_4&S_4&A_5\\ \hline
\text{first }m&1&2&1&2&3&4&6.
\end{array}
\]
Here $C_n$ denotes a finite cyclic rotation subgroup, and $D_n$ a dihedral rotation subgroup with $n\ge2$.
\end{lem}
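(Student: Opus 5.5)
The plan is to compute $\dim W_{2m}^H$ directly. I realize $W_{2m}$ as the space $\mathcal H_m$ of harmonic homogeneous polynomials of degree $m$ on $\C^3$, with $G\cong\SO_3(\C)$ acting by linear substitution. The reason is that $\Sym^m(\C^3)\cong\mathcal H_m\oplus r^2\Sym^{m-2}(\C^3)$ with $r^2=x^2+y^2+z^2$ invariant. This gives
\[
\dim W_{2m}^H=\dim\Sym^m(\C^3)^H-\dim\Sym^{m-2}(\C^3)^H .
\]
So the first positive $m$ with $W_{2m}^H\ne0$ is the first degree in which $\C[x,y,z]^H$ has an invariant not divisible by $r^2$. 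As an independent check I would also use the character formula
\[
\dim W_{2m}^H=\frac1{|H|}\sum_{g\in H}\frac{\sin((2m+1)\varphi_g/2)}{\sin(\varphi_g/2)},
\]
with the convention that the summand is $2m+1$ when $\varphi_g=0$. Here $\varphi_g$ is the rotation angle of $g$.

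\textbf{Torus and cyclic cases.} Restricted to $T$, the module $W_{2m}$ has weights $-m,\dots,m$, each with multiplicity one. So $W_{2m}^T$ is the zero-weight line, which is nonzero for every $m$; the first positive value is $m=1$. The nontrivial Weyl element of $N(T)$ acts on the zero-weight vector of $W_{2m}$ by $(-1)^m$. In the harmonic model this vector is the zonal polynomial $P_m(z)$, the Legendre polynomial in the axial coordinate, and the Weyl element is a half-turn $z\mapsto -z$. Hence $W_{2}^{N(T)}=0$ and $W_4^{N(T)}\ne0$, so the first value is $m=2$.

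A finite cyclic $C_n$ is conjugate into $T$, so $W_2^{C_n}\supseteq W_2^T\ne0$, giving $m=1$. A dihedral group $D_n$ with $n\ge2$ is conjugate into $N(T)$, so $W_4^{D_n}\supseteq W_4^{N(T)}\ne0$. On the other hand, $W_2\cong\C^3$ is the defining representation, and $D_n$ contains half-turns about two distinct axes; no nonzero vector is fixed by both, so $W_2^{D_n}=0$. This gives $m=2$.

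\textbf{Polyhedral groups.} For these I use explicit invariants and the difference formula above. None of the three groups fixes a vector in $\C^3$, so $m\ne1$ in each case.

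For $A_4$, realized as the rotations preserving a tetrahedron inscribed in the cube: the quadratic invariants are only $\C r^2$, since the $3$-cycle permutes the coordinate axes, giving $m\ne2$. The cubic $xyz$ is invariant and harmonic, giving $m=3$.

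For $S_4$: the only quadratic and cubic invariants are multiples of $r^2$ and $0$. Indeed $xyz$ changes sign under the quarter-turns, and the Molien series is $1+t^2+t^4+\dots$ with degree-$4$ generator $x^4+y^4+z^4$. The harmonic part $x^4+y^4+z^4-\tfrac35r^4$ is therefore the first nonzero invariant harmonic, giving $m=4$.

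For $A_5$: the invariant ring has primary generators in degrees $2,6,10$ and a secondary generator in degree $15$. Hence $\dim\Sym^m(\C^3)^{A_5}$ equals $1$ in degrees $2$ and $4$, and rises to $2$ in degree $6$. So the first invariant harmonic occurs at $m=6$.

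\textbf{Main obstacle.} The delicate step is the icosahedral computation, where one must be sure no invariant harmonic occurs in degrees $3,4,5$. I would settle it with the character sum. The class data for $A_5$ are: $15$ half-turns, $20$ rotations of angle $2\pi/3$, and $12+12$ rotations of angles $2\pi/5$ and $4\pi/5$. Evaluating the sum for $m=1,\dots,6$ should give $0,0,0,0,0,1$. The same routine evaluation double-checks the $A_4$ and $S_4$ rows.
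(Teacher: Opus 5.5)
Your proof is correct. For $T$, $N(T)$, $C_n$ and $D_n$ it is essentially the paper's argument. The zero-weight line carries the Weyl sign $(-1)^m$, a cyclic group fixes its axis in $W_2\cong\C^3$, and two half-turns about distinct axes have no common fixed vector. Your conjugation $D_n\le N(T)$ replaces the paper's explicit invariant $Q_e=e\otimes e-\frac13 I\in W_4$, which is just the zonal quadratic harmonic.

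For the three polyhedral groups the routes differ. The paper never looks at invariant rings; it only evaluates the average $|H|^{-1}\sum_{h\in H}\chi_m(h)$ for $m$ up to $3,4,6$. Your harmonic model gives $\dim W_{2m}^H=\dim\Sym^m(\C^3)^H-\dim\Sym^{m-2}(\C^3)^H$ and exhibits the first invariant harmonics explicitly: $xyz$ for $A_4$ and $x^4+y^4+z^4-\frac35 r^4$ for $S_4$. It also explains the Hilbert series the paper records right after the lemma: they are exactly $(1-t^2)$ times the Molien series of $H$ acting on $\C^3$. The cost is the icosahedral case, where you must either import Klein's description of $\C[x,y,z]^{A_5}$ (generators in degrees $2,6,10,15$) or fall back on the same character sum as the paper. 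With your class data that sum does give $0,0,0,0,0,1$ for $m=1,\dots,6$.

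Two small slips need fixing.

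First, the Molien series of $S_4$ begins $1+t^2+2t^4+\cdots$, not $1+t^2+t^4+\cdots$, because both $r^4$ and $x^4+y^4+z^4$ are invariant quartics. With the coefficient $1$ you wrote, your own difference formula would give $W_8^{S_4}=0$, contradicting your conclusion.

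Second, for $A_4$ the $3$-cycle alone does not force the quadratic invariants down to $\C r^2$, since $xy+yz+zx$ survives it. You also need the coordinate half-turns. Alternatively, apply Schur's lemma directly: $\C^3$ is an irreducible $A_4$-module, so it carries only one invariant symmetric form up to scalar.
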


\begin{proof}
Since all maximal tori of $G$ are conjugate, we may assume that $T$ is the standard diagonal torus. For every $m\ge1$, under the realization $W_{2m}\cong\Sym^{2m}(\C^2)$, its zero-weight line is spanned by $x^my^m$. A representative $w=\begin{pmatrix}0&-1\\1&0\end{pmatrix}$ of the nontrivial element of $N_G(T)/T$ sends $x^my^m$ to $(-1)^m x^my^m$. Hence the first positive $T$-invariant degree is $1$, while the first positive $N(T)$-invariant degree is $2$.

For the finite rotation groups, use their standard realizations inside the compact real form \(SO(3)\subset G\). Under the standard three-dimensional representation, identify \(W_2\cong\C^3,W_4\cong\Sym^2_0(\C^3),\) where \(\Sym^2_0(\C^3)\) denotes the space of traceless symmetric \(3\times3\) matrices. A cyclic rotation group \(C_n\) fixes its rotation axis, and hence fixes a nonzero vector in \(W_2\cong\C^3\). Thus \(W_2^{C_n}\ne0\), so its first positive invariant degree is \(1\). For a dihedral rotation group $D_n$ with $n\ge2$, let $e$ be a unit vector along the principal rotation axis. The subgroup $C_n\le D_n$ fixes precisely the line $\C e$ in $W_2$, whereas a half-turn about an axis perpendicular to $e$ sends $e$ to $-e$. Thus $W_2^{D_n}=0$. On the other hand, the traceless quadratic tensor \(Q_e:=e\otimes e-\frac13 I\in\Sym^2_0(\C^3)\cong W_4\) is fixed by every element of $D_n$, since each such element sends the principal axis to itself and hence $e$ to $\pm e$. Here \(e\otimes e=ee^{\mathsf T}\) denotes the rank-one symmetric matrix associated with \(e\), and \(I=I_3\) is the identity matrix. Therefore the first positive invariant degree of $D_n$ is $2$.

For the three Platonic groups, let $\chi_m(\theta):=\operatorname{tr}\rho_m(R_\theta),$ where $R_\theta\in SO(3)$ is any rotation through angle $\theta$ and $\rho_m$ denotes the representation on $W_{2m}$. For a rotation through angle \(\theta\), the eigenvalues on \(W_{2m}\) are \(e^{ij\theta}\) for \(-m\le j\le m\). Hence \(\chi_m(\theta)=\sum_{j=-m}^{m}e^{ij\theta}=1+2\sum_{j=1}^{m}\cos(j\theta).\) For \(\theta\not\equiv0\pmod{2\pi}\), the geometric-series identity gives \(\chi_m(\theta)=\frac{\sin((m+\tfrac12)\theta)}{\sin(\theta/2)},\) while \(\chi_m(0)=2m+1\). The averaging formula gives \(\dim W_{2m}^H=|H|^{-1}\sum_{h\in H}\chi_m(h)\). The angle distributions are
\[
\begin{array}{c|l}
A_4&1\cdot0,\ 3\cdot\pi,\ 8\cdot\frac{2\pi}{3},\\
S_4&1\cdot0,\ 9\cdot\pi,\ 8\cdot\frac{2\pi}{3},\ 6\cdot\frac{\pi}{2},\\
A_5&1\cdot0,\ 15\cdot\pi,\ 20\cdot\frac{2\pi}{3},\ 12\cdot\frac{2\pi}{5},\ 12\cdot\frac{4\pi}{5}.
\end{array}
\]
Substitution yields
\[
\bigl(\dim W_{2m}^{A_4}\bigr)_{m=1}^{3}=(0,0,1),\qquad \bigl(\dim W_{2m}^{S_4}\bigr)_{m=1}^{4}=(0,0,0,1),
\]
and
\[
\bigl(\dim W_{2m}^{A_5}\bigr)_{m=1}^{6}=(0,0,0,0,0,1).
\] Thus the first positive invariant degrees are respectively $3$, $4$, and $6$.
\end{proof}

\begin{lem}\label{lem:reductive-subgroups}
Let $H$ be a proper closed reductive algebraic subgroup of $G=\PSLtwo$ with $\dim H>0$. Then the identity component $H^\circ$ is a maximal torus $T$ of $G$, and \(H=T\text{ or }H=N_G(T)=N(T).\) In particular, up to conjugacy these are the only positive-dimensional proper closed reductive algebraic subgroups of $G$.
\end{lem}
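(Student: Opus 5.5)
We classify through the identity component. Let $H\le G$ be proper, closed and reductive with $\dim H>0$. Then $H^\circ$ is a connected reductive group of positive dimension, and it is proper in $G$ because otherwise $H=G$. The group $G=\PSLtwo$ has dimension $3$, and its Lie algebra $\mathfrak{sl}_2$ has only the following proper nonzero subalgebras up to conjugacy: one-dimensional ones, spanned by a semisimple or a nilpotent element, and two-dimensional ones, which are Borel subalgebras.

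\emph{Step 1: determine $H^\circ$.} A connected reductive group of dimension $2$ that is not a torus would be non-abelian. Its derived group would then be semisimple of dimension $\ge3$, which is impossible. So if $\dim H^\circ=2$, then $H^\circ$ is a $2$-dimensional torus. That contradicts the fact that $G$ has rank $1$. Also, a Borel subgroup is solvable and non-nilpotent, with nontrivial unipotent radical, so it is not reductive. Hence $\dim H^\circ=1$, and $H^\circ$ is either $\C^\times$ or $\mathbb G_a$. The additive group $\mathbb G_a$ is unipotent and not reductive, because its unipotent radical is itself. Therefore $H^\circ$ is a one-dimensional torus. Since $G$ has rank one, $H^\circ$ is a maximal torus $T$.

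\emph{Step 2: pass from $H^\circ$ to $H$.} The identity component $H^\circ=T$ is normal in $H$, so $H\le N_G(T)=N(T)$. For $G=\PSLtwo$ we have $N(T)/T\cong W_G(T)\cong\Z/2$, the Weyl group of $\mathfrak{sl}_2$. Hence $H/T$ is a subgroup of $\Z/2$, which forces $H=T$ or $H=N(T)$. Since all maximal tori are conjugate (as already used in Lemma~\ref{lem:first-invariant-degrees}), these two subgroups are unique up to conjugacy. This gives the last assertion.

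\emph{Main obstacle.} The only delicate point is Step 1: the subalgebra classification of $\mathfrak{sl}_2$ has to be turned into the statement about connected algebraic subgroups. I would use the standard fact that closed connected subgroups of $G$ correspond to algebraic Lie subalgebras. Alternatively, I would argue directly that every connected solvable subgroup lies in a Borel subgroup $B\cong\C\ltimes\C^\times$, whose positive-dimensional connected reductive subgroups are exactly the maximal tori. The remaining case, a connected non-solvable $H^\circ$, has dimension $3$ and so equals $G$, which is excluded.
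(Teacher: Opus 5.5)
Your proof is correct and follows essentially the same route as the paper: in both, the key input is that a nontrivial derived subgroup of $H^\circ$ would be semisimple of dimension at least $3$, forcing $H^\circ$ to be a (necessarily maximal) torus, after which $T\subseteq H\subseteq N_G(T)$ with $N_G(T)/T\cong\Z/2$ finishes the argument. The paper applies the derived-group argument directly to $H^\circ$ without splitting by dimension, so your detours through Borel subalgebras and the $\mathbb G_m$ versus $\mathbb G_a$ dichotomy are correct but unnecessary.
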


\begin{proof}
Since $H$ is reductive, its identity component $H^\circ$ is a connected reductive algebraic subgroup of $G$. Its derived subgroup $[H^\circ,H^\circ]$ is connected and semisimple. Suppose that $[H^\circ,H^\circ]$ is nontrivial. Since it is connected, it then has positive dimension, and hence \(\operatorname{Lie}[H^\circ,H^\circ]\neq0.\) Moreover, because $[H^\circ,H^\circ]$ is semisimple and is a closed algebraic subgroup of $G$, its Lie algebra is a nonzero semisimple Lie subalgebra of \(\operatorname{Lie}G=\mathfrak{sl}_2.\) A nonzero complex semisimple Lie algebra has dimension at least three, whereas $\dim\mathfrak{sl}_2=3$. Hence \(\operatorname{Lie}[H^\circ,H^\circ]=\mathfrak{sl}_2.\) Therefore $\dim[H^\circ,H^\circ]=3=\dim G$. Since $[H^\circ,H^\circ]$ is a closed connected subgroup of the connected algebraic group $G$, it follows that $[H^\circ,H^\circ]=G$, contradicting the properness of $H$. Thus $[H^\circ,H^\circ]=1$, so $H^\circ$ is a torus.

Because $\dim H>0$, the torus $H^\circ$ is positive dimensional. Since $G=\PSL_2(\C)$ has rank one, $H^\circ$ is therefore a maximal torus; write $T=H^\circ$. The identity component is normal in $H$, so every element of $H$ normalizes $T$. Hence \(T\subseteq H\subseteq N_G(T).\) Passing to quotients gives an injection \(H/T\hookrightarrow N_G(T)/T.\) Since the Weyl group of $G$ has order two, $N_G(T)/T\cong\Z/2\Z$. Thus $H/T$ is either trivial or all of $N_G(T)/T$. In the first case $H=T$, while in the second case $H=N_G(T)=N(T)$. Finally, all maximal tori of the connected reductive group $G$ are conjugate. Moreover, if $T'=gTg^{-1}$ for some $g\in G$, then \(N_G(T')=gN_G(T)g^{-1}.\) Hence the possibilities $T$ and $N_G(T)$ are unique up to conjugacy. This proves the last assertion.
\end{proof}

\begin{thm}\label{thm:closed-orbit}
Let $m\in{3,4,6}$. Suppose $V$ contains a nonisotropic primary vector $v$ generating an embedded $X_m$, and assume \(P_{j^2}(V)=0(0<j<m).\) Let \(U=\langle L(1,0),X_m\rangle_{\mathrm{VOA}}\). Then \(U\in\IndCMY(\Csq)\), so we may set \(B:=\widehat\Phi^{-1}(U)\in\CAlg(\Rep^{\mathrm{rat}}G)\). The algebra $B$ is finitely generated as a commutative $G$-algebra. Moreover, $\Spec B$ has a unique closed $G$-orbit \(Gx\cong G/H\), where \(H:=G_x\). Restriction to this orbit induces a surjective $G$-algebra homomorphism \(B\twoheadrightarrow\OO(G/H)\), and $$ H\text{ is conjugate in }G\text{ to }
\begin{cases}
A_4,&m=3,\\
S_4,&m=4,\\
A_5,&m=6.
\end{cases}
$$
\end{thm}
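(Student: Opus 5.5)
First, $U\in\IndCMY(\Csq)$ follows from Theorem~\ref{thm:square-core} with $S=L(1,0)\oplus X_m$. The sum is direct because $U(\Vir)v\cap L(1,0)$ is a Virasoro submodule of the simple module $X_m$ that lives in weights $\ge m^2>0$. Consequently $U$ is a commutative algebra object in $\IndCMY(\Csq)$ with the grading properties of Theorem~\ref{thm:CMY-square}, and $B:=\widehat\Phi^{-1}(U)$ is a commutative rational $G$-algebra. Lemma~\ref{lem:finite-generation}, applied with $E=X_m$, shows that $B$ is generated as a $\C$-algebra by the copy $\Phi^{-1}(X_m)\cong W_{2m}$. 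Hence $B$ is finitely generated and is a quotient of $\Sym(W_{2m})$, so $\Spec B$ is a closed $G$-stable subscheme of $W_{2m}^*\cong W_{2m}$.

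Next I will show $B^G=\C$. Under $\widehat\Phi$, the invariants $B^G=\Hom_G(\C,B)$ correspond to $\Hom(L(1,0),U)$, and the latter is $\C$ because $U\subset V$, $V_0=\C\one$, and any morphism $L(1,0)\to U$ is determined by the image of $\one$, which lies in $U_0$. By the closed-orbit theorem of Subsection~\ref{subsec:AG-conventions}, the GIT quotient is a point, so $\Spec B$ has a unique closed orbit $Gx$. Closed orbits in affine varieties are affine, so $Gx\cong G/H$ is affine and $H=G_x$ is reductive by Matsushima. Restriction of functions to the reduced closed orbit then gives a surjection $B\twoheadrightarrow\OO(G/H)$ of $G$-algebras. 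It remains to identify $H$.

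The constraint on $H$ comes from $B\twoheadrightarrow\OO(G/H)$ together with the primary-vanishing hypothesis. Since $B$ is generated by $W_{2m}$, the composite $W_{2m}\to\OO(G/H)$ is nonzero, for otherwise the image would be $\C$ and $G/H$ would be a point. A point orbit is excluded as follows. The fixed point $x$ would be a $G$-invariant linear form on the generators, hence $0$ since $W_{2m}^G=0$. Then $x=0$ lies in every closed $G$-stable set, so it would be the unique closed orbit and $B$ would have $\Spec B$ a cone. I expect to exclude this using nonisotropy: Lemma~\ref{lem:vacuum-channel} gives $v_{2h-1}v=\pm(v,v)\one\ne0$, so the product $W_{2m}\otimes W_{2m}\to B$ has a nonzero component onto the trivial summand $W_0=\C\cdot 1$. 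Thus the invariant quadratic $q\in\Sym^2(W_{2m})^G$ maps to a nonzero scalar in $B$, and $q(x)=c\ne0$ on all of $\Spec B$. In particular $0\notin\Spec B$, which rules out the point orbit. So $H$ is a proper reductive subgroup and $W_{2m}^H\ne0$, because $\OO(G/H)\cong\bigoplus(\dim W_{2k}^H)W_{2k}$ by Lemma~\ref{lem:admissible-homogeneous} and $W_{2m}$ maps nonzero into it.

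The main obstacle is proving $W_{2j}^H=0$ for $0<j<m$. My plan is to transport the $W_{2j}$-isotypic components of $B$ back to $U$, where they are copies of $X_j$ whose top vectors are primaries of weight $j^2$ in $V$. By hypothesis $P_{j^2}(V)=0$, so $U$ contains no $X_j$ summand, $B$ contains no $W_{2j}$, and since $B\twoheadrightarrow\OO(G/H)$ is surjective, $\OO(G/H)$ has no $W_{2j}$ either, forcing $W_{2j}^H=0$. (One must check that the top vector of an embedded $X_j\subset U$ is genuinely primary in $V$; this holds because it is an $L(n)$-singular vector of weight $j^2$.) Then $H$ satisfies $W_{2j}^H=0$ for $0<j<m$ and $W_{2m}^H\ne0$, so its first invariant degree is exactly $m$. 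By Lemma~\ref{lem:reductive-subgroups}, a positive-dimensional $H$ is $T$ or $N(T)$, with first degree $1$ or $2<3$. So $H$ is finite, and by the classification of finite subgroups of $\SO_3(\C)$ it is conjugate to a subgroup of $SO(3)$: cyclic, dihedral, $A_4$, $S_4$, or $A_5$. Lemma~\ref{lem:first-invariant-degrees} then forces $H\simeq A_4,S_4,A_5$ for $m=3,4,6$ respectively. The degree-$3$ case must still exclude $S_4$ and $A_5$, and it does so because their first degrees are $4$ and $6$, not $3$. Likewise $m=4$ excludes $A_4$ (degree $3<4$) and $m=6$ excludes the other two.
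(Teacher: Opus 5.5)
Your proposal is correct and follows the paper's proof essentially step by step. You use Theorem~\ref{thm:square-core} and Lemma~\ref{lem:finite-generation} for finite generation, then $B^G=\C$ with the closed-orbit theorem and Matsushima. Lemma~\ref{lem:vacuum-channel} shows that the generating $W_{2m}$ survives on the closed orbit. The hypothesis $P_{j^2}(V)=0$ kills $W_{2j}^H$ for $0<j<m$, and Lemmas~\ref{lem:reductive-subgroups} and~\ref{lem:first-invariant-degrees} then identify $H$.

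The differences are cosmetic. You exclude a point orbit via $q\mapsto c\ne0$ and $0\notin\Spec B\subset W_{2m}^*$, whereas the paper shows $\rho|_E\ne0$ directly by contracting the inverse-form tensor. You also apply the closed-orbit theorem to the possibly nonreduced $\Spec B$, whereas the paper first passes to $B/\sqrt{(0)}$ to match its stated convention for varieties. Both are valid, since the underlying spaces coincide and the reduced quotient still has invariants $\C$ by linear reductivity.
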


\begin{proof}
Theorem~\ref{thm:square-core} gives $U\in\IndCMY(\Csq)$. Under \(\widehat\Phi^{-1}\), the chosen embedded copy of $X_m$ corresponds to $E=W_{2m}\subset B$, and Lemma~\ref{lem:finite-generation} shows that $B$ is generated as a commutative $\C$-algebra by $E$. In particular, $B$ is finitely generated.

Under the tensor equivalence, \(\dim B^G=\dim\Hom_G(\C,B)=\dim\Hom_{L(1,0)}(X_0,U).\) Since $U\subseteq V$ and $V$ is of CFT type, $U_0=\C\one$. A Virasoro homomorphism $X_0=L(1,0)\to U$ is determined by the image of the vacuum top vector, so the last Hom-space has dimension at most one. The unit embedding $X_0=L(1,0)\hookrightarrow U$ is nonzero, hence this dimension is exactly one. Therefore \(B^G=\C\).

Let \(N=\sqrt{(0)}\subset B\). Since \(G\) acts on \(B\) by algebra automorphisms, \(N\) is \(G\)-stable. We first show that \((B/N)^G=\C\). Let \(\overline b\in(B/N)^G\) and choose a lift \(b\in B\). Since \(B\) is a rational \(G\)-module, \(b\) lies in a finite-dimensional rational \(G\)-submodule \(M\subset B\). If \(\overline M\) denotes the image of \(M\) in \(B/N\), then \(0\longrightarrow M\cap N\longrightarrow M\longrightarrow\overline M\longrightarrow0\) is an exact sequence of finite-dimensional rational \(G\)-modules. Since \(G\) is reductive over \(\C\), these modules are completely reducible, so the sequence splits \(G\)-equivariantly. Hence the invariant vector \(\overline b\in\overline M^G\) has a \(G\)-invariant lift in \(M\). Thus the natural map \(B^G\to(B/N)^G\) is surjective. Its kernel is \(B^G\cap N=N^G\). Since \(B^G=\C\) and no nonzero scalar is nilpotent, \(N^G=0\). Therefore $ (B/N)^G\cong B^G/N^G=\C. $

Every prime ideal of $B$ contains $N$. Hence the closed immersion $\Spec(B/N)\hookrightarrow\Spec B$ has image $V(N)=\Spec B$, and is therefore a homeomorphism on underlying topological spaces. Since $N$ is $G$-stable, this homeomorphism is $G$-equivariant; it therefore identifies $G$-orbits and their closures. Thus $\Spec(B/N)$ and $\Spec B$ have the same closed $G$-orbits.

Set $X=\Spec(B/N)$. It is a nonempty reduced affine $G$-variety of finite type, and \(X/\!/G=\Spec((B/N)^G)=\Spec\C.\) By the closed-orbit theorem recalled in Subsection~\ref{subsec:AG-conventions}, $X$ has a unique closed $G$-orbit. Choose $x$ on this orbit and set $H:=G_x$. Then $Gx\cong G/H$. Since $Gx$ is closed in the affine variety $X$, it is affine, and Matsushima's criterion recalled in the same subsection implies that $H$ is reductive. The closed immersion $Gx\hookrightarrow X$, together with the identification $Gx\cong G/H$, induces a surjection \(B/N=\OO(X)\twoheadrightarrow\OO(G/H).\) Composing with $B\twoheadrightarrow B/N$ gives a surjective $G$-algebra homomorphism \(\rho:B\twoheadrightarrow\OO(G/H).\)

We next prove that the generating copy $E=W_{2m}$ survives on the closed orbit. Since $G$ is reductive over $\mathbb C$, hence linearly reductive, $B$ admits a $G$-equivariant projection onto its trivial isotypic component $B^G=\mathbb C$: $p_0:B\longrightarrow B^G=\mathbb C.$ Put \(\beta:=p_0\circ\mu_B|_{E\otimes E}:E\otimes E\longrightarrow\C.\) Let $\pi_0:U\to X_0$ be the morphism corresponding to $p_0$ under $\widehat\Phi$. Let $i_0:X_0\hookrightarrow U$ denote the vacuum embedding. Under $\widehat\Phi$, it corresponds to the inclusion $B^G=\C\hookrightarrow B$. Since $p_0$ is a projection onto $B^G$, we have $\pi_0\circ i_0=\id_{X_0}.$ Hence $\pi_0$ is the identity on $U_0=\C\one$. Let $i_m:X_m\hookrightarrow U$ denote the chosen embedding corresponding under $\widehat\Phi$ to $E\hookrightarrow B$, and let \(\widehat J_{M,N}:\widehat\Phi(M)\boxtimes\widehat\Phi(N) \overset\sim\longrightarrow\widehat\Phi(M\otimes N)\) denote the symmetric monoidal constraint of $\widehat\Phi$. In the notation fixed in Section~\ref{sec:categorical}, let \(I_{\mu_U}:=\overline{\mu_U}\circ\boxtimes_{P(1)}:U\otimes U\to\overline U\) be the $P(1)$-intertwining map associated with the commutative-algebra-object multiplication $\mu_U:U\boxtimes U\to U$ in $\IndCMY(\Csq)$. Since the algebra-object structure on $U=\widehat\Phi(B)$ is transported from that on $B$, naturality of $\widehat J$ gives, after the fixed identifications $\widehat\Phi(E)=X_m$ and $\widehat\Phi(\C)=X_0$, \(\widehat\Phi(\beta)\circ\widehat J_{E,E} =\pi_0\circ\mu_U\circ(i_m\boxtimes i_m): X_m\boxtimes X_m\longrightarrow X_0\). Thus $\beta$ corresponds under the tensor equivalence to the displayed morphism. By naturality of the canonical $P(1)$-intertwining maps, its associated $P(1)$-intertwining map is \(\overline{\pi_0}\circ I_{\mu_U}\circ(i_m\otimes i_m): X_m\otimes X_m\longrightarrow\overline{X_0}\). We henceforth regard $X_m$ as its image under $i_m$. For the chosen highest-weight vector $v\in X_m$, of weight $h=m^2$, Lemma~\ref{lem:vacuum-channel} gives \(\operatorname{pr}_{U_0}I_{\mu_U}(v\otimes v)=v_{2h-1}v=(-1)^h(v,v)\one\ne0.\) Since $\pi_0$ is the identity on $U_0$, the corresponding $X_0$-valued intertwining map is nonzero. Hence $\beta\ne0$.

Since $B$ is commutative and the braiding on $\Rep^{\mathrm{rat}}G$ is the usual flip, $\mu_B(x\otimes y)=\mu_B(y\otimes x)$ for $x,y\in E$. Hence $\beta(x,y)=p_0(xy)=p_0(yx)=\beta(y,x),$ so $\beta$ is symmetric. The $G$-invariant form $\beta$ defines a nonzero $G$-homomorphism $E\to E^*$. Since $E$ and $E^*$ are irreducible and $E\cong E^*$, Schur's lemma implies that this map is an isomorphism. Hence $\beta$ is nondegenerate. Since $\beta$ is nondegenerate, it induces a $G$-equivariant isomorphism \(\beta^\sharp:E\overset{\sim}{\longrightarrow}E^*, x\longmapsto\beta(x,-)\). Let $q_\beta\in E\otimes E$ be the tensor corresponding to $(\beta^\sharp)^{-1}:E^*\to E$. Equivalently, if $\{e_i\}$ and $\{e^i\}$ are $\beta$-dual bases, then $q_\beta=\sum_i e_i\otimes e^i.$ Since $\beta$ is symmetric and $G$-invariant, $q_\beta\in(\Sym^2E)^G$, and $\beta(q_\beta)=\dim E=2m+1.$ As $\mu_B:B\otimes B\to B$ is $G$-equivariant, $\mu_B(q_\beta)\in B^G$. Hence, since $p_0|_{B^G}=\id$, \(\mu_B(q_\beta)=p_0(\mu_B(q_\beta))=\beta(q_\beta)=2m+1\ne0\), where the scalar is identified with the corresponding constant in $B$. If $\rho|_E=0$, then, since $q_\beta\in E\otimes E$, $(\rho\otimes\rho)(q_\beta)=0.$ As $\rho$ is a unital $G$-algebra homomorphism, $\rho\bigl(\mu_B(q_\beta)\bigr) = \mu_{\OO(G/H)} \bigl((\rho\otimes\rho)(q_\beta)\bigr) =0.$ On the other hand, $\mu_B(q_\beta)=(2m+1)1_B$, so $\rho\bigl(\mu_B(q_\beta)\bigr) =(2m+1)1_{\OO(G/H)}\ne0,$ a contradiction. Thus $\rho|_E\ne0$. Since $\rho|_E$ is a nonzero $G$-homomorphism and $E$ is irreducible, its kernel is zero; hence $\rho(E)\cong E$. By Lemma~\ref{lem:admissible-homogeneous}, the multiplicity of $E=W_{2m}$ in $\OO(G/H)$ is $\dim E^H$. Therefore $\dim E^H>0$, i.e. $E^H\ne0$.

We claim that no $W_{2j}$ with $0<j<m$ has an $H$-fixed vector. Suppose to the contrary that $W_{2j}^H\ne0$. By Lemma~\ref{lem:admissible-homogeneous}, $\OO(G/H)$ contains a $G$-submodule $W'\cong W_{2j}$. Choose a basis $w_1,\ldots,w_r$ of $W'$ and elements $b_i\in B$ such that $\rho(b_i)=w_i$. Since $B$ is a rational $G$-module, each $b_i$ is contained in a finite-dimensional rational $G$-submodule $M_i\subset B$. Hence $M:=M_1+\cdots+M_r$ is a finite-dimensional rational $G$-submodule of $B$. Since $\rho$ is $G$-equivariant, $\rho(M)$ is a finite-dimensional rational $G$-submodule, and it contains every $w_i$; thus $W'\subseteq\rho(M)$. Complete reducibility of the finite-dimensional rational $G$-module $\rho(M)$ gives a $G$-equivariant projection $r:\rho(M)\to W'$ with $r|_{W'}=\id_{W'}$. Hence $q:=r\circ\rho|_M:M\twoheadrightarrow W'$ is a surjective homomorphism of finite-dimensional rational $G$-modules. Since $G$ is reductive, $q$ admits a $G$-equivariant section, and therefore $B$ contains a $G$-submodule isomorphic to $W'\cong W_{2j}$. Applying the exact equivalence $\widehat\Phi$ gives an embedded copy of $X_j=L(1,j^2)$ in $U\subseteq V$. The image of a nonzero highest-weight vector of $X_j$ is a nonzero vector of $P_{j^2}(V)$, contradicting $P_{j^2}(V)=0$. Hence $W_{2j}^H=0$ for every $0<j<m$. Together with $W_{2m}^H=E^H\ne0$, this shows that $m$ is the first positive invariant degree of $H$.

The subgroup $H$ is proper: if $H=G$, then the nontrivial irreducible $G$-module $E$ would have $E^H=E^G=0$, contrary to the preceding paragraph. If $H$ has positive dimension, Lemma~\ref{lem:reductive-subgroups} gives $H=T$ or $H=N(T)$ for a maximal torus $T$. Their first positive invariant degrees are $1$ and $2$, respectively, by Lemma~\ref{lem:first-invariant-degrees}, so neither can occur because $m\in\{3,4,6\}$. Thus $H$ is zero dimensional, hence finite. The classical classification of finite subgroups of $\PSLtwo$, up to conjugacy, gives the cyclic, dihedral, tetrahedral, octahedral, and icosahedral groups. Lemma~\ref{lem:first-invariant-degrees} now identifies $H$, up to conjugacy, as $A_4$, $S_4$, or $A_5$ according as $m=3$, $4$, or $6$.
\end{proof}

\subsection{Polyhedral multiplicities and exceptional rigidity}

For later use we record the first spherical multiplicities. For $H=A_4,S_4,A_5$ set \(a_m(H)=\dim W_{2m}^H.\) The character average in the proof of Lemma~\ref{lem:first-invariant-degrees} gives
\[
\begin{array}{c|rrrrrrr}
H\backslash m&1&2&3&4&5&6&7\\ \hline
A_4&0&0&1&1&0&2&1\\
S_4&0&0&0&1&0&1&0\\
A_5&0&0&0&0&0&1&0.
\end{array}
\]
Summing the character averages uses \(\sum_{m\ge0}\chi_m(\theta)t^m=\frac{1+t}{1-2t\cos\theta+t^2}\). Substitution of the three angle distributions and elementary simplification give the Hilbert series \(\sum_{m\ge0}a_m(A_4)t^m=\frac{1-t^{12}}{(1-t^3)(1-t^4)(1-t^6)},\) \(\sum_{m\ge0}a_m(S_4)t^m=\frac{1-t^{18}}{(1-t^4)(1-t^6)(1-t^9)},\) \(\sum_{m\ge0}a_m(A_5)t^m=\frac{1-t^{30}}{(1-t^6)(1-t^{10})(1-t^{15})}.\)

\begin{cor}\label{cor:exceptional-first-simple}
Assume \(\ch V=\ch V_{A_1}^{H},H\in\{A_4,S_4,A_5\}.\) Then the first positive primary weight is \(9,16,36\), according as \(H=A_4,S_4,A_5\). Moreover, if $h$ denotes this weight, then \(P_h(V)\) is one-dimensional, and any nonzero vector in it generates \(X_3,X_4,X_6\), respectively.
\end{cor}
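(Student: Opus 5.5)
The plan is to read off the graded dimensions of $V$ from the known character of $V_{A_1}^H$ and then apply Lemmas~\ref{lem:primary-complement}, \ref{lem:first-primary} and~\ref{lem:first-singular}. By Corollary~\ref{cor:McRae-A1-SO3}, $V_{A_1}\cong\bigoplus_{m\ge0}W_{2m}\otimes X_m$ as an $SO(3)\times L(1,0)$-module, so taking $H$-invariants gives
\[
V_{A_1}^H\cong\bigoplus_{m\ge0}a_m(H)\,X_m
\]
as an $L(1,0)$-module, with $a_0(H)=1$. Since $\ch V=\ch V_{A_1}^H$ and both algebras have central charge one, we get $\dim V_n=\sum_{m^2\le n}a_m(H)\dim(X_m)_n$ for every $n$. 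Let $m_0=3,4,6$ according as $H=A_4,S_4,A_5$, and put $h=m_0^2$. By Lemma~\ref{lem:first-invariant-degrees} and the multiplicity table, $a_j(H)=0$ for $0<j<m_0$ and $a_{m_0}(H)=1$. Hence $\dim V_n=\dim L(1,0)_n$ for $n<h$, and $\dim V_h=\dim L(1,0)_h+1$.

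Next I identify the first primary weight. Since $L(1,0)\subset V$ and the dimensions agree, $V_n=L(1,0)_n$ for all $0\le n<h$. Lemma~\ref{lem:primary-complement} then gives $P_n(V)=0$ for $0<n<h$: for such $n$ the hypothesis $V_k=L(1,0)_k$ for $k<n$ holds, and $\dim P_n=\dim V_n-\dim L(1,0)_n=0$. At weight $h$, Lemma~\ref{lem:first-primary} gives $\dim P_h(V)=1$, spanned by a nonisotropic $v$. So the first positive primary weight is $h=9,16,36$.

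For the identification $U(\Vir)v\cong X_{m_0}$, I apply Lemma~\ref{lem:first-singular} with $m=m_0$ and $H'=(m_0+1)^2=16,25,49$; conditions (i) and (ii) have just been verified. For (iii), the character formula gives
\[
\dim V_{H'}=\dim L(1,0)_{H'}+\sum_{0<j\le m_0+1}a_j(H)\dim(X_j)_{H'}.
\]
From the table, $a_{m_0+1}(H)$ equals $1$ for $A_4$ (since $a_4=1$) and $0$ for $S_4$ and $A_5$ (since $a_5=0$ and $a_7=0$). Also $(X_{m_0+1})_{H'}$ is its one-dimensional top space, and no other $a_j$ with $m_0<j\le m_0+1$ contributes. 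Therefore $\dim V_{H'}=\dim L(1,0)_{H'}+\dim(X_{m_0})_{H'}+\varepsilon$ with $\varepsilon\in\{0,1\}$. Lemma~\ref{lem:first-singular} then gives $U(\Vir)v\cong X_{m_0}$. Any nonzero vector of the line $P_h(V)$ is a scalar multiple of $v$, so the conclusion holds for every such vector.

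The only point needing care is the bookkeeping in (iii): one must confirm that the only summands of $V_{A_1}^H$ with lowest weight at most $H'$ are $X_0$, $X_{m_0}$ and possibly $X_{m_0+1}$. This follows from the vanishing $a_j(H)=0$ for $0<j<m_0$ together with the listed values of $a_{m_0+1}(H)$.
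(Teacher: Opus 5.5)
Your proposal is correct and follows the paper's proof essentially step for step: you decompose $V_{A_1}^H\cong\bigoplus_j a_j(H)X_j$ and use $a_j(H)=0$ for $0<j<m_0$ and $a_{m_0}(H)=1$ to get the dimension pattern below and at weight $m_0^2$. You then invoke Lemma~\ref{lem:first-primary} and verify condition (iii) of Lemma~\ref{lem:first-singular} with $\varepsilon=a_{m_0+1}(H)=1,0,0$; the only cosmetic difference is that you rule out primaries below weight $m_0^2$ by applying Lemma~\ref{lem:primary-complement} at each weight, whereas the paper observes directly that the simple module $L(1,0)$ has no positive-weight primary vectors.
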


\begin{proof}
Write \(V_{A_1}^H\cong\bigoplus_{j\ge0}a_j(H)X_j\) and let $m=3,4,6$ for $H=A_4,S_4,A_5$, respectively. The table gives \(a_j(H)=0(0<j<m),a_m(H)=1.\) Since the lowest conformal weight of $X_j$ is $j^2$, the character identity and the inclusion $L(1,0)\subset V$ imply \(V_n=L(1,0)_n(0\le n<m^2),\dim V_{m^2}=\dim L(1,0)_{m^2}+1.\) The simple vacuum module $L(1,0)$ has no positive-weight primary vectors: such a vector would generate a nonzero proper highest-weight submodule. Hence there is no positive primary vector in $V$ below weight $m^2$. Lemma~\ref{lem:first-primary} therefore shows that \(P_{m^2}(V)\) is one-dimensional and that every nonzero \(v\in P_{m^2}(V)\) is a nonisotropic primary vector.

Set $H_m=(m+1)^2$. At conformal weight $H_m$, no summand $X_j$ with $j>m+1$ contributes, while the positive summands with $j\le m+1$ are the single copy of $X_m$ and $a_{m+1}(H)$ copies of $X_{m+1}$. The top level of $X_{m+1}$ is one-dimensional, and hence \(\dim V_{H_m}=\dim L(1,0)_{H_m}+\dim(X_m)_{H_m}+a_{m+1}(H).\) Now \(a_4(A_4)=1,\ a_5(S_4)=0,\ a_7(A_5)=0\). Lemma~\ref{lem:first-singular} applies with $\varepsilon=1,0,0$ in the three cases, proving that the cyclic module generated by any such nonzero $v$ is respectively $X_3,X_4,X_6$.
\end{proof}

\begin{thm}\label{thm:exceptional-rigidity}
Let $H\in\{A_4,S_4,A_5\}$. Let $V$ be a simple self-contragredient VOA of CFT type and central charge one. If \(\ch V=\ch V_{A_1}^{H},\) then \(V\cong V_{A_1}^{H}.\)
\end{thm}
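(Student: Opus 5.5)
The plan is to combine Corollary~\ref{cor:exceptional-first-simple}, Theorem~\ref{thm:closed-orbit} and the fixed-point dictionary, Corollary~\ref{cor:fixed-point-dictionary}, with a final character comparison. Let $m=3,4,6$ for $H=A_4,S_4,A_5$.

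\textbf{Step 1: locate the square core.} By Corollary~\ref{cor:exceptional-first-simple}, $P_{m^2}(V)=\C v$ with $v$ nonisotropic, and $U(\Vir)v\cong X_m$. The same corollary gives $V_n=L(1,0)_n$ for $n<m^2$, so $P_{j^2}(V)=0$ for $0<j<m$. Theorem~\ref{thm:closed-orbit} therefore applies to $U=\langle L(1,0),X_m\rangle_{\mathrm{VOA}}$. It gives $U\in\IndCMY(\Csq)$, a finitely generated $G$-algebra $B=\widehat\Phi^{-1}(U)$, and a surjection $\rho:B\twoheadrightarrow\OO(G/H')$ with $H'$ conjugate to $H$. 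After conjugating we may take $H'=H$.

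\textbf{Step 2: show $\rho$ is an isomorphism.} Since $\rho$ is a surjective $G$-algebra map, $\widehat\Phi(\rho)$ is a surjective morphism of commutative algebra objects. Under Theorem~\ref{thm:CMY-extension} it corresponds to a surjective VOA homomorphism $U\twoheadrightarrow U_H\cong V_{A_1}^H$. Its kernel is an ideal of $U$. Here I would argue that $U$ is simple rather than merely a VOA. For this I would use the restriction of the nondegenerate invariant form of $V$ to $U$: it is invariant, and by Lemma~\ref{lem:vacuum-channel}-type arguments it should be nondegenerate on $U$, because $U$ is graded and self-contragredient. The cleaner route is a dimension count, as follows. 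Since $U\subseteq V$, for every $n$
\[
\dim V_{A_1,n}^H\le\dim U_n\le\dim V_n=\dim V_{A_1,n}^H ,
\]
where the first inequality comes from the surjection and the last equality from the character hypothesis. All of these spaces are finite dimensional, so equality holds throughout. Hence the surjection is injective in each weight, so $U\cong V_{A_1}^H$, and moreover $U=V$.

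\textbf{Step 3: conclude.} The identification is as VOA extensions of $L(1,0)$ by Corollary~\ref{cor:fixed-point-dictionary}, so $V=U\cong V_{A_1}^H$.

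\textbf{Main obstacle.} The delicate point is Step 2's use of the conjugation $H'\to H$ and of the surjection at the VOA level. One must check that conjugating by $g\in G$ gives an isomorphism of commutative algebra objects $\OO(G/H')\cong\OO(G/H)$. This holds because right translation by $g$ is a left-$G$-equivariant algebra isomorphism. One must also check that a surjective algebra-object morphism yields a surjective VOA map whose underlying Virasoro map is the same. That follows from the category isomorphism in Theorem~\ref{thm:CMY-extension}, together with exactness of $\widehat\Phi$ and closure of $\Csq$ under quotients.
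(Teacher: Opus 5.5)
Your proposal is correct and follows the paper's proof. You use the same closed-orbit surjection $\rho$, the same conjugation $H'\to H$, and the same weight-by-weight comparison of $U$, $V$ and $V_{A_1}^H$; you merely fold the paper's splitting bound $b_j\ge a_j(H)$ and its separate kernel-vanishing step into one chain of inequalities, using surjectivity of $\widehat\Phi(\rho)$ directly. The aside about proving $U$ simple via self-contragredience is unsupported and should be deleted, since your dimension count makes it unnecessary.
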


\begin{proof}
Let $m_0=3,4,6$ for $H=A_4,S_4,A_5$, respectively. By Corollary~\ref{cor:exceptional-first-simple}, $V$ contains a nondegenerate embedded $X_{m_0}$ and satisfies \(P_{j^2}(V)=0\) for $0<j<m_0$. Set \(U=\langle L(1,0),X_{m_0}\rangle_{\mathrm{VOA}}.\) Theorem~\ref{thm:closed-orbit} gives \(U\in\IndCMY(\Csq)\), a commutative rational $G$-algebra \(B=\widehat\Phi^{-1}(U)\), and a surjective $G$-algebra homomorphism \(B\twoheadrightarrow\OO(G/H')\), where $H'$ is conjugate in $G$ to $H$. Choose $g\in G$ with $H'=gHg^{-1}$. Then \(G/H'\to G/H,\ xH'\mapsto xgH\) is a $G$-equivariant isomorphism, and hence induces a $G$-algebra isomorphism \(\OO(G/H')\cong\OO(G/H)\). Composing with this isomorphism, we may fix a surjective $G$-algebra homomorphism \(\rho:B\twoheadrightarrow\OO(G/H).\) For $j\ge0$, put \(b_j:=\dim\Hom_G(W_{2j},B).\) Lemma~\ref{lem:ind-equivalence} then gives \(U\cong\bigoplus_{j\ge0}X_j\otimes\Hom_G(W_{2j},B)\cong\bigoplus_{j\ge0}b_jX_j.\) On the other hand, taking $H$-fixed points in the standard $G\times L(1,0)$-module decomposition \(V_{A_1}\cong\bigoplus_{j\ge0}W_{2j}\otimes X_j\) gives \(V_{A_1}^{H}\cong\bigoplus_{j\ge0}a_j(H)X_j.\) Each $b_j$ is finite, since every copy of $X_j$ contributes an independent lowest-weight vector to the finite-dimensional space $U_{j^2}\subseteq V_{j^2}$. Moreover, at any fixed conformal weight $n$, only the summands with $j^2\le n$ can contribute. Thus all character comparisons below are legitimate coefficient by coefficient.

We claim that \(b_j\ge a_j(H)\) for every $j$. By Lemma~\ref{lem:ind-equivalence}, \(\Rep^{\mathrm{rat}}G\) is a semisimple abelian category. Hence the surjection \(\rho:B\twoheadrightarrow\OO(G/H)\) splits $G$-equivariantly, so \(\OO(G/H)\) embeds as a $G$-submodule of $B$. Let $T_j\subset\OO(G/H)$ be the $W_{2j}$-isotypic component. By Peter--Weyl, \(T_j\cong W_{2j}^{\oplus a_j(H)}\), and therefore \(T_j\hookrightarrow B\). Consequently, \(\dim\Hom_G(W_{2j},B)\ge\dim\Hom_G(W_{2j},T_j)=a_j(H),\) that is, \(b_j\ge a_j(H)\), as claimed.

All coefficients in the characters of the $X_j$ are nonnegative, so \(b_j\ge a_j(H)\) for all $j$ implies \(\ch U\ge\ch V_{A_1}^{H}\) coefficientwise. On the other hand, $U\subseteq V$ gives \(\ch U\le\ch V\), while by hypothesis \(\ch V=\ch V_{A_1}^{H}\). Hence \(\ch U=\ch V=\ch V_{A_1}^{H}.\) Since $U_n\subseteq V_n$ and the two spaces have the same dimension for every $n$, we have $U_n=V_n$ for all $n$, and therefore $U=V$.

Put \(K=\ker\rho\). Exactness of $\widehat\Phi$ gives an exact sequence in $\IndCMY(\Csq)$, \(0\longrightarrow\widehat\Phi(K)\longrightarrow U \longrightarrow\widehat\Phi(\OO(G/H))\longrightarrow0\). By Corollary~\ref{cor:fixed-point-dictionary}, the last term is $V_{A_1}^{H}$ as an $L(1,0)$-module. All three objects lie in $\IndCMY(\Csq)$, so $L(0)$ acts semisimply on them, and the maps commute with $L(0)$. Consequently the preceding exact sequence restricts, for each $n\ge0$, to \(0\longrightarrow\widehat\Phi(K)_n\longrightarrow U_n \longrightarrow (V_{A_1}^{H})_n\longrightarrow0\). Since \(\ch U=\ch V_{A_1}^{H}\), it follows that \(\dim\widehat\Phi(K)_n=0\) for every $n\ge0$. Every object of $\IndCMY(\Csq)$ is an algebraic direct sum of the $X_j$, so its $L(0)$-spectrum is contained in $\Z_{\ge0}$. Hence \(\widehat\Phi(K)=0\). Because $\widehat\Phi$ is an equivalence, $K=0$. Thus $\rho$ is an isomorphism \(B\cong\OO(G/H)\) of commutative rational $G$-algebras. Applying the symmetric monoidal equivalence $\widehat\Phi$, the commutative algebra object corresponding to $U=V$ is therefore isomorphic to \(\widehat\Phi(\OO(G/H))\). Corollary~\ref{cor:fixed-point-dictionary} now yields \(V\cong V_{A_1}^{H}\) as VOA extensions of $L(1,0)$, and hence as vertex operator algebras.
\end{proof}

\begin{cor}\label{cor:A5-gap}
Under the hypotheses of Theorem~\ref{thm:main}, suppose that $ P_h(V)=0 $ for every integer $1\le h<36$. Then \(V\cong V_{A_1}^{A_5}\).
\end{cor}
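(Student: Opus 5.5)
The plan is to reduce the statement to Theorem~\ref{thm:exceptional-rigidity} with $H=A_5$. By Theorem~\ref{thm:character-reduction}, $\ch V$ is one of $\ch V_L$, $\ch V_L^+$, $\ch V_{A_1}^{A_4}$, $\ch V_{A_1}^{S_4}$, $\ch V_{A_1}^{A_5}$. It therefore suffices to show that the gap hypothesis $P_h(V)=0$ for $1\le h<36$ excludes the first four characters. Once only $\ch V=\ch V_{A_1}^{A_5}$ remains, Theorem~\ref{thm:exceptional-rigidity} gives $V\cong V_{A_1}^{A_5}$.

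The exclusion is a low-weight primary count. First I show by induction on $n$ that, under the gap hypothesis, $V_n=L(1,0)_n$ for $0\le n<36$. Suppose $V_k=L(1,0)_k$ for all $k<n$, where $n<36$. Lemma~\ref{lem:primary-complement} then gives $V_n=L(1,0)_n\perp P_n(V)$, and $P_n(V)=0$ by hypothesis, so $V_n=L(1,0)_n$. Consequently the coefficients of $\ch V$ must agree with those of $\ch L(1,0)$ through weight $35$. Each of the four other candidates has some coefficient that is strictly too large below weight $36$:
\begin{itemize}
\item[] for $\ch V_L$ with $\langle\alpha,\alpha\rangle=2k$, the weight-one space is already $\C\alpha(-1)\one$, which has dimension $1$, whereas $L(1,0)_1=0$;
\item[] for $\ch V_L^+$, Proposition~\ref{prop:rank-one-lattice-package}(ii) shows that $M(1)^+\subseteq V_L^+$ contains a copy of $L(1,4)$, so the character exceeds $\ch L(1,0)$ at weight $4$;
\item[] for $\ch V_{A_1}^{A_4}$ and $\ch V_{A_1}^{S_4}$, the multiplicity table gives a first extra summand $X_3$ or $X_4$, so there is an excess at weight $9$ or $16$, as in Corollary~\ref{cor:exceptional-first-simple}.
\end{itemize}

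The excess must be compared coefficientwise. This is valid because each candidate character is $\ch L(1,0)$ plus a series with nonnegative coefficients, with a positive coefficient at the weight indicated. For the lattice and $V_L^+$ cases I will use \eqref{eq:rank-one-character-interface} directly, writing the characters as the grading series $\grch$ shifted by $q^{-1/24}$. I will also use the fact that $\grch L(1,0)=(1-q)P(q)$. Then, for example, $\grch V_L^+-\grch L(1,0)$ has coefficient $1$ at $q^4$ when $k\ge 3$. When $k\le 2$ the lattice term $q^{k}P(q)$ already contributes at weight $k\le 2<36$, with coefficient at least $1$. So in all cases some coefficient below $36$ is strictly too large.

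I expect the main obstacle to be bookkeeping rather than any conceptual difficulty. One must make sure that the character reduction identifies $\ch V$ as an honest $q$-series, including the common shift $q^{-1/24}$, so that coefficient comparison is legitimate. One must also check that for every $k\ge1$ the $V_L$ and $V_L^+$ series exceed $\ch L(1,0)$ below weight $36$. The computation $\grch V_L=P(q)\sum_{r}q^{kr^2}$ gives coefficient $1$ at weight $1$ for every $k$, while $\grch L(1,0)$ vanishes there, which settles the lattice case uniformly.
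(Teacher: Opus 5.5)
Your proposal is correct and matches the paper's proof. Both arguments use the character reduction, rule out the $V_L$, $V_L^+$, $A_4$ and $S_4$ characters by finding an excess over $\ch L(1,0)$ below weight $36$ that Lemma~\ref{lem:primary-complement} turns into a nonzero primary, and then conclude with Theorem~\ref{thm:exceptional-rigidity}. Your uniform induction giving $V_n=L(1,0)_n$ for $n<36$ simply repackages the paper's case-by-case use of that lemma at $h=1$, $h=\min\{k,4\}$, and $h=9,16$.

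Only positivity of the excess is needed, so two small slips do no harm: the $q^4$-excess for $V_L^+$ is $2$ rather than $1$ when $k=3,4$, and $(V_L)_1$ is $3$-dimensional rather than $1$-dimensional when $k=1$.
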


\begin{proof}
By Theorem~\ref{thm:character-reduction}, \(\ch V\in \left\{ \ch V_L,\ \ch V_L^+,\ \ch V_{A_1}^{A_4},\ \ch V_{A_1}^{S_4},\ \ch V_{A_1}^{A_5} \right\}\). Suppose first that \(\ch V=\ch V_L\). Since \(\ch V=\ch V_L\), we have $ \dim V_n=\dim (V_L)_n $ for all $n\ge0. $ Because \(L\) has rank one, the Heisenberg subVOA \(M(1)\subset V_L\) contains a nonzero weight-one vector, namely \(\alpha(-1)\mathbf 1\) for any nonzero \(\alpha\in L\otimes_{\mathbb Z}\mathbb C\). Hence $ \dim V_1=\dim (V_L)_1>0. $ On the other hand, \(V\) is of CFT type, so $ V_0=\mathbb C\mathbf1=L(1,0)_0, $ whereas $ L(1,0)_1=0 $ since \(L(-1)\mathbf1=0\). Thus the hypotheses of Lemma~\ref{lem:primary-complement} are satisfied with \(h=1\). It follows that $ V_1=L(1,0)_1\oplus P_1(V), $ and therefore $ \dim P_1(V) =\dim V_1-\dim L(1,0)_1 =\dim V_1>0. $ Hence \(P_1(V)\ne0\), contradicting the assumption that \(P_h(V)=0\) for \(1\le h<36\).

Next suppose that $ \ch V=\ch V_L^+,$ $L=\mathbb Z\alpha, $ $\langle\alpha,\alpha\rangle=2k. $ Since \(L\) is a positive-definite even rank-one lattice, \(k\) is a positive integer. By Proposition~\ref{prop:rank-one-lattice-package}(i)--(ii), the fixed-point VOA \(V_L^+\) contains the vacuum Virasoro subVOA \(L(1,0)\), and its contributions outside this vacuum Virasoro module arise from two sources. First, the Heisenberg fixed-point subVOA \(M(1)^+\) has no contribution outside \(L(1,0)\) in weights \(<4\), whereas at weight \(4\) it contains a nonzero Virasoro-primary contribution. Second, the nonzero lattice sectors of \(V_L^+\) have lowest conformal weight \(k\): indeed, the vectors $ e^\alpha+e^{-\alpha}\in V_L^+ $ have conformal weight $ \frac{\langle\alpha,\alpha\rangle}{2}=k, $ and no nonzero lattice vector has smaller conformal weight. Set $ h=\min\{k,4\}. $ It follows that, for every \(0\le n<h\), no contribution to \(V_L^+\) outside the vacuum Virasoro module can occur, and hence $ (V_L^+)_n=L(1,0)_n. $ At weight \(h\), however, there is a nonzero contribution outside \(L(1,0)\): if \(k<4\), it is supplied by the lattice sector generated by \(e^\alpha+e^{-\alpha}\); if \(k>4\), it is supplied by the first nonvacuum Virasoro summand of \(M(1)^+\); and if \(k=4\), both sources occur in weight \(4\). Consequently, $ \dim (V_L^+)_h>\dim L(1,0)_h. $ Since \(\ch V=\ch V_L^+\), we have $ \dim V_n=\dim (V_L^+)_n (n\ge0). $ Therefore $ \dim V_n=\dim L(1,0)_n (0\le n<h), $ while $ \dim V_h>\dim L(1,0)_h. $ Lemma~\ref{lem:primary-complement} is thus applicable at the first such weight \(h\), and gives $ V_h=L(1,0)_h\oplus P_h(V) $ (and, with the notation of that lemma, this is an orthogonal direct sum). Hence $ \dim P_h(V) = \dim V_h-\dim L(1,0)_h >0. $ Thus \(P_h(V)\ne0\). Since \(k\ge1\), $ 1\le h=\min\{k,4\}\le4<36, $ contradicting the hypothesis that \(P_j(V)=0\) for every \(1\le j<36\).

If \(\ch V=\ch V_{A_1}^{A_4}\) or \(\ch V=\ch V_{A_1}^{S_4}\), Corollary~\ref{cor:exceptional-first-simple} gives \(P_9(V)\ne0\) or \(P_{16}(V)\ne0\), respectively, again a contradiction. Thus necessarily \(\ch V=\ch V_{A_1}^{A_5}.\) Theorem~\ref{thm:exceptional-rigidity} now yields \(V\cong V_{A_1}^{A_5}\).
\end{proof}

\section{The weight-four core and reconstruction of \texorpdfstring{$M(1)^+$}{M(1)+}}\label{sec:weight4}

\subsection{The spin-two matrix model}

Retain \(G=\PSLtwo\cong\SO_3(\C)\), and let $ E:=W_4\cong\Sym^2_0(\C^3) $ be its five-dimensional irreducible (spin-two) representation, realized as the space of traceless symmetric \(3\times3\) matrices with $ g\cdot A=gAg^{-1}=gAg^{\mathsf T}, g\in \SO_3(\C). $ The trace pairing $ \tau(A,B):=\operatorname{tr}(AB) $ is a nondegenerate \(G\)-invariant symmetric bilinear form on \(E\). Let $\tau^\flat:E\overset{\sim}{\longrightarrow}E^*,\tau^\flat(A)=\tau(A,-),$ and set $ \iota_E:=(\tau^\flat)^{-1}:E^*\overset{\sim}{\longrightarrow}E. $ Define $ q(A):=\operatorname{tr}(A^2) $ and the quadratic \(G\)-equivariant polynomial map $ Q(A):=A^2-\frac13\operatorname{tr}(A^2)I_3. $ Its polarization is the symmetric bilinear \(G\)-equivariant map \(\widetilde Q(A,B):= \frac12\bigl(Q(A+B)-Q(A)-Q(B)\bigr) = \frac12(AB+BA)-\frac13\operatorname{tr}(AB)I_3\), so that \(\widetilde Q(A,A)=Q(A)\). By the Clebsch--Gordan decomposition, $ \Sym^2E\cong W_0\oplus W_4\oplus W_8. $ Consequently, $ \dim(\Sym^2E^*)^G=1, \dim\Hom_G(\Sym^2E,E)=1. $ Since \(q\) and \(\widetilde Q\) are nonzero, they span these one-dimensional spaces: $ (\Sym^2E^*)^G=\C q, \Hom_G(\Sym^2E,E)=\C\widetilde Q. $

Fix the orthogonal decomposition $\C^3=\C e_1\oplus (\C e_1)^\perp, (\C e_1)^\perp=\operatorname{span}_{\C}\{e_2,e_3\},$ and set \(SO_2(\C):=\ker\!\left(\det:O_2(\C)\to\{\pm1\}\right).\) We use the maximal torus \(T:=\{\operatorname{diag}(1,R):R\in SO_2(\C)\}\leq G\cong SO_3(\C), N(T):=N_G(T)\).

\begin{lem}\label{lem:quadratic-orbit}
Let $c\in\C^\times$ and $\lambda\in\C$. Define the ideal \(I_{c,\lambda}:=\bigl(q-c,\ \ell\circ(Q-\lambda\id_E):\ell\in E^*\bigr)\subseteq\Sym(E^*)\) and the corresponding closed affine subscheme \(Z_{c,\lambda}:=\Spec\bigl(\Sym(E^*)/I_{c,\lambda}\bigr)\subseteq E.\) If $Z_{c,\lambda}$ is nonempty, then \(\lambda\ne0\) and \(c=6\lambda^2\), and, scheme-theoretically, \(Z_{c,\lambda}=G\cdot\operatorname{diag}(2\lambda,-\lambda,-\lambda)\cong G/N(T)\). In particular, $Z_{c,\lambda}$ is smooth and reduced.
\end{lem}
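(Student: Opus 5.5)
The plan is to read off the $\C$-points of $Z_{c,\lambda}$ from the two defining equations by linear algebra, then handle the scheme structure with a single tangent-space computation at one point, using $G$-homogeneity.

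\emph{$\C$-points.} A point $A\in E$ lies in $Z_{c,\lambda}(\C)$ exactly when $\operatorname{tr}(A^2)=c$ and $Q(A)=\lambda A$. Together these say
\[
A^2=\lambda A+\tfrac c3 I_3,
\]
so the minimal polynomial of $A$ divides $p(t)=t^2-\lambda t-\tfrac c3$. Let $r_1,r_2$ be the roots of $p$, so $r_1+r_2=\lambda$ and $r_1r_2=-c/3$.

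If $r_1=r_2=r$, then every eigenvalue of $A$ equals $r$. Tracelessness gives $3r=0$, hence $r=0$ and $c=-3r^2=0$, contradicting $c\neq0$.

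So $r_1\neq r_2$, and $A$ is diagonalizable with eigenvalues in $\{r_1,r_2\}$, with multiplicities $a+b=3$. If one root has multiplicity $3$, the trace forces it to be $0$, and then $\operatorname{tr}A^2=0=c$, again excluded. Hence, up to relabelling, the multiplicities are $(1,2)$. Tracelessness gives $r_1=-2r_2$, so $\lambda=-r_2$, and the eigenvalues are $(2\lambda,-\lambda,-\lambda)$. Then $c=-3r_1r_2=6\lambda^2$, and $\lambda\neq0$ because $c\neq0$.

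\emph{Single orbit and stabilizer.} Let $A$ be such a point. Because $A$ is symmetric, eigenvectors for distinct eigenvalues are orthogonal for the standard bilinear form $x^{\mathsf T}y$. Thus $\C^3=\C u\perp E_{-\lambda}$, where $u$ spans the $2\lambda$-eigenline. Nondegeneracy of the form forces $u^{\mathsf T}u\neq0$, and also forces the form to be nondegenerate on $E_{-\lambda}$. Normalize $u^{\mathsf T}u=1$ and choose an orthonormal basis of $E_{-\lambda}$. This produces $g\in O_3(\C)$, which can be adjusted by $-1$ to lie in $SO_3(\C)$, with $A=g\operatorname{diag}(2\lambda,-\lambda,-\lambda)g^{\mathsf T}$. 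Hence $Z_{c,\lambda}(\C)=G\cdot A_0$, where $A_0=\operatorname{diag}(2\lambda,-\lambda,-\lambda)$.

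An element of $SO_3(\C)$ fixes $A_0$ if and only if it preserves both eigenspaces $\C e_1$ and $(\C e_1)^\perp$. The stabilizer is therefore $S(O_1\times O_2)\cong O_2(\C)$, which is exactly $N(T)$ for the torus fixed above. So $G_{A_0}=N(T)$, and $\dim Z_{c,\lambda}=3-1=2$. Since $Z_{c,\lambda}(\C)$ is Zariski closed and equals one orbit, the orbit is closed.

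\emph{Scheme structure (the main step).} I expect this to be the main obstacle, since it is where one must rule out non-reduced structure, and I will handle it by the tangent-space route. Compute $T_{A_0}Z_{c,\lambda}$ from the differentials of the defining equations, as in the Jacobian conventions of Subsection~\ref{subsec:AG-conventions}. For $B\in E$ the linearized equations are
\[
2\operatorname{tr}(A_0B)=0,
\qquad
A_0B+BA_0-\tfrac23\operatorname{tr}(A_0B)I_3-\lambda B=0.
\]
Using the first equation, the second reduces entrywise to $(a_i+a_j-\lambda)B_{ij}=0$, where $(a_1,a_2,a_3)=(2\lambda,-\lambda,-\lambda)$. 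The coefficient is $3\lambda\neq0$ for $i=j=1$, equals $-3\lambda\neq0$ for $i,j\in\{2,3\}$, and equals $0$ for the mixed entries $\{i,j\}=\{1,k\}$ with $k\in\{2,3\}$. Hence the tangent space consists of the symmetric matrices supported on the positions $(1,2),(1,3)$ and their transposes. These are automatically traceless and satisfy $\operatorname{tr}(A_0B)=0$, so $\dim T_{A_0}Z_{c,\lambda}=2$.

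On the other hand, $\dim\mathcal O_{Z,A_0}\ge2$, because the 2-dimensional orbit passes through $A_0$ and is all of $Z$. Since the local dimension is at most the tangent dimension, the local ring at $A_0$ is regular. By $G$-homogeneity the same holds at every closed point, so $Z_{c,\lambda}$ is smooth and hence reduced.

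Finally, a reduced closed subscheme of $E$ is determined by its $\C$-points (Nullstellensatz convention). Therefore $Z_{c,\lambda}$ coincides scheme-theoretically with the reduced closed orbit $G\cdot A_0\cong G/N(T)$.
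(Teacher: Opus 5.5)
Your proposal is correct and follows the paper's proof essentially step for step: the eigenvalue analysis from $A^2=\lambda A+\frac c3 I_3$, conjugation to $\operatorname{diag}(2\lambda,-\lambda,-\lambda)$ by an orthonormal eigenbasis adjusted into $SO_3(\C)$, the stabilizer $N(T)$, the same tangent-space computation $(a_i+a_j-\lambda)b_{ij}=0$ giving dimension $2$, homogeneity to get smoothness, and the Nullstellensatz to identify $Z_{c,\lambda}$ with the reduced closed orbit. The differences are cosmetic. You obtain $\lambda\neq0$ from $c=6\lambda^2$, whereas the paper rules out $\lambda=0$ first. You also simply assert $S(O_1\times O_2)=N(T)$, where the paper briefly checks both inclusions.
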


\begin{proof}
Assume that $Z_{c,\lambda}$ is nonempty and choose a closed point $A\in Z_{c,\lambda}(\C)$, using the conventions of Subsection~\ref{subsec:AG-conventions}. The defining equation $Q(A)=\lambda A$, together with $q(A)=c$, is equivalent to \(A^2-\lambda A-\frac c3I_3=0\). If $\lambda=0$, then $A^2=(c/3)I_3$. Since $c\ne0$, every eigenvalue of $A$ is one of the two nonzero numbers $\pm\sqrt{c/3}$, and three such eigenvalues cannot sum to zero. This contradicts $\operatorname{tr}(A)=0$, so $\lambda\ne0$.

Equation (4.1) shows that the minimal polynomial of $A$ divides $x^2-\lambda x-c/3$, so $A$ has at most two eigenvalues. It cannot have only one eigenvalue: tracelessness would then force that eigenvalue to be $0$, whereas substituting $0$ into the polynomial gives $c=0$. Thus $A$ has exactly two distinct eigenvalues, of algebraic multiplicities $1$ and $2$. Write them as $a$ and $b$, respectively. Then \(a+2b=0,a+b=\lambda,ab=-\frac c3,\) and hence \(b=-\lambda,a=2\lambda,c=6\lambda^2.\) The two roots are distinct, so $A$ is semisimple. Since $A$ is symmetric, eigenspaces for distinct eigenvalues are orthogonal. The one-dimensional $2\lambda$-eigenspace is nondegenerate: if a nonzero generator were isotropic, it would be orthogonal both to itself and to the $-\lambda$-eigenspace, hence to all of $\C^3$, contradicting the nondegeneracy of the standard symmetric bilinear form. Its orthogonal complement, which is the two-dimensional $-\lambda$-eigenspace, is therefore nondegenerate as well. Choosing orthonormal bases in the two eigenspaces and changing the sign of one basis vector if necessary, we obtain an orthogonal eigenbasis of determinant one. Thus $A$ is conjugate under $G\cong SO_3(\C)$ to \(A_0:=\operatorname{diag}(2\lambda,-\lambda,-\lambda).\) Since $q$ is $G$-invariant and $Q$ is $G$-equivariant, the ideal $I_{c,\lambda}$ is $G$-stable and hence so is $Z_{c,\lambda}$. The preceding argument gives $Z_{c,\lambda}(\C)\subseteq G\cdot A_0$, while $A_0\in Z_{c,\lambda}(\C)$ and $G$-stability give the reverse inclusion. Therefore \(Z_{c,\lambda}(\C)=G\cdot A_0.\)

It remains to identify the stabilizer. Put \(L:=\C e_1,W:=L^\perp=\langle e_2,e_3\rangle.\) Since $A_0$ acts by $2\lambda$ on $L$ and by $-\lambda$ on $W$, every $g\in G_{A_0}$ preserves the two eigenspaces $L$ and $W$. Relative to the orthogonal decomposition $\C^3=L\oplus W$, the restriction $g|_W$ lies in $O_2(\C)$. Since $SO_2(\C)$ is a normal subgroup of $O_2(\C)$, the element $g|_W$ normalizes $SO_2(\C)$; hence $g$ normalizes $T$, and therefore $G_{A_0}\subseteq N(T)$. Conversely, $L$ is precisely the common fixed-point space $(\C^3)^T$. Thus every element of $N(T)$ preserves $L$, and hence also $W=L^\perp$. Since $A_0$ acts as a scalar on each summand, every such element commutes with $A_0$. Therefore \(G_{A_0}=N(T).\) By the orbit and Weyl-group facts recalled in Subsection~\ref{subsec:AG-conventions}, \(G/N(T)\cong G\cdot A_0,\) and, since $N(T)/T$ is finite, \(\dim G\cdot A_0=\dim G-\dim N(T)=2.\)

We next determine the scheme structure. Let $B=(b_{ij})\in E$ be a tangent vector at $A_0$. Linearizing the equations $q(A)=c$ and $Q(A)=\lambda A$ gives \(2\operatorname{tr}(A_0B)=0,A_0B+BA_0-\frac23\operatorname{tr}(A_0B)I_3=\lambda B.\) Set $a_1=2\lambda$ and $a_2=a_3=-\lambda$, so that $A_0=\operatorname{diag}(a_1,a_2,a_3)$. The first linearized equation implies $\operatorname{tr}(A_0B)=0$, and therefore the $(i,j)$-entry of the second equation is \((a_i+a_j-\lambda)b_{ij}=0.\) Since $\lambda\ne0$, this gives \(b_{11}=b_{22}=b_{33}=b_{23}=0,\) whereas the coefficients of $b_{12}$ and $b_{13}$ vanish identically; thus $b_{12}$ and $b_{13}$ are free. Consequently
\[
T_{A_0}Z_{c,\lambda}
=
\left\{
\begin{pmatrix}
0&u&v\\
u&0&0\\
v&0&0
\end{pmatrix}
:u,v\in\C
\right\},
\qquad
\dim_\C T_{A_0}Z_{c,\lambda}=2.
\]
Because the $G$-action preserves $Z_{c,\lambda}$ and is transitive on its closed points, the same tangent-space dimension holds at every closed point $x\in Z_{c,\lambda}$. Moreover, every such $x$ lies in $G\cdot A_0$. Since $I_{c,\lambda}$ is $G$-stable and $A_0\in Z_{c,\lambda}(\C)$, the orbit morphism \(G\longrightarrow E,g\longmapsto g\cdot A_0,\) factors through $Z_{c,\lambda}$. Hence the two-dimensional orbit $G\cdot A_0$ is a locally closed subvariety of $Z_{c,\lambda}$ through $x$, whence \(2\leq\dim_x Z_{c,\lambda}\leq\dim_\C T_xZ_{c,\lambda}=2.\) Consequently, \(\dim\mathcal O_{Z_{c,\lambda},x}=\dim_x Z_{c,\lambda}=2=\dim_\C\mathfrak m_x/\mathfrak m_x^2.\) By the regularity and smoothness criteria of Subsection~\ref{subsec:AG-conventions}, every closed point of $Z_{c,\lambda}$ is smooth. Since the smooth locus is open and finite-type $\C$-schemes are Jacobson, $Z_{c,\lambda}$ is smooth, and hence reduced.

Finally, $ G\cdot A_0=Z_{c,\lambda}(\C) $ is Zariski closed in the affine space \(E\), so the orbit is a closed subvariety of \(E\). Give it the reduced induced scheme structure. Then \(Z_{c,\lambda}\) and \(G\cdot A_0\) are reduced closed subschemes of \(E\) with the same \(\C\)-points. By the Nullstellensatz consequence recalled in Subsection~\ref{subsec:AG-conventions}, they coincide scheme-theoretically. Together with the orbit--stabilizer identification above, $ Z_{c,\lambda}=G\cdot A_0\cong G/N(T), $ as claimed.
\end{proof}

\begin{thm}\label{thm:M1plus-core}
Let $V$ be a simple self-contragredient VOA of CFT type and central charge one. Suppose $J\in V_4$ is a nonisotropic primary vector, its cyclic Virasoro module is \(U(\Vir)J\cong X_2=L(1,4),\) and \(\dim P_4(V)=1.\) Then \(\langle L(1,0),J\rangle_{\mathrm{VOA}}\cong M(1)^+.\)
\end{thm}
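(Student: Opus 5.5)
The plan is to run the square-core machinery of Section~\ref{sec:squarecore} with $m=2$ and then pin down the resulting commutative $G$-algebra by the quadratic-orbit Lemma~\ref{lem:quadratic-orbit}. We identify the answer with $\OO(G/N(T))$. The fixed-point dictionary (Corollary~\ref{cor:fixed-point-dictionary}) will then give $V_{A_1}^{N(T)}$. Finally, we recognize $V_{A_1}^{N(T)}\cong M(1)^+$: the torus fixed points $V_{A_1}^T$ are the Heisenberg VOA $M(1)$, and the Weyl element acts as $\theta$.

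First, set $S=L(1,0)\oplus U(\Vir)J$, where $U(\Vir)J\cong X_2$. The sum is direct because $X_2$ is simple and has positive lowest weight. Theorem~\ref{thm:square-core} gives $U:=\langle L(1,0),J\rangle_{\mathrm{VOA}}\in\IndCMY(\Csq)$. Put $B=\widehat\Phi^{-1}(U)$. By Lemma~\ref{lem:finite-generation}, $B$ is generated as an algebra by $E=W_4\cong\Sym^2_0(\C^3)$. As in the proof of Theorem~\ref{thm:closed-orbit}, $B^G=\C$, because $U_0=\C\one$. Hence $B$ is a quotient of $\Sym(E)\cong\Sym(E^*)$, where we identify $E\cong E^*$ via $\tau$. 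So $\Spec B$ is a closed $G$-stable subscheme of $E$.

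Next, we produce the relations defining $Z_{c,\lambda}$. The component of $\mu_B|_{\Sym^2E}$ landing in $W_0$ is the invariant form $\beta$. By Lemma~\ref{lem:vacuum-channel} and nonisotropy of $J$, $\beta$ is a nonzero multiple of $\tau$. Thus $\mu_B(q)=c\cdot 1$ with $c\ne0$, after rescaling. The $W_4$-component of $\mu_B|_{\Sym^2E}$ is some $G$-map $\Sym^2E\to B$ with image in the $W_4$-isotypic part of $B$. The key input is that this part is exactly $E$. Indeed, $\Hom_G(W_4,B)=\Hom(X_2,U)$ is detected by primary vectors of weight $4$ in $U\subseteq V$, and $\dim P_4(V)=1$. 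So $\Hom_G(W_4,B)$ is one-dimensional and spanned by the inclusion of $E$. Since $\Hom_G(\Sym^2E,E)=\C\widetilde Q$, we obtain $\mu_B\circ\widetilde Q$-type relations $Q(A)=\lambda A$ in $B$ for some $\lambda\in\C$. Hence $B$ is a quotient of $\Sym(E^*)/I_{c,\lambda}$. We have $B\ne0$, so $Z_{c,\lambda}\ne\emptyset$. Lemma~\ref{lem:quadratic-orbit} then gives $Z_{c,\lambda}\cong G/N(T)$, reduced and homogeneous. By Lemma~\ref{lem:G-simple-homogeneous}, $\OO(G/N(T))$ has no proper nonzero $G$-stable ideals. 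The kernel of $\OO(Z_{c,\lambda})\twoheadrightarrow B$ is $G$-stable and proper, so it is zero, and $B\cong\OO(G/N(T))$.

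Corollary~\ref{cor:fixed-point-dictionary} with $H=N(T)$ (reductive, with affine quotient) then yields $U\cong V_{A_1}^{N(T)}$. It remains to identify $V_{A_1}^{N(T)}\cong M(1)^+$. Choose $T$ to be the torus $e^{t\alpha(0)}$ of $V_{A_1}$; its fixed points are the charge-zero sector $M(1)$. A Weyl representative acts as a lift of $-1$ on $A_1$, which on $M(1)$ is $\theta$, up to possibly a twist by $T$ that does not affect $M(1)$. Therefore $V_{A_1}^{N(T)}=M(1)^{\langle\theta\rangle}=M(1)^+$.

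The main obstacle is the second step. We must justify that the $W_4$-isotypic component of $B$ is the single copy $E$, so that the $\widetilde Q$-relation holds with a scalar $\lambda$. The danger is a second copy of $X_2$ in $U$ whose top vector is not primary in $V$. This cannot occur: inside $U\in\IndCMY(\Csq)$, every $X_2$ summand has a genuinely primary top vector at weight $4$. Combined with $\dim P_4(V)=1$, this gives multiplicity one.
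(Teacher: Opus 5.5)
Your proposal is correct and follows the paper's proof essentially step for step: the square core via Theorem~\ref{thm:square-core}, generation of $B$ by $E=W_4$, $B^G=\C$, $c\neq0$ from Lemma~\ref{lem:vacuum-channel}, and the $\lambda$-relation from the multiplicity one of $W_4$ forced by $\dim P_4(V)=1$. It then uses Lemma~\ref{lem:quadratic-orbit} together with $G$-simplicity, and finally Corollary~\ref{cor:fixed-point-dictionary}. The only cosmetic difference is in the last identification. You use the algebraic torus $e^{t\alpha(0)}$ and the fact that $N(T)=T\rtimes\langle\theta\rangle$ directly, whereas the paper passes through the compact group $K_{A_1}\cong O(2)$ and Remark~\ref{rmk:compact-complexification}; both routes give $V_{A_1}^{N(T)}=M(1)^+$.
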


\begin{proof}
Set \(U_4:=\langle L(1,0),J\rangle_{\mathrm{VOA}}.\) By Theorem~\ref{thm:square-core}, $U_4\in\IndCMY(\Csq)$. Via the symmetric monoidal equivalence \(\widehat\Phi^{-1}:\IndCMY(\Csq)\to\Rep^{\mathrm{rat}}G,\) let \(B:=\widehat\Phi^{-1}(U_4),\) regarded as a commutative rational $G$-algebra. By Lemma~\ref{lem:finite-generation}, $B$ is generated by the copy $E=W_4$ corresponding to the cyclic summand $U(\Vir)J\cong X_2$.

Under $\widehat\Phi$, the trivial $G$-module corresponds to $X_0=L(1,0)$. Every copy of $X_0$ in $U_4$ contributes a nonzero weight-zero vector, while $(U_4)_0=\C\one$. Since $\widehat\Phi(\C)=X_0$, full faithfulness of $\widehat\Phi$ gives \(\Hom_G(\C,B)\cong\Hom_{\IndCMY(\Csq)}(X_0,U_4).\) Thus the multiplicity of the trivial $G$-module in $B$ equals the multiplicity of $X_0$ in $U_4$. Every copy of $X_0$ contributes a nonzero weight-zero vector, while $(U_4)_0=\C\one$. Since the vacuum submodule $L(1,0)=X_0$ is already contained in $U_4$, the multiplicity is exactly one. Hence \(B^G\cong\Hom_G(\C,B)=\C.\) Similarly, since $\widehat\Phi(E)=X_2$, \(\Hom_G(E,B)\cong\Hom_{\IndCMY(\Csq)}(X_2,U_4).\) Hence the multiplicity of $E$ in $B$ equals the multiplicity of $X_2$ in $U_4$. The highest-weight line of every such copy lies in \(P_4(U_4)=U_4\cap P_4(V)\), which has dimension at most one, whereas $J$ supplies one such copy. Thus $E$ occurs in $B$ with multiplicity one. Let \(j:E\hookrightarrow B\) denote this generating copy.

Since $B$ is generated by $E$, the map \(j\circ\iota_E:E^*\longrightarrow B\) extends uniquely to a surjective $G$-algebra homomorphism \(\pi:\Sym(E^*)\twoheadrightarrow B.\) Since $q$ is $G$-invariant and $B^G=\C$, there is a scalar $c\in\C$ such that $\pi(q)=c\one$. We claim that $c\ne0$. Choose a $\tau$-orthonormal basis $e_1,\ldots,e_5$ of $E$ and put $x_i:=\tau(e_i,-)\in E^*$. Then \(q=\sum_{i=1}^5 x_i^2,\iota_E(x_i)=e_i,\) and hence \(\pi(q)=\sum_{i=1}^5 j(e_i)j(e_i).\) The tensor \(\sum_{i=1}^5 e_i\otimes e_i\) spans the unique trivial $G$-submodule of $\Sym^2E$. Under the symmetric tensor equivalence, the restriction of the multiplication of $B$ to this trivial summand corresponds to the vacuum-channel component of the product of the $X_2$-summand of $U_4$ with itself. This component is nonzero by Lemma~\ref{lem:vacuum-channel}, because $J$ is nonisotropic. Therefore \(\pi(q)\ne0\), and hence $c\ne0$.

To obtain the quadratic covariant relation without making any normalization choice for the $W_4$ component of the multiplication, define the $G$-equivariant linear map \(Q^*:E^*\longrightarrow\Sym^2(E^*),\ Q^*(\ell)=\ell\circ Q.\) The composite $\pi\circ Q^*:E^*\to B$ is $G$-equivariant. Since $E^*\cong E$ and $E$ occurs with multiplicity one in $B$, there is a unique scalar $\lambda\in\C$ such that \(\pi\circ Q^*=\lambda(j\circ\iota_E).\) Because $\pi|_{E^*}=j\circ\iota_E$, this is equivalently \(\pi\bigl(Q^*(\ell)-\lambda\ell\bigr)=0(\ell\in E^*).\) Thus $q-c$ and every coordinate function of the polynomial map $Q-\lambda\id_E$ lie in $\ker\pi$. Therefore the ideal $I_{c,\lambda}$ of Lemma~\ref{lem:quadratic-orbit} is contained in $\ker\pi$, and $\pi$ factors through a surjective $G$-algebra homomorphism \(R_{c,\lambda}:=\frac{\Sym(E^*)}{I_{c,\lambda}}\twoheadrightarrow B.\) Since this map is unital and $B\ne0$, the algebra $R_{c,\lambda}$ is a nonzero finitely generated $\C$-algebra. By the Nullstellensatz its spectrum has a closed point, so Lemma~\ref{lem:quadratic-orbit} applies and gives \(R_{c,\lambda}\cong\OO(G/N(T)).\) By Lemma~\ref{lem:G-simple-homogeneous}, this coordinate algebra has no nonzero proper $G$-stable ideal. Hence the kernel of the displayed surjection is zero and \(B\cong\OO(G/N(T)).\)

Lemma~\ref{lem:quadratic-orbit} also realizes $G/N(T)$ as a closed subvariety of the affine space $E$, so $G/N(T)$ is affine. Since $N(T)$ is closed, Matsushima's criterion recalled in Subsection~\ref{subsec:AG-conventions} shows that $N(T)$ is reductive. Thus Corollary~\ref{cor:fixed-point-dictionary} applies and yields \(U_4\cong V_{A_1}^{N(T)}.\) All maximal tori of $G$ are conjugate, so, after conjugating $T$ if necessary, we may take its maximal compact torus to be the standard lattice torus $T_{A_1}$ of Proposition~\ref{prop:rank-one-lattice-package}(v). The lattice involution $\theta$ conjugates every element of $T_{A_1}$ to its inverse, and hence represents the nontrivial Weyl-group element. Consequently \(K_{A_1}:=T_{A_1}\rtimes\langle\theta\rangle\cong O(2)\) is the standard maximal compact subgroup of the algebraic normalizer $N(T)\cong O_2(\C)$. By the compact--complexification convention of Remark~\ref{rmk:compact-complexification}, $N(T)$ and $K_{A_1}$ have the same fixed vectors on each finite-dimensional homogeneous subspace of $V_{A_1}$. Therefore \(V_{A_1}^{N(T)}=V_{A_1}^{K_{A_1}}=M(1)^+\) by Proposition~\ref{prop:rank-one-lattice-package}(v). Hence \(U_4\cong M(1)^+\), as required.
\end{proof}

\subsection{Recovering the \texorpdfstring{$L(1,4)$}{L(1,4)} summand from the lattice-orbifold character}

Let \(L=\Z\alpha,\ \langle\alpha,\alpha\rangle=2k,\ k\in\Z_{>0}\). By Proposition~\ref{prop:rank-one-lattice-package}(i)--(ii), \(V_L^+\cong M(1)^+\oplus\bigoplus_{r\ge1}M(1,r\alpha)\) and \(M(1)^+\cong\bigoplus_{j\ge0}X_{2j}\), and the first lattice sector has one-dimensional top in conformal weight $k$.

\begin{lem}\label{lem:k3-weight3}
Let $\Lambda_3=\Z\alpha$ with $\langle\alpha,\alpha\rangle=6$ and assume $\ch V=\ch V_{\Lambda_3}^+$. Then $P_3(V)$ is one-dimensional. Every nonzero $F\in P_3(V)$ satisfies \((F,F)\ne0\) and \(U(\Vir)F\cong L(1,3).\)
\end{lem}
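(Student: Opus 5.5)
We first read off the low-weight dimensions from \eqref{eq:rank-one-character-interface} with $k=3$. By Proposition~\ref{prop:rank-one-lattice-package}(ii), $M(1)^+\cong\bigoplus_{j\ge0}L(1,4j^2)$, so $M(1)^+$ agrees with $L(1,0)$ in weights $<4$. The first lattice sector $M(1,\alpha)$ has lowest weight $3$ and character $q^3P(q)$, and the higher sectors begin at weight $12$. Hence
\[
\dim V_n=\dim L(1,0)_n\quad(0\le n<3),\qquad \dim V_3=\dim L(1,0)_3+1.
\]
Lemma~\ref{lem:first-primary} with $h=3$ then gives that $P_3(V)$ is one-dimensional, spanned by some $F$ with $(F,F)\ne0$, and $V_3=L(1,0)_3\oplus\C F$.

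It remains to show that $C:=U(\Vir)F$ is irreducible. Since $3$ is not of the form $n^2/4$, Proposition~\ref{prop:c1-verma} shows that $V(1,3)$ is irreducible, so $V(1,3)=L(1,3)$. The canonical map $V(1,3)\to C$ sends the highest-weight vector to $F\ne0$. It is therefore a nonzero surjection from a simple module, hence an isomorphism. Thus $C\cong L(1,3)$.

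Alternatively, one can argue via the invariant form without Proposition~\ref{prop:c1-verma}. The pulled-back form $\langle x,y\rangle=(\phi(x),\phi(y))$ on $V(1,3)$ is contravariant and nonzero on the top. It is therefore a nonzero multiple of the Shapovalov form, so $\ker\phi$ equals the Shapovalov radical, which is zero because $V(1,3)$ is irreducible.

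The only point needing care is the character bookkeeping at weight $3$. We must confirm that the $M(1)^+$ part contributes nothing beyond $L(1,0)$ there, which is Proposition~\ref{prop:rank-one-lattice-package}(ii) since its first nonvacuum summand starts at weight $4$. We must also confirm that the lattice sector contributes exactly one dimension, namely the top $\C F_1$.

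Unlike the square-weight cases, no singular-vector analysis is needed here. The nondegenerate weight $3$ makes the Verma module already simple, so $L(1,3)$ lies outside $\Csq$, consistent with the later $O_2$ treatment.
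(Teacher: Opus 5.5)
Your proposal is correct and follows the paper's proof essentially verbatim: the character comparison below weight four feeds Lemma~\ref{lem:first-primary} at $h=3$, and the irreducibility of $V(1,3)$ from Proposition~\ref{prop:c1-verma} (since $3\ne n^2/4$) makes the canonical map $V(1,3)\to U(\Vir)F$ an isomorphism. Your optional Shapovalov-form aside is not needed, and it is slightly imprecise: it only yields $\ker\phi\subseteq$ radical, not equality, and it still relies on the irreducibility of $V(1,3)$, so it does not actually avoid Proposition~\ref{prop:c1-verma}.
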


\begin{proof}
Both VOAs have central charge one, so equality of vacuum characters is equivalent to equality of graded characters. By Proposition~\ref{prop:rank-one-lattice-package}(i)--(ii), below weight four the character of $V_{\Lambda_3}^+$ consists of the vacuum module $X_0=L(1,0)$ together with the one-dimensional top of the charge-one lattice sector in weight three. Hence $V_n=L(1,0)_n$ for $0\le n<3$ and $\dim V_3=\dim L(1,0)_3+1$. Lemma~\ref{lem:first-primary} gives $\dim P_3(V)=1$, and every nonzero $F\in P_3(V)$ satisfies $(F,F)\ne0$.

For such an $F$, the definition of $P_3(V)$ gives \(L(0)F=3F\) and \(L(n)F=0\) for all $n>0$. Thus $F$ is a Virasoro highest-weight vector of central charge one and highest weight three. By the universal property of the Verma module, there is a nonzero Virasoro homomorphism \(\phi_F:V(1,3)\longrightarrow V\) sending the highest-weight vector to $F$, and \(\operatorname{Im}\phi_F=U(\Vir)F.\) By Proposition~\ref{prop:c1-verma}, $V(1,h)$ is reducible precisely when \(h=n^2/4\) for some $n\in\Z_{\ge0}$. Since \(3\ne n^2/4\) for every $n\in\Z_{\ge0}$, the Verma module $V(1,3)$ is irreducible. Hence the canonical quotient \(V(1,3)\twoheadrightarrow L(1,3)\) is an isomorphism. Since $\phi_F$ is nonzero and its domain is simple, $\phi_F$ is injective. Therefore \(U(\Vir)F=\operatorname{Im}\phi_F\cong L(1,3).\)
\end{proof}

\begin{lem}\label{lem:intervening}
Assume $k=3$ or $k>4$ and \(\ch V=\ch V_L^+\) for a simple self-contragredient CFT-type VOA $V$ of central charge one. Then \(\dim P_4(V)=1\), the restriction of the invariant form to \(P_4(V)\) is nondegenerate, and every \(0\ne J\in P_4(V)\) satisfies \(U(\Vir)J\cong X_2=L(1,4).\)
\end{lem}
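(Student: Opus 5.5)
We compare low-weight dimensions of $V$ with those of $V_L^+$, using Proposition~\ref{prop:rank-one-lattice-package}(i)--(ii). Below weight five, $M(1)^+$ contributes $L(1,0)$ together with the top of $X_2=L(1,4)$ in weight $4$. The charge-one sector $M(1,\alpha)$ has top weight $k$, and higher charges start at weight $4k\ge12$.

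\textbf{Case $k>4$.} Here $V_n=L(1,0)_n$ for $0\le n<4$ and $\dim V_4=\dim L(1,0)_4+1$. So the first-primary situation of Lemma~\ref{lem:first-primary} holds at $h=4=2^2$ with $m=2$. This gives $\dim P_4(V)=1$ with nondegenerate restriction. We then apply Lemma~\ref{lem:first-singular} with $m=2$ and $H=9$. The excess of $\dim V_9$ over $\dim L(1,0)_9+\dim(X_2)_9$ must be computed from the character. The $M(1)^+$ contributions at weight $9$ are the vacuum, $X_2$, and the top of $X_4$ at weight $16$, which is too high, so they contribute nothing extra. The lattice sector contributes $\dim M(1,\alpha)_9=p(9-k)$, where $p$ is the partition function. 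This is $0$ for $k>9$, $1$ for $k=9$, and larger for $5\le k\le8$. Lemma~\ref{lem:first-singular} only allows $\varepsilon\in\{0,1\}$, so for $5\le k\le 8$ it does not apply directly. This is the main obstacle.

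To get around it, I would rerun the proof of Lemma~\ref{lem:first-singular} with the Verma image $C=U(\Vir)J$. Suppose the singular image $s$ at weight $9$ is nonzero. Then $s$ is a weight-$9$ primary orthogonal to $C$ and to $L(1,0)$. It therefore lies in the orthogonal complement of $L(1,0)_9\oplus C_9$, and this complement carries the primaries coming from the lattice sector. To rule out $s$, I would use the vacuum channel of Lemma~\ref{lem:vacuum-channel}: $s_{17}s=-(s,s)\one$. I would also use that $s\in U(\Vir)J$ lies in the radical of the pulled-back contravariant form, so $(s,U(\Vir)J)=0$ with $s\in U(\Vir)J$. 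The real argument is then a dimension count showing $s$ is orthogonal to all of $V_9$.

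\textbf{Refinement.} Decompose $V_9=L(1,0)_9\perp \bigl(L(1,0)_9^\perp\bigr)$. Inside $L(1,0)_9^\perp$, the module $U(\Vir)J$ and the Virasoro descendants of the weight-$k$ lattice primary $F$ (weights $k<9$) are mutually orthogonal, because $F$ and $J$ are orthogonal primaries of different weights. One shows that these descendants, together with $C_9$ and any new primaries at weight $9$, account for everything, with multiplicities matching $\ch V_L^+$. The key point is that the total dimension forces $C_9$ to have dimension $\dim(X_2)_9$ exactly, so $s=0$. The Virasoro module generated by $F$ has weight $k$, and for $k\ne n^2/4$ it is a Verma module, hence free of dimension $p(9-k)$ at weight $9$. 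For $k\in\{9/4,4,\dots\}$, i.e. $k=4$ or $k=9$, it is handled by Corollary~\ref{cor:c1-verma-low-weight}. Equality then follows from the character, since $p(9-k)$ exactly equals the lattice contribution.

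\textbf{Case $k=3$.} Lemma~\ref{lem:k3-weight3} gives $U(\Vir)F\cong L(1,3)$, which is Verma, with $\dim L(1,3)_4=1$. So $\dim V_4=\dim L(1,0)_4+1+1$, and $P_4(V)$ is the orthogonal complement of $L(1,0)_4+\C L(-1)F$. Invariance shows that primaries are orthogonal to $L(-1)F$. The form is nondegenerate on $\C L(-1)F$, since $(L(-1)F,L(-1)F)=6(F,F)\ne0$. This gives $\dim P_4(V)=1$ with nondegenerate restriction. The singular-vector step at weight $9$ is then the same orthogonality-count argument as above, with the Verma module $U(\Vir)F$ contributing $p(6)$ at weight $9$ and absorbing exactly the lattice part of the character.
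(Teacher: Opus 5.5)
Your proposal is correct and follows essentially the same route as the paper: Lemma~\ref{lem:first-primary} (or, for $k=3$, the orthogonal complement of $L(1,0)_4\oplus\C L(-1)F$) gives the nondegenerate weight-four line, and Lemma~\ref{lem:first-singular} disposes of $k\ge9$. For $k\in\{3,5,6,7,8\}$, the weight-nine dimension count with the pairwise orthogonal modules $L(1,0)$, $U(\Vir)J$ and the Verma module $U(\Vir)F\cong L(1,k)$ forces the singular image to vanish; the vacuum-channel detour is unnecessary, and since $p(1)=1$, the case $k=8$ is already covered by Lemma~\ref{lem:first-singular} with $\varepsilon=1$.

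The one step you leave implicit is the existence of the nonisotropic primary $F\in V_k$ for $5\le k\le8$. The paper obtains it as the orthogonal complement of $L(1,0)_k\oplus C_k$ in $V_k$. This uses Corollary~\ref{cor:c1-verma-low-weight} to get $V_n=L(1,0)_n\oplus C_n$ for $n<k$, and the pulled-back Shapovalov form (valid since $k<9$) for nondegeneracy on $C_k$.
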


\begin{proof}
We first construct the weight-four primary line. Suppose $k=3$, so $L=\Lambda_3$. Choose $0\ne F\in P_3(V)$ as in Lemma~\ref{lem:k3-weight3} and put $D=U(\Vir)F\cong L(1,3)$. The modules $L(1,0)$ and $D$ are orthogonal: by contravariance, pairing a Virasoro descendant of $F$ with a vacuum descendant reduces to pairing $F$ with a weight-three vector of $L(1,0)$, which vanishes because $F$ is primary. Both restrictions of the invariant form are nondegenerate, the latter because $D$ is irreducible and $(F,F)\ne0$. Hence $B:=L(1,0)\oplus D$ is an orthogonal nondegenerate sum. By Proposition~\ref{prop:rank-one-lattice-package}(i)--(ii), through weight four the only further contribution to the character is the one-dimensional top of $X_2=L(1,4)$ in weight four. Thus $V_n=B_n$ for $n<4$ and $\dim V_4=\dim B_4+1$. Therefore $Q:=B_4^\perp\cap V_4$ is one-dimensional and nondegenerate. Choose $0\ne J\in Q$. For $1\le r\le4$ and $a\in B_{4-r}=V_{4-r}$, invariance gives \((L(r)J,a)=(J,L(-r)a)=0\), so $L(r)J=0$; for $r>4$ this follows from CFT type. Hence $J\in P_4(V)$. Conversely every $u\in P_4(V)$ is orthogonal to $L(1,0)_4$, and \((u,L(-1)F)=(L(1)u,F)=0\); since $D_4=\C L(-1)F$, we have $u\in Q$. Thus $P_4(V)=Q=\C J$ and $(J,J)\ne0$.

Now suppose $k>4$. Every nonzero lattice sector starts above weight four, while \(M(1)^+=X_0\oplus X_2\oplus X_4\oplus\cdots\), with $X_0=L(1,0)$, $X_2=L(1,4)$, and $X_4$ beginning in weight $16$. Hence $V_n=L(1,0)_n$ for $0\le n<4$ and $\dim V_4=\dim L(1,0)_4+1$. Lemma~\ref{lem:first-primary} gives $\dim P_4(V)=1$ and nondegeneracy of the form on this line. Fix $0\ne J\in P_4(V)$, so again $(J,J)\ne0$.

Set $C=U(\Vir)J$. By Proposition~\ref{prop:c1-verma} with $m=2$, the first nontrivial singular vector of $V(1,4)$ occurs at level five, hence at absolute weight nine. Let \(\phi:V(1,4)\twoheadrightarrow C\) be the canonical map. The minimal-kernel argument of Lemma~\ref{lem:first-singular} shows that $\phi$ is injective below weight nine.

If $k\ge9$, Lemma~\ref{lem:first-singular} applies with $m=2$, $h=4$, and $H=9$: for $k>9$ there is no lattice contribution in weight nine, while for $k=9$ the lattice top contributes exactly one dimension. Hence $C\cong X_2$.

It remains to consider $k\in\{3,5,6,7,8\}$. For $k=3$, retain the module $D\cong L(1,3)$ constructed above. Since $J\perp B_4$, contravariance shows that $C\perp B=L(1,0)\oplus D$, so $L(1,0)$, $C$, and $D$ are pairwise orthogonal.

Suppose instead that $5\le k\le8$. First $C\perp L(1,0)$ by contravariance, since $J\perp L(1,0)_4$; thus $A:=L(1,0)\oplus C$ is an orthogonal direct sum. By Corollary~\ref{cor:c1-verma-low-weight}, $C$ and $X_2$ have the same graded dimensions below weight nine. Since the first lattice sector starts in weight $k$ and $X_4$ starts in weight $16$, character comparison gives $V_n=A_n$ for $n<k$ and $\dim V_k=\dim A_k+1$. The form on $A_k$ is nondegenerate: on $C_k$ this follows by pulling back the Shapovalov form, since $k<9$ and $(J,J)\ne0$. Choose a nonisotropic vector $0\ne F\in A_k^\perp\cap V_k$. For $1\le r\le k$ and $a\in A_{k-r}$, \((L(r)F,a)=(F,L(-r)a)=0\); since $V_{k-r}=A_{k-r}$ is nondegenerate, $L(r)F=0$, and for $r>k$ the target is zero. Thus $F$ is primary. Because none of $5,6,7,8$ is of the form $n^2/4$, Proposition~\ref{prop:c1-verma} gives \(D:=U(\Vir)F\cong L(1,k)\). Again contravariance gives $D\perp A$. Thus in every remaining case $k\in\{3,5,6,7,8\}$ we have pairwise orthogonal Virasoro submodules \(L(1,0), C=U(\Vir)J, D\cong L(1,k).\)

No second lattice sector occurs by weight nine because $4k>9$, and no further $M(1)^+$-summand occurs because $X_4$ starts in weight $16$. Moreover the charge-one Fock sector and $L(1,k)$ have the same graded dimension $q^kP(q)$ for these values of $k$. Hence \(\dim V_9=\dim L(1,0)_9+\dim(X_2)_9+\dim L(1,k)_9.\) Let $\widetilde s\in V(1,4)_9$ be a nonzero first singular vector. If $s=\phi(\widetilde s)\ne0$, the same minimal-kernel argument makes $\phi$ injective through weight nine, and therefore \(\dim C_9=\dim V(1,4)_9=\dim(X_2)_9+1\). Since $L(1,0)_9\oplus C_9\oplus D_9\subseteq V_9$, this contradicts the preceding character equality. Thus $s=0$, so $\phi$ factors through the simple quotient $X_2=L(1,4)$. The induced nonzero surjection $X_2\to C$ is an isomorphism. Hence \(U(\Vir)J=C\cong X_2\) in all cases.
\end{proof}

\begin{cor}\label{cor:M1plus-kgt4}
Under the hypotheses of Lemma~\ref{lem:intervening}, for every \(0\ne J\in P_4(V)\), the VOA \(\langle L(1,0),J\rangle_{\mathrm{VOA}}\) is isomorphic to \(M(1)^+\).
\end{cor}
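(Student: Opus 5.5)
This is a direct combination of Lemma~\ref{lem:intervening} and Theorem~\ref{thm:M1plus-core}; the plan is simply to check the hypotheses of the theorem.

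Fix $0\ne J\in P_4(V)$. The standing assumptions of Lemma~\ref{lem:intervening} say that $V$ is simple, self-contragredient, of CFT type and of central charge one. These are exactly the ambient hypotheses of Theorem~\ref{thm:M1plus-core}. The remaining hypotheses of that theorem are supplied by Lemma~\ref{lem:intervening}:
\begin{enumerate}[label=\textnormal{(\alph*)}]
\item $\dim P_4(V)=1$;
\item $U(\Vir)J\cong X_2=L(1,4)$;
\item the invariant form restricts nondegenerately to the line $P_4(V)=\C J$.
\end{enumerate}
Since $P_4(V)$ is one-dimensional and the form on it is nondegenerate, (c) forces $(J,J)\ne0$. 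Hence $J$ is a nonisotropic primary vector of weight four.

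All hypotheses of Theorem~\ref{thm:M1plus-core} are therefore satisfied, and that theorem gives $\langle L(1,0),J\rangle_{\mathrm{VOA}}\cong M(1)^+$.

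The choice of $J$ plays no role. Any two nonzero vectors of $P_4(V)$ are scalar multiples of each other, so they generate the same subalgebra.

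I do not expect any genuine obstacle. The substantive work has already been done: the singular-vector analysis in Lemma~\ref{lem:intervening}, which rules out a nonsplit weight-nine quotient in the cases $k\in\{3,5,6,7,8\}$, and the invariant-theoretic identification $B\cong\OO(G/N(T))$ in Theorem~\ref{thm:M1plus-core}. The only point to check is that nondegeneracy of the form on the one-dimensional space $P_4(V)$ is exactly the nonisotropy that Theorem~\ref{thm:M1plus-core} requires, and this was verified above.
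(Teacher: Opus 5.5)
Your proposal is correct and follows the paper's own proof. Lemma~\ref{lem:intervening} supplies $\dim P_4(V)=1$, the nonisotropy $(J,J)\ne0$ coming from nondegeneracy on that line, and $U(\Vir)J\cong X_2$, so Theorem~\ref{thm:M1plus-core} applies directly.
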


\begin{proof}
Lemma~\ref{lem:intervening} gives \((J,J)\ne0\), \(U(\Vir)J\cong X_2\), and \(\dim P_4(V)=1\). Theorem~\ref{thm:M1plus-core} now applies.
\end{proof}

\section{The \texorpdfstring{$O_2$}{O2} charge category and the rank-one orbifold branch}\label{sec:o2}

\subsection{Compact-lattice normalization of the charge category}

Fix \(k\in\Z_{>0}\) and choose \(\lambda\in\mathfrak h\) such that \(\langle\lambda,\lambda\rangle=2k\). Then \(L_k=\Z\lambda\) is a positive-definite even lattice of rank one, and \(T_k:=\Hom(L_k,U(1))\cong U(1)\). Write \(\mathbb G_m=\C^\times\) for the multiplicative algebraic group and, for \(r\in\Z\), let \(\C_r\) denote its one-dimensional rational module on which \(z\in\mathbb G_m\) acts by \(z^r\). We use throughout the concrete $T_k$- and $O(2)$-actions recorded in Proposition~\ref{prop:rank-one-lattice-package}(v).

\begin{thm}\label{thm:Fock-Gm}
Let \(\mathcal P_k=\left\langle M(1),\ M(1,r\lambda)\mid r\in\Z\setminus\{0\}\right\rangle_{\oplus}\) be the full subcategory of finite direct sums of the indicated Heisenberg modules. Then \(V_{L_k}^{T_k}=M(1)\), and there is a symmetric vertex tensor equivalence
\[
\Psi_k^{\mathrm{Fock}}:\Rep\mathbb G_m\overset{\sim}{\longrightarrow}\mathcal P_k,
\qquad
\C_r\longmapsto
\begin{cases}
M(1),&r=0,\\
M(1,r\lambda),&r\ne0.
\end{cases}
\]
\end{thm}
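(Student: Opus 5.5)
The plan is to apply McRae's compact-orbifold theorem (Theorem~\ref{thm:McRae-VOA-orbifold}) with $U=V_{L_k}$ and $K=T_k\cong U(1)$, and then complexify as in Remark~\ref{rmk:compact-complexification}, with $\mathbb G_m$ as the complexification of $U(1)$.

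First, the identity $V_{L_k}^{T_k}=M(1)$ is exactly the charge-separation argument of Proposition~\ref{prop:rank-one-lattice-package}(v). It uses the charge decomposition $V_{L_k}=\bigoplus_{r\in\Z}M(1,r\lambda)$, where $T_k$ acts on the charge-$r$ summand by the character $\chi\mapsto\chi(r\lambda)$. This immediately gives the Schur--Weyl decomposition \eqref{eq:McRae-SW-VOA}:
\[
V_{L_k}\cong\bigoplus_{r\in\Z}\C_{r}\otimes M(1,r\lambda).
\]
Each multiplicity module $M(1,r\lambda)$, including $M(1)$ for $r=0$, is irreducible over $M(1)$, and these modules are pairwise nonisomorphic because the zero mode $\lambda(0)$ acts by distinct scalars $2kr$. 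Hence $\mathcal C_{V_{L_k}}=\mathcal P_k$. With the convention $U_\chi\cong\Phi_K(M_\chi^*)$, the compact-orbifold functor sends the character $\C_r$ to the charge-$(-r)$ sector. I will therefore either precompose with the inversion automorphism of $\mathbb G_m$ (a symmetric tensor autoequivalence), or equivalently relabel characters. This produces the stated assignment $\C_r\mapsto M(1,r\lambda)$.

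Second, to obtain a vertex tensor structure I will use Theorem~\ref{thm:McRae-VOA-orbifold}(i) rather than (ii). There is no convenient ambient $C_1$-cofinite category for $M(1)$ containing all Fock modules with the needed tensor structure, so I will not rely on one. Instead, I will verify surjectivity of the maps \eqref{eq:McRae-intertwiner-map}. Here $\Hom_{T_k}(\C_a\otimes\C_b,\C_c)$ is one-dimensional if $c=a+b$ and zero otherwise. On the intertwining side, the standard Heisenberg fusion rules (via $\lambda(0)$-charge conservation and the Frenkel--Lepowsky--Meurman/Dong--Lepowsky computation) give
\[
\dim\mathcal V^{M(1,\gamma)}_{M(1,\alpha)\,M(1,\beta)}=\delta_{\gamma,\alpha+\beta}.
\]
Since the map is always injective, dimension counting on irreducibles and additivity over finite direct sums give surjectivity for all finite-dimensional $M_1,M_2,M_3$. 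Then Theorem~\ref{thm:McRae-VOA-orbifold}(i), together with McRae's equivalence statement, gives the vertex tensor category structure on $\mathcal P_k$ and the vertex tensor equivalence from $\Rep_{\mathrm{fd}}U(1)$. Its underlying braided category is the symmetric $\Rep_{\mathrm{fd}}U(1)$, in the ordinary-VOA specialization with trivial grading group. Finally, composing with $\Res:\Rep\mathbb G_m\simeq\Rep_{\mathrm{fd}}U(1)$ gives the symmetric vertex tensor equivalence.

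The main obstacle is the second step. I must confirm that McRae's construction under hypothesis~(i) really yields a vertex tensor equivalence, not just fusion-ring data, and that only ordinary intertwining operators need counting. Logarithmic intertwining operators among Fock modules vanish because $L(0)$ acts semisimply, and the charge constraint follows from the commutator formula for $\lambda(0)$. If (i) turns out to be insufficient, the fallback is (ii) with McRae's own Heisenberg example, where the ambient category is semisimple Fock modules of real charge, a known vertex tensor category.
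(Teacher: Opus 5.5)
Your proposal is correct. Its outer steps match the paper's:
\begin{itemize}
\item charge separation gives $V_{L_k}^{T_k}=M(1)$;
\item you precompose with inversion because $\Phi_K(\chi)$ is the $\chi^*$-multiplicity space;
\item you finish by restricting from $\mathbb G_m$ to $U(1)$.
\end{itemize}
The genuine difference is how the vertex tensor structure is obtained.

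The paper uses Theorem~\ref{thm:McRae-VOA-orbifold}(ii). It cites \cite[Theorem~2.3]{CKLR} for an ambient vertex tensor category of real-charge Heisenberg Fock modules containing every $M(1,r\lambda)$. McRae's results then yield, in one stroke, closure of $\mathcal P_k$, surjectivity of the maps \eqref{eq:McRae-intertwiner-map}, the symmetric vertex tensor equivalence, and independence of the ambient choice. This is exactly your fallback.

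Your primary route uses part~(i) and checks surjectivity directly. The map $J$ is injective and natural, and both sides are additive. Moreover, $\Hom_{T_k}(\C_a\otimes\C_b,\C_c)$ has dimension $\delta_{c,a+b}$, while the classical Heisenberg fusion rules give $\dim\mathcal V^{M(1,\gamma)}_{M(1,\alpha)\,M(1,\beta)}=\delta_{\gamma,\alpha+\beta}$. There are no logarithmic operators, since semisimplicity of $L(0)$ on the target kills them, as you say. This route is more elementary and needs no tensor-category theory for $M(1)$-modules at all.

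What it costs is this: as the paper restates McRae, part~(i) only asserts that a vertex tensor structure exists. You therefore have to go back to McRae to confirm that his lax-functor and equivalence results (his Theorem~4.5 and Corollary~4.8) apply to that structure, the one with $\Phi_K(M)\boxtimes_{P(z)}\Phi_K(N)=\Phi_K(M\otimes N)$. The paper's own Corollary~\ref{cor:McRae-VOA-fusion} indicates that the ambient hypothesis of part~(ii) serves only to produce this surjectivity, so the point you flag should go through. You also forgo the ambient-independence statement, but the theorem does not require it.
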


\begin{proof}
By Proposition~\ref{prop:rank-one-lattice-package}(v), \(V_{L_k}=M(1)\oplus\bigoplus_{r\in\Z\setminus\{0\}}M(1,r\lambda), V_{L_k}^{T_k}=M(1)\). The lattice VOA is simple and the compact torus action is continuous on every finite-dimensional homogeneous subspace. Moreover, the Fock modules above belong to the vertex tensor category generated by real-charge Heisenberg Fock modules; see \cite[Theorem~2.3]{CKLR}. Therefore Theorem~\ref{thm:McRae-VOA-orbifold}(ii) gives a symmetric vertex tensor equivalence \(\Phi_{T_k}:\Rep_{\mathrm{fd}}T_k\overset{\sim}{\longrightarrow}\mathcal P_k\), with the $P(z)$-tensor products independent of the chosen admissible ambient category.

We now fix the charge normalization. For \(r\in\Z\), let \(\chi_r\in\Rep_{\mathrm{fd}}T_k\) be the character \(\chi_r(\chi)=\chi(r\lambda)\). With the convention \(\Phi_{T_k}(M)=(M\otimes V_{L_k})^{T_k}\) fixed in Subsection~\ref{subsec:McRae-VOA}, the charge decomposition above gives \(\Phi_{T_k}(\chi_r)\cong M(1,-r\lambda)\). Indeed, the $T_k$-invariants in \(\chi_r\otimes M(1,s\lambda)\) occur precisely when \(r+s=0\). Let \(\iota:T_k\to T_k\) be inversion, \(\iota(\chi)=\chi^{-1}\). The pullback \(\iota^*\) is a symmetric vertex tensor autoequivalence of \(\Rep_{\mathrm{fd}}T_k\), and \(\iota^*(\chi_r)=\chi_{-r}\). Consequently \((\Phi_{T_k}\circ\iota^*)(\chi_r)\cong M(1,r\lambda)\). Finally, restriction from \(\mathbb G_m=\C^\times\) to its maximal compact subgroup $U(1)$, followed by the identification $T_k\cong U(1)$, gives the symmetric tensor equivalence of Remark~\ref{rmk:compact-complexification}; under it \(\C_r\) restricts to \(\chi_r\). Transporting the preceding normalized compact equivalence therefore yields \(\Psi_k^{\mathrm{Fock}}:\Rep\mathbb G_m\overset\sim\longrightarrow\mathcal P_k\) with \(\C_r\mapsto M(1,r\lambda)\), as claimed.
\end{proof}

Following Proposition~\ref{prop:rank-one-lattice-package}(v), set \(K_k:=\langle T_k,\theta\rangle=T_k\rtimes\langle\theta\rangle\cong O(2)\).
\begin{thm}\label{thm:O2-category}
In the $C_1$-cofinite $M(1)^+$ tensor category, let $\Dk$ be the full subcategory of finite direct sums of \(M(1)^+,\ M(1)^-,\ M(1,r\lambda)\ (r\ge1).\) Then there is a symmetric vertex tensor equivalence \(\Psi_k:\Rep\Otwo\overset\sim\longrightarrow\Dk\). Under the equivalence, \(\C\longmapsto M(1)^+,\det\longmapsto M(1)^-,\rho_r\longmapsto M(1,r\lambda)\), where $\rho_r$ is the two-dimensional irreducible $\Otwo$-representation with torus weights $\{r,-r\}$.
\end{thm}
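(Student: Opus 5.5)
The plan is to apply the compact-orbifold theorem, Theorem~\ref{thm:McRae-VOA-orbifold}(ii), to \(U=V_{L_k}\) with \(K=K_k\cong O(2)\), and then to pass to the complexification \(\Otwo\) using Remark~\ref{rmk:compact-complexification}. Proposition~\ref{prop:rank-one-lattice-package}(v) supplies the fixed-point algebra \(V_{L_k}^{K_k}=M(1)^+\) and the Schur--Weyl decomposition \eqref{eq:rank-one-O2-Schur-Weyl}. In that decomposition the multiplicity modules are \(M(1)^+\), \(M(1)^-\) and \(M(1,r\lambda)\) for \(r\ge1\), attached respectively to the trivial character, to \(\det\), and to \(\rho_r^{\mathrm{cpt}}\). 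These are exactly the irreducible continuous representations of \(O(2)\), each occurring once, so \(\mathcal C_{V_{L_k}}=\Dk\). The action is continuous on each homogeneous space, because \(T_k\) acts by characters on charge sectors and \(\theta\) is a single involution. Simplicity of \(V_{L_k}\) is standard.

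For the ambient category hypothesis I will take \(\mathcal C=\mathcal C_1(M(1)^+)\). By Proposition~\ref{prop:ALY-package}(i) it contains every multiplicity module, and by Proposition~\ref{prop:ALY-package}(ii) it is a vertex tensor category of grading-restricted generalized \(M(1)^+\)-modules. Theorem~\ref{thm:McRae-VOA-orbifold}(ii) then shows that \(\Dk\) is a full vertex tensor subcategory, and it gives a symmetric vertex tensor equivalence \(\Phi_{K_k}:\Rep_{\mathrm{fd}}K_k\to\Dk\) with \(\Phi_{K_k}(M)=(M\otimes V_{L_k})^{K_k}\).

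Next comes the normalization. Since \(\Phi_{K_k}(M_\chi^*)\cong U_\chi\), we get \(\Phi_{K_k}(\C)=M(1)^+\), \(\Phi_{K_k}(\det)=M(1)^-\) because \(\det\) is self-dual, and \(\Phi_{K_k}((\rho_r^{\mathrm{cpt}})^*)=M(1,r\lambda)\). The representation \(\rho_r^{\mathrm{cpt}}\) is self-dual, since its weights \(\pm r\) are symmetric and it is the unique irreducible with these torus weights. So no twist by inversion is needed here, unlike the \(\mathbb G_m\) case of Theorem~\ref{thm:Fock-Gm}. Alternatively one can compose with \(\iota^*\) exactly as there; either way the dictionary is as stated.

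Finally, \(K_k\cong O(2)\) is a maximal compact subgroup of \(\Otwo\), and restriction carries the algebraic representations \(\C\), \(\det\), \(\rho_r\) to \(\C\), \(\det\), \(\rho_r^{\mathrm{cpt}}\). This follows by matching the torus weights \(\{r,-r\}\) under \(SO_2(\C)\supset SO(2)\cong T_k\), and by sending the reflection to \(\theta\). Composing \(\Res\) with \(\Phi_{K_k}\), as in Remark~\ref{rmk:compact-complexification}, yields \(\Psi_k\). The main obstacle is this identification of \(K_k\) with the standard compact \(O(2)\subset\Otwo\) so that the torus weights match. The check is that \(\theta\) inverts \(T_k\), by Proposition~\ref{prop:rank-one-lattice-package}(v), which makes the reflection correspond and fixes \(\det\) consistently. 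The symmetric property then comes directly from the ordinary-VOA specialization in Theorem~\ref{thm:McRae-VOA-orbifold}(ii).
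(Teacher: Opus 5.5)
Your proposal is correct and follows essentially the same route as the paper: apply Theorem~\ref{thm:McRae-VOA-orbifold}(ii) to \(K_k\cong O(2)\) acting on \(V_{L_k}\), with ambient category \(\mathcal C_1(M(1)^+)\) supplied by Proposition~\ref{prop:ALY-package}(i)--(ii) and the Schur--Weyl decomposition from Proposition~\ref{prop:rank-one-lattice-package}(v), then pass to \(\Otwo\) via Remark~\ref{rmk:compact-complexification}. Your explicit check that \(\det\) and \(\rho_r^{\mathrm{cpt}}\) are self-dual, so that \(\Phi_{K_k}(M_\chi^*)\cong U_\chi\) gives the stated dictionary with no inversion twist, spells out a step the paper leaves implicit.
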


\begin{proof}
By Proposition~\ref{prop:ALY-package}(i)--(ii), the category $\mathcal C_1(M(1)^+)$ is a vertex and braided tensor category and contains all the multiplicity modules occurring below. The compact group \(K_k\cong O(2)\) acts on the simple vertex operator algebra $V_{L_k}$. Proposition~\ref{prop:rank-one-lattice-package}(v), specifically \eqref{eq:rank-one-O2-Schur-Weyl}, gives \(V_{L_k}^{K_k}=M(1)^+\) and identifies its compact-orbifold multiplicity category with precisely $\Dk$. Applying Theorem~\ref{thm:McRae-VOA-orbifold}(ii), with ambient category $\mathcal C_1(M(1)^+)$, gives a symmetric vertex tensor equivalence \(\Rep_{\mathrm{fd}}K_k\overset{\sim}{\longrightarrow}\Dk\). The ambient-independence assertion in the same theorem applies to all its $P(z)$-tensor products. Finally, complexification $K_k\cong O(2)\subset \Otwo$ and Remark~\ref{rmk:compact-complexification} identify the three irreducible types with $\C$, $\det$, and $\rho_r$, respectively. This gives the stated equivalence \(\Psi_k:\Rep\Otwo\overset{\sim}{\longrightarrow}\Dk\).
\end{proof}

The fusion formulas in this subcategory are therefore \(M(1)^-\boxtimes M(1)^-\cong M(1)^+, M(1)^-\boxtimes M(1,r\lambda)\cong M(1,r\lambda)\), and
\[
M(1,r\lambda)\boxtimes M(1,s\lambda)\cong
\begin{cases}
M(1,(r+s)\lambda)\oplus M(1,|r-s|\lambda),&r\ne s,\\
M(1,2r\lambda)\oplus M(1)^+\oplus M(1)^-,&r=s.
\end{cases}
\]
These are also exactly the specializations of Proposition~\ref{prop:ALY-package}(iv).

\begin{lem}\label{lem:Dk-ambient-Serre}
The full subcategory $\Dk$ is closed under ambient $M(1)^+$-submodules and quotients, and its inclusion in the grading-restricted $M(1)^+$-module category is exact.
\end{lem}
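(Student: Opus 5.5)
The plan is to mirror Lemma~\ref{lem:square-ambient-Serre}: show that every object of $\Dk$ is a finite direct sum of simple grading-restricted $M(1)^+$-modules from a fixed list. Then semisimplicity in the ambient category gives closure under submodules and quotients, and exactness follows formally.

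First, I would check that each generator $M(1)^+$, $M(1)^-$, $M(1,r\lambda)$ ($r\ge1$) is an irreducible grading-restricted $M(1)^+$-module. This holds by the classification recalled before Proposition~\ref{prop:ALY-package}; irreducibility of $M(1,r\lambda)$ as an $M(1)^+$-module is stated explicitly there. Hence an object $M=\bigoplus_i S_i$ of $\Dk$, a finite sum of such simple modules, is semisimple as an ambient $M(1)^+$-module. Standard module theory then applies. If $N\subseteq M$ is any ambient $M(1)^+$-submodule, a sum of simples is semisimple, so $N$ is a direct summand with complement $N'$. Moreover $N$ is itself a sum of simple submodules, each isomorphic to some $S_i$, by Jordan--Hölder/Krull--Schmidt for finite-length semisimple modules. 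Only finitely many summands occur, since $M$ has finite length.

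The point needing care is that the isomorphism types in $N$ and $M/N\cong N'$ lie in the allowed list. Any simple submodule or quotient of $M$ is isomorphic to one of the $S_i$, because $\Hom(S,M)=\bigoplus_i\Hom(S,S_i)$ vanishes unless $S\cong S_i$ by Schur's lemma. Schur's lemma applies because the modules are irreducible with finite-dimensional weight spaces, so their endomorphisms are scalars. Since the list $M(1)^\pm$, $M(1,r\lambda)$ is closed under isomorphism, with $M(1,-r\lambda)\cong M(1,r\lambda)$ harmless, both $N$ and $M/N$ lie in $\Dk$.

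For exactness, let $f:M\to M'$ be a morphism in $\Dk$. Its ambient kernel is a submodule of $M$ and its ambient cokernel is a quotient of $M'$, so both lie in $\Dk$ by the previous paragraph. Because $\Dk$ is full, they satisfy the kernel and cokernel universal properties in $\Dk$. Hence the inclusion preserves kernels and cokernels, and is exact. I expect no real obstacle. The only mildly delicate step is making sure that an arbitrary ambient submodule of a finite semisimple module is again a finite direct sum of the same simple types. This is the Schur/Hom-decomposition argument above, and it uses only finite length and the finite-dimensionality of the weight spaces.
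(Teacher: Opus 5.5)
Your proposal is correct and is essentially the paper's argument: objects of $\Dk$ are finite direct sums of the simple $M(1)^+$-modules $M(1)^{\pm}$ and $M(1,r\lambda)$, so every ambient submodule or quotient is a direct summand built from the same simple types, and fullness then lets the ambient kernels and cokernels serve as those in $\Dk$. Your Schur/Hom-decomposition step only spells out what the paper asserts in one line; note that it needs just the elementary form of Schur's lemma (a nonzero map between simple modules is an isomorphism), not finite-dimensionality of the weight spaces.
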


\begin{proof}
Every object of $\Dk$ is, as an actual $M(1)^+$-module, a finite direct sum of the simple modules $M(1)^+$, $M(1)^-$, and $M(1,r\lambda)$. It is therefore semisimple in the ordinary module-theoretic sense. Every ambient submodule is a direct summand and hence a finite direct sum of the same simple modules; the same holds for quotients. Kernels and cokernels in the full subcategory consequently agree with the ambient ones.
\end{proof}

The following proposition is the $M(1)^+$ charge-category analogue of Theorem~\ref{thm:CMY-square}; its final assertion is the corresponding analogue of Theorem~\ref{thm:square-core}. The only additional step is that coefficient images are first placed in the ambient category $\mathcal C_1(M(1)^+)$ and then returned to $\Dk$ by the $O_2$ tensor structure and Lemma~\ref{lem:Dk-ambient-Serre}.

\begin{prop}\label{prop:CMY-O2}
The category $\Dk$ satisfies the hypotheses of Theorem~\ref{thm:CMY-direct-limit}. Consequently, $\IndCMY(\Dk)$ carries the $P(z)$-vertex tensor category and braided tensor category structures constructed by Creutzig--McRae--Yang, extending those on $\Dk$. Moreover, by Theorem~\ref{thm:CMY-extension}, the following two categories are isomorphic:
\begin{enumerate}[label=\textnormal{(\arabic*)}]
\item Vertex operator algebras $(A,Y_A,\one_A,\omega_A)$ such that
\begin{itemize}
\item $A$ is an $M(1)^+$-module in $\IndCMY(\Dk)$;
\item writing $Y_A^{M(1)^+}$ for the given $M(1)^+$-module vertex
operator on $A$, \(Y_A^{M(1)^+}(v,x)=Y_A(v_{-1}\one_A,x)(v\in M(1)^+)\);
\item $\omega_A=L(-2)\one_A$.
\end{itemize}

\item Commutative associative algebra objects
$(A,\mu_A,\iota_A)\in\CAlg(\IndCMY(\Dk))$ such that \(A=\bigoplus_{n\in\Z}A_n,A_n=\{a\in A\mid L(0)a=na\}\), with $A_n=0$ for all sufficiently negative $n$ and $\dim_\C A_n<\infty$ for every $n\in\Z$.
\end{enumerate}
In particular, this correspondence applies to the extension-admissible objects of Definition~\ref{def:extension-admissible}. Finally, if a grading-restricted VOA extension $A\supset M(1)^+$ contains a finite direct sum $S$ of simple objects of $\Dk$, then \(\langle M(1)^+,S\rangle_{\mathrm{VOA}}\in\IndCMY(\Dk)\).
\end{prop}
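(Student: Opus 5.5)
The plan is to copy the proof of Theorem~\ref{thm:CMY-square} and Theorem~\ref{thm:square-core}, with $\Csq\subset\Ovir$ replaced by $\Dk\subset\mathcal C_1(M(1)^+)$.

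\textbf{The five hypotheses of Theorem~\ref{thm:CMY-direct-limit}.}
\begin{enumerate}[label=(\arabic*)]
\item $M(1)^+=\Psi_k(\C)$ lies in $\Dk$.
\item Closure under submodules, quotients and finite direct sums is Lemma~\ref{lem:Dk-ambient-Serre}, together with the definition of $\Dk$.
\item Every object of $\Dk$ is finitely generated. Each simple object is generated by one vector: its lowest weight vector, or the vector $e^{r\lambda}$ in the Fock case. Finite sums of such objects are therefore finitely generated.
\item The required $P(z)$-vertex and braided tensor structures come from Theorem~\ref{thm:O2-category}. By Theorem~\ref{thm:McRae-VOA-orbifold}(ii), $\Dk$ is a full vertex tensor subcategory of $\mathcal C_1(M(1)^+)$. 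So its tensor products are the ambient ones, and by ambient independence they agree with those of $\mathcal P_k$-type ambient categories.
\item This is the coefficient-image condition, and I expect it to be the main obstacle.
\end{enumerate}

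\textbf{Condition (5).} I would prove an analogue of Lemma~\ref{lem:coeff-image}. Let $A,B\in\Dk$, let $X$ be an arbitrary generalized $M(1)^+$-module, and let $\mathcal Y$ be of type $\binom{X}{A\,B}$.
\begin{itemize}
\item The simple objects of $\Dk$ are $C_1$-cofinite and grading restricted by Proposition~\ref{prop:ALY-package}(i). Proposition~\ref{prop:CMY-coefficient-image} then places $I=\operatorname{Im}\mathcal Y$ in $\mathcal C_1(M(1)^+)$.
\item Corestricting $\mathcal Y$ to $I$ and using the ambient $P(z)$ universal property gives an epimorphism $A\boxtimes_{P(z)}B\twoheadrightarrow I$, exactly as in Lemma~\ref{lem:coeff-image}.
\item The source lies in $\Dk$ by closure under ambient tensor products. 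Lemma~\ref{lem:Dk-ambient-Serre} then gives $I\in\Dk$.
\end{itemize}
The one point needing care is that the ambient tensor product really is the one computed in $\mathcal C_1(M(1)^+)$. This is guaranteed by Theorem~\ref{thm:McRae-VOA-orbifold}(ii) applied with that ambient category.

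\textbf{Extension correspondence.} With (1)--(5) in hand, Theorem~\ref{thm:CMY-direct-limit} gives the tensor structures on $\IndCMY(\Dk)$. Theorem~\ref{thm:CMY-extension} then gives the isomorphism between the category of VOA extensions in (1) and the category of commutative algebra objects in (2). Extension-admissible objects satisfy the grading conditions of (2), so the correspondence applies to them.

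\textbf{Final assertion.} I would rerun the proof of Theorem~\ref{thm:square-core}.
\begin{itemize}
\item Put $F_0=M(1)^++S$. This lies in $\Dk$ as a quotient of $M(1)^+\oplus S$.
\item Decompose $F_n$ internally into simple summands $S_\lambda$. The restriction of $Y_A$ to each pair $S_\lambda\otimes S_\mu$ is an $M(1)^+$-intertwining operator into $A$, since $A$ is a VOA extension of $M(1)^+$.
\item By the analogue of Lemma~\ref{lem:coeff-image} just proved, each coefficient image $I_{\lambda,\mu}$ lies in $\Dk$. Setting $F_{n+1}=F_n+\sum I_{\lambda,\mu}$, the space $F_{n+1}$ is a quotient of a finite direct sum of objects of $\Dk$, hence lies in $\Dk$.
\item The union $F_\infty=\bigcup_n F_n$ is closed under all products, and it is contained in every vertex subalgebra containing $M(1)^+$ and $S$. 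Hence $F_\infty=\langle M(1)^+,S\rangle_{\mathrm{VOA}}$.
\item This union is a directed union of $\Dk$-submodules, so it lies in $\IndCMY(\Dk)$ by the concrete characterization of \cite[Proposition~4.6]{CMY}.
\end{itemize}
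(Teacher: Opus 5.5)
Your proposal is correct and follows the paper's proof essentially step by step. It checks (1)--(4) the same way, proves (5) with the same coefficient-image argument (Proposition~\ref{prop:CMY-coefficient-image}, the ambient $P(z)$ universal property in $\mathcal C_1(M(1)^+)$, then Lemma~\ref{lem:Dk-ambient-Serre}), and proves the final assertion with the same iterative $F_n$ construction; the aside in (4) about agreement with ``$\mathcal P_k$-type'' ambient categories is unnecessary, since those are $M(1)$-module categories, but it plays no role in the argument.
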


\begin{proof}
We verify the five hypotheses of Theorem~\ref{thm:CMY-direct-limit}, in parallel with the proof of Theorem~\ref{thm:CMY-square}. For condition~\textnormal{(1)}, the tensor unit $M(1)^+$ is an object of $\Dk$ by definition. For condition~\textnormal{(2)}, Lemma~\ref{lem:Dk-ambient-Serre} shows that $\Dk$ is closed under $M(1)^+$-submodules and quotients in the ambient grading-restricted module category. Closure under finite direct sums is part of the definition of $\Dk$. For condition~\textnormal{(3)}, every object of $\Dk$ is a finite direct sum of the simple modules \(M(1)^+, M(1)^-, M(1,r\lambda)(r\ge1).\) Each simple module is generated by any nonzero vector, since the submodule generated by such a vector is nonzero and hence equals the whole simple module. Thus every object of $\Dk$ is finitely generated. For condition~\textnormal{(4)}, Proposition~\ref{prop:ALY-package}(ii) provides the ambient vertex tensor category $\mathcal C_1(M(1)^+)$. Theorem~\ref{thm:O2-category}, together with the ambient-independence statement in Theorem~\ref{thm:McRae-VOA-orbifold}(ii), shows that $\Dk$ is a full vertex tensor subcategory of $\mathcal C_1(M(1)^+)$ and that its $P(z)$-tensor products and structural isomorphisms are the restrictions of the ambient ones. Hence condition~\textnormal{(4)} holds. It remains to verify condition~\textnormal{(5)}. As in the square case, we prove the stronger statement with an arbitrary generalized target. Let $M,N\in\Dk$, let $X$ be an arbitrary generalized $M(1)^+$-module, and let $\mathcal Y$ be a logarithmic intertwining operator of type \(\binom{X}{M N}\). Set $I:=\operatorname{Im}\mathcal Y$. Since $M$ and $N$ are $C_1$-cofinite, Proposition~\ref{prop:CMY-coefficient-image} shows that $I$ is a $C_1$-cofinite grading-restricted generalized $M(1)^+$-module. Thus \(I\in\mathcal C_1(M(1)^+)\) by Proposition~\ref{prop:ALY-package}(ii). The operator $\mathcal Y$ corestricts to \(\mathcal Y^I:M\otimes N\longrightarrow I\{x\}[\log x]\), and its coefficients span $I$ by the definition of the coefficient image. Fix $z\in\C^\times$ and a branch of $\log z$, and let \(I_{\mathcal Y^I}:M\otimes N\longrightarrow\overline I\) be the corresponding $P(z)$-intertwining map. Since $I\in\mathcal C_1(M(1)^+)$, the universal property of the ambient $P(z)$-tensor product gives a unique $M(1)^+$-module morphism \(f:M\boxtimes^{\mathcal C_1(M(1)^+)}_{P(z)}N\longrightarrow I\) such that \(I_{\mathcal Y^I}=\overline f\circ\boxtimes_{P(z)}\). Applying the inverse correspondence \eqref{eq:Pz-inverse} gives $\mathcal Y^I=f\circ\mathcal Y_{\boxtimes}$. Hence every coefficient of $\mathcal Y^I(m,x)n$ lies in $\operatorname{Im}f$. Since these coefficients span $I$, the morphism $f$ is surjective. By Theorem~\ref{thm:O2-category} and the ambient-independence statement in Theorem~\ref{thm:McRae-VOA-orbifold}(ii), \(M\boxtimes^{\mathcal C_1(M(1)^+)}_{P(z)}N\) is an object of $\Dk$; equivalently, it is a finite direct sum of the simple modules displayed above, with multiplicities determined by the $O_2$ tensor-product rules. Since $I$ is an ambient quotient of this object, Lemma~\ref{lem:Dk-ambient-Serre} gives $I\in\Dk$. In particular, when $X\in\IndCMY(\Dk)$, this is exactly condition~\textnormal{(5)} of Theorem~\ref{thm:CMY-direct-limit}. All five hypotheses are therefore satisfied. Theorem~\ref{thm:CMY-direct-limit} gives the asserted $P(z)$-vertex tensor and braided tensor category structures on $\IndCMY(\Dk)$, extending those on $\Dk$. Theorem~\ref{thm:CMY-extension} then identifies the two categories displayed in the statement. The assertion concerning extension-admissible objects follows immediately from Definition~\ref{def:extension-admissible}.

It remains to prove the final assertion. Put \(U:=\langle M(1)^+,S\rangle_{\mathrm{VOA}}\text{ and }E_0:=M(1)^++S\subseteq A\). The natural addition map \(M(1)^+\oplus S\twoheadrightarrow E_0\) is an ambient $M(1)^+$-module epimorphism. Its source belongs to $\Dk$, so Lemma~\ref{lem:Dk-ambient-Serre} gives $E_0\in\Dk$. Suppose that $E_n\in\Dk$ has been constructed and choose an internal decomposition \(E_n=\bigoplus_{\alpha\in\Lambda_n}T_\alpha\) into its actual embedded simple summands; the set $\Lambda_n$ is finite. For every ordered pair $\alpha,\beta\in\Lambda_n$, the restriction \(Y_A|_{T_\alpha\otimes T_\beta}\) is an $M(1)^+$-intertwining operator of type \(\binom{A}{T_\alpha T_\beta}\), because $A$ is a VOA extension of $M(1)^+$. Let \(I_{\alpha,\beta}:=\operatorname{Im}\bigl(Y_A|_{T_\alpha\otimes T_\beta}\bigr)\) be its coefficient image. The stronger coefficient-image statement proved above applies with the generalized target $A$ and gives $I_{\alpha,\beta}\in\Dk$. Define \(E_{n+1}:=E_n+\sum_{\alpha,\beta\in\Lambda_n}I_{\alpha,\beta}\). The natural addition map \(E_n\oplus\bigoplus_{\alpha,\beta\in\Lambda_n}I_{\alpha,\beta}\twoheadrightarrow E_{n+1}\) is an ambient $M(1)^+$-module epimorphism. Its source belongs to $\Dk$, since $\Lambda_n$ is finite, so Lemma~\ref{lem:Dk-ambient-Serre} gives $E_{n+1}\in\Dk$. Thus \(E_0\subseteq E_1\subseteq E_2\subseteq\cdots\) is an increasing sequence of $\Dk$-submodules of $A$. Set $E_\infty:=\bigcup_{n\ge0}E_n$. If $a,b\in E_\infty$, choose $n$ with $a,b\in E_n$ and decompose them as finite sums of vectors from the embedded simple summands $T_\alpha$. By bilinearity and the definition of the coefficient images $I_{\alpha,\beta}$, every coefficient of $Y_A(a,x)b$ lies in $E_{n+1}\subseteq E_\infty$. Hence $E_\infty$ is closed under all mode products. Since it contains $M(1)^+$, it contains the vacuum and the conformal vector, and is therefore a vertex operator subalgebra of $A$ containing $S$. Thus $U\subseteq E_\infty$. Conversely, $E_0\subseteq U$. If $E_n\subseteq U$, then every $T_\alpha\subseteq E_n$ lies in $U$; since $U$ is a vertex operator subalgebra, all coefficients of $Y_A(T_\alpha,x)T_\beta$ belong to $U$. Hence $I_{\alpha,\beta}\subseteq U$ and therefore $E_{n+1}\subseteq U$. Induction gives $E_n\subseteq U$ for all $n$, so \(U=E_\infty=\bigcup_{n\ge0}E_n\). This is a directed union of $\Dk$-subobjects. By the definition of the Creutzig--McRae--Yang direct-limit completion, $U\in\IndCMY(\Dk)$, as required.
\end{proof}

Lemma~\ref{lem:ind-equivalence} now extends $\Psi_k$ to an exact symmetric monoidal equivalence
\[
\widehat\Psi_k:\Rep^{\mathrm{rat}}\Otwo\overset\sim\longrightarrow\IndCMY(\Dk).
\]
\subsection{The first lattice module}

\begin{lem}\label{lem:M1plus-lowest-line}
Let $M=\bigoplus_{\mu\in\C}M_{[\mu]}$ be a grading-restricted generalized $M(1)^+$-module. Suppose that, for some $h\in\C$ and some $0\ne v\in M$, \(M_{[h]}=\C v\) and \(M_{[h-r]}=0\) for \(r\in\Z_{>0}\). Then the cyclic submodule $M(1)^+\cdot v$ is a highest-weight $M(1)^+$-module in the sense fixed in Subsection~\ref{subsec:ALY-package}.
\end{lem}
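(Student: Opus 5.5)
We must show two things about $v$: it lies in the Zhu top space $\Omega(M)$, and $\C v$ is a one-dimensional irreducible $A(M(1)^+)$-module. Then $M(1)^+\cdot v$ is generated by $v$, which is exactly the definition of a highest-weight $M(1)^+$-module fixed in Subsection~\ref{subsec:ALY-package}.

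For the first point we use weight bookkeeping. Let $a\in M(1)^+$ be homogeneous and $n\in\Z$. The commutator relation $[L(0),a_n]=(\wt a-n-1)a_n$ holds on a generalized module. It shows that $a_n$ maps the generalized eigenspace $M_{[\mu]}$ into $M_{[\mu+\wt a-n-1]}$; one checks this by conjugating $(L(0)-\mu)^N$ past $a_n$. If $n>\wt a-1$, the target is $M_{[h-r]}$ with $r=n-\wt a+1\in\Z_{>0}$, and that space is zero by hypothesis. Hence $a_nv=0$ for all such $a$ and $n$, so $v\in\Omega(M)$.

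For the second point, the zero mode $o(a)=a_{\wt a-1}$ preserves $M_{[h]}=\C v$. So $\C v$ is stable under every $o(a)$ with $a\in M(1)^+$. By Zhu's top-space theorem, the map $a\mapsto o(a)|_{\Omega(M)}$ factors through an action of $A(M(1)^+)$ on $\Omega(M)$. The line $\C v$ is therefore an $A(M(1)^+)$-submodule, and any one-dimensional module is automatically irreducible. Thus $v$ is a highest-weight vector, and $M(1)^+\cdot v$ is a highest-weight module with highest weight of conformal weight $h$.

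The only delicate point is the use of Zhu's theorem on a possibly non-$L(0)$-semisimple module. We need that $\Omega(M)$ carries the $A(M(1)^+)$-action, and that the submodule $\C v\subseteq\Omega(M)$ is stable under it. The stability is immediate from the weight-space argument above: $o(a)$ preserves $M_{[h]}$, which is one-dimensional. The action itself is the convention already recorded in Subsection~\ref{subsec:ALY-package}, so no new input is needed. If one prefers to avoid any generalized-eigenspace subtlety, one can also note that $L(0)=o(\omega)$ acts on $\C v$ by some scalar, necessarily $h$, so $v$ is a genuine $L(0)$-eigenvector.
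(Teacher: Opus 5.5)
Your proposal is correct and follows the paper's proof essentially step for step: you use the relation $[L(0),a_n]=(\wt a-n-1)a_n$ to place $v$ in $\Omega(M)$, and then you use stability of the one-dimensional line $M_{[h]}$ under the zero modes to get a one-dimensional, hence irreducible, $A(M(1)^+)$-submodule. The paper makes explicit two points you leave implicit, both immediate: $N=M(1)^+\cdot v$ is itself grading restricted as a submodule of $M$, and $v\in\Omega(N)$ because $\Omega(N)=\Omega(M)\cap N$. These are what the definition of a highest-weight module literally requires.
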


\begin{proof}
Since $M_{[h]}=\C v$ is one-dimensional and is a generalized $L(0)$-eigenspace of eigenvalue $h$, necessarily $L(0)v=hv$. Let $a\in M(1)^+$ be homogeneous. The standard mode-weight relation \([L(0),a_n]=(\wt(a)-n-1)a_n\) gives \(a_nM_{[h]}\subseteq M_{[h+\wt(a)-n-1]}\). Hence, if $n\ge\wt(a)$, then \(a_nv\in M_{[h-(n+1-\wt(a))]}=0\), because $n+1-\wt(a)\in\Z_{>0}$. Thus $v\in\Omega(M)$.

For $n=\wt(a)-1$, the zero mode $o(a)=a_{\wt(a)-1}$ preserves $M_{[h]}=\C v$. Since $\Omega(M)$ is an $A(M(1)^+)$-module under zero modes, $\C v$ is therefore a one-dimensional $A(M(1)^+)$-submodule of $\Omega(M)$, and hence is irreducible.

Set $N:=M(1)^+\cdot v$. As an $M(1)^+$-submodule of the grading-restricted generalized module $M$, the module $N$ is itself grading restricted. Moreover, $v\in\Omega(N)$ and $\C v$ is a one-dimensional irreducible $A(M(1)^+)$-module. Therefore, by the convention of Subsection~\ref{subsec:ALY-package}, $N$ is a highest-weight $M(1)^+$-module generated by $v$.
\end{proof}

\begin{lem}\label{lem:first-lattice}
Let $V$ be a simple self-contragredient VOA of CFT type and central charge one. Assume $k=3$ or $k>4$ and \(\ch V=\ch V_L^+, L=\Z\alpha,\langle\alpha,\alpha\rangle=2k\). Let \(U\cong M(1)^+\) be the subVOA obtained in Corollary~\ref{cor:M1plus-kgt4}, and put $K=U^\perp$ with respect to the invariant bilinear form. Then:
\begin{enumerate}[label=(\roman*),leftmargin=2.1em]
\item $K$ is a grading-restricted $U$-module, and the restriction of the invariant bilinear form to each homogeneous subspace $K_j$ is nondegenerate;
\item $K_j=0$ for $j<k$ and $\dim K_k=1$;
\item every $0\ne F\in K_k$ is nonisotropic and is a $U$-highest-weight vector;
\item the $U$-module $U\cdot F$ is irreducible, and the restriction of the
invariant bilinear form to $U\cdot F$ is nondegenerate; and \(U\cdot F\cong M(1,\lambda)\) for some $\lambda\in\mathfrak h$ with \(\langle\lambda,\lambda\rangle=2k\).
\end{enumerate}
\end{lem}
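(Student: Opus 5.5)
The plan is to treat the four assertions in order, using the nondegenerate invariant form on $V$ and the structure of $U\cong M(1)^+$.

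For (i), first show that $K=U^\perp$ is a $U$-submodule. For $a\in U$, $w\in K$ and $u\in U$, invariance gives
\[
(Y(a,z)w,u)=\bigl(w,Y(e^{zL(1)}(-z^{-2})^{L(0)}a,z^{-1})u\bigr).
\]
The right-hand side vanishes because $e^{zL(1)}(-z^{-2})^{L(0)}a$ has coefficients in $U$, since $L(1)$ preserves the subVOA $U$, and $U$ is closed under products. Distinct weight spaces are orthogonal, so $K=\bigoplus_j K_j$ with $K_j=U_j^\perp\cap V_j$. Grading restriction is then inherited from $V$.

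For the nondegeneracy claims, the restriction of the form to $U$ is a nonzero invariant form on the simple VOA $U\cong M(1)^+$, exactly as in Lemma~\ref{lem:primary-complement}. Its radical is an ideal, so it is nondegenerate on each $U_j$. Hence $V_j=U_j\perp K_j$, and the form is nondegenerate on each $K_j$.

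For (ii), use $\dim K_j=\dim V_j-\dim U_j$ and compare characters. By Proposition~\ref{prop:rank-one-lattice-package}(iv),
\[
\grch V-\grch U=P(q)\sum_{r\ge1}q^{kr^2},
\]
which starts as $q^k+\cdots$. This gives $K_j=0$ for $j<k$ and $\dim K_k=1$.

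For (iii), let $0\ne F\in K_k$. Nondegeneracy of the form on the line $K_k$ gives $(F,F)\ne0$. Apply Lemma~\ref{lem:M1plus-lowest-line} to the grading-restricted $U$-module $K$ with $h=k$. The hypotheses hold because $K_{[k]}=\C F$ and $K$ vanishes below weight $k$; note that $L(0)$ acts semisimply on $V$, so generalized eigenspaces are just the weight spaces. Thus $U\cdot F$ is a highest-weight $U$-module with highest weight $k$.

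For (iv), since $k\ne0$, Proposition~\ref{prop:ALY-package}(iii) says $U\cdot F$ is irreducible. Its lowest weight is $k\notin\{0,1,1/16,9/16\}$ when $k=3$ or $k>4$, so by the classification of irreducibles $U\cdot F\cong M(1,\lambda)$ with $\langle\lambda,\lambda\rangle/2=k$.

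The main obstacle is the nondegeneracy of the form on $U\cdot F$. The plan is to argue that its radical $R$ is a $U$-submodule, by the same invariance computation as in (i) applied within $K$. Irreducibility then forces $R=0$ or $R=U\cdot F$. The second alternative is impossible because $(F,F)\ne0$.
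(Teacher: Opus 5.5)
Your proposal is correct and follows the paper's proof essentially step for step. Both arguments use the same ingredients: simplicity of $U$ giving the orthogonal splitting $V_j=U_j\perp K_j$, the invariance computation for $U$-stability of $K$ and of the radical, the character count for (ii), Lemma~\ref{lem:M1plus-lowest-line} for (iii), and ALY irreducibility plus the lowest-weight classification for (iv). The only difference is cosmetic: you establish $U$-stability before the orthogonal decomposition, whereas the paper does these in the opposite order.
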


\begin{proof}
The restriction of the invariant form to the simple subVOA $U$ is nondegenerate: its radical is an ideal, and the vacuum has nonzero norm. Since distinct conformal-weight spaces are orthogonal and the form on each homogeneous space $V_j$ is nondegenerate, for every $j\ge0$ we have an orthogonal decomposition \(V_j=U_j\perp K_j\), where $K_j=K\cap V_j$. In particular, the restriction of the form to each $K_j$ is nondegenerate. Moreover, $K$ is graded, lower bounded, and has finite-dimensional homogeneous spaces.

To verify that $K$ is a $U$-module, let $a,u'\in U$ and $v\in K$. Invariance of the bilinear form gives \(\bigl(Y(a,z)v,u'\bigr)=\bigl(v,Y(e^{zL(1)}(-z^{-2})^{L(0)}a,z^{-1})u'\bigr).\) Because $U$ is a conformal subVOA, every coefficient on the right belongs to $U$, and the right-hand side vanishes. Comparing coefficients shows that $a_nv\in K$ for every $n\in\Z$. Hence $K$ is a grading-restricted $U$-module.

By Proposition~\ref{prop:rank-one-lattice-package}(i), the character of $V_L^+$ is \(\ch M(1)^++\sum_{r\ge1}\ch M(1,r\alpha)\), and the first summand outside $M(1)^+$ begins at weight $k$ with one-dimensional top. Since $U\cong M(1)^+$ and \(V_j=U_j\perp K_j\), one has \(\dim K_j=\dim V_j-\dim U_j\). Therefore the character equality \(\ch V=\ch V_L^+\) gives $K_j=0$ for $j<k$ and $\dim K_k=1$. The form on $K_k$ is nondegenerate, so every nonzero $F\in K_k$ is nonisotropic. If $a\in U$ is homogeneous and $n\ge\wt(a)$, then \(a_nF\in K_{k+\wt(a)-n-1}=0\). The zero mode $o(a)=a_{\wt(a)-1}$ preserves the one-dimensional space $K_k$ and hence acts by a scalar. Via $U\cong M(1)^+$, Lemma~\ref{lem:M1plus-lowest-line} shows that $U\cdot F$ is a grading-restricted highest-weight module in the sense fixed in Subsection~\ref{subsec:ALY-package}, of conformal weight $k$.

By Proposition~\ref{prop:ALY-package}(iii), $U\cdot F$ is irreducible because $k\ne0$. The classification recorded before Proposition~\ref{prop:ALY-package} shows that the non-Fock irreducibles have lowest conformal weights $0$, $1$, $1/16$, or $9/16$. Since the present lowest weight is the integer $k\ge3$, the generated module must be of Fock type: \(U\cdot F\cong M(1,\lambda)\) for some $\lambda\in\mathfrak h\setminus\{0\}$ with \(\langle\lambda,\lambda\rangle/2=k\), equivalently \(\langle\lambda,\lambda\rangle=2k\). The sign of $\lambda$ is immaterial because $M(1,\lambda)\cong M(1,-\lambda)$.

Finally, the radical of the restriction of the invariant form to $U\cdot F$ is a $U$-submodule. Since $U\cdot F$ is irreducible and $(F,F)\ne0$, this radical is zero. Thus the restriction of the form to $U\cdot F$ is nondegenerate.
\end{proof}

\subsection{The nonzero quadratic orbit}

Let $\rho_1$ be the standard two-dimensional representation of $\Otwo$. Fix a nondegenerate $\Otwo$-invariant symmetric bilinear form $ \beta_Q:\rho_1\otimes\rho_1\longrightarrow\C, $ and regard the same form as the invariant quadratic polynomial \(Q_\rho\in\Sym^2(\rho_1^*)^{\Otwo}\), normalized by \(Q_\rho(v)=\beta_Q(v,v)\). Let \(\iota_\rho:\rho_1\overset{\sim}{\longrightarrow}\rho_1^*, v\longmapsto\beta_Q(v,-)\), be the induced $\Otwo$-equivariant self-duality, and define the associated inverse-form tensor \(\check Q_\rho:= \bigl(\iota_\rho^{-1}\bigr)^{\otimes2}(Q_\rho) \in\Sym^2(\rho_1)^{\Otwo}\). Contracting $\check Q_\rho$ with $\beta_Q$ gives $ \langle\beta_Q,\check Q_\rho\rangle =\operatorname{tr}(\id_{\rho_1}) =\dim\rho_1=2. $ We retain throughout the distinction between the quadratic coordinate tensor $Q_\rho$ and the inverse-form tensor $\check Q_\rho$.

\begin{lem}\label{lem:O2-quadric}
For $c\in\C^\times$, set \(R_c=\frac{\Sym(\rho_1^*)}{(Q_\rho-c)}.\) Choose $x_0\in\rho_1$ with $Q_\rho(x_0)=c$ and put \(H_{\mathrm{refl}}:=(\Otwo)_{x_0}.\) Then $H_{\mathrm{refl}}\cong\Z_2$ is an order-two reflection subgroup and \(\Spec R_c\cong\Otwo/H_{\mathrm{refl}}.\) In particular, $R_c$ is reduced and has no nonzero proper $\Otwo$-stable ideals. As an $\Otwo$-module, \(R_c\cong\C\oplus\bigoplus_{r\ge1}\rho_r\).
\end{lem}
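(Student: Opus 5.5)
The plan mirrors the proof of Lemma~\ref{lem:quadratic-orbit}, and is simpler because the group is only two-dimensional. First we fix coordinates. Choose a basis of $\rho_1$ in which $\beta_Q$ is the standard form, so that $Q_\rho(x,y)=x^2+y^2$. Then $O_2(\C)$ is realized as the orthogonal group of this form. Its torus is $SO_2(\C)\cong\mathbb G_m$, acting in the isotropic coordinates $u=x+iy$, $w=x-iy$ by $u\mapsto zu$, $w\mapsto z^{-1}w$. A reflection interchanges $u$ and $w$. In these coordinates $Q_\rho=uw$, so
\[
R_c=\C[u,w]/(uw-c)\cong\C[u,u^{-1}],
\]
with $w=c\,u^{-1}$.

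\textbf{Orbit and stabilizer.} The closed points of $\Spec R_c$ are the vectors $x\in\rho_1$ with $Q_\rho(x)=c\neq0$. Since $c\ne0$, $SO_2(\C)$ already acts simply transitively on this set. Concretely, $(u,w)=(a,c/a)$ is sent to $(za,c/(za))$, and only $z=1$ fixes a point. Hence $(O_2(\C))_{x_0}$ meets $SO_2(\C)$ trivially. It has order two because $O_2(\C)$ acts transitively with two-dimensional stabilizer-free torus orbits. Explicitly, the reflection in the line orthogonal to $x_0$ sends $x_0\mapsto -x_0$, so it is not in the stabilizer, while the reflection fixing the nondegenerate line $\C x_0$ is. Thus $H_{\mathrm{refl}}\cong\Z_2$ is generated by a reflection. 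Orbit--stabilizer then gives a bijective morphism $O_2(\C)/H_{\mathrm{refl}}\to\Spec R_c$.

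\textbf{Scheme structure.} We show $R_c$ is smooth. The Jacobian criterion applies because $dQ_\rho=2\beta_Q(x,-)$ is nonzero at every point with $Q_\rho(x)=c\ne0$. So $\Spec R_c$ is smooth of dimension one, hence reduced; alternatively, $\C[u,u^{-1}]$ is visibly a domain. We then argue exactly as at the end of Lemma~\ref{lem:quadratic-orbit}. The orbit is closed and equals all $\C$-points, so the Nullstellensatz consequence identifies $\Spec R_c$ with the orbit scheme-theoretically, which is isomorphic to $O_2(\C)/H_{\mathrm{refl}}$. $G$-simplicity of $R_c$ then follows from Lemma~\ref{lem:G-simple-homogeneous}.

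\textbf{Module decomposition.} Use the basis $u^r$, $r\in\Z$, of $\C[u,u^{-1}]$; the torus acts on $u^r$ with weight $r$. The reflection sends $u\mapsto w=c\,u^{-1}$. So for each $r\ge1$ it swaps the lines $\C u^r$ and $\C u^{-r}$ up to a nonzero scalar, and these two lines span a copy of $\rho_r$. The reflection fixes $u^0=1$, giving the trivial summand $\C$ rather than $\det$. Hence $R_c\cong\C\oplus\bigoplus_{r\ge1}\rho_r$. As a consistency check, this agrees with Frobenius reciprocity: $\rho_r^{H_{\mathrm{refl}}}$ is one-dimensional, $\C^{H_{\mathrm{refl}}}=\C$, and $\det^{H_{\mathrm{refl}}}=0$.

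I expect no serious obstacle here. The only care needed is in the stabilizer identification, namely checking that the stabilizer is the reflection fixing $x_0$ and is not a central element. Central elements are excluded because $-1$ sends $x_0\mapsto-x_0\neq x_0$.
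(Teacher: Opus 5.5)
Your proof is correct and follows essentially the same route as the paper: Jacobian smoothness, transitivity with a reflection stabilizer, the Nullstellensatz/orbit--stabilizer identification, Lemma~\ref{lem:G-simple-homogeneous} for $\Otwo$-simplicity, and the Laurent-polynomial decomposition in an isotropic coordinate. The differences are minor: you read off $R_c\cong\C[u,u^{-1}]$ directly from $Q_\rho=uw$ and get transitivity from the simply transitive $SO_2(\C)$-action, where the paper uses an orthogonal-decomposition argument and the isomorphism $T\cong\Otwo/H_{\mathrm{refl}}$; also, your phrase about ``two-dimensional'' torus orbits is a slip (they are one-dimensional), and $|H_{\mathrm{refl}}|=2$ is best justified by noting that $SO_2(\C)$ is simply transitive of index two, so $H_{\mathrm{refl}}\cong\Otwo/SO_2(\C)$.
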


\begin{proof}
Put $X_c:=\Spec R_c$. Its closed points are exactly the vectors $x\in\rho_1$ satisfying $Q_\rho(x)=c$. Since $c\ne0$, every such $x$ is nonzero, and our normalization $Q_\rho(v)=\beta_Q(v,v)$ gives \((d(Q_\rho-c))_x=2\beta_Q(x,-)\ne0\), because $\beta_Q$ is nondegenerate. The Jacobian criterion recalled in Subsection~\ref{subsec:AG-conventions} therefore shows that $X_c$ is smooth, hence reduced.

The quadric is nonempty. Indeed, since $\beta_Q$ is nondegenerate, there is $v\in\rho_1$ with $Q_\rho(v)\ne0$; after rescaling $v$ by a square root in $\C$, we obtain a vector of $Q_\rho$-value $c$. Moreover, $\Otwo$ acts transitively on $X_c$. To see this directly, let $x,y\in X_c$. Since $Q_\rho(x)=Q_\rho(y)=c\ne0$, the orthogonal decompositions \(\rho_1=\C x\perp x^\perp=\C y\perp y^\perp\) are decompositions into nondegenerate one-dimensional spaces. The map $x\mapsto y$ is an isometry on the first summands, and the one-dimensional nondegenerate spaces $x^\perp$ and $y^\perp$ are isometric over $\C$. Taking the orthogonal direct sum of these two isometries gives an element of $\Otwo$ carrying $x$ to $y$.

Now fix $x_0\in X_c$. An element of $\Otwo$ fixing $x_0$ preserves the orthogonal line $x_0^\perp$ and acts on it by $\pm1$. Thus \((\Otwo)_{x_0}=\{1,s_{x_0}\}\cong\Z_2,\) where $s_{x_0}$ is the reflection acting trivially on $\C x_0$ and by $-1$ on $x_0^\perp$. Since $X_c$ is now known to be reduced, the orbit--stabilizer identification from Subsection~\ref{subsec:AG-conventions} applies to the transitive action and gives \(X_c\cong\Otwo/H_{\mathrm{refl}}.\) Lemma~\ref{lem:G-simple-homogeneous}, applied to this nonempty reduced affine homogeneous $\Otwo$-variety, gives the assertion about $\Otwo$-stable ideals.

It remains to determine the left $\Otwo$-module structure of $R_c$. Choose an isotropic basis $e_+,e_-$ of $\rho_1$ with \(\beta_Q(e_+,e_-)=1\). With \(T:=\{t_a:a\in\C^\times,\ t_a e_+=ae_+,\ t_a e_-=a^{-1}e_-\} \cong\mathbb G_m\) and the reflection $s(e_+)=e_-$, $s(e_-)=e_+$, one has \(\Otwo=T\rtimes\langle s\rangle.\) Indeed, in the basis $e_+,e_-$ the Gram matrix of $\beta_Q$ is \(J=\left(\begin{smallmatrix}0&1\\1&0\end{smallmatrix}\right)\). If \(g=\left(\begin{smallmatrix}a&b\\c&d\end{smallmatrix}\right)\in\Otwo\), then $g^{\mathsf T}Jg=J$ gives \(ac=bd=0\) and \(ad+bc=1\). Hence either \(g=\operatorname{diag}(a,a^{-1})=t_a\) or \(g=\left(\begin{smallmatrix}0&b\\b^{-1}&0\end{smallmatrix}\right)=t_b s\). Moreover $s^2=1$ and $st_as^{-1}=t_{a^{-1}}$, which proves the stated semidirect-product decomposition. Choosing $a_0\in\C^\times$ with $2a_0^2=c$ and replacing the base point $x_0$ within its $\Otwo$-orbit if necessary, we may take \(x_0=a_0(e_++e_-)\), so that \(H_{\mathrm{refl}}=\langle s\rangle.\) Hence the map $T\to\Otwo/H_{\mathrm{refl}}$, $t\mapsto tH_{\mathrm{refl}}$, is an isomorphism of varieties. If $z$ denotes the standard coordinate on $T\cong\mathbb G_m$, then \(R_c\cong\C[z,z^{-1}] =\C\cdot1\oplus\bigoplus_{r\ge1}\operatorname{span}\{z^r,z^{-r}\}\). Under the left $\Otwo$-action, the summand $\operatorname{span}\{z^r,z^{-r}\}$ has torus weights $\{r,-r\}$ and the reflection interchanges its two weight lines. By the convention of Theorem~\ref{thm:O2-category}, this summand is $\rho_r$. Therefore \(R_c\cong\C\oplus\bigoplus_{r\ge1}\rho_r\), as claimed.
\end{proof}

\begin{lem}\label{lem:O2-admissible}
For every $c\in\C^\times$, the commutative algebra object $\widehat\Psi_k(R_c)$ in $\IndCMY(\Dk)$ is extension-admissible. As an $M(1)^+$-module, it is \(M(1)^+\oplus\bigoplus_{r\ge1}M(1,r\lambda)\).
\end{lem}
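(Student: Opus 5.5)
The plan mirrors Lemma~\ref{lem:admissible-homogeneous}, with $\Otwo$, $\widehat\Psi_k$ and $\Dk$ in place of $G$, $\widehat\Phi$ and $\Csq$. First, $R_c$ is a commutative rational $\Otwo$-algebra: it is a quotient of $\Sym(\rho_1^*)$ by an ideal generated by an invariant element, so multiplication and unit are equivariant and every vector lies in a finite-dimensional rational submodule. Since $\widehat\Psi_k$ is a symmetric monoidal equivalence (Lemma~\ref{lem:ind-equivalence} applied to Theorem~\ref{thm:O2-category} and Proposition~\ref{prop:CMY-O2}), $A:=\widehat\Psi_k(R_c)$ is a commutative algebra object in $\IndCMY(\Dk)$. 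Lemma~\ref{lem:O2-quadric} gives $R_c\cong\C\oplus\bigoplus_{r\ge1}\rho_r$. The multiplicity-space formula of Lemma~\ref{lem:ind-equivalence}, together with $\Psi_k(\C)=M(1)^+$ and $\Psi_k(\rho_r)=M(1,r\lambda)$, then yields the displayed $M(1)^+$-module decomposition. Note that $\det$ does not occur, so $M(1)^-$ is absent.

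Next I would check Definition~\ref{def:extension-admissible}.

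\emph{Condition (i).} The trivial representation occurs in $R_c$ with multiplicity one. By full faithfulness, $\Hom(M(1)^+,A)\cong\Hom_{\Otwo}(\C,R_c)=\C$. The unit $\eta:\C\to R_c$ is nonzero, so its image $\iota_A$ is nonzero by faithfulness, once we identify the monoidal unit isomorphism $M(1)^+\cong\widehat\Psi_k(\C)$. Hence $\iota_A$ spans this Hom-space. It is injective because $M(1)^+$ is a simple $M(1)^+$-module.

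\emph{Conditions (ii) and (iii).} $L(0)$ acts semisimply on each $M(1,r\lambda)$, with spectrum contained in $kr^2+\Z_{\ge0}$ since $\langle r\lambda,r\lambda\rangle/2=kr^2$. On $M(1)^+$ the spectrum lies in $\Z_{\ge0}$. So all eigenvalues are nonnegative integers. For a fixed weight $N$, only the finitely many $r$ with $kr^2\le N$ contribute. Each contributing summand appears with multiplicity one and has finite-dimensional weight spaces, so every eigenspace is finite dimensional.

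The main point needing care is the identification of $R_c$ with $\C\oplus\bigoplus_{r\ge1}\rho_r$ as a left $\Otwo$-module, with each $\rho_r$ occurring exactly once. This is what makes the multiplicities in $A$ finite, and it is supplied by the last assertion of Lemma~\ref{lem:O2-quadric}. The other point is the convention that $\Psi_k(\rho_r)=M(1,r\lambda)$ with the same normalization of torus weights, as fixed in Theorem~\ref{thm:O2-category}. With these two inputs, the remaining steps are formal transports along the equivalence, exactly as in the square case.
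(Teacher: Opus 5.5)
Your proposal is correct and follows essentially the same route as the paper: you transport $R_c\cong\C\oplus\bigoplus_{r\ge1}\rho_r$ from Lemma~\ref{lem:O2-quadric} through $\widehat\Psi_k$ via the multiplicity-space formula, then check the three conditions of Definition~\ref{def:extension-admissible} as in Lemma~\ref{lem:admissible-homogeneous}. The only cosmetic difference is that you get injectivity of $\iota_A$ from simplicity of $M(1)^+$, whereas the paper notes that the equivalence $\widehat\Psi_k$ carries the monomorphism $\C\to R_c$ to a monomorphism; either argument works.
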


\begin{proof}
Since $Q_\rho\in\Sym^2(\rho_1^*)^{\Otwo}$, the ideal $(Q_\rho-c)\subseteq\Sym(\rho_1^*)$ is $\Otwo$-stable. Hence $R_c$ is a commutative rational $\Otwo$-algebra, and the symmetric monoidal equivalence $\widehat\Psi_k$ transports it to a commutative algebra object of $\IndCMY(\Dk)$. By Lemma~\ref{lem:O2-quadric}, \(R_c\cong\C\oplus\bigoplus_{r\ge1}\rho_r\). Using Theorem~\ref{thm:O2-category} together with the explicit Ind-extension formula of Lemma~\ref{lem:ind-equivalence}, we obtain \(\widehat\Psi_k(R_c)\cong M(1)^+\oplus\bigoplus_{r\ge1}M(1,r\lambda)\) as an $M(1)^+$-module.

Let $A:=\widehat\Psi_k(R_c)$ and let $\iota_A:M(1)^+\to A$ be its algebra unit. The preceding decomposition contains exactly one copy of the tensor unit $M(1)^+$, so \(\Hom_{M(1)^+}(M(1)^+,A)=\C.\) The unit $\C\to R_c$, $1\mapsto1$, is a nonzero monomorphism, and an equivalence preserves monomorphisms; therefore $\iota_A$ is injective. Since $\iota_A\ne0$, the one-dimensional Hom-space is precisely \(\C\iota_A\). Thus condition~\textnormal{(i)} of Definition~\ref{def:extension-admissible} holds.

The vacuum module $M(1)^+$ is $L(0)$-semisimple, has spectrum in $\Z_{\ge0}$, and has finite-dimensional homogeneous subspaces. For $r\ge1$, the Fock-module construction recalled in Subsection~\ref{subsec:ALY-package} gives lowest conformal weight \(\frac{\langle r\lambda,r\lambda\rangle}{2}=kr^2\in\Z_{>0}\) and spectrum $kr^2+\Z_{\ge0}$; its homogeneous subspaces are finite dimensional. Hence $L(0)$ acts semisimply on $A$ with integral eigenvalues, and its spectrum is bounded below by zero. Finally, for a fixed weight $N$, only finitely many $r\ge1$ satisfy $kr^2\le N$, so the weight-$N$ subspace of $A$ is a finite direct sum of finite-dimensional spaces. Conditions~\textnormal{(ii)}--\textnormal{(iii)} therefore hold, and $A$ is extension-admissible.
\end{proof}

\begin{lem}
\label{lem:standard-O2-all-k}
Let $k\in\Z_{>0}$ and let $L=\Z\alpha$ with $\langle\alpha,\alpha\rangle=2k$. Identify $L$ with the lattice $L_k=\Z\lambda$ fixed above by the isometry $\alpha\mapsto\lambda$, and use the induced lattice-VOA isomorphism to identify $V_L^+$ with $V_{L_k}^+$. Then \(V_L^+\in\IndCMY(\Dk),V_L^+=\langle M(1)^+,V_L^+[1]\rangle_{\mathrm{VOA}}, V_L^+[1]\cong M(1,\lambda)\). Regard the extension $V_L^+\supset M(1)^+$ as a commutative algebra object in $\IndCMY(\Dk)$ through Theorem~\ref{thm:CMY-extension}, and put \(B_k^{\rm std}:=\widehat\Psi_k^{-1}(V_L^+)\). Then $B_k^{\rm std}$ is a commutative rational $\Otwo$-algebra and, as a rational $\Otwo$-module, \(B_k^{\rm std}\cong\C\oplus\bigoplus_{r\ge1}\rho_r\). Moreover, $B_k^{\rm std}$ is generated as a commutative $\C$-algebra by its unique $\rho_1$-summand, and there exists $c_k\in\C^\times$ such that \(B_k^{\rm std}\cong R_{c_k}=\frac{\Sym(\rho_1^*)}{(Q_\rho-c_k)}\) as commutative rational $\Otwo$-algebras.
\end{lem}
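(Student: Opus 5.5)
The plan is to establish the three module-theoretic assertions first, then read off the $\Otwo$-module structure, and finally produce the quadric presentation by the same mechanism as in Theorem~\ref{thm:M1plus-core}.

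First, Proposition~\ref{prop:rank-one-lattice-package}(i) gives $V_L^+\cong M(1)^+\oplus\bigoplus_{r\ge1}M(1,r\alpha)$ as $M(1)^+$-modules and $V_L^+[1]\cong M(1,\alpha)$. Transported along the isometry $\alpha\mapsto\lambda$, this reads $V_L^+[1]\cong M(1,\lambda)$, and $V_L^+$ is the directed union of finite partial sums, each lying in $\Dk$. Hence $V_L^+\in\IndCMY(\Dk)$. Generation by $M(1)^+$ and $V_L^+[1]$ is \eqref{eq:VLplus-generated-first-charge}. The module decomposition of $B_k^{\rm std}$ then follows from the Ind-formula of Lemma~\ref{lem:ind-equivalence}, using $\Psi_k(\C)=M(1)^+$ and $\Psi_k(\rho_r)=M(1,r\lambda)$ from Theorem~\ref{thm:O2-category}. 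Commutativity and rationality are inherited through the extension--algebra correspondence of Proposition~\ref{prop:CMY-O2} and the symmetric monoidal equivalence $\widehat\Psi_k^{-1}$. Lemma~\ref{lem:finite-generation}, applied with $\mathcal C=\Dk$, $G=\Otwo$, and $E=V_L^+[1]$, shows that the unique $\rho_1$-summand generates $B_k^{\rm std}$.

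Second, I would produce the quadric. Identify $\rho_1\cong\rho_1^*$ via $\iota_\rho$; the inclusion $\rho_1^*\cong\rho_1\hookrightarrow B:=B_k^{\rm std}$ then extends to a surjective $\Otwo$-algebra map $\pi:\Sym(\rho_1^*)\twoheadrightarrow B$. Since $B^{\Otwo}=\C$ (the trivial module has multiplicity one), $\pi(Q_\rho)=c_k\cdot1$ for some scalar $c_k$. To prove $c_k\ne0$, I would argue as in Theorem~\ref{thm:closed-orbit} and Theorem~\ref{thm:M1plus-core}. The trivial-channel component of multiplication $\rho_1\otimes\rho_1\to\C$ corresponds to the $M(1)^+$-component of $Y(F_1,x)F_1$. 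By Lemma~\ref{lem:vacuum-channel}, or directly from the lattice formula $Y(e^\alpha,z)e^{-\alpha}=\varepsilon z^{-2k}(\one+\cdots)$, one has $(F_1)_{2k-1}F_1=(-1)^k(F_1,F_1)\one\ne0$, since the invariant form on $V_L^+$ is nondegenerate on the weight-$k$ line. Hence the invariant pairing $\beta$ on the $\rho_1$-summand is nonzero. Contracting with the inverse-form tensor $\check Q_\rho$, as in the proof of Theorem~\ref{thm:closed-orbit}, gives that $\pi(Q_\rho)$ is a nonzero multiple of $\beta$ evaluated on the corresponding Casimir, namely a nonzero multiple of $2$. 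I expect the main care to be needed here: matching $\pi(Q_\rho)$ with the vacuum coefficient through the tensor equivalence and the identification $\iota_\rho$.

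Finally, $\pi$ factors through a surjection $R_{c_k}\twoheadrightarrow B$. By Lemma~\ref{lem:O2-quadric}, $R_{c_k}$ has no nonzero proper $\Otwo$-stable ideals, and the kernel is $\Otwo$-stable and proper (since $B\ne0$). Hence the kernel is zero, so $B_k^{\rm std}\cong R_{c_k}$. As a consistency check, both sides have the same module structure $\C\oplus\bigoplus_{r\ge1}\rho_r$.
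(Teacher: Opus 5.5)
Your proposal is correct and follows the paper's proof essentially step by step. The paper's argument has the same four ingredients: the $\Otwo$-module structure from the Ind-formula, generation by the unique $\rho_1$-summand via Lemma~\ref{lem:finite-generation}, nonvanishing of $c_k$ from the vacuum coefficient of $F_1\cdot F_1$ contracted against $\check Q_\rho$ (giving $2b_k$), and injectivity from the $\Otwo$-simplicity of $R_{c_k}$ in Lemma~\ref{lem:O2-quadric}. Of your two options for the vacuum coefficient, the paper uses the direct lattice computation $(F_1)_{2k-1}F_1=2\varepsilon(\lambda,-\lambda)\one\neq0$. That choice spares you from separately justifying $(F_1,F_1)\neq0$, i.e.\ that $F_1$ is orthogonal to $M(1)^+_k$.
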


\begin{proof}
By Proposition~\ref{prop:rank-one-lattice-package}(i), after the preceding identification $\alpha=\lambda$ one has \(V_L^+\cong M(1)^+\oplus\bigoplus_{r\ge1}M(1,r\lambda)\) as an $M(1)^+$-module. This is the directed union of its finite partial sums, which belong to $\Dk$, and hence $V_L^+\in\IndCMY(\Dk)$. Proposition~\ref{prop:rank-one-lattice-package}(iii) gives
\[
V_L^+=\langle M(1)^+,V_L^+[1]\rangle_{\mathrm{VOA}},\qquad V_L^+[1]\cong M(1,\lambda).
\]

By Theorem~\ref{thm:O2-category} and the explicit Ind-extension formula of Lemma~\ref{lem:ind-equivalence}, applying a quasi-inverse of $\widehat\Psi_k$ to the displayed $M(1)^+$-module decomposition gives directly \(B_k^{\rm std}\cong\C\oplus\bigoplus_{r\ge1}\rho_r\) as a rational $\Otwo$-module. In particular, \((B_k^{\rm std})^{\Otwo}=\C1\), and the $\rho_1$-summand occurs with multiplicity one. Lemma~\ref{lem:finite-generation}, applied to $\Psi_k$ with $E=V_L^+[1]$, shows that this $\rho_1$-summand generates $B_k^{\rm std}$ as an ordinary commutative $\C$-algebra.

Let \(j_k:\rho_1\hookrightarrow B_k^{\rm std}\) be this generating summand. The equivariant self-duality $\iota_\rho:\rho_1\overset\sim\longrightarrow\rho_1^*$ gives an $\Otwo$-equivariant map $j_k\circ\iota_\rho^{-1}:\rho_1^*\to B_k^{\rm std}$, which extends uniquely to a unital $\Otwo$-algebra homomorphism \(\pi_k:\Sym(\rho_1^*)\longrightarrow B_k^{\rm std}\). It is surjective because $j_k(\rho_1)$ generates $B_k^{\rm std}$. Since $Q_\rho$ is $\Otwo$-invariant and $(B_k^{\rm std})^{\Otwo}=\C1$, there is a scalar $c_k\in\C$ such that \(\pi_k(Q_\rho)=c_k1\). It remains to prove $c_k\ne0$. Let \(\mu_0^{\rm std}:\rho_1\otimes\rho_1\longrightarrow\C\) be the component of the multiplication of $B_k^{\rm std}$ obtained by projection onto its unique trivial summand. Under the symmetric vertex tensor equivalence $\Psi_k$, this morphism corresponds to the $M(1)^+$-summand component of the multiplication \(V_L^+[1]\boxtimes V_L^+[1]\longrightarrow V_L^+\). This component is nonzero. Indeed, with $F_1=e^\lambda+e^{-\lambda}\in V_L^+[1]$, the lattice vertex-operator formula recalled in Subsection~\ref{subsec:rank-one-lattice} gives
\[
Y(e^\lambda,z)e^{-\lambda}
=\varepsilon(\lambda,-\lambda)z^{-2k}(\one+O(z)),
\qquad
Y(e^{-\lambda},z)e^\lambda
=\varepsilon(-\lambda,\lambda)z^{-2k}(\one+O(z)).
\]
Because the lattice is even, $\varepsilon(\lambda,-\lambda)=\varepsilon(-\lambda,\lambda)$, and this common value is nonzero. The other two products have nonzero lattice charge and cannot contribute to the vacuum. Hence \((F_1)_{2k-1}F_1=2\varepsilon(\lambda,-\lambda)\one\ne0\), so $\mu_0^{\rm std}\ne0$.

By irreducibility and self-duality of $\rho_1$, Schur's lemma gives \(\Hom_{\Otwo}(\rho_1\otimes\rho_1,\C)=\C\beta_Q.\) Thus \(\mu_0^{\rm std}=b_k\beta_Q\) for some $b_k\in\C^\times$. By the definition of $\pi_k$ and the inverse-form tensor $\check Q_\rho$, the element $\pi_k(Q_\rho)$ is the product of the two $\rho_1$-legs of $\check Q_\rho$. It is $\Otwo$-invariant, so it lies in the trivial summand and therefore \(\pi_k(Q_\rho) =\mu_0^{\rm std}(\check Q_\rho) =b_k\beta_Q(\check Q_\rho) =2b_k\ne0.\) Consequently $c_k\in\C^\times$, and $\pi_k$ factors through a surjective $\Otwo$-algebra homomorphism \(R_{c_k}=\frac{\Sym(\rho_1^*)}{(Q_\rho-c_k)} \twoheadrightarrow B_k^{\rm std}.\) Its kernel is a proper $\Otwo$-stable ideal of $R_{c_k}$, hence is zero by Lemma~\ref{lem:O2-quadric}. Therefore $B_k^{\rm std}\cong R_{c_k}$ as commutative rational $\Otwo$-algebras.
\end{proof}

\begin{thm}\label{thm:VLplus-kgt4}
Let $L=\Z\alpha$ be a positive-definite even rank-one lattice with $\langle\alpha,\alpha\rangle=2k$, where $k\in\Z_{>4}$. Let $V$ be a simple self-contragredient CFT-type VOA of central charge one and assume only \(\ch V=\ch V_L^+.\) Then \(V\cong V_L^+.\)
\end{thm}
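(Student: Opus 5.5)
The plan is to mirror the square-core argument of Theorem~\ref{thm:exceptional-rigidity}, now in the $O_2$ charge category. First, Corollary~\ref{cor:M1plus-kgt4} gives a nonzero $J\in P_4(V)$ with $U:=\langle L(1,0),J\rangle_{\mathrm{VOA}}\cong M(1)^+$, so $V$ is a grading-restricted VOA extension of $M(1)^+$. Lemma~\ref{lem:first-lattice} then supplies a nonisotropic $F\in K_k$ with $S:=U\cdot F\cong M(1,\lambda)$ and $\langle\lambda,\lambda\rangle=2k$. We identify this $\lambda$ with the charge generator of $L_k$; this is harmless, since only the norm matters and $M(1,\lambda)\cong M(1,-\lambda)$. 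Then $S$ is the simple object $\Psi_k(\rho_1)$ of $\Dk$. By the final assertion of Proposition~\ref{prop:CMY-O2}, $W:=\langle M(1)^+,S\rangle_{\mathrm{VOA}}\in\IndCMY(\Dk)$. Put $B:=\widehat\Psi_k^{-1}(W)$, a commutative rational $\Otwo$-algebra. Lemma~\ref{lem:finite-generation}, applied to $\Psi_k$ with $E=S$, shows that $B$ is generated by the corresponding copy $\rho_1\subset B$.

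Next we identify $B$. Because $W\subseteq V$ is of CFT type and $\Hom(M(1)^+,W)$ is detected by weight-zero vectors, we get $B^{\Otwo}=\C$, exactly as in the proof of Theorem~\ref{thm:M1plus-core}. The generating map gives a surjection $\pi:\Sym(\rho_1^*)\twoheadrightarrow B$ with $\pi(Q_\rho)=c\cdot 1$ for some scalar $c$. To see that $c\ne0$, we use the argument of Lemma~\ref{lem:standard-O2-all-k}: the trivial-summand component of the multiplication $\rho_1\otimes\rho_1\to\C$ corresponds to the $M(1)^+$-channel of $S\boxtimes S\to W$. This channel is nonzero by Lemma~\ref{lem:vacuum-channel}, because $F$ is primary and nonisotropic; here one checks that $F$ is Virasoro-primary, which holds since $F$ is killed by all positive modes of $M(1)^+$, in particular by $L(n)$, $n>0$. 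Schur's lemma and the contraction $\langle\beta_Q,\check Q_\rho\rangle=2$ then give $c\neq0$. Hence $\pi$ factors through $R_c\twoheadrightarrow B$. This surjection is an isomorphism by the $\Otwo$-simplicity in Lemma~\ref{lem:O2-quadric}.

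Third, we compare $W$ with $V$ and with $V_L^+$. By Lemma~\ref{lem:O2-admissible}, $W\cong M(1)^+\oplus\bigoplus_{r\ge1}M(1,r\lambda)$ as an $M(1)^+$-module, so $\ch W=\ch V_L^+=\ch V$ by \eqref{eq:rank-one-character-interface}. Since $W\subseteq V$, this forces $W=V$. Lemma~\ref{lem:standard-O2-all-k} gives $B_k^{\rm std}\cong R_{c_k}$. Moreover, for any $c,c'\in\C^\times$ the rescaling $v\mapsto\sqrt{c'/c}\,v$ on $\rho_1$ is $\Otwo$-equivariant and induces $R_c\cong R_{c'}$. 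Therefore $B\cong B_k^{\rm std}$ as commutative $\Otwo$-algebras. Applying $\widehat\Psi_k$ yields an isomorphism of commutative algebra objects in $\IndCMY(\Dk)$. Because Theorem~\ref{thm:CMY-extension} (via Proposition~\ref{prop:CMY-O2}) is an isomorphism of categories, this gives $V=W\cong V_L^+$ as VOA extensions of $M(1)^+$, hence as VOAs.

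The main obstacle is the nonvanishing $c\neq0$. It requires transporting the vacuum-channel coefficient $F_{2k-1}F$ through $\widehat\Psi_k$ and the $P(1)$-intertwining-map identification, exactly as in the proof of Theorem~\ref{thm:closed-orbit}. A secondary point is that $S$ is really the object $\Psi_k(\rho_1)$ for the fixed $\lambda$, so that all charge sectors of $W$ sit at $r\lambda$. The hypothesis $k>4$ is used only through Corollary~\ref{cor:M1plus-kgt4} and Lemma~\ref{lem:first-lattice}.
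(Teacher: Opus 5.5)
Your proposal is correct and follows the paper's proof essentially step for step: the \(M(1)^+\) core from Corollary~\ref{cor:M1plus-kgt4}, the first lattice module from Lemma~\ref{lem:first-lattice}, the generated subVOA \(W\in\IndCMY(\Dk)\), and the identification \(B\cong R_c\) via the nonzero vacuum channel and \(\Otwo\)-simplicity. It then uses rescaling to \(B_k^{\rm std}\) and character saturation \(W=V\). The only difference is cosmetic: you obtain \(\ch W=\ch V\) directly from the module decomposition in Lemma~\ref{lem:O2-admissible} before building the isomorphism, whereas the paper reads it off from \(W\cong V_L^+\) afterwards.
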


\begin{proof}
By Corollary~\ref{cor:M1plus-kgt4}, $V$ contains a subVOA \(U\cong M(1)^+.\) As usual, we identify $U$ with $M(1)^+$. By Lemma~\ref{lem:first-lattice}, there is a vector $0\ne F\in V_k$ with $(F,F)\ne0$ such that $U\cdot F$ is irreducible and \(U\cdot F\cong M(1,\lambda)\), where \(\langle\lambda,\lambda\rangle=2k\). Set \(W=\langle U,U\cdot F\rangle_{\mathrm{VOA}}.\) Since $U$ is a conformal subVOA of $V$ and $U\subseteq W$, the subVOA $W$ contains the conformal vector of $V$ and is therefore a conformal subVOA of $V$. By Proposition~\ref{prop:CMY-O2}, $W\in\IndCMY(\Dk)$. Put \(B=\widehat\Psi_k^{-1}(W).\) Lemma~\ref{lem:finite-generation}, applied to $\Psi_k$, shows that the commutative rational $\Otwo$-algebra $B$ is generated by the standard representation $\rho_1$.

Full faithfulness of $\widehat\Psi_k$ identifies \(B^{\Otwo}=\Hom_{\Otwo}(\C,B)\cong\Hom_{M(1)^+}(M(1)^+,W)\). A module map $M(1)^+\to W$ is determined by the image of the vacuum, and this image lies in $W_0=\C\one$ because $W\subset V$ and $V$ is of CFT type. Thus this Hom-space has dimension at most one. Since the extension unit $M(1)^+\hookrightarrow W$ is a nonzero $M(1)^+$-module map, the Hom-space is one-dimensional, and hence $B^{\Otwo}=\C1_B$. Let $j:\rho_1\hookrightarrow B$ be the generating copy. The map \(j\circ\iota_\rho^{-1}:\rho_1^*\longrightarrow B\) extends uniquely to a surjective unital $\Otwo$-algebra morphism \(\pi_B:\Sym(\rho_1^*)\twoheadrightarrow B\). Since $Q_\rho$ is $\Otwo$-invariant, there is a scalar $c\in\C$ such that \(\pi_B(Q_\rho)=c1_B\). We claim that $c\ne0$. Let \(\mu_0:\rho_1\otimes\rho_1\longrightarrow\C\) be the component of the multiplication of $B$ obtained by projection onto its unique trivial summand. Under the symmetric monoidal equivalence $\widehat\Psi_k$, this morphism corresponds to the $M(1)^+$-summand component of the algebra-object multiplication \((U\cdot F)\boxtimes(U\cdot F)\longrightarrow W\). Every nonunit simple object of $\Dk$ has positive lowest conformal weight, so the weight-zero coefficient of the product of two lowest-weight vectors lies entirely in this channel. Since $U$ is a conformal subVOA and $F$ is a $U$-highest-weight vector of lowest conformal weight $k$, one has $L(n)F=0$ for $n>0$ and $L(0)F=kF$; hence $F\in P_k(V)$. Lemma~\ref{lem:vacuum-channel} therefore gives \(F_{2k-1}F=(-1)^k(F,F)\one\ne0\). Since no nonunit simple object of $\Dk$ contains conformal weight zero, this nonzero weight-zero coefficient is a nonzero coefficient of the $M(1)^+$-channel of the corresponding intertwining operator. Hence the associated tensor-category morphism, and therefore $\mu_0$, is nonzero. Since \(\Hom_{\Otwo}(\rho_1\otimes\rho_1,\C)=\C\beta_Q\), there is $b\in\C^\times$ such that $\mu_0=b\beta_Q$. By the normalization of the inverse-form tensor, \(\pi_B(Q_\rho) =\mu_0(\check Q_\rho)1_B =b\beta_Q(\check Q_\rho)1_B =2b1_B\ne0.\) Therefore $c\in\C^\times$. The map $\pi_B$ factors through a surjective $\Otwo$-algebra homomorphism \(R_c=\frac{\Sym(\rho_1^*)}{(Q_\rho-c)}\twoheadrightarrow B\). Its kernel is a proper $\Otwo$-stable ideal of $R_c$, so Lemma~\ref{lem:O2-quadric} implies that the kernel is zero. Thus \(B\cong R_c.\)

By Lemma~\ref{lem:standard-O2-all-k}, the standard extension $V_L^+\supset M(1)^+$ corresponds to a commutative rational $\Otwo$-algebra \(B_k^{\rm std}\cong R_{c_k}\) for some $c_k\in\C^\times$. Choose $a\in\C^\times$ with $a^2c=c_k$. Sending the generating copy of $\rho_1$ in $R_{c_k}$ to $a$ times the generating copy in $R_c$ defines an $\Otwo$-algebra isomorphism \(R_{c_k}\overset\sim\longrightarrow R_c\). Applying $\widehat\Psi_k$ gives an isomorphism of commutative algebra objects in $\IndCMY(\Dk)$. By Lemma~\ref{lem:O2-admissible}, the commutative algebra objects $\widehat\Psi_k(R_{c_k})$ and $\widehat\Psi_k(R_c)$ are extension-admissible. Hence Theorem~\ref{thm:CMY-extension} transports the preceding algebra-object isomorphism to an isomorphism of VOA extensions of the fixed base $M(1)^+$. Consequently \(W\cong V_L^+\) and \(\ch W=\ch V_L^+=\ch V\). Since $W$ is a conformal subVOA of $V$, one has $W_n\subseteq V_n$ for every $n\ge0$. Equality of the characters therefore gives $\dim W_n=\dim V_n$ and hence $W_n=V_n$ for every $n$. Thus $W=V$, and so \(V\cong V_L^+.\)
\end{proof}

\section{The four low-norm lattice-orbifold cases}\label{sec:low}

Throughout this section, $V$ is assumed to satisfy the hypotheses of Theorem~\ref{thm:main}.

We retain the notation $P(q)$, $T(q)$ and $\grch$ from Proposition~\ref{prop:rank-one-lattice-package}(iv). In particular, for $L=\Z\alpha$ with $\langle\alpha,\alpha\rangle=2k$, \(\grch V_L^+=\frac{P(q)+T(q)}2+P(q)\sum_{r\ge1}q^{kr^2}.\)

\begin{prop}\label{prop:low-table}
Let $\Lambda_k=\Z\alpha$ with $\langle\alpha,\alpha\rangle=2k$ and write \(\grch V_{\Lambda_k}^+=\sum_{n\ge0}d_n^{(k)}q^n\). For $1\le k\le4$ and $0\le n\le9$,
\[
\begin{array}{c|rrrrrrrrrr}
 k\backslash n&0&1&2&3&4&5&6&7&8&9\\ \hline
1&1&1&2&3&7&9&15&21&32&44\\
2&1&0&2&2&5&6&11&14&24&30\\
3&1&0&1&2&4&5&9&12&19&25\\
4&1&0&1&1&4&4&8&10&17&21.
\end{array}
\]
\end{prop}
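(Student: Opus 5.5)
The table will follow from a direct power-series computation with the closed formula \eqref{eq:rank-one-character-interface} of Proposition~\ref{prop:rank-one-lattice-package}(iv). For every $k$, that formula gives
\[
d_n^{(k)}=\tfrac12\bigl(p(n)+t(n)\bigr)+\sum_{r\ge1,\ kr^2\le n}p(n-kr^2),
\]
where $P(q)=\sum_n p(n)q^n$ and $T(q)=\sum_n t(n)q^n$. Since we only need $n\le9$ and $k\le4$, only the sectors $r=1$ (all $k$) and $r=2$ (only for $k\le2$, since $4k\le9$ requires $k\le2$) contribute. The calculation therefore reduces to the first ten coefficients of $P$ and $T$.

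First I will record the partition numbers $p(0),\dots,p(9)=1,1,2,3,5,7,11,15,22,30$. For $T$, I will use Euler's identity
\[
\prod_{n\ge1}(1+q^n)^{-1}=\prod_{n\ge1}(1-q^{2n-1}).
\]
This identity follows from $\prod(1+q^n)\prod(1-q^{2n-1})=1$, which in turn comes from $(1+q^n)=(1-q^{2n})/(1-q^n)$. With it, $t(n)$ is a signed count of partitions of $n$ into distinct odd parts, each partition weighted by $(-1)^{\#\text{parts}}$. Enumerating these partitions gives $t(0),\dots,t(9)=1,-1,0,-1,1,-1,1,-1,2,-2$; for instance, $8=1+7=3+5$ contributes $+2$ and $9=9=1+3+5$ contributes $-2$. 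Hence the graded character of $M(1)^+$ through weight nine is
\[
\tfrac12(P+T)=1+q^2+q^3+3q^4+3q^5+6q^6+7q^7+12q^8+14q^9+\cdots.
\]
This is consistent with Proposition~\ref{prop:rank-one-lattice-package}(ii), since the first Virasoro primary of $M(1)^+$ appears at weight $4$.

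Next I will add the lattice sectors row by row. For $k=3,4$ only $q^kP(q)$ is added; for example, in the $k=4$ row the weight-$9$ entry is $14+p(5)=21$. For $k=2$ the terms $q^2P(q)+q^8P(q)$ are added, and for $k=1$ the terms $qP(q)+q^4P(q)+q^9P(q)$ are added. As a check on the $k=1$ row, the entry at $n=9$ is $14+p(8)+p(5)+p(0)=14+22+7+1=44$. The remaining entries are checked the same way.

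There is no conceptual obstacle here. The only step requiring care is the correct evaluation of the signed coefficients $t(n)$ and the bookkeeping of which $r=2$ and $r=3$ sectors fall in the range $n\le9$. I will guard against arithmetic slips by verifying two identities: the $k=4$ row must agree with $\tfrac12(P+T)$ below weight $4$, and every row must satisfy $d_0^{(k)}=1$ and $d_1^{(k)}=\delta_{k,1}$.
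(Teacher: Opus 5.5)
Your proposal is correct and is essentially the paper's proof: expand $P$ and $T$ through $q^9$, form $\tfrac12(P+T)=1+q^2+q^3+3q^4+3q^5+6q^6+7q^7+12q^8+14q^9+\cdots$, and add the lattice contributions $(q+q^4+q^9)P(q)$, $(q^2+q^8)P(q)$, $q^3P(q)$, $q^4P(q)$ for $k=1,2,3,4$. One small slip: your opening claim that only the sectors $r=1,2$ contribute is false for $k=1$, where $r=3$ contributes $q^9P(q)$; you do include that term in the $k=1$ row, so the entry $44$ and the rest of the computation are right.
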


\begin{proof}
The expansions needed are \(P(q)=1+q+2q^2+3q^3+5q^4+7q^5+11q^6+15q^7+22q^8+30q^9+O(q^{10})\) and \(T(q)=1-q-q^3+q^4-q^5+q^6-q^7+2q^8-2q^9+O(q^{10}).\) Hence \(\frac{P(q)+T(q)}2 =1+q^2+q^3+3q^4+3q^5+6q^6+7q^7+12q^8+14q^9+O(q^{10}).\) By \eqref{eq:rank-one-character-interface}, only the terms with $kr^2\le9$ can contribute through weight nine. Thus the lattice contribution is \((q+q^4+q^9)P(q), (q^2+q^8)P(q), q^3P(q), q^4P(q)\) for $k=1,2,3,4$, respectively. Adding these four contributions to the preceding expansion of $(P(q)+T(q))/2$ and comparing coefficients through $q^9$ gives the stated table.
\end{proof}

\subsection{The cases \texorpdfstring{$k=1,2,3$}{k=1,2,3}}

\begin{thm}\label{thm:k1}
Let $\Lambda_1=\Z\alpha$ with $\langle\alpha,\alpha\rangle=2$. If \(\ch V=\ch V_{\Lambda_1}^+,\) then \(V\cong V_{\Lambda_1}^+.\)
\end{thm}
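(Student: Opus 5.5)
The plan is to reduce the norm-two case to the lattice branch by means of the classical identification \(V_{\Lambda_1}^+\cong V_{2A_1}\). Here \(2A_1=\Z\beta\) with \(\langle\beta,\beta\rangle=8\), i.e.\ \(\beta=2\alpha\). Recall that \(V_{A_1}=L_1(\mathfrak{sl}_2)\), that the lift \(\theta\) of \(-1\) is conjugate in \(\Aut(V_{A_1})\cong\PSLtwo\) to an element of the torus, and that the fixed points of that torus element form the sublattice VOA \(V_{2A_1}\); this is recorded in \cite[Section~2]{DG98}. In particular \(\ch V_{\Lambda_1}^+=\ch V_{2A_1}\). As a sanity check, the row \(k=1\) of Proposition~\ref{prop:low-table} agrees with \(P(q)\sum_{r\in\Z}q^{4r^2}\) through weight nine: \(1,1,2,3,7,\dots\). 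The hypothesis \(\ch V=\ch V_{\Lambda_1}^+\) therefore says that \(V\) has the character of a rank-one lattice VOA of norm \(8\).

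The main step is to show \(V\cong V_{L'}\) for a positive-definite even rank-one lattice \(L'\). By the table, \(\dim V_1=1\). Lemma~\ref{lem:primary-complement} with \(h=1\) gives \(V_1=P_1(V)\), and the form on it is nondegenerate. So there is a weight-one vector \(x\) with \((x,x)\ne0\), and \(x\) is primary. By skew-symmetry \(x_0x\in V_1=\C x\) must vanish, and \(x_1x=-(x,x)\one\) by Lemma~\ref{lem:vacuum-channel}. Hence \(V_1\) is a one-dimensional abelian Lie algebra with nondegenerate form, and it generates a Heisenberg subVOA \(M(1)\subset V\). We are in the setting \(c=\widetilde c=1=\dim V_1\). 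Here I would invoke the effective-central-charge theorem of Dong--Mason, the same input the paper uses for the lattice character: a strongly rational VOA with \(\widetilde c\) equal to the rank of the abelian Lie algebra \(V_1\) is a lattice VOA \(V_{L'}\) with \(\operatorname{rank}L'=\dim V_1\).

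It remains to identify \(L'\). Writing \(L'=\Z\gamma\) with \(\langle\gamma,\gamma\rangle=2k'\), we have \(\grch V_{L'}=P(q)\sum_{r\in\Z}q^{k'r^2}\). Its first deviation from \(P(q)\) is \(2q^{k'}\), whereas that of \(\grch V\) is \(2q^{4}\); hence \(k'=4\) and \(L'\cong 2A_1\). Combining \(V\cong V_{2A_1}\) with \(V_{2A_1}\cong V_{\Lambda_1}^+\) gives the theorem.

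The main obstacle is the Dong--Mason step: one must check that its hypotheses, namely strong rationality and \(\widetilde c=\operatorname{rank}V_1\), hold exactly, and that no Virasoro complete-reducibility assumption enters. If that citation turned out to be inconvenient, the fallback would be to decompose \(V\) directly under \(M(1)\). By the weight-one analysis \(V\) is an \(M(1)\)-module. Using the \(C_1\)-cofinite Heisenberg category of \cite{CKLR} and a coefficient-image argument as in Proposition~\ref{prop:CMY-O2}, one would place \(V\) in the Ind-completion of the Fock category of Theorem~\ref{thm:Fock-Gm}. Reconstruction would then proceed through commutative \(\mathbb G_m\)-algebras generated by the charge-\(\pm1\) lines, with the nonzero vacuum pairing supplied by Lemma~\ref{lem:vacuum-channel}.
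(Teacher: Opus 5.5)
Your proposal is correct and follows essentially the same route as the paper: $\dim V_1=1$ from the character table, the strong-CFT-type condition $L(1)V_1=0$ (which your primary-complement step also yields), Dong--Mason's equality case giving $V\cong V_K$ with $\operatorname{rank}K=1$, and then the Dong--Griess isomorphism $V_{A_1}^+\cong V_{2A_1}$ together with a theta-series comparison forcing $K\cong 2A_1$. The fallback via the Fock category is not needed.
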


\begin{proof}
Proposition~\ref{prop:low-table} gives $\dim V_1=1$. By Proposition~\ref{prop:Li-invariant-package}(ii), the standing assumptions imply $L(1)V_1=0$, so $V$ is of strong CFT type in the terminology of Dong--Mason. Theorem~1 of \cite{DM} therefore implies that the Lie algebra $V_1$ is reductive. Since $\dim V_1=1$, its Lie rank is one. As $c(V)=\widetilde c(V)=1$, the equality case in \cite[Theorem~3]{DM} yields an isomorphism \(V\cong V_K\) for a positive-definite even rank-one lattice $K$.

Since $\Lambda_1\cong A_1$, we identify these lattices. Dong--Griess show in \cite[Section~3]{DG98} that \(V_{A_1}^+\cong V_{2A_1},\) where $2A_1=\Z(2\alpha)$ has generator of squared length $8$. Hence \(\ch V_K=\ch V_{2A_1}.\) Write $K=\Z\beta$ with $\langle\beta,\beta\rangle=2m$, $m\in\Z_{>0}$. By the rank-one lattice realization in Subsection~\ref{subsec:rank-one-lattice}, \(\grch V_K=P(q)\sum_{n\in\Z}q^{mn^2}, \grch V_{2A_1}=P(q)\sum_{n\in\Z}q^{4n^2}.\) Both vertex operator algebras have central charge one, so their equal vacuum characters have equal graded characters. Since $P(q)$ has constant term $1$ and is therefore invertible in $\C[[q]]$, it follows that \(\sum_{n\in\Z}q^{mn^2}=\sum_{n\in\Z}q^{4n^2}.\) The least positive exponent on the left is $m$, whereas on the right it is $4$; hence $m=4$ and $K\cong2A_1$. Therefore \(V\cong V_{2A_1}\cong V_{A_1}^+\cong V_{\Lambda_1}^+.\)
\end{proof}

\begin{thm}\label{thm:k2}
Let $\Lambda_2=\Z\alpha$ with $\langle\alpha,\alpha\rangle=4$. If \(\ch V=\ch V_{\Lambda_2}^+,\) then \(V\cong V_{\Lambda_2}^+.\)
\end{thm}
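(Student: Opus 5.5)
The proof I would give follows the route announced in the Introduction: the norm-$4$ case reduces to a known uniqueness theorem, applied through the weight-two Griess algebra rather than through the square category. By Proposition~\ref{prop:low-table} with $k=2$, we have $\dim V_1=0$ and $\dim V_2=2$. Since $L(1,0)_2=\C\omega$, Lemma~\ref{lem:first-primary} with $h=2$ gives a weight-two primary $J$, unique up to scalar, with $(J,J)\ne0$ and $V_2=\C\omega\oplus\C J$.

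Step one is to study the commutative Griess algebra $(V_2,\ a\cdot b:=a_1b)$. Because $V_1=0$, this product is commutative, and the invariant form is associative for it. We know $\omega_1$ acts as $2$ on $V_2$. Hence the only unknown is $J_1J=a\omega+bJ$. Here $a$ is fixed by the pairing $(J_1J,\omega)=2(J,J)$. The coefficient $b$ is the cubic structure constant, and I would show $b\ne0$.

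To get $b\ne0$, I would first use Lemma~\ref{lem:first-singular}-type arguments to show that $U(\Vir)J$ is $L(1,4)$-free and isomorphic to $L(1,4)$-free Verma data; note that $2\ne n^2/4$, so $V(1,2)$ is irreducible and $U(\Vir)J\cong L(1,2)$. I would then compare $\dim V_4=5$ with the contributions of $L(1,0)$ and $L(1,2)$ through weight four. If $b=0$, the skew-symmetry and associativity constraints on $J_3J$ and $J_1J$ should force a contradiction with this dimension count. Alternatively, one could invoke Matsuo's classification of two-dimensional Griess algebras with invariant form and $c=1$.

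Once $b\ne0$, I would solve $e\cdot e=2e$ inside $V_2$. This should produce two orthogonal idempotents $e^{\pm}=\tfrac12\omega\pm\gamma J$ with $e^++e^-=\omega$. Their central charges $2(e^\pm,e^\pm)\cdot\frac{1}{(\one,\one)}$ should both equal $\tfrac12$. By Miyamoto's criterion they are commuting Virasoro vectors, so $V$ contains a frame $L(\tfrac12,0)\otimes L(\tfrac12,0)$.

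Since $L(\tfrac12,0)^{\otimes2}$ is rational, $V$ decomposes into the modules $L(\tfrac12,h_1)\otimes L(\tfrac12,h_2)$ with $h_i\in\{0,\tfrac12,\tfrac1{16}\}$. Integrality of weights, $V_1=0$, and the graded character $\frac{P+T}{2}+P\sum_r q^{2r^2}$ should then pin down the multiplicities. I expect to find $V\cong L(\tfrac12,0)^{\otimes2}\oplus L(\tfrac12,\tfrac1{16})^{\otimes2}$ as a frame module, matching the known frame decomposition of $V_{\Lambda_2}^+$. That decomposition is read off from $V_{\Lambda_2}\cong$ the Dirac-fermion lattice picture; equivalently, its $1/2$-word code is $\{00\}$ and its $1/16$-word code is $\{00,11\}$. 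The final step is the uniqueness of a simple-current (respectively framed) VOA structure with given structure codes, due to Dong--Griess--H\"ohn and Lam--Yamauchi. Here $L(\tfrac12,\tfrac1{16})^{\otimes2}$ is a simple current of $L(\tfrac12,0)^{\otimes 2}$ only after the $\mathbb Z_2$ simple-current refinement, so one must cite the framed-VOA uniqueness rather than the plain simple-current one. That uniqueness gives $V\cong V_{\Lambda_2}^+$.

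The main obstacle is the Griess-algebra step: proving $b\ne0$, and thereby that the idempotents have central charge exactly $\tfrac12$, without any Virasoro semisimplicity assumption. I would first try the dimension count in weight four, using the invariant form and the identities $L(1)J=L(2)J=0$ to compute $(J_1J,J)$. If that fails, I would instead try to realize $J_3J$ and $J_1J$ through the vacuum-channel Lemma~\ref{lem:vacuum-channel} and the weight-four character $d_4^{(2)}=5$.
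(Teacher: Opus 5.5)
\textbf{There is a genuine gap.} Your Step one aims at a false statement, and the input that is actually needed is missing.

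Since $J$ is primary, $(J,\omega)=0$ and $a=4(J,J)\neq0$. Write $e=x\omega+yJ$ with $y\neq0$. Then $e\cdot e=2e$ is equivalent to $4x+by=2$ and $(b^2+8a)y^2=4$. Moreover $c_e=2(e,e)=x^2+\tfrac a2y^2=x$. So nontrivial idempotents exist iff $b^2+8a\neq0$, and then
\[
c_{e^\pm}=\tfrac12\mp\frac{b}{2\sqrt{b^2+8a}} .
\]
Thus the idempotents have the shape $\tfrac12\omega\pm\gamma J$, with both central charges equal to $\tfrac12$, \emph{exactly when $b=0$}. This is what happens in the target. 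There $V_{\Lambda_2}^+\cong L(\tfrac12,0)\otimes L(\tfrac12,0)$ and $J\propto\omega^1-\omega^2$, so $J_1J=2\omega$. Equivalently, $J\propto e^{\alpha}+e^{-\alpha}$ is odd under the charge-parity automorphism, which fixes $\omega$.

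Consequently $b\neq0$ cannot be proved, and no contradiction can be extracted from $b=0$. What must be shown is $b=0$. That means excluding every splitting $\omega=e+f$ with $c_e+c_f=1$ and $c_e\neq\tfrac12$, and also the nonsemisimple case $b^2+8a=0$, which is of $W(2,2)$ type.

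Your weight-four count cannot do this. For generic $c_e$, the subalgebra generated by $V_2$ already has a five-dimensional weight-four space, matching $\dim V_4=5$. Even $L(-\tfrac{22}5,0)\otimes L(\tfrac{27}5,0)$ has graded dimensions $1,0,2,2,4,5,10,12,21,27$, which stay below those of $V$ through weight nine. Ruling such cases out requires the global hypotheses (rationality, $C_2$-cofiniteness, $\widetilde c=1$). That is precisely the content of Zhang--Dong's Theorem~4.7, which the paper invokes directly. Since $V_1=0$ and $\dim V_2=2$, $V$ is of moonshine type, so $V\cong L(\tfrac12,0)\otimes L(\tfrac12,0)$. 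Dong--Griess's Remark~3.2 identifies $V_{\Lambda_2}^+$ with the same algebra.

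Your endgame is also wrong. $L(\tfrac12,\tfrac1{16})^{\otimes2}$ has lowest weight $\tfrac18\notin\Z$, so it cannot occur in $V$. The only irreducible $L(\tfrac12,0)^{\otimes2}$-modules with integral weights are $L(\tfrac12,0)^{\otimes2}$ and $L(\tfrac12,\tfrac12)^{\otimes2}$, and the latter begins in weight $1$, which $V_1=0$ excludes.

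Correspondingly, $V_{\Lambda_2}^+$ \emph{is} $L(\tfrac12,0)^{\otimes2}$, and its $1/16$-word code is $\{00\}$, not $\{00,11\}$. The code $\{00,11\}$ you have in mind is the $1/2$-word code of $V_{\Lambda_2}=L(\tfrac12,0)^{\otimes2}\oplus L(\tfrac12,\tfrac12)^{\otimes2}$.

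Once a conformal frame with $c_e=c_f=\tfrac12$ is in hand, equality of characters gives $V=L(\tfrac12,0)^{\otimes2}$ immediately. So the framed-VOA uniqueness step is unnecessary, and as stated it rests on a false decomposition.
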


\begin{proof}
By Proposition~\ref{prop:low-table}, \(V_1=0\) and $\dim V_2=2$. Thus $V$ is of moonshine type, namely $V_0=\C\one$ and $V_1=0$. The standing assumptions also give rationality, $C_2$-cofiniteness, and $c(V)=\widetilde c(V)=1$. Hence \cite[Theorem~4.7]{ZhangDong} applies and gives \(V\cong L\!\left(\frac12,0\right)\otimes L\!\left(\frac12,0\right).\) For the rank-one lattice $\Lambda_2$ of squared norm four, \cite[Remark~3.2]{DG98} gives the VOA isomorphism \(V_{\Lambda_2}^+\cong L\!\left(\frac12,0\right)\otimes L\!\left(\frac12,0\right).\) Consequently $V\cong V_{\Lambda_2}^+$.
\end{proof}

\begin{thm}\label{thm:k3}
Let $\Lambda_3=\Z\alpha$ with $\langle\alpha,\alpha\rangle=6$. If \(\ch V=\ch V_{\Lambda_3}^+,\) then \(V\cong V_{\Lambda_3}^+.\)
\end{thm}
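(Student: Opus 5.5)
The case $k=3$ is already covered by the weight-four machinery of Sections~\ref{sec:weight4} and~\ref{sec:o2}, since Lemma~\ref{lem:intervening}, Corollary~\ref{cor:M1plus-kgt4} and Lemma~\ref{lem:first-lattice} were all stated for $k=3$ or $k>4$. The plan is to run the proof of Theorem~\ref{thm:VLplus-kgt4} verbatim, checking only that none of its steps used $k>4$ beyond those three inputs.

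\textbf{Step 1.} By Lemma~\ref{lem:intervening} with $k=3$, the space $P_4(V)$ is one-dimensional and nonisotropic, and any $0\ne J\in P_4(V)$ generates $X_2=L(1,4)$. Its $k=3$ branch uses the weight-three primary $F$ from Lemma~\ref{lem:k3-weight3} and the orthogonal sum $L(1,0)\oplus L(1,3)$. The weight-nine singular-vector count is also needed. There the dimension identity $\dim V_9=\dim L(1,0)_9+\dim(X_2)_9+\dim L(1,3)_9$ uses that $V(1,3)$ is irreducible, so $L(1,3)$ has character $q^3P(q)$, which matches the charge-one Fock sector. It also uses that no second lattice sector appears by weight nine, since $4k=12>9$. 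I will double-check this against the row $k=3$ of Proposition~\ref{prop:low-table}, namely $d_9^{(3)}=25$. Corollary~\ref{cor:M1plus-kgt4} then yields a subVOA $U=\langle L(1,0),J\rangle_{\mathrm{VOA}}\cong M(1)^+$.

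\textbf{Step 2.} Lemma~\ref{lem:first-lattice}, again valid for $k=3$, gives $K=U^\perp$ with $K_j=0$ for $j<3$ and $\dim K_3=1$. For a nonzero $F\in K_3$, the module $U\cdot F$ is irreducible and isomorphic to $M(1,\lambda)$ with $\langle\lambda,\lambda\rangle=6$. The only point to recheck is that weight $3$ is not among the non-Fock lowest weights $0,1,1/16,9/16$ in Proposition~\ref{prop:ALY-package}, which is clear.

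\textbf{Step 3.} Put $W=\langle U,U\cdot F\rangle_{\mathrm{VOA}}$. By Proposition~\ref{prop:CMY-O2}, $W\in\IndCMY(\mathcal D_3)$. Transport $W$ through $\widehat\Psi_3^{-1}$ to a commutative rational $\Otwo$-algebra $B$; by Lemma~\ref{lem:finite-generation} it is generated by $\rho_1$, and CFT type gives $B^{\Otwo}=\C$. Next, apply Lemma~\ref{lem:vacuum-channel}: $F_{5}F=(-1)^3(F,F)\one\ne0$, so $\pi_B(Q_\rho)=c\ne0$ and hence $B\cong R_c$ by Lemma~\ref{lem:O2-quadric}. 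Lemma~\ref{lem:standard-O2-all-k} with $k=3$ and rescaling the generator then give $R_c\cong R_{c_3}\cong B_3^{\rm std}$. Lemma~\ref{lem:O2-admissible} together with Theorem~\ref{thm:CMY-extension} turns this into $W\cong V_{\Lambda_3}^+$. Comparing characters gives $W=V$.

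\textbf{Main obstacle.} The expected difficulty is entirely in Step~1, the $k=3$ branch of Lemma~\ref{lem:intervening}. There the weight-three lattice primary precedes the weight-four primary, so Lemma~\ref{lem:first-primary} does not apply directly. One must construct $J$ as the orthogonal complement of $B_4=L(1,0)_4\oplus\C L(-1)F$, and rule out a nonzero singular image at weight nine. Since that lemma already handles this, the proof of the theorem should reduce to citing it. I would therefore write the proof as "identical to Theorem~\ref{thm:VLplus-kgt4}, with Corollary~\ref{cor:M1plus-kgt4} and Lemma~\ref{lem:first-lattice} invoked at $k=3$", noting explicitly that no step of that proof used $k>4$ except through these inputs.
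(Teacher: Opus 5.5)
Your proposal is correct and follows the paper's proof essentially verbatim: the paper also obtains $U\cong M(1)^+$ from Corollary~\ref{cor:M1plus-kgt4}, gets the first lattice module $E=U\cdot F\cong M(1,\mu)$ from Lemma~\ref{lem:first-lattice} at $k=3$, and then reruns the $\Otwo$-quadric argument. That argument consists of finite generation, $B^{\Otwo}=\C$, $F_5F=-(F,F)\one\ne0$ forcing $c\ne0$, the chain $B\cong R_c\cong R_{c_3}\cong B_3^{\rm std}$, and character saturation $W=V$. The only detail the paper makes explicit that you leave implicit is that $\mu=\pm\lambda$ because $\mathfrak h$ is one-dimensional, so that $E\cong M(1,\lambda)=\Psi_3(\rho_1)$ is the generating object of $\mathcal D_3$.
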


\begin{proof}
Corollary~\ref{cor:M1plus-kgt4} gives a conformal subVOA \(U\cong M(1)^+.\) Fix such an isomorphism and henceforth identify $U$ with $M(1)^+$. Lemma~\ref{lem:first-lattice}, now with $k=3$ and $L=\Lambda_3$, gives \(K:=U^\perp, K_j=0\ (j<3), \dim K_3=1,\) and, for every $0\ne F\in K_3$, the vector $F$ is nonisotropic and \(E:=U\cdot F\cong M(1,\mu), \langle\mu,\mu\rangle=6.\) Choose the charge $\lambda\in\mathfrak h$ of norm six used in the specialization $\mathcal D_3$ of Theorem~\ref{thm:O2-category}. Since $\mathfrak h$ is one-dimensional, $\mu=\pm\lambda$, and $M(1,\lambda)\cong M(1,-\lambda)$ as $M(1)^+$-modules. Thus \(E\cong M(1,\lambda)=\Psi_3(\rho_1)\in\mathcal D_3.\)

Set \(W=\langle U,E\rangle_{\mathrm{VOA}}\). By Proposition~\ref{prop:CMY-O2}, $W\in\IndCMY(\mathcal D_3)$; through Theorem~\ref{thm:CMY-extension}, regard $W$ as a commutative algebra object and put \(B:=\widehat\Psi_3^{-1}(W).\) Lemma~\ref{lem:finite-generation} shows that $B$ is generated by its copy of $\rho_1$. Moreover, $W_0=\C\one$, so the tensor unit $M(1)^+$ occurs in $W$ with multiplicity one; equivalently, \(B^{\Otwo}=\C1.\) Let \(j:\rho_1\hookrightarrow B\) be the generating summand. The self-duality $\iota_\rho$ extends $j\circ\iota_\rho^{-1}$ uniquely to a surjective $\Otwo$-algebra homomorphism \(\pi_B:\Sym(\rho_1^*)\twoheadrightarrow B.\) Since $Q_\rho$ is invariant, \(\pi_B(Q_\rho)=c1\) for some $c\in\C$.

We claim that $c\ne0$. Let \(\mu_0:\rho_1\otimes\rho_1\to\C\) be the trivial-summand component of the multiplication of $B$. Under $\Psi_3$, it corresponds to the $U$-summand component of \(E\boxtimes E\to W\). Since the conformal vector belongs to $U$ and $K_j=0$ for $j<3$, one has $F\in P_3(V)$. Hence Lemma~\ref{lem:vacuum-channel} gives \(F_5F=-(F,F)\one\ne0\), so $\mu_0\ne0$. By Schur's lemma, \(\Hom_{\Otwo}(\rho_1\otimes\rho_1,\C)=\C\beta_Q\), and therefore \(\mu_0=b\beta_Q\) for some $b\in\C^\times$. By the definition of $\pi_B$ and $\check Q_\rho$, \(c=\mu_0(\check Q_\rho)=b \beta_Q(\check Q_\rho)=2b\ne0.\) Thus $\pi_B$ factors through a surjection \(R_c\twoheadrightarrow B\). Its kernel is a proper $\Otwo$-stable ideal of $R_c$, so Lemma~\ref{lem:O2-quadric} gives \(B\cong R_c\).

For the standard extension $V_{\Lambda_3}^+\supset M(1)^+$, Lemma~\ref{lem:standard-O2-all-k} gives \(B_3^{\rm std}\cong R_{c_3}\) with $c_3\ne0$. Choose $a\in\C^\times$ with $a^2c=c_3$. Scaling the generating copy of $\rho_1$ by $a$ gives an $\Otwo$-algebra isomorphism \(R_{c_3}\overset\sim\longrightarrow R_c\), hence \(B_3^{\rm std}\cong B\). Applying $\widehat\Psi_3$ and then the extension--algebra correspondence of Theorem~\ref{thm:CMY-extension} therefore gives an isomorphism of VOA extensions \(V_{\Lambda_3}^+\cong W\). Consequently \(\ch W=\ch V\). Since $W$ is a conformal subVOA of $V$, one has $W_n\subseteq V_n$ for every $n\ge0$; equality of characters gives equality of dimensions in each degree, hence $W_n=V_n$ for all $n$ and therefore $W=V$. Thus \(V\cong V_{\Lambda_3}^+\).
\end{proof}

\subsection{The case \texorpdfstring{$k=4$}{k=4}: the first-singular map}

Let $\Lambda_4=\Z\alpha$ with $\langle\alpha,\alpha\rangle=8$, and assume \(\ch V=\ch V_{\Lambda_4}^+.\) Since both VOAs have central charge one, this is equivalent to equality of their graded characters, so Proposition~\ref{prop:low-table} applies. Let $V(1,4)$ be the Verma module with highest-weight vector $v_4$, and write $V(1,4)[r]$ for its level-$r$ subspace. By Proposition~\ref{prop:c1-verma}, fix a nonzero first singular vector $s_5\in V(1,4)[5]$, unique up to scalar. For $u\in P_4(V)$, let \(\phi_u:V(1,4)\to V\) be the canonical Virasoro homomorphism with $\phi_u(v_4)=u$, and set \(\Theta:V(1,4)\otimes P_4(V)\longrightarrow V, \Theta(x\otimes u)=\phi_u(x), S(u)=\phi_u(s_5)\). Thus $S:P_4(V)\to V_9$ is a linear map, which we call the \emph{first-singular map}. Its vanishing is independent of the scalar chosen for $s_5$.

\begin{lem}\label{lem:k4-saturation8}
The space $P_4(V)$ is two-dimensional and nondegenerate. For every $0\le r\le4$, the restriction \(\Theta_r:V(1,4)[r]\otimes P_4(V)\longrightarrow V_{4+r}\) of $\Theta$ is injective. Moreover, for $4\le n\le8$, \(V_n=L(1,0)_n\oplus \Theta\bigl(V(1,4)[n-4]\otimes P_4(V)\bigr).\)
\end{lem}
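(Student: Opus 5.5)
The plan is to compare graded dimensions with the table of Proposition~\ref{prop:low-table}, and to control $\Theta$ through the invariant form pulled back to the Verma module. Three steps suffice: the vacuum gap below weight four, a factorization of the pulled-back form as (Shapovalov)$\otimes$(invariant form on $P_4(V)$), and a final dimension count.

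\emph{Weight four.} The graded dimensions of $L(1,0)=V(1,0)/\langle L(-1)\one\rangle$ are $p(n)-p(n-1)$. For $n=0,\dots,9$ these are
\[
1,0,1,1,2,2,4,4,7,8 .
\]
The row $k=4$ of Proposition~\ref{prop:low-table} reads $1,0,1,1,4,4,8,10,17,21$. Hence $V_n=L(1,0)_n$ for $n<4$ and $\dim V_4-\dim L(1,0)_4=2$. Lemma~\ref{lem:primary-complement} with $h=4$ then gives $V_4=L(1,0)_4\perp P_4(V)$, with $\dim P_4(V)=2$ and the form nondegenerate on $P_4(V)$.

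\emph{Factorization of the form.} For $u,u'\in P_4(V)$, define a bilinear form on $V(1,4)$ by $B_{u,u'}(x,y):=(\phi_u(x),\phi_{u'}(y))$. By Proposition~\ref{prop:Li-invariant-package}(iii) it is contravariant, and its value on the highest-weight line is $(u,u')$. Any contravariant form $\langle\,,\rangle$ on a Verma module is determined by its top value. To see this, write $y=Y\,v_4$ with $Y\in U(\Vir_{<0})$. Moving $Y$ across by contravariance expresses $\langle x,y\rangle$ as the top coefficient of $Y^\dagger x$ times $\langle v_4,v_4\rangle$. Therefore
\[
B_{u,u'}=(u,u')\,\langle\ ,\ \rangle_{\rm Sh}.
\]
This bilinearity in $(u,u')$ is the key point to write carefully. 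It shows that the form pulled back along $\Theta_r$ is the tensor product of $\langle\ ,\ \rangle_{\rm Sh}|_{V(1,4)[r]}$ and $(\ ,\ )|_{P_4(V)}$.

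\emph{Injectivity of $\Theta_r$.} By Proposition~\ref{prop:c1-verma}, the radical of the Shapovalov form on $V(1,4)$ is the embedded $V(1,9)$, which begins at level $5$. So the Shapovalov form is nondegenerate on $V(1,4)[r]$ for $r\le4$. A tensor product of nondegenerate forms is nondegenerate, so the pulled-back form on $V(1,4)[r]\otimes P_4(V)$ is nondegenerate. Consequently $\ker\Theta_r$, which lies in its radical, is zero.

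\emph{Orthogonality and direct sum.} Let $u\in P_4(V)$ and $w=L(-n_1)\cdots L(-n_r)\one$ be a vacuum descendant. Contravariance and $L(n)u=0$ for $n>0$ give $(\phi_u(x),w)=0$ by the same computation as in Lemma~\ref{lem:primary-complement}. Hence the image of $\Theta$ is orthogonal to $L(1,0)$. The image of $\Theta_r$ carries a nondegenerate form, since it is isometric to the source. Therefore it meets $L(1,0)_{4+r}$ trivially, and the sum $L(1,0)_n+\Theta(V(1,4)[n-4]\otimes P_4(V))$ is direct for $4\le n\le 8$.

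\emph{Dimension count.} Since $\dim V(1,4)[r]=p(r)=1,1,2,3,5$ for $r=0,\dots,4$, injectivity gives $\dim\Theta(V(1,4)[r]\otimes P_4(V))=2p(r)$, that is $2,2,4,6,10$. From the table, $\dim V_n-\dim L(1,0)_n$ for $n=4,\dots,8$ is $4-2,\ 4-2,\ 8-4,\ 10-4,\ 17-7$, which is exactly $2,2,4,6,10$. Thus the direct sum fills $V_n$, giving
\[
V_n=L(1,0)_n\oplus \Theta\bigl(V(1,4)[n-4]\otimes P_4(V)\bigr)\qquad(4\le n\le 8).
\]

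The only delicate step is the factorization of the pulled-back form; the rest is bookkeeping against the character table.
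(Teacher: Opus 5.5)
Your proof is correct. The weight-four step and the final dimension count are the same as the paper's. The two arguments differ in how they get injectivity of $\Theta_r$ and the trivial intersection with $L(1,0)$.

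The paper stays inside the Verma module. A nonzero element of $\ker\Theta_r$ of least level is a singular vector of $V(1,4)\otimes P_4(V)$ at level $r<5$. One of its components would then be a nontrivial singular vector of $V(1,4)$ below level five, which contradicts Proposition~\ref{prop:c1-verma}. The paper then gets $L(1,0)\cap\operatorname{Im}\Theta=0$ from the simplicity of $L(1,0)$ and the absence of weight-zero vectors in $\operatorname{Im}\Theta$.

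You instead pull back the invariant form and identify it as $(u,u')\langle\ ,\ \rangle_{\rm Sh}$. You then use two facts: the Shapovalov radical (the embedded $V(1,9)$) starts at level five, and the form is nondegenerate on $P_4(V)$.

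Your route needs the nondegeneracy on $P_4(V)$ already at the injectivity step, which the paper's does not. In return it gives more:
\begin{itemize}
\item The decomposition for $4\le n\le 8$ is orthogonal, with nondegenerate summands.
\item At level five the same factorization immediately gives $\ker\Theta_5\subseteq\C s_5\otimes P_4(V)$.
\item It also gives $S(u)\perp\operatorname{Im}\Theta_5$.
\end{itemize}
The paper establishes the last two facts separately, in Lemma~\ref{lem:k4-kernel} and Theorem~\ref{thm:k4-Szero}.

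One small imprecision: for $x$ of positive level, $\phi_u(x)$ is not primary. So its orthogonality to vacuum descendants is not literally the one-step computation of Lemma~\ref{lem:primary-complement}. To fix this, do one of the following:
\begin{itemize}
\item Move all the lowering operators of $x$ onto the vacuum descendant and use $u\perp L(1,0)_4$.
\item Move all the lowering operators of the vacuum descendant onto $\phi_u(x)$ and use that $U(\Vir)u$ has no weight-zero vectors.
\end{itemize}
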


\begin{proof}
Proposition~\ref{prop:c1-verma} with $m=0$ and PBW give \(\grch L(1,0)=(1-q)P(q).\) Together with Proposition~\ref{prop:low-table}, this gives $V_n=L(1,0)_n$ for $0\le n<4$ and \(\dim V_4=4=\dim L(1,0)_4+2.\) Lemma~\ref{lem:primary-complement} therefore yields \(V_4=L(1,0)_4\mathbin{\perp}P_4(V), \dim P_4(V)=2,\) and the restriction of the invariant form to $P_4(V)$ is nondegenerate.

The level-zero restriction of $\Theta$ is injective, since \(\Theta(v_4\otimes u)=u\) for $u\in P_4(V)$. Suppose that $\Theta_r$ is not injective for some $1\le r\le4$, and choose the least such $r$ and $0\ne x\in\ker\Theta_r$. Since $\Theta$ is a Virasoro homomorphism, for every $n>0$ one has $L(n)x\in\ker\Theta$. If $0<n<r$, then $L(n)x$ lies at level $r-n$ and vanishes by the minimality of $r$; if $n=r$, it lies in the level-zero kernel, which is zero; and if $n>r$, it has negative level and is zero. Thus $x$ is a singular vector of positive level $r<5$ in $V(1,4)\otimes P_4(V)$. After choosing a basis of the two-dimensional space $P_4(V)$, this tensor product is a direct sum of two copies of $V(1,4)$, so at least one nonzero component of $x$ is a singular vector of $V(1,4)$ at level $r$. This contradicts Proposition~\ref{prop:c1-verma}, according to which the first nontrivial singular vector of $V(1,4)$ occurs at level five. Hence $\Theta_r$ is injective for every $0\le r\le4$.

The image of $\Theta$ is a Virasoro submodule supported in conformal weights at least four. Hence \(L(1,0)\cap\operatorname{Im}\Theta=0\): indeed, a nonzero intersection would be a nonzero Virasoro submodule of the simple module $L(1,0)$, therefore all of $L(1,0)$, which is impossible because $\operatorname{Im}\Theta$ contains no weight-zero vector. By PBW, the level-$r$ dimensions of $V(1,4)$ for $0\le r\le4$ are the first five coefficients $1,1,2,3,5$ of $P(q)$. Proposition~\ref{prop:low-table} and \(\grch L(1,0)=(1-q)P(q)\) give \(\dim V_n-\dim L(1,0)_n=2,2,4,6,10 (n=4,5,6,7,8),\) which are exactly twice those level dimensions. Since $\dim P_4(V)=2$ and the relevant restrictions of $\Theta$ are injective, the displayed direct sum has dimension $\dim V_n$ for every $4\le n\le8$, and hence equals $V_n$.
\end{proof}

\begin{lem}\label{lem:k4-kernel}
Let \(\Theta_5:=\Theta|_{V(1,4)[5]\otimes P_4(V)}: V(1,4)[5]\otimes P_4(V)\longrightarrow V_9\). Then \(\ker\Theta_5=\C s_5\otimes\ker S\), and hence \(\dim\operatorname{Im}\Theta_5=12+\operatorname{rank}S.\)
\end{lem}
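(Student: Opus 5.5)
The plan is to run the same minimal-level singular-vector argument as in Lemma~\ref{lem:k4-saturation8}, now at level five. Let $x\in\ker\Theta_5$ be nonzero. Since $\Theta$ is a Virasoro homomorphism and $V(1,4)\otimes P_4(V)$ is a Virasoro module (with $\mathrm{Vir}$ acting on the first factor), $L(n)x\in\ker\Theta$ for all $n>0$. For $1\le n\le 4$, the vector $L(n)x$ lies in $V(1,4)[5-n]\otimes P_4(V)$, where $\Theta$ is injective by Lemma~\ref{lem:k4-saturation8} (the level-zero case is included, via $\Theta(v_4\otimes u)=u$). For $n=5$, $L(5)x$ lies at level zero, where $\Theta$ is again injective. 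For $n>5$, $L(n)x$ has negative level and is zero. Hence $L(n)x=0$ for every $n>0$, so $x$ is a singular vector of level five in $V(1,4)\otimes P_4(V)$.

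Choose a basis $u_1,u_2$ of $P_4(V)$ and write $x=x_1\otimes u_1+x_2\otimes u_2$. Each $x_i\in V(1,4)[5]$ is then singular or zero. By Proposition~\ref{prop:c1-verma} (with $m=2$), the singular space at level five is $\C s_5$. It follows that $x\in \C s_5\otimes P_4(V)$, so $x=s_5\otimes u$ for some $u\in P_4(V)$. Then $0=\Theta(x)=\phi_u(s_5)=S(u)$, which gives $u\in\ker S$. Conversely, every $s_5\otimes u$ with $u\in\ker S$ lies in $\ker\Theta_5$ by the definition of $S$. This proves $\ker\Theta_5=\C s_5\otimes\ker S$.

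For the dimension count, PBW gives $\dim V(1,4)[5]=7$, the coefficient of $q^5$ in $P(q)$. The domain of $\Theta_5$ therefore has dimension $14$, and the kernel has dimension $\dim\ker S=2-\operatorname{rank}S$. Hence $\dim\operatorname{Im}\Theta_5=14-(2-\operatorname{rank}S)=12+\operatorname{rank}S$.

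The only step needing care is the singularity argument. It relies on injectivity of $\Theta$ at every level $0,\dots,4$, which is exactly what Lemma~\ref{lem:k4-saturation8} supplies, and on the one-dimensionality of the level-five singular space, which Proposition~\ref{prop:c1-verma} supplies.
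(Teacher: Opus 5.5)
Your proposal is correct and follows essentially the same route as the paper: you show every kernel vector is a level-five singular vector using the injectivity at levels $0,\dots,4$ from Lemma~\ref{lem:k4-saturation8}, and you invoke the one-dimensionality of the level-five singular space from Proposition~\ref{prop:c1-verma} to get $\ker\Theta_5=\C s_5\otimes\ker S$. You then finish with rank--nullity using $\dim V(1,4)[5]=7$ and $\dim P_4(V)=2$. Your explicit basis decomposition $x=x_1\otimes u_1+x_2\otimes u_2$ and your stated reverse inclusion just spell out steps the paper leaves implicit.
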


\begin{proof}
Let $x\in\ker\Theta_5$. The Virasoro algebra acts on $V(1,4)\otimes P_4(V)$ through the first tensor factor, and $\Theta$ is a Virasoro homomorphism. Hence, for every $n>0$, \(\Theta(L(n)x)=L(n)\Theta(x)=0\). If $1\le n\le5$, then $L(n)x$ lies in level $5-n$, where the corresponding restriction of $\Theta$ is injective by Lemma~\ref{lem:k4-saturation8}; thus $L(n)x=0$. If $n>5$, then $L(n)x=0$ because $V(1,4)$ has no negative levels. Therefore $x$ is a level-five singular vector.

By Proposition~\ref{prop:c1-verma}, the level-five singular space of $V(1,4)$ is $\C s_5$. Consequently the level-five singular space of $V(1,4)\otimes P_4(V)$ is \(\C s_5\otimes P_4(V)\). On this subspace, \(\Theta_5(s_5\otimes u)=S(u)\), so \(\ker\Theta_5=\C s_5\otimes\ker S.\) By PBW, $\dim V(1,4)[5]$ is the coefficient of $q^5$ in \(P(q)=\prod_{n\ge1}(1-q^n)^{-1}\), namely $7$. Since $\dim P_4(V)=2$ by Lemma~\ref{lem:k4-saturation8}, rank--nullity gives \(\dim\operatorname{Im}\Theta_5 =14-\dim\ker\Theta_5 =14-\dim\ker S =12+\operatorname{rank}S\).
\end{proof}

\begin{thm}\label{thm:k4-Szero}
In the case $k=4$, the first-singular map satisfies $S=0$. Consequently, for every $0\ne u\in P_4(V)$, \(U(\Vir)u\cong X_2=L(1,4).\)
\end{thm}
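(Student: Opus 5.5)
The plan is a dimension count in weight nine, followed by the same orthogonality trick used in Lemma~\ref{lem:first-singular}. From Proposition~\ref{prop:low-table} we have $\dim V_9=21$. From $\grch L(1,0)=(1-q)P(q)$ we get $\dim L(1,0)_9=30-22=8$. The image $\operatorname{Im}\Theta_5$ is a subspace of the Virasoro submodule $\operatorname{Im}\Theta$, and $\operatorname{Im}\Theta\cap L(1,0)=0$ (shown in the proof of Lemma~\ref{lem:k4-saturation8}). So $L(1,0)_9\oplus\operatorname{Im}\Theta_5\subseteq V_9$, and Lemma~\ref{lem:k4-kernel} gives $8+12+\operatorname{rank}S\le21$. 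Hence $\operatorname{rank}S\le1$.

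It remains to rule out $\operatorname{rank}S=1$; this is the main step. In that case the dimensions are saturated, so $V_9=L(1,0)_9\oplus\operatorname{Im}\Theta_5$. Choose $u$ with $s:=S(u)\ne0$. First, $s$ is primary, since $s_5$ is singular and $\phi_u$ is a Virasoro map. By the invariance argument in Lemma~\ref{lem:primary-complement}, $s$ is therefore orthogonal to $L(1,0)_9$.

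Next I show $s\perp\operatorname{Im}\Theta$. For $u,u'\in P_4(V)$, consider the bilinear form $\langle x,y\rangle:=(\phi_u(x),\phi_{u'}(y))$ on $V(1,4)$. By Proposition~\ref{prop:Li-invariant-package}(iii) it is contravariant. It takes the value $(u,u')$ on the highest-weight line. Uniqueness of contravariant forms on the Verma module then gives $\langle\ ,\ \rangle=(u,u')\langle\ ,\ \rangle_{\rm Sh}$; when $(u,u')=0$ the form vanishes on the generator and hence everywhere by contravariance. The vector $s_5$ lies in the radical of the Shapovalov form (Proposition~\ref{prop:c1-verma}). Hence $(S(u),\phi_{u'}(y))=0$ for all $u'$ and all $y$, so $s\perp\operatorname{Im}\Theta$.

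Combining the two orthogonality statements, $s\perp V_9$. This contradicts the nondegeneracy of the invariant form on $V_9$. Therefore $S=0$.

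For the consequence, let $0\ne u\in P_4(V)$. Since $S(u)=0$, the canonical map $\phi_u$ kills $s_5$. By Proposition~\ref{prop:c1-verma}, $s_5$ generates the maximal proper submodule of $V(1,4)$, so $\phi_u$ factors through $X_2=L(1,4)$. The induced map $X_2\to U(\Vir)u$ is nonzero because $u\ne0$, and it is surjective. Since $X_2$ is simple, it is an isomorphism, giving $U(\Vir)u\cong X_2$. I expect the only delicate point to be the uniqueness of the contravariant form for mixed pairs $u\ne u'$. This is handled by noting that any contravariant form on a Verma module is determined by its value on the highest-weight vectors, which here is $(u,u')$.
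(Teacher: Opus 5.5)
Your proof is correct and follows the paper's argument step for step. The steps are: the weight-nine count giving $\operatorname{rank}S\le1$; the saturation $V_9=L(1,0)_9\oplus\operatorname{Im}\Theta_5$ when $\operatorname{rank}S=1$; orthogonality of $s=S(u)$ to all of $V_9$; and factorization of $\phi_u$ through $X_2$. The only difference is in showing $s\perp\operatorname{Im}\Theta_5$: the paper reuses the primarity trick you applied to $L(1,0)_9$, writing elements as $L(-n_1)\cdots L(-n_r)v$ with $n_1>0$ and moving $L(-n_1)$ across as $L(n_1)$, which kills $s$, so your detour through the Shapovalov radical for mixed pairs $u\ne u'$ is valid but unnecessary.
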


\begin{proof}
Proposition~\ref{prop:low-table} gives $\dim V_9=21$, while the Virasoro character computation used in Lemma~\ref{lem:k4-saturation8} gives $\dim L(1,0)_9=8$. By Lemma~\ref{lem:k4-kernel}, \(\dim\operatorname{Im}\Theta_5=12+\operatorname{rank}S\). Also \(L(1,0)\cap\operatorname{Im}\Theta=0\): the intersection is a Virasoro submodule of the simple module $L(1,0)$, hence if nonzero it would equal all of $L(1,0)$, which is impossible because $\operatorname{Im}\Theta$ has no weight-zero vectors. Therefore \(8+12+\operatorname{rank}S =\dim\bigl(L(1,0)_9\oplus\operatorname{Im}\Theta_5\bigr) \le21\), and hence $\operatorname{rank}S\le1$.

Suppose that $\operatorname{rank}S=1$, and choose $u\in P_4(V)$ with $s:=S(u)\ne0$. Since $s_5$ is a level-five singular vector and $\phi_u$ is a Virasoro homomorphism, $s$ is a primary vector of conformal weight nine. The preceding dimension inequality is then an equality, so \(V_9=L(1,0)_9\oplus\operatorname{Im}\Theta_5.\) We show that $s$ is orthogonal to both summands. By PBW, $L(1,0)_9$ is spanned by vacuum descendants \(L(-n_1)\cdots L(-n_r)\one\) of total degree nine. Invariance of the bilinear form moves the negative Virasoro modes to positive modes acting on $s$, and these annihilate $s$ because $s$ is primary. Hence \(s\perp L(1,0)_9\). Likewise, $\operatorname{Im}\Theta_5$ is spanned by vectors of the form \(L(-n_1)\cdots L(-n_r)v\), where $v\in P_4(V)$ and $n_1+\cdots+n_r=5$. Again invariance and primarity give \(\bigl(s,L(-n_1)\cdots L(-n_r)v\bigr)=0.\) Thus $s\perp V_9$. By Proposition~\ref{prop:Li-invariant-package}, the invariant form restricts nondegenerately to $V_9$, so $s=0$, a contradiction. Therefore $\operatorname{rank}S=0$, and hence $S=0$.

Now let $0\ne u\in P_4(V)$. The canonical homomorphism \(\phi_u:V(1,4)\to V\) is nonzero because it sends the highest-weight vector $v_4$ to $u$. Since $S=0$, one has $\phi_u(s_5)=0$. By Proposition~\ref{prop:c1-verma}, the submodule generated by $s_5$ is the unique maximal proper submodule of $V(1,4)$, and the corresponding simple quotient is $X_2=L(1,4)$. Hence $\phi_u$ factors through a nonzero Virasoro homomorphism \(\overline\phi_u:X_2=L(1,4)\longrightarrow V.\) Since $X_2$ is simple, $\overline\phi_u$ is injective, and its image is exactly $U(\Vir)u$. Therefore \(U(\Vir)u\cong X_2=L(1,4)\).
\end{proof}

\subsection{The regular spin-two orbit}

\begin{lem}\label{lem:complex-symmetric}
Let $A$ be a symmetric $3\times3$ complex matrix with three distinct eigenvalues. Then there exists $g\in SO_3(\C)$ such that $gAg^{-1}$ is diagonal.
\end{lem}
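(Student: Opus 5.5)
The plan is to reproduce, for simple eigenvalues, the argument already used in the proof of Lemma~\ref{lem:quadratic-orbit}: find an eigenbasis of $A$ that is orthonormal for the standard symmetric bilinear form $\langle x,y\rangle=x^{\mathsf T}y$ on $\C^3$, and then fix the determinant.

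First, since the three eigenvalues $a_1,a_2,a_3$ are distinct, $A$ is diagonalizable, and each eigenspace is a line $\C v_i$. For $i\ne j$, the symmetry $A^{\mathsf T}=A$ gives
\[
a_i\langle v_i,v_j\rangle=\langle Av_i,v_j\rangle=\langle v_i,Av_j\rangle=a_j\langle v_i,v_j\rangle,
\]
so $\langle v_i,v_j\rangle=0$.

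Second, each $v_i$ is non-isotropic. If $\langle v_i,v_i\rangle=0$, then $v_i$ would be orthogonal to itself and to the other two eigenvectors, hence to the whole basis $\{v_1,v_2,v_3\}$ of $\C^3$. This contradicts the nondegeneracy of $\langle\ ,\ \rangle$. This step is the only place where complex (rather than real) symmetric matrices need care, since isotropic vectors exist over $\C$. It is handled exactly as the nondegeneracy of the $2\lambda$-eigenline was handled in Lemma~\ref{lem:quadratic-orbit}.

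Third, choose square roots and set $u_i=v_i/\sqrt{\langle v_i,v_i\rangle}$, which gives an orthonormal eigenbasis. Let $P$ be the matrix with columns $u_1,u_2,u_3$. Then $P^{\mathsf T}P=I_3$, so $P\in O_3(\C)$ and $\det P=\pm1$. If $\det P=-1$, replace $u_1$ by $-u_1$; this keeps the basis an orthonormal eigenbasis and makes the determinant $1$. Now $P^{-1}AP=\operatorname{diag}(a_1,a_2,a_3)$, so $g:=P^{-1}=P^{\mathsf T}\in SO_3(\C)$ satisfies $gAg^{-1}=\operatorname{diag}(a_1,a_2,a_3)$.

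No step is a genuine obstacle. The only point requiring attention is the non-isotropy argument in the second step, which must use the full orthogonality relations among the eigenvectors and not merely a single pairing.
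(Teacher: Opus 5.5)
Your proposal is correct and follows essentially the same argument as the paper: orthogonality of the eigenlines from $A^{\mathsf T}=A$, non-isotropy of each eigenvector from nondegeneracy of the standard form, normalization by square roots, a sign flip to land in $SO_3(\C)$, and $g=P^{-1}$. Nothing further is needed.
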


\begin{proof}
Let $\beta(v,w)=v^{\mathsf T}w$ be the standard nondegenerate symmetric bilinear form on $\C^3$. Since $A^{\mathsf T}=A$, one has $\beta(Av,w)=\beta(v,Aw)$. Choose eigenvectors $v_i\ne0$ with $Av_i=\lambda_i v_i$, where the $\lambda_i$ are pairwise distinct. Then \((\lambda_i-\lambda_j)\beta(v_i,v_j)=0,\) so the three eigenlines are pairwise orthogonal. As the eigenvalues are distinct, $v_1,v_2,v_3$ form a basis of $\C^3$. Moreover, $\beta(v_i,v_i)\ne0$ for every $i$: otherwise $v_i$ would be orthogonal to each basis vector and hence to all of $\C^3$, contradicting the nondegeneracy of $\beta$.

Because $\C$ is algebraically closed, each $v_i$ may be rescaled to a vector $u_i$ with $\beta(u_i,u_i)=1$. Thus $u_1,u_2,u_3$ is an orthonormal eigenbasis. Let $P$ be the matrix with columns $u_1,u_2,u_3$. Then $P^{\mathsf T}P=I_3$, so $P\in O_3(\C)$ and $\det P=\pm1$. Replacing one $u_i$ by $-u_i$ if necessary, we may assume $\det P=1$, hence $P\in SO_3(\C)$. Finally, \(P^{-1}AP=\operatorname{diag}(\lambda_1,\lambda_2,\lambda_3).\) Taking $g=P^{-1}\in SO_3(\C)$ proves the claim.
\end{proof}

Retain $q(A)=\operatorname{tr}(A^2)$ and define the cubic invariant \(d(A):=\operatorname{tr}(A^3)\) for $A\in E$. Newton's identities give \(\det(tI_3-A)=t^3-\frac12q(A)t-\frac13d(A)\), so the discriminant of the characteristic polynomial is \(\Delta(A):=\frac12q(A)^3-3d(A)^2.\) Thus the regular locus \(E_{\mathrm{reg}}:=\{A\in E:\Delta(A)\ne0\}\) consists precisely of the matrices with three distinct eigenvalues. It is a nonempty Zariski-open subset of the affine space $E$, since it contains $\operatorname{diag}(1,0,-1)$, and hence is dense in $E$.

Let \(\mathfrak a :=\{\operatorname{diag}(x_1,x_2,x_3):x_1+x_2+x_3=0\} \subset E\) be the traceless diagonal subspace, and consider the restriction map \(\operatorname{res}:\C[E]^G\longrightarrow\C[\mathfrak a].\) By Lemma~\ref{lem:complex-symmetric}, every element of $E_{\mathrm{reg}}$ is $G$-conjugate to an element of $\mathfrak a$ with three distinct diagonal entries. Hence, if $f\in\C[E]^G$ satisfies $\operatorname{res}(f)=0$, then $f$ vanishes on $E_{\mathrm{reg}}$ and therefore, by density, on all of $E$. Thus $\operatorname{res}$ is injective.

Every permutation of the diagonal entries is induced by an element of $SO_3(\C)$. Indeed, for a permutation matrix $P_\sigma$, choose a diagonal sign matrix $D_\sigma$ with $\det D_\sigma=\det P_\sigma$ and set $g_\sigma=D_\sigma P_\sigma\in SO_3(\C)$. Since $D_\sigma$ centralizes $\mathfrak a$, conjugation by $g_\sigma$ permutes the diagonal entries according to $\sigma$. Therefore \(\operatorname{res}(f)\in\C[\mathfrak a]^{S_3}\) for every $f\in\C[E]^G$.

By the fundamental theorem of symmetric polynomials, \(\C[\mathfrak a]^{S_3}=\C[e_2,e_3]\), where $e_i$ are the elementary symmetric functions in $x_1,x_2,x_3$ and $e_1=x_1+x_2+x_3=0$. Newton's identities give \(e_2=-\frac12q|_{\mathfrak a}, e_3=\frac13d|_{\mathfrak a}\). Since $q,d\in\C[E]^G$, every restricted invariant is the restriction of a polynomial in $q$ and $d$. The injectivity of $\operatorname{res}$ therefore yields \(\C[E]^G=\C[q,d].\)

\begin{lem}\label{lem:regular-fiber}
Let $c_2,c_3\in\C$ satisfy \(\frac12c_2^3-3c_3^2\ne0.\) Then the affine scheme \(\mathcal F_{c_2,c_3} =\Spec\frac{\Sym(E^*)}{(q-c_2,d-c_3)}\) is smooth and reduced, and \(\mathcal F_{c_2,c_3}\cong G/D_2,\) where $D_2\cong(\Z/2)^2$ is the Klein four subgroup of coordinate-axis half-turns.
\end{lem}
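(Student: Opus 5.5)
The argument follows the pattern of Lemma~\ref{lem:quadratic-orbit}: determine the closed points and show $G$ is transitive on them, compute the stabilizer, then use the Jacobian criterion to get smoothness and hence a scheme-theoretic identification with the orbit.

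\emph{Closed points.} A closed point $A\in\mathcal F_{c_2,c_3}(\C)$ is a traceless symmetric matrix with $q(A)=c_2$ and $d(A)=c_3$. By Newton's identities its characteristic polynomial is
\[
t^3-\tfrac12c_2t-\tfrac13c_3 .
\]
The hypothesis says $\Delta(A)\ne0$, so this polynomial has three distinct roots $\lambda_1,\lambda_2,\lambda_3$, the same for every point of the fiber. By Lemma~\ref{lem:complex-symmetric}, every such $A$ is $G$-conjugate to $A_0:=\operatorname{diag}(\lambda_1,\lambda_2,\lambda_3)$. Hence $\mathcal F_{c_2,c_3}(\C)=G\cdot A_0$. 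The ideal is $G$-stable because $q,d$ are invariants, so this set is a single closed orbit.

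\emph{Stabilizer.} An element $g\in G_{A_0}$ preserves each of the three one-dimensional eigenspaces $\C e_i$. So $g$ is diagonal with entries $\pm1$ and determinant one. These are exactly the identity and the three coordinate half-turns, so $G_{A_0}=D_2$. Conversely, every element of $D_2$ commutes with $A_0$. The orbit is therefore $G\cdot A_0\cong G/D_2$, of dimension $3$.

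\emph{Scheme structure.} Compute $dq$ and $dd$ at $A_0$ on $B\in E$:
\[
(dq)_{A_0}(B)=2\operatorname{tr}(A_0B),\qquad (dd)_{A_0}(B)=3\operatorname{tr}(A_0^2B).
\]
Both depend only on the diagonal entries $(b_{11},b_{22},b_{33})$, which lie in the plane $b_{11}+b_{22}+b_{33}=0$. They are linearly independent there exactly when $(1,1,1)$, $(\lambda_i)$ and $(\lambda_i^2)$ are independent, which is a Vandermonde condition. This holds since the $\lambda_i$ are distinct. By the Jacobian criterion the fiber is smooth of codimension $2$ in the $5$-dimensional space $E$, i.e.\ of dimension $3$, at $A_0$. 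By $G$-homogeneity this holds at every closed point, and since finite-type schemes are Jacobson, $\mathcal F_{c_2,c_3}$ is smooth and hence reduced.

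\emph{Conclusion.} The fiber and the reduced orbit $G\cdot A_0$ are reduced closed subschemes of $E$ with the same $\C$-points. The Nullstellensatz consequence of Subsection~\ref{subsec:AG-conventions} then identifies them, so $\mathcal F_{c_2,c_3}\cong G/D_2$.

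The only step with real content is the Jacobian independence; it reduces to the Vandermonde determinant above, so no serious obstacle is expected.
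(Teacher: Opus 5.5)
Your proof is correct and follows the paper's argument essentially step for step: closed points via Lemma~\ref{lem:complex-symmetric}, stabilizer $D_2$, the Jacobian criterion plus the Jacobson property for smoothness, then the Nullstellensatz identification with the reduced orbit. The only difference is minor. You check independence of $dq,dd$ at the diagonal representative $A_0$ by a Vandermonde determinant and transport it by $G$-homogeneity, whereas the paper checks it at an arbitrary closed point using $E^\perp=\C I_3$ for the trace pairing. Two small steps you should make explicit: conjugating to exactly $A_0$, rather than some permutation of it, uses the paper's remark that permutations of the diagonal entries are realized in $SO_3(\C)$; and $A_0$ lies in the fiber by Newton's identities.
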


\begin{proof}
For a traceless $3\times3$ matrix $A$, Newton's identities give \(\det(tI_3-A) =t^3-\frac12q(A)t-\frac13d(A)\). Set \(f(t)=t^3-\frac12c_2t-\frac13c_3.\) Its discriminant is \(\frac12c_2^3-3c_3^2\), so by hypothesis it has three distinct roots $x_1,x_2,x_3$. Their sum is zero, and Newton's identities give \(\sum_i x_i^2=c_2\) and \(\sum_i x_i^3=c_3\). Hence \(A_0:=\operatorname{diag}(x_1,x_2,x_3)\in \mathcal F_{c_2,c_3}(\C).\)

Let $A\in\mathcal F_{c_2,c_3}(\C)$. Its characteristic polynomial is $f$, so its eigenvalues are precisely $x_1,x_2,x_3$. By Lemma~\ref{lem:complex-symmetric}, $A$ is conjugate under $G\cong SO_3(\C)$ to a diagonal matrix with these entries in some order. As observed above, every permutation of the diagonal entries is induced by an element of $SO_3(\C)$. Thus \(\mathcal F_{c_2,c_3}(\C)=G\cdot A_0.\) Since the eigenvalues of $A_0$ are distinct, an element of $G$ stabilizes $A_0$ precisely when it preserves each coordinate eigenline. Hence
\[
G_{A_0}
 =\{\operatorname{diag}(\varepsilon_1,\varepsilon_2,\varepsilon_3):
   \varepsilon_i\in\{\pm1\},\
   \varepsilon_1\varepsilon_2\varepsilon_3=1\}
 \cong(\Z/2)^2=:D_2.
\]

It remains to verify that the scheme-theoretic fiber has no nilpotent thickening. For $A,H\in E$, \((dq)_A(H)=2\operatorname{tr}(AH), (dd)_A(H)=3\operatorname{tr}(A^2H).\) The trace pairing on $\Sym_3(\C)$ is nondegenerate, and $E=\Sym^2_0(\C^3)$ is the trace-orthogonal complement of $\C I_3$. Consequently \(E^\perp=\C I_3\) inside $\Sym_3(\C)$, and the trace pairing restricts nondegenerately to $E$. At every $A\in\mathcal F_{c_2,c_3}(\C)$ the eigenvalues are distinct, so $A\ne0$ and therefore $(dq)_A\ne0$ on $E$.

Suppose that $(dq)_A$ and $(dd)_A$ were linearly dependent. Then, since $(dq)_A\ne0$, there would be $\mu\in\C$ such that $(dd)_A=\mu(dq)_A$ on $E$. Thus \(\operatorname{tr}\bigl((3A^2-2\mu A)H\bigr)=0 (H\in E).\) Since $E^\perp=\C I_3$, there is $\nu\in\C$ such that \(3A^2-2\mu A=\nu I_3.\) Every eigenvalue of $A$ would then be a root of the quadratic polynomial \(3t^2-2\mu t-\nu\), contradicting the existence of three distinct eigenvalues. Hence the Jacobian of $(q,d)$ has rank two at every closed point of the fiber.

By the Jacobian criterion recalled in Subsection~\ref{subsec:AG-conventions}, every closed point of $\mathcal F_{c_2,c_3}$ is smooth of codimension two in the five-dimensional affine space $E$. Since the nonsmooth locus is closed and a finite-type $\C$-scheme is Jacobson, a nonempty nonsmooth locus would contain a closed point. Therefore $\mathcal F_{c_2,c_3}$ is smooth everywhere, of dimension three, and in particular is reduced.

Let $\mathcal O:=G\cdot A_0$. Since $q$ and $d$ are $G$-invariant, $\mathcal O\subseteq\mathcal F_{c_2,c_3}$. By the orbit facts recalled in Subsection~\ref{subsec:AG-conventions}, $\mathcal O$ is locally closed and \(\mathcal O\cong G/G_{A_0}=G/D_2.\) Every closed point of $\mathcal F_{c_2,c_3}$ lies in $\mathcal O$. Hence the constructible complement \(|\mathcal F_{c_2,c_3}|\setminus|\mathcal O|\) is empty: otherwise, by the Jacobson property recalled in the same subsection, it would contain a closed point. Thus \(|\mathcal F_{c_2,c_3}|=|\mathcal O|\), so the orbit is closed in $E$. Endow $\mathcal O$ with its reduced induced scheme structure. Then $\mathcal F_{c_2,c_3}$ and $\mathcal O$ are reduced closed subschemes of $E$ with the same $\C$-points. The Nullstellensatz consequence recalled in Subsection~\ref{subsec:AG-conventions} therefore gives \(\mathcal F_{c_2,c_3}=\mathcal O\cong G/D_2\) scheme-theoretically.
\end{proof}

\begin{lem}\label{lem:k4-parameters}
Put $P:=P_4(V)$, and let $G$ act trivially on the multiplicity space $P$. There is a commutative rational $G$-algebra $B_P$, generated by $E\otimes P$, with the following properties.
\begin{enumerate}[label=(\roman*),leftmargin=2.1em]
\item For every $u\in P$, if $B_u\subseteq B_P$ denotes the unital
$G$-subalgebra generated by $E\otimes u$, then under $\widehat\Phi$ it corresponds to the vertex operator subalgebra \(W_u:=\langle L(1,0),u\rangle_{\mathrm{VOA}}.\)
\item Let
\(\pi_u:\Sym(E^*)\longrightarrow B_u, \xi\longmapsto \iota_E(\xi)\otimes u,\) be the induced $G$-algebra epimorphism. Then \(\pi_u(q)=q_V(u)\one, \pi_u(d)=d_V(u)\one,\) for homogeneous forms \(q_V\in\Sym^2(P^*)\) and \(d_V\in\Sym^3(P^*)\).
\item There is a scalar $\kappa\in\C^\times$ such that
\(q_V(u)=\kappa (u,u) (u\in P),\) where $(\,\cdot\,,\,\cdot\,)$ is the invariant bilinear form of $V$ restricted to $P$. In particular, $q_V$ is nondegenerate.
\end{enumerate}
\end{lem}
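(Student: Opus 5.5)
Set $U_P:=\langle L(1,0),P\rangle_{\mathrm{VOA}}$, where $P=P_4(V)$. By Theorem~\ref{thm:k4-Szero}, $U(\Vir)u\cong X_2$ for every $0\ne u\in P$. Choosing a basis $u_1,u_2$ of $P$, the sum $L(1,0)+U(\Vir)u_1+U(\Vir)u_2$ is a direct sum. The argument is as in Lemma~\ref{lem:k4-saturation8}: $\Theta$ is injective below level five, and the only possible kernel at level five is $\C s_5\otimes\ker S$, which is now all of $\C s_5\otimes P$. This shows that the image is $L(1,0)\oplus X_2\otimes P$. Theorem~\ref{thm:square-core} then gives $U_P\in\IndCMY(\Csq)$, and we set $B_P:=\widehat\Phi^{-1}(U_P)$. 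The canonical morphism $X_2\otimes P\to U_P$, $x\otimes u\mapsto\phi_u(x)$, is $\widehat\Phi$ of an embedding $E\otimes P\hookrightarrow B_P$, with $G$ acting trivially on $P$. Lemma~\ref{lem:finite-generation} shows that $E\otimes P$ generates $B_P$.

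For (i), let $B_u$ be the $G$-subalgebra generated by $E\otimes u$. It is a $G$-stable subalgebra, so $\widehat\Phi(B_u)\hookrightarrow U_P$ is a monomorphism of commutative algebra objects. By Proposition~\ref{prop:subalgebra-interface}, it is a vertex operator subalgebra containing $L(1,0)$ and $U(\Vir)u$, hence contains $W_u$. For the reverse inclusion, apply $\widehat\Phi^{-1}$ to the inclusion $W_u\subseteq U_P$; this is legitimate because $W_u\in\IndCMY(\Csq)$ by Theorem~\ref{thm:square-core}. We get a $G$-subalgebra of $B_P$ containing $E\otimes u$, hence containing $B_u$. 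The two inclusions agree, so $\widehat\Phi(B_u)=W_u$. The case $u=0$ is trivial.

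For (ii), $B_P^G\cong\Hom(X_0,U_P)=\C$, because $(U_P)_0=\C\one$. The map $\xi\mapsto\iota_E(\xi)\otimes u$ extends to an algebra map $\Sym(E^*)\to B_P$ with image $B_u$. Since $q$ and $d$ are $G$-invariant, their images are scalars $q_V(u)$ and $d_V(u)$. The map $\pi_u$ depends linearly on $u$ in degree one and is multiplicative. Writing $\pi_u(q)=\sum\pi_u(x_i)\pi_u(x_i)$ shows that $q_V(u)$ is the value at $(u,u)$ of the symmetric bilinear map $P\otimes P\to B_P^G$ given by $(u,w)\mapsto\sum (e_i\otimes u)(e_i\otimes w)$. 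Hence $q_V\in\Sym^2(P^*)$, and similarly $d_V\in\Sym^3(P^*)$, because $d$ is cubic.

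For (iii), the main obstacle is identifying this bilinear map with the invariant form up to a universal constant. Write $\beta_P(u,w):=p_0\bigl(\sum_i(e_i\otimes u)(e_i\otimes w)\bigr)$. Under $\widehat\Phi$, the trivial-channel component $E\otimes E\to\C$ of the multiplication restricted to $(E\otimes u)\otimes(E\otimes w)$ corresponds to the vacuum channel $X_2\otimes X_2\to X_0$ of $Y_V$ restricted to $U(\Vir)u\otimes U(\Vir)w$. That component is a multiple $\gamma(u,w)\tau$ of the trace form. Its $P(1)$-intertwining map, evaluated on the highest-weight vectors, has weight-zero coefficient $u_7w=(u,w)\one$ by Lemma~\ref{lem:vacuum-channel} with $h=4$. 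The identification of $\Hom_G(E\otimes E,\C)$ with the vacuum-channel intertwiners $\mathcal V^{X_0}_{X_2X_2}$ is a fixed linear isomorphism, independent of $u$ and $w$, and a nonzero intertwiner of this type is detected by its top coefficient. Hence $\gamma(u,w)=\kappa'(u,w)$ for a universal $\kappa'\ne0$, and therefore $q_V(u)=\kappa'\tau(q^\vee)(u,u)=5\kappa'(u,u)$. Setting $\kappa:=5\kappa'\ne0$ gives (iii). Nondegeneracy of $q_V$ then follows from Lemma~\ref{lem:k4-saturation8}.
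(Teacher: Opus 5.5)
Your proposal is correct and follows the paper's proof essentially step for step. You factor $\Theta$ through $X_2\otimes P$ using $S=0$, invoke Theorem~\ref{thm:square-core} and Lemma~\ref{lem:finite-generation}, and prove (i) by the same two inclusions via Proposition~\ref{prop:subalgebra-interface}. In (iii) you match the unit channel of the multiplication, up to a universal nonzero constant, with $u_7w=(u,w)\one$ from Lemma~\ref{lem:vacuum-channel}, and then contract with $\tau(\check q_E)=5$.

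One step needs a sharper justification: injectivity of $X_2\otimes P\to V$ above level five. The level-five kernel analysis does not give this by itself. It follows, as in the paper, because every nonzero submodule of the semisimple module $X_2\otimes P$ meets the weight-four top, and there the map is the identity onto $P_4(V)$.
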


\begin{proof}
By Theorem~\ref{thm:k4-Szero}, the map \(\Theta:V(1,4)\otimes P\to V\) from the preceding subsection kills $s_5\otimes P$. Proposition~\ref{prop:c1-verma} identifies the quotient of $V(1,4)$ by the submodule generated by $s_5$ with $X_2=L(1,4)$. Hence $\Theta$ factors through a Virasoro homomorphism \(\overline\Theta:X_2\otimes P\longrightarrow V\) whose restriction to the highest-weight space is the identity $P\to P_4(V)$. Since $X_2\otimes P$ is a finite direct sum of copies of the simple module $X_2$, it is semisimple; any nonzero submodule meets its highest-weight space. Thus $\overline\Theta$ is injective. We henceforth identify $X_2\otimes P$ with its image in $V$.

Let \(U_P:=\langle L(1,0),X_2\otimes P\rangle_{\mathrm{VOA}}.\) Theorem~\ref{thm:square-core} gives $U_P\in\IndCMY(\Csq)$. Set \(B_P:=\widehat\Phi^{-1}(U_P)\in\CAlg(\Rep^{\mathrm{rat}}G).\) Since $U_P$ is generated by $L(1,0)$ and the finite direct sum $X_2\otimes P$, Lemma~\ref{lem:finite-generation} shows that $B_P$ is generated by the corresponding $G$-submodule $E\otimes P$. Moreover, full faithfulness and $\widehat\Phi(\C)=X_0=L(1,0)$ give \(B_P^G=\Hom_G(\C,B_P) \cong \Hom_{L(1,0)}(X_0,U_P)=\C,\) because $(U_P)_0=\C\one$ and the vacuum embedding supplies the unique copy of $X_0$.

Fix $u\in P$. The equivariant map \(j_u:E^*\to B_P\), \(j_u(\xi)=\iota_E(\xi)\otimes u\), extends uniquely to a $G$-algebra homomorphism \(\pi_u:\Sym(E^*)\to B_P\); its image is the unital $G$-subalgebra $B_u$ generated by $E\otimes u$. On the VOA side, put \(W_u=\langle L(1,0),u\rangle_{\mathrm{VOA}}.\) Theorem~\ref{thm:square-core} gives $W_u\in\IndCMY(\Csq)$. Let \(\widetilde B_u:=\widehat\Phi^{-1}(W_u)\subseteq B_P\); the inclusion $W_u\hookrightarrow U_P$ corresponds to a monomorphism of commutative $G$-algebras. Since $\widetilde B_u$ contains $E\otimes u$, minimality of $B_u$ gives $B_u\subseteq\widetilde B_u$.

Conversely, $B_u\hookrightarrow B_P$ is a monomorphism of commutative algebra objects. Its image under $\widehat\Phi$ is an $L(0)$-stable submodule of the grading-restricted VOA $U_P$, so it is lower bounded with finite-dimensional homogeneous spaces. Proposition~\ref{prop:subalgebra-interface} therefore identifies $\widehat\Phi(B_u)$ with a vertex operator subalgebra of $U_P$. It contains $L(1,0)$ and the copy of $X_2$ whose highest-weight vector is $u$, hence it contains $W_u$. Applying $\widehat\Phi^{-1}$ gives $\widetilde B_u\subseteq B_u$. Therefore $B_u=\widetilde B_u$, proving~(i).

Since $q,d\in\Sym(E^*)^G$ and $B_P^G=\C\one$, there are unique scalars $q_V(u),d_V(u)$ such that \(\pi_u(q)=q_V(u)\one\) and \(\pi_u(d)=d_V(u)\one\). The map $u\mapsto j_u$ is linear. Hence, for a homogeneous polynomial $f\in\Sym^r(E^*)$, the scalar obtained from $\pi_u(f)$ is a homogeneous polynomial of degree $r$ in $u$. Thus $q_V\in\Sym^2(P^*)$ and $d_V\in\Sym^3(P^*)$, proving~(ii).

It remains to identify $q_V$. Because $B_P$ is a semisimple rational $G$-module and $B_P^G=\C\one$, let $p_0:B_P\to\C\one$ be the $G$-equivariant projection onto the trivial isotypic component. Write $\tau(x,y)=\operatorname{tr}(xy)$ for the fixed trace form on $E$. Since \(\Hom_G(E\otimes E,\C)=\C\tau\) and $G$ acts trivially on $P$, there is a unique bilinear form $\beta_P$ on $P$ such that the unit component of multiplication satisfies \(p_0\bigl((x\otimes u)(y\otimes v)\bigr) =\tau(x,y)\beta_P(u,v)\one (x,y\in E,\ u,v\in P)\). Commutativity of $B_P$ and symmetry of $\tau$ imply that $\beta_P$ is symmetric.

Under the fixed tensor equivalence \(\Phi:\Rep G\overset\sim\longrightarrow\Csq\), the nondegenerate contraction $\tau:E\otimes E\to\C$ is an evaluation morphism for the self-duality of $E$. A symmetric monoidal equivalence preserves duality, so $\Phi(\tau):X_2\boxtimes X_2\to X_0$ is an evaluation morphism for a self-duality of $X_2$. The corresponding invariant pairing on $X_2$ is nondegenerate. Since distinct conformal-weight spaces are orthogonal, its restriction to the one-dimensional highest-weight space of $X_2$ is nonzero. Consequently there is a scalar $\gamma_E\in\C^\times$, depending only on the fixed normalization of $\tau$ and the tensor equivalence, such that for $u,v\in P$ the vacuum component of their product in $U_P$ is \(\gamma_E\beta_P(u,v)z^{-8}\one\). Lemma~\ref{lem:vacuum-channel}, applied in $V$ with $h=4$, gives the same coefficient as \((u,v)z^{-8}\one\). Hence \(\beta_P(u,v)=\gamma_E^{-1}(u,v) (u,v\in P).\)

Finally, under the trace self-duality $\iota_E:E^*\overset\sim\to E$, the tensor \(\check q_E:=(\iota_E\otimes\iota_E)(q)\in(\Sym^2E)^G\) is the inverse Casimir tensor for $\tau$, so \(\tau(\check q_E)=\dim E=5\). Since $\pi_u(q)$ is already scalar, applying $p_0$ to its expression through the multiplication of $B_P$ gives \(q_V(u)=5\beta_P(u,u)=5\gamma_E^{-1}(u,u).\) Thus~(iii) holds with $\kappa=5\gamma_E^{-1}\ne0$. The restriction of the invariant form to $P=P_4(V)$ is nondegenerate by Lemma~\ref{lem:k4-saturation8}; therefore $q_V$ is nondegenerate.
\end{proof}

\begin{lem}\label{lem:regular-primary}
Call $u\in P_4(V)$ \emph{regular} if \(\Delta_V(u):=\frac12q_V(u)^3-3d_V(u)^2\ne0.\) Then regular vectors exist in $P_4(V)$. Equivalently, if \(c_2(u):=q_V(u)\) and \(c_3(u):=d_V(u)\) are the scalar parameters associated with $B_u$, then $u$ is regular precisely when \(\frac12c_2(u)^3-3c_3(u)^2\ne0\).
\end{lem}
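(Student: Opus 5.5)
The plan is to show that the degree-six form $\Delta_V=\frac12q_V^3-3d_V^2$ on the two-dimensional space $P=P_4(V)$ is not the zero polynomial. A nonzero polynomial function on $P\cong\C^2$ does not vanish identically on the $\C$-points, so some $u\in P_4(V)$ has $\Delta_V(u)\ne0$; this is a regular vector. The ``equivalently'' clause needs no work. By Lemma~\ref{lem:k4-parameters}(ii), the scalars attached to $B_u$ through $\pi_u(q)=q_V(u)\one$ and $\pi_u(d)=d_V(u)\one$ are exactly $c_2(u)=q_V(u)$ and $c_3(u)=d_V(u)$. So $\Delta_V(u)=\frac12c_2(u)^3-3c_3(u)^2$, which is the discriminant condition appearing in Lemma~\ref{lem:regular-fiber}.

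To prove $\Delta_V\ne0$, I will work in the polynomial ring $\Sym(P^*)\cong\C[x,y]$ and use unique factorization. By Lemma~\ref{lem:k4-parameters}(iii), $q_V=\kappa(\,\cdot\,,\,\cdot\,)|_P$ with $\kappa\ne0$, and this binary quadratic form is nondegenerate by Lemma~\ref{lem:k4-saturation8}. Over $\C$ a nondegenerate binary quadratic form has nonzero discriminant, so it factors as a product of two non-proportional linear forms. After a linear change of coordinates on $P$ we may therefore take $q_V=xy$.

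Now suppose, for contradiction, that $\Delta_V=0$, that is, $x^3y^3=6\,d_V^2$ in $\C[x,y]$. The left side is nonzero, so $d_V\ne0$. The linear forms $x$ and $y$ are non-associate primes of the UFD $\C[x,y]$, and each divides the left side to exactly the odd power $3$. On the right side every prime divides $6d_V^2$ to an even power. This contradicts unique factorization, so $\Delta_V\ne0$ and regular vectors exist.

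The only real input is the nondegeneracy of $q_V$, which is already established in Lemma~\ref{lem:k4-parameters}(iii). No genuine obstacle remains beyond making sure the splitting of $q_V$ into two distinct linear factors is used, since a degenerate $q_V$ (a square of a linear form) would defeat the parity argument. If one wants to avoid factorization language, the same conclusion follows by evaluating on the line $y=1$: there $x^3=6\,d_V(x,1)^2$, and comparing the orders of vanishing at $x=0$ gives an odd number on the left and an even number on the right.
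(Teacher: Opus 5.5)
Your proposal is correct and follows the paper's proof essentially verbatim: if $\Delta_V$ vanished identically, then $q_V^3=6d_V^2$ in $\C[P]\cong\C[x,y]$, and the nondegenerate binary form $q_V$ splits into two non-proportional linear factors, each occurring to the odd power $3$ on the left but to an even power on the right, which contradicts unique factorization. Your normalization $q_V=xy$ and the order-of-vanishing check on the line $y=1$ are only cosmetic variants of this same parity argument.
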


\begin{proof}
Set $P=P_4(V)$ and \(\Delta_V:=\frac12q_V^3-3d_V^2\in\Sym^6(P^*)=\C[P]\). Suppose, to the contrary, that $\Delta_V(u)=0$ for every $u\in P$. Since a polynomial function on the affine space $P$ that vanishes at all of its $\C$-points is the zero polynomial, we obtain the identity \(q_V^3=6d_V^2 \text{in }\C[P].\) By Lemma~\ref{lem:k4-saturation8}, $\dim P=2$, and by Lemma~\ref{lem:k4-parameters}, $q_V$ is nondegenerate. Hence, over $\C$, there are distinct nonproportional linear forms $\ell_1,\ell_2\in P^*$ and a scalar $a\in\C^\times$ such that \(q_V=a\ell_1\ell_2\). The coordinate ring \(\C[P]\cong\C[x,y]\) is a unique factorization domain. Thus the irreducible factor $\ell_1$ occurs with multiplicity three in $q_V^3$, whereas every irreducible factor of the square $6d_V^2$ occurs with even multiplicity, since $6\in\C^\times$ is a unit. This contradiction proves the claim.
\end{proof}

\begin{thm}\label{thm:k4-D2}
Let $u\in P_4(V)$ be regular in the sense of Lemma~\ref{lem:regular-primary}. Then
\[
\langle L(1,0),u\rangle_{\mathrm{VOA}}\cong V_{A_1}^{D_2}
\]
as vertex operator algebra extensions of $L(1,0)$, where $D_2\cong(\Z/2)^2$ is the coordinate-axis half-turn subgroup of $G\cong SO_3(\C)$ appearing in Lemma~\ref{lem:regular-fiber}.
\end{thm}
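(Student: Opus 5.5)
The plan mirrors the proof of Theorem~\ref{thm:M1plus-core}, with the quadratic-orbit Lemma~\ref{lem:quadratic-orbit} replaced by the regular-fiber Lemma~\ref{lem:regular-fiber}. Put $W_u:=\langle L(1,0),u\rangle_{\mathrm{VOA}}$.

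\emph{Step 1: the algebra $B_u$.} By Theorem~\ref{thm:k4-Szero}, $U(\Vir)u\cong X_2$. So Theorem~\ref{thm:square-core} gives $W_u\in\IndCMY(\Csq)$. By Lemma~\ref{lem:k4-parameters}(i), $W_u$ corresponds under $\widehat\Phi$ to the unital $G$-subalgebra $B_u\subseteq B_P$ generated by $E\otimes u$. The map $\pi_u:\Sym(E^*)\twoheadrightarrow B_u$ is a surjective $G$-algebra homomorphism. By Lemma~\ref{lem:k4-parameters}(ii) it sends $q\mapsto c_2\one$ and $d\mapsto c_3\one$, with $c_2=q_V(u)$ and $c_3=d_V(u)$. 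Hence $\pi_u$ factors through
\[
R:=\Sym(E^*)/(q-c_2,\,d-c_3)\twoheadrightarrow B_u .
\]

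\emph{Step 2: identify $R$ with $B_u$.} Since $u$ is regular, $\frac12c_2^3-3c_3^2\ne0$. Lemma~\ref{lem:regular-fiber} then gives $R\cong\OO(G/D_2)$, a reduced affine homogeneous space. The target $B_u$ is nonzero: it contains the unit, because the unit embedding $L(1,0)\hookrightarrow W_u$ is nonzero. So the kernel of $R\twoheadrightarrow B_u$ is a proper $G$-stable ideal. Lemma~\ref{lem:G-simple-homogeneous} forces this kernel to be zero, so $B_u\cong\OO(G/D_2)$ as commutative rational $G$-algebras.

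\emph{Step 3: apply the fixed-point dictionary.} $D_2$ is finite, hence reductive, and $G/D_2$ is affine; Lemma~\ref{lem:regular-fiber} even realizes it as a closed subvariety of $E$. Corollary~\ref{cor:fixed-point-dictionary} therefore applies. The isomorphism $\widehat\Phi(B_u)\cong\widehat\Phi(\OO(G/D_2))$ of commutative algebra objects transports to an isomorphism of VOA extensions $W_u\cong V_{A_1}^{D_2}$. Here the identification of $W_u$ with $\widehat\Phi(B_u)$ comes from Lemma~\ref{lem:k4-parameters}(i), together with Theorem~\ref{thm:CMY-extension} and Proposition~\ref{prop:subalgebra-interface}.

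\emph{Main obstacle.} The delicate point is that $q$ and $d$ are the \emph{only} relations needed to reach $R$. This requires that $B_u$ really be generated by the single copy $E\otimes u$, with $\pi_u(q)$ and $\pi_u(d)$ scalar. Both facts are supplied by Lemma~\ref{lem:k4-parameters}: it gives $B_P^G=\C$, and the generation statement follows from Lemma~\ref{lem:finite-generation}. Unlike the $M(1)^+$ case, no multiplicity-one argument is required. Once the fiber is known to be a single closed orbit, $G$-simplicity does the rest, and no character comparison is needed.
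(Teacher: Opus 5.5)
Your proposal is correct and matches the paper's proof essentially step for step. You factor $\pi_u$ through $\Sym(E^*)/(q-c_2,d-c_3)$ via Lemma~\ref{lem:k4-parameters}, identify this quotient with $\OO(G/D_2)$ by Lemma~\ref{lem:regular-fiber}, kill the kernel by $G$-simplicity (Lemma~\ref{lem:G-simple-homogeneous}), and conclude with Corollary~\ref{cor:fixed-point-dictionary}; your extra appeals to Theorems~\ref{thm:k4-Szero} and~\ref{thm:square-core}, and to $B_u\ne0$ for properness of the kernel, are already contained in Lemma~\ref{lem:k4-parameters} and in the unitality of the factored map.
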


\begin{proof}
Set \(W_u:=\langle L(1,0),u\rangle_{\mathrm{VOA}}\), \(c_2:=q_V(u)\), and \(c_3:=d_V(u)\). By Lemma~\ref{lem:k4-parameters}\textnormal{(i)--(ii)}, the VOA extension $W_u$ corresponds under $\widehat\Phi$ to the unital commutative rational $G$-algebra $B_u$, and there is a surjective $G$-algebra homomorphism \(\pi_u:\Sym(E^*)\twoheadrightarrow B_u\) satisfying \(\pi_u(q)=c_2\one\) and \(\pi_u(d)=c_3\one\). Consequently the $G$-stable ideal \((q-c_2,d-c_3)\) is contained in $\ker\pi_u$, so $\pi_u$ factors as a surjective unital $G$-algebra homomorphism \(\overline\pi_u: A_u:=\frac{\Sym(E^*)}{(q-c_2,d-c_3)}\twoheadrightarrow B_u.\)

Since $u$ is regular, \(\frac12c_2^3-3c_3^2\ne0\). Lemma~\ref{lem:regular-fiber} therefore identifies the affine $G$-scheme $\Spec A_u$ $G$-equivariantly with $G/D_2$; equivalently, \(A_u\cong\OO(G/D_2)\) as commutative rational $G$-algebras. By Lemma~\ref{lem:G-simple-homogeneous}, $A_u$ has no nonzero proper $G$-stable ideals. The kernel of $\overline\pi_u$ is a $G$-stable ideal, and it is proper because $\overline\pi_u$ is unital. Hence \(\ker\overline\pi_u=0\), and therefore \(B_u\cong\OO(G/D_2)\) as commutative rational $G$-algebras.

The subgroup $D_2$ is finite, hence closed and reductive over $\C$, and $G/D_2$ is affine by Lemma~\ref{lem:regular-fiber}. Thus Corollary~\ref{cor:fixed-point-dictionary} applies and yields \(W_u\cong V_{A_1}^{D_2}\) as VOA extensions of $L(1,0)$.
\end{proof}

\begin{lem}\label{lem:D2-lattice}
Let $D_2\le G$ be the coordinate-axis Klein four subgroup fixed in Lemma~\ref{lem:regular-fiber}. Write $A_1=\Z\gamma$ with $\langle\gamma,\gamma\rangle=2$, and set \(2A_1:=2\Z\gamma\subset A_1.\) Then \(V_{A_1}^{D_2}\cong V_{2A_1}^+\) as vertex operator algebras.
\end{lem}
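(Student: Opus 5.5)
We prove $V_{A_1}^{D_2}\cong V_{2A_1}^+$ by realizing $D_2$ inside $\Aut(V_{A_1})$ concretely through lattice automorphisms and then taking fixed points in two stages. The Klein four group of coordinate-axis half-turns is conjugate in $G\cong\SO_3(\C)$ to any Klein four subgroup generated by two commuting involutions of $G$. All such subgroups are conjugate, since the involutions of $\SO_3(\C)$ form a single class and the centralizer of one involution acts transitively on the commuting involutions. Conjugate subgroups give isomorphic fixed-point algebras. We may therefore replace $D_2$ by the subgroup $\langle\sigma,\theta\rangle$. Here $\theta$ is the standard lift of $-1$ from Subsection~\ref{subsec:rank-one-lattice}. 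The element $\sigma$ is the element of the torus $T_{A_1}$ of Proposition~\ref{prop:rank-one-lattice-package}(v) given by $\sigma(u\otimes e^{n\gamma})=(-1)^n u\otimes e^{n\gamma}$.

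Both generators are involutions, and they commute because $\theta\chi\theta^{-1}=\chi^{-1}=\chi$ for a character $\chi$ of order two. Since $\sigma\ne1$ and $\theta\notin T_{A_1}$, the group $\langle\sigma,\theta\rangle$ has order four, so it is a Klein four group. To confirm that the subgroup sits inside $G$ as intended, check that it is a Klein four subgroup of $\Aut(V_{A_1})$, which is automatic from the above.

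First take $\sigma$-invariants. By the charge decomposition $V_{A_1}=\bigoplus_{n}M(1,n\gamma)$, the fixed space is $V_{A_1}^{\sigma}=\bigoplus_{n\ \mathrm{even}}M(1,n\gamma)$. This is the sub-VOA $V_{2A_1}$, since the lattice construction is compatible with the sublattice $2A_1=2\Z\gamma$ and the restricted cocycle on $2A_1$ is again a valid normalized cocycle. Next, $\theta$ preserves $V_{2A_1}$ and restricts there to a lift of $-1$ on $2A_1$. Any two lifts of $-1$ are conjugate by an automorphism of $V_{2A_1}$ (cf.\ \cite{DG98}), and under that conjugation the fixed points become $V_{2A_1}^+$. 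Hence $V_{A_1}^{\langle\sigma,\theta\rangle}=(V_{2A_1})^{\theta}\cong V_{2A_1}^+$.

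The main obstacle is the conjugacy step: showing that $\langle\sigma,\theta\rangle$ is $G$-conjugate to the coordinate-axis $D_2$ inside $\Aut(V_{A_1})\cong\SO_3(\C)$. I would handle it in the compact group $SO(3)$, where $\sigma$ and $\theta$ are two commuting half-turns. Two commuting nontrivial half-turns have perpendicular axes, and their product is the half-turn about the third axis. A rotation carrying this orthonormal frame of axes to the coordinate frame gives the conjugation. As a cross-check, one can verify the result with characters: the Hilbert-series data give $a_m(D_2)=\lceil (2m+1)/4\rceil$ adjusted by parity, and comparing with $\grch V_{2A_1}^+$ from Proposition~\ref{prop:rank-one-lattice-package}(iv) with $k=4$ should yield an exact match.
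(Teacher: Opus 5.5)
Your proposal is correct and matches the paper's proof: the same subgroup $\langle\sigma,\theta\rangle$, and the same two-stage computation $V_{A_1}^{\langle\sigma\rangle}=V_{2A_1}$ followed by taking $\theta$-invariants. The differences are minor. For the conjugacy to $D_2$, the paper exhibits the orthogonal simultaneous eigenbasis $h,\ e^\gamma+e^{-\gamma},\ e^\gamma-e^{-\gamma}$ of $(V_{A_1})_1$, and it observes that $\theta|_{V_{2A_1}}$ is literally the standard lift. Your general appeals to conjugacy of Klein four subgroups of $\SO_3(\C)$ and of lifts of $-1$ are valid but not needed.
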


\begin{proof}
Let $\chi_{-1}\in T_{A_1}=\Hom(A_1,U(1))$ be the character determined by \(\chi_{-1}(\gamma)=-1\), and let $\sigma$ be the corresponding lattice-charge automorphism from Proposition~\ref{prop:rank-one-lattice-package}\textnormal{(v)}. Thus \(\sigma(u\otimes e^{n\gamma})=(-1)^n u\otimes e^{n\gamma} (u\in M(1),\ n\in\Z).\) Let $\theta$ be the standard lift of the lattice isometry $-1$ fixed in Subsection~\ref{subsec:rank-one-lattice}. By Proposition~\ref{prop:rank-one-lattice-package}\textnormal{(v)}, \(\theta\sigma\theta^{-1}=\sigma^{-1}=\sigma\), so $\sigma$ and $\theta$ commute. Both are nontrivial involutions, and they are distinct because $\sigma$ fixes $M(1)$ pointwise whereas $\theta$ acts as $-1$ on $M(1)_1$. Hence \(H:=\langle\sigma,\theta\rangle\cong(\Z/2)^2.\)

We next compare $H$ with the subgroup $D_2$ occurring in Lemma~\ref{lem:regular-fiber}. The rank-one lattice decomposition and the weight formula from Subsection~\ref{subsec:rank-one-lattice} give \((V_{A_1})_1 =\C h\oplus\C e^\gamma\oplus\C e^{-\gamma}, h:=\gamma(-1)\one.\) Set \(x:=e^\gamma+e^{-\gamma}\) and \(y:=e^\gamma-e^{-\gamma}.\) Then \(\sigma:(h,x,y)\longmapsto(h,-x,-y), \theta:(h,x,y)\longmapsto(-h,x,-y).\) The normalized invariant bilinear form on $V_{A_1}$ is preserved by every VOA automorphism and is nondegenerate on $(V_{A_1})_1$ by Proposition~\ref{prop:Li-invariant-package}. The three lines $\C h,\C x,\C y$ are the distinct simultaneous eigenspaces of the commuting involutions $\sigma$ and $\theta$; hence they are pairwise orthogonal. Because their orthogonal direct sum is all of $(V_{A_1})_1$, the form is nonzero on each line. After rescaling the three vectors, and changing the sign of one vector if necessary, they give an oriented orthonormal basis. Under the standard identification of the $G$-module $(V_{A_1})_1$ with $W_2\cong\C^3$ from Corollary~\ref{cor:McRae-A1-SO3} and Remark~\ref{rmk:compact-complexification}, the subgroup $H$ is therefore conjugate in $G\cong SO_3(\C)$ to
\[
\{\operatorname{diag}(\varepsilon_1,\varepsilon_2,\varepsilon_3):
\varepsilon_i\in\{\pm1\},\ \varepsilon_1\varepsilon_2\varepsilon_3=1\}
=D_2.
\]
Choose $g\in G$ with $gHg^{-1}=D_2$. Since $G$ acts on $V_{A_1}$ by VOA automorphisms, \(V_{A_1}^{D_2}=V_{A_1}^{gHg^{-1}}=g\bigl(V_{A_1}^{H}\bigr) \cong V_{A_1}^{H}.\)

It remains to compute the latter fixed-point algebra. By the charge decomposition of $V_{A_1}$, the $\sigma$-fixed subspace is precisely the sum of the even charge sectors:
\[
V_{A_1}^{\langle\sigma\rangle}=\bigoplus_{m\in\Z}M(1,2m\gamma)=V_{2A_1}.
\] The explicit formula for the standard lift $\theta$ in Subsection~\ref{subsec:rank-one-lattice} shows that its restriction to $V_{2A_1}$ is exactly the standard lattice inversion for the sublattice $2A_1$. Since $\sigma$ and $\theta$ commute,
\[
V_{A_1}^{H}
 =\left(V_{A_1}^{\langle\sigma\rangle}\right)^{\langle\theta\rangle}
 =V_{2A_1}^{\langle\theta\rangle}
 =V_{2A_1}^+.
\]
Combining the two identifications proves the lemma.
\end{proof}

\begin{thm}\label{thm:k4}
Let $V$ satisfy the hypotheses of Theorem~\ref{thm:main}. If \(\ch V=\ch V_{\Lambda_4}^+,\) then \(V\cong V_{\Lambda_4}^+.\)
\end{thm}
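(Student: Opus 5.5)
The plan is to show that the subalgebra generated by a regular weight-four primary already exhausts $V$, and then to identify it through the lattice identity. By Lemma~\ref{lem:regular-primary}, choose a regular vector $0\ne u\in P_4(V)$. Put $W_u:=\langle L(1,0),u\rangle_{\mathrm{VOA}}\subseteq V$. Theorem~\ref{thm:k4-D2} gives $W_u\cong V_{A_1}^{D_2}$ as VOA extensions of $L(1,0)$. Lemma~\ref{lem:D2-lattice} then gives $V_{A_1}^{D_2}\cong V_{2A_1}^+$. The lattice $2A_1=\Z(2\gamma)$ has generator of squared length $8$, so $2A_1\cong\Lambda_4$ and $W_u\cong V_{\Lambda_4}^+$.

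Next compare characters. Since $W_u$ contains the conformal vector $\omega\in L(1,0)$, it is a conformal subVOA of $V$, so $(W_u)_n\subseteq V_n$ for every $n\ge0$. Both $W_u$ and $V$ have central charge one, so equal vacuum characters mean equal graded characters. By hypothesis $\ch V=\ch V_{\Lambda_4}^+$, and we have just shown $\ch W_u=\ch V_{\Lambda_4}^+$. Hence $\dim(W_u)_n=\dim V_n$ for all $n$, which forces $(W_u)_n=V_n$ and therefore $W_u=V$. This gives $V\cong V_{\Lambda_4}^+$.

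All the substantive work has already been done in the preceding results. Theorem~\ref{thm:k4-Szero} shows that $S=0$, which makes $U(\Vir)u\cong X_2$ and so places $W_u$ in $\IndCMY(\Csq)$. Lemma~\ref{lem:k4-parameters} supplies the nondegenerate $q_V$, and Lemma~\ref{lem:regular-fiber} provides the regular fiber $G/D_2$. The only step that needs care here is confirming that $V_{2A_1}^+$ is literally $V_{\Lambda_4}^+$. This follows because a lattice isometry $\Lambda_4\to 2A_1$ induces a VOA isomorphism that intertwines the standard lifts $\theta$. That compatibility comes from the explicit formula for $\theta$ in Subsection~\ref{subsec:rank-one-lattice}: that formula depends only on the lattice, and it restricts correctly to the sublattice, as already observed in Lemma~\ref{lem:D2-lattice}.
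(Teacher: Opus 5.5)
Your proposal is correct and follows the paper's proof essentially step for step: choose a regular $u\in P_4(V)$ by Lemma~\ref{lem:regular-primary}, identify $\langle L(1,0),u\rangle_{\mathrm{VOA}}\cong V_{A_1}^{D_2}\cong V_{2A_1}^+\cong V_{\Lambda_4}^+$ via Theorem~\ref{thm:k4-D2}, Lemma~\ref{lem:D2-lattice} and the lattice isometry $2A_1\cong\Lambda_4$, and then conclude $W_u=V$ by comparing graded dimensions of this conformal subVOA. The only cosmetic difference is that you impose $u\neq0$ explicitly, whereas the paper observes that regularity already forces it because $\Delta_V(0)=0$.
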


\begin{proof}
Under the character hypothesis, Theorem~\ref{thm:k4-Szero} shows that, for every $0\ne u\in P_4(V)$, the cyclic Virasoro module $U(\Vir)u$ is an embedded copy of $X_2=L(1,4)$. By Lemma~\ref{lem:regular-primary}, choose a regular vector $u\in P_4(V)$. Such a vector is automatically nonzero because $\Delta_V(0)=0$. Set \(W:=\langle L(1,0),u\rangle_{\mathrm{VOA}}.\) Theorem~\ref{thm:k4-D2} and Lemma~\ref{lem:D2-lattice} give \(W\cong V_{A_1}^{D_2}\cong V_{2A_1}^+.\) Here $A_1=\Z\gamma$ with $\langle\gamma,\gamma\rangle=2$, so the sublattice $2A_1=2\Z\gamma$ is generated by $2\gamma$ of squared length $8$. Thus $2A_1$ is isometric to $\Lambda_4$. By the standard functoriality of the lattice-VOA construction recalled in Subsection~\ref{subsec:rank-one-lattice}, this lattice isometry identifies the standard lifts of $-1$ and hence induces \(V_{2A_1}^+\cong V_{\Lambda_4}^+.\) Consequently \(\ch W=\ch V_{\Lambda_4}^+=\ch V.\)

By Theorem~\ref{thm:k4-D2}, $W$ is a VOA extension of $L(1,0)$; in particular it contains the conformal vector of $V$ and is therefore a conformal subVOA of $V$. Hence $W_n\subseteq V_n$ for every $n\ge0$. Equality of the vacuum characters gives \(\dim W_n=\dim V_n\) for every $n$, so $W_n=V_n$ for all $n\ge0$. Thus $W=V$, and the preceding isomorphism yields \(V\cong V_{\Lambda_4}^+.\)
\end{proof}

\begin{thm}\label{thm:VLplus-all}
Let $L$ be any positive-definite even lattice of rank one, and let $V$ satisfy the hypotheses of Theorem~\ref{thm:main}. If \(\ch V=\ch V_L^+,\) then \(V\cong V_L^+.\) No Virasoro complete reducibility assumption is required.
\end{thm}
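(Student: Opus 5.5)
This is a case split on the norm of $L$. Write $L=\Z\alpha$ with $\langle\alpha,\alpha\rangle=2k$, where $k\in\Z_{>0}$. Every positive-definite even rank-one lattice has this form, and the character $\ch V_L^+$ depends only on $k$ by \eqref{eq:rank-one-character-interface}. An isometry $L\cong\Lambda_k$ induces a lattice-VOA isomorphism that intertwines the standard lifts of $-1$. Hence $V_L^+\cong V_{\Lambda_k}^+$, and we may assume $L=\Lambda_k$.

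The proof then dispatches $k$ to the results already proved:
\begin{itemize}
\item $k=1$: Theorem~\ref{thm:k1}, via Dong--Mason and $V_{A_1}^+\cong V_{2A_1}$;
\item $k=2$: Theorem~\ref{thm:k2}, via the moonshine-type classification at $c=\widetilde c=1$;
\item $k=3$: Theorem~\ref{thm:k3}, via the $O_2$ quadric reconstruction over the $M(1)^+$ core;
\item $k=4$: Theorem~\ref{thm:k4}, via the regular weight-four primary with orbit $G/D_2$ and Lemma~\ref{lem:D2-lattice};
\item $k>4$: Theorem~\ref{thm:VLplus-kgt4}.
\end{itemize}

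These cases exhaust $\Z_{>0}$. Each of the cited theorems has hypotheses no stronger than those of Theorem~\ref{thm:main} together with $\ch V=\ch V_{\Lambda_k}^+$. Theorems~\ref{thm:k1}, \ref{thm:k2} and \ref{thm:k4} assume the hypotheses of Theorem~\ref{thm:main}. Theorems~\ref{thm:VLplus-kgt4} and \ref{thm:k3} need only simplicity, self-contragredience, CFT type and $c=1$. For Theorem~\ref{thm:k3}, Corollary~\ref{cor:M1plus-kgt4} covers $k=3$ through Lemma~\ref{lem:intervening}.

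The only point requiring care is the final assertion, that no Virasoro complete-reducibility assumption is used. For this I would trace the inputs of each branch. Semisimplicity is only ever asserted for the VOA subalgebras generated by square primaries (Theorem~\ref{thm:square-core}) or by $\Dk$-summands (Proposition~\ref{prop:CMY-O2}). It is derived from the coefficient-image lemma and is never imposed on $V\downarrow L(1,0)$. The cited external results for $k=1,2$ likewise make no such assumption. No genuine obstacle is expected; the work is bookkeeping the normalization $L\cong\Lambda_k$ and checking that the cited hypotheses match.
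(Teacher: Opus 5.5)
Your proposal is correct and follows the paper's proof. Both reduce to $L=\Lambda_k$ via the isometry-induced isomorphism $V_L^+\cong V_{\Lambda_k}^+$, then dispatch $k=1,2,3,4$ to Theorems~\ref{thm:k1}, \ref{thm:k2}, \ref{thm:k3}, \ref{thm:k4} and $k>4$ to Theorem~\ref{thm:VLplus-kgt4}; your tracing of where semisimplicity is used matches the argument the paper gives separately in Corollary~\ref{cor:no-Vir-splitting}.
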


\begin{proof}
Choose a generator $\alpha$ of $L$. Since $L$ is positive definite and even, there is a unique integer $k\in\Z_{>0}$ such that \(\langle\alpha,\alpha\rangle=2k.\) Let $\Lambda_k:=\Z\beta$ with \(\langle\beta,\beta\rangle=2k.\) Then $L\cong\Lambda_k$ as even lattices, and the standard lattice-VOA construction gives \(V_L^+\cong V_{\Lambda_k}^+.\) Accordingly, the character hypothesis may be written as \(\ch V=\ch V_{\Lambda_k}^+.\)

If $k>4$, Theorem~\ref{thm:VLplus-kgt4} gives \(V\cong V_{\Lambda_k}^+.\) For $k=1,2,3,4$, the same conclusion follows from Theorems~\ref{thm:k1}, \ref{thm:k2}, \ref{thm:k3}, and \ref{thm:k4}, respectively. These cases exhaust $k\in\Z_{>0}$. Transporting the resulting isomorphism along $\Lambda_k\cong L$ gives \(V\cong V_L^+.\)
\end{proof}

\section{The lattice branch and the final classification}\label{sec:final}

\subsection{The lattice character branch}

We first isolate the only remaining structural branch. The following is precisely the form of the effective-central-charge theorem needed below.

\begin{thm}[Dong--Mason]\label{thm:DM-rank}
Let $V$ be strongly rational, and equip $V_1$ with its standard Lie bracket \([a,b]=a_0b\). Then $V_1$ is a finite-dimensional reductive complex Lie algebra. Set \(\ell:=\operatorname{rank}V_1=\dim\mathfrak h,\) where $\mathfrak h$ is any Cartan subalgebra of $V_1$. Then \(\ell\leq\widetilde c(V).\) Moreover, the following are equivalent:
\begin{enumerate}[label=\textnormal{(\roman*)}]
\item \(\ell=\widetilde c(V)=c(V);\)
\item $V$ is isomorphic to the lattice vertex operator algebra $V_K$ of a
positive-definite even lattice $K$.
\end{enumerate}
In this case, \(\operatorname{rank}K=\ell\).
\end{thm}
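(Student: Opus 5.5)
This statement is a citation of Dong--Mason's effective-central-charge theorem, so the plan is to assemble it from \cite{DM}. The one thing to check is that the hypotheses match the paper's "strongly rational" convention. Dong--Mason work with VOAs of \emph{strong CFT type}, meaning CFT type together with $L(1)V_1=0$. So the first step is to note that a strongly rational $V$ is simple, of CFT type and self-contragredient, and then apply Proposition~\ref{prop:Li-invariant-package}(ii) to get $L(1)V_1=0$. This is the same reduction already used in the proof of Theorem~\ref{thm:k1}. Rationality and $C_2$-cofiniteness are assumed directly, so $V$ satisfies the standing hypotheses of \cite{DM}.

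Next, \cite[Theorem~1]{DM} gives that $V_1$, with bracket $[a,b]=a_0b$, is a finite-dimensional reductive Lie algebra. Hence its Cartan subalgebras are abelian, all conjugate, and of a common dimension $\ell$. The inequality $\ell\le\widetilde c(V)$ and the equivalence of (i) and (ii) are \cite[Theorem~3]{DM}. For orientation, the mechanism behind that theorem is as follows. A Cartan subalgebra $\mathfrak h$ generates a rank-$\ell$ Heisenberg subVOA $M_{\mathfrak h}(1)$, and its commutant is a subVOA of central charge $c-\ell$. The resulting lattice-type decomposition $V\cong\bigoplus_{\lambda}M_{\mathfrak h}(1,\lambda)\otimes\Omega_\lambda$ yields $\ell\le\widetilde c$. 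Equality $\ell=\widetilde c=c$ forces the commutant to be trivial, so $V$ is a lattice extension $V_K$.

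For the direction (ii)$\Rightarrow$(i), I would give the check directly. For a positive-definite even lattice $K$ one has $c(V_K)=\operatorname{rank}K$. The irreducible $V_K$-modules $V_{K+\mu}$ have nonnegative lowest weights $\min_{\nu\in K+\mu}\langle\nu,\nu\rangle/2$, with $0$ attained by $V_K$, so $\lambda_{\min}=0$ and $\widetilde c=c$. Finally $V_1=\mathfrak h\oplus\bigoplus_{\langle\beta,\beta\rangle=2}\C e^\beta$ has Cartan subalgebra $\mathfrak h=K\otimes\C$, so $\ell=\operatorname{rank}K$. This last computation also gives the closing claim $\operatorname{rank}K=\ell$ once (ii) holds.

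I expect no step to be a real obstacle. The only care needed is the hypothesis translation: making sure the paper's self-contragredience gives Dong--Mason's strong CFT type via $L(1)V_1=0$, and that their $\widetilde c$ agrees with the paper's definition $c-24\lambda_{\min}$.
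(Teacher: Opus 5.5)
Your proposal is correct and follows the paper's proof. Both first use Proposition~\ref{prop:Li-invariant-package}(ii) to get $L(1)V_1=0$, which is Dong--Mason's strong CFT type, and then cite \cite{DM} for reductivity, the rank bound, and the equivalence of (i) and (ii).

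The differences are minor. The paper attributes $\ell\le\widetilde c(V)$ to \cite[Theorem~2]{DM} rather than Theorem~3. It also obtains $\operatorname{rank}K=\ell$ by combining $c(V_K)=\operatorname{rank}K$ with $c(V)=\ell$ from (i), instead of your direct identification of $K\otimes\C$ as a Cartan subalgebra of $(V_K)_1$; your route is equally valid.
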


\begin{proof}
In \cite[Section~3]{DM}, Dong--Mason call a VOA strongly rational when it is rational, $C_2$-cofinite, and of strong CFT type, the latter meaning that it is of CFT type and satisfies $L(1)V_1=0$. Under our convention, $V$ is simple, rational, $C_2$-cofinite, self-contragredient, and of CFT type. Proposition~\ref{prop:Li-invariant-package}(ii) therefore gives \(L(1)V_1=0\), so all of the Dong--Mason hypotheses are satisfied.

Now \cite[Theorem~1]{DM} gives the reductivity of $V_1$, \cite[Theorem~2]{DM} gives \(\ell\leq\widetilde c(V)\), and \cite[Theorem~3]{DM} gives the equivalence of \textnormal{(i)} and \textnormal{(ii)}. Finally, if \(V\cong V_K\), then the standard lattice construction gives \(c(V_K)=\operatorname{rank}K\); see, for example, \cite[Chapter~8]{FLM}. Combining this with \(c(V)=\ell\) from \textnormal{(i)} yields \(\operatorname{rank}K=\ell\).
\end{proof}

\begin{thm}\label{thm:lattice-rigidity}
Let $V$ satisfy the hypotheses of Theorem~\ref{thm:main}, and let $L$ be a positive-definite even lattice of rank one. If \(\ch V=\ch V_L\), then \(V\cong V_L\).
\end{thm}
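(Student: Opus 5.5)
The plan is to apply Theorem~\ref{thm:DM-rank} directly, in the same way as the proof of Theorem~\ref{thm:k1}. Write $L=\Z\alpha$ with $\langle\alpha,\alpha\rangle=2k$, $k\in\Z_{>0}$.

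First, I will compute $\dim V_1$ from the character. Since both $V$ and $V_L$ have central charge one, equality of vacuum characters gives equality of graded characters. By the rank-one lattice realization, $\grch V_L=P(q)\sum_{n\in\Z}q^{kn^2}$. The weight-one space of $V_L$ therefore has dimension $1$ if $k>1$ and $3$ if $k=1$ (the vectors $\alpha(-1)\one$ and $e^{\pm\alpha}$). In either case $V_1\ne0$.

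Second, I will bound the rank of $V_1$. By Theorem~\ref{thm:DM-rank}, $V_1$ is reductive and $\ell=\operatorname{rank}V_1\le\widetilde c(V)=1$. A nonzero reductive Lie algebra has rank at least one: its Cartan subalgebra is nonzero, since a reductive algebra whose Cartan subalgebra is zero would be zero. Hence $\ell=1=\widetilde c(V)=c(V)$.

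Third, I will identify $V$ and then pin down its lattice. The equality case of Theorem~\ref{thm:DM-rank} gives $V\cong V_K$ for a positive-definite even lattice $K$ with $\operatorname{rank}K=1$. Write $K=\Z\beta$ with $\langle\beta,\beta\rangle=2m$. Then $\grch V_K=P(q)\sum_{n}q^{mn^2}$. Because $P(q)$ is invertible in $\C[[q]]$, the character equality $\grch V_K=\grch V_L$ gives $\sum_n q^{mn^2}=\sum_n q^{kn^2}$. Comparing the least positive exponents yields $m=k$. Thus $K\cong L$, and the functoriality of the lattice construction gives $V\cong V_K\cong V_L$.

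The only mildly delicate point is the rank argument in the second step, namely that $V_1\neq0$ forces rank $\ge1$; this is immediate from reductivity. Apart from that, no real obstacle is expected. In particular, no Virasoro-semisimplicity input is needed here.
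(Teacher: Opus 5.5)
Your proposal is correct and follows essentially the same route as the paper: use the character to get \(V_1\neq0\), apply Theorem~\ref{thm:DM-rank} to force \(\operatorname{rank}V_1=\widetilde c(V)=c(V)=1\) and hence \(V\cong V_K\) with \(K\) of rank one, then cancel \(P(q)\) and compare the least positive exponents of the two theta series to obtain \(K\cong L\). Your explicit count of \(\dim (V_L)_1\) as \(3\) or \(1\) is slightly more than needed; the paper only uses the Heisenberg vector \(\alpha(-1)\one\) to see that this space is nonzero.
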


\begin{proof}
Put $\mathfrak h=\C\otimes_{\Z}L$. By the standard lattice construction recalled in Subsection~\ref{subsec:rank-one-lattice}, the Heisenberg line $\mathfrak h(-1)\one$ is contained in $(V_L)_1$. Since $\dim\mathfrak h=1$, this gives $(V_L)_1\ne0$. Moreover, $c(V)=1$ by hypothesis and $c(V_L)=\operatorname{rank}L=1$ for a lattice VOA; see \cite[Chapter~8]{FLM}. By the definition of the vacuum character, \(\ch V=q^{-1/24}\sum_{n\ge0}(\dim V_n)q^n, \ch V_L=q^{-1/24}\sum_{n\ge0}(\dim (V_L)_n)q^n\). Hence \(\ch V=\ch V_L\) implies \(\dim V_1=\dim (V_L)_1>0\), so $V_1\ne0$.

By Theorem~\ref{thm:DM-rank}, $V_1$ is a complex reductive Lie algebra. Writing \(\ell=\operatorname{rank}V_1\), we therefore have $\ell\ge1$: indeed, every nonzero reductive Lie algebra has a nonzero Cartan subalgebra. On the other hand, the same theorem and \(\widetilde c(V)=1\) give \(\ell\le1\). Thus \(\ell=\widetilde c(V)=c(V)=1.\) The equality case of Theorem~\ref{thm:DM-rank} now yields \(V\cong V_K\) for a positive-definite even lattice $K$ of rank one.

Choose generators $K=\Z\beta$ and $L=\Z\alpha$. Since both lattices are positive definite and even, there are unique $r,s\in\Z_{>0}$ such that \(\langle\beta,\beta\rangle=2r\) and \(\langle\alpha,\alpha\rangle=2s\). Put \(P(q)=\prod_{m\ge1}(1-q^m)^{-1}\). The rank-one lattice decomposition from Subsection~\ref{subsec:rank-one-lattice} gives \(V_K=\bigoplus_{n\in\Z}M(1,n\beta), V_L=\bigoplus_{n\in\Z}M(1,n\alpha).\) The oscillator calculation in the proof of Proposition~\ref{prop:rank-one-lattice-package} gives \(\grch M(1)=P(q)\), while the top vectors $e^{n\beta}$ and $e^{n\alpha}$ have conformal weights $rn^2$ and $sn^2$, respectively. Since both lattice VOAs have central charge one, it follows that \(\ch V_K=q^{-1/24}P(q)\sum_{n\in\Z}q^{rn^2}, \ch V_L=q^{-1/24}P(q)\sum_{n\in\Z}q^{sn^2}\). Because $V\cong V_K$ and \(\ch V=\ch V_L\), these two formal series are equal. Multiplying by $q^{1/24}$ and cancelling the unit $P(q)\in\C[[q]]^{\times}$ gives \(\sum_{n\in\Z}q^{rn^2}=\sum_{n\in\Z}q^{sn^2}.\) The least positive exponent on the two sides is $r$ and $s$, respectively; hence $r=s$. Therefore the map $\beta\mapsto\alpha$ is an isometry $K\cong L$. The standard lattice-VOA construction carries lattice isometries to VOA isomorphisms; see \cite[Chapter~8]{FLM}. Thus \(V_K\cong V_L\), and consequently \(V\cong V_L\).
\end{proof}

\subsection{Proof of the classification theorem}

\begin{proof}[Proof of Theorem~\ref{thm:main}]
Let $V$ satisfy the hypotheses of Theorem~\ref{thm:main}. By Theorem~\ref{thm:character-reduction}, there exists a positive-definite even rank-one lattice $L$ such that
\[
\ch V\in\left\{\ch V_L,\ch V_L^+,\ch V_{A_1}^{A_4},\ch V_{A_1}^{S_4},\ch V_{A_1}^{A_5}\right\}.
\] We consider the cases permitted by this character list.

If \(\ch V=\ch V_L\), then Theorem~\ref{thm:lattice-rigidity} gives \(V\cong V_L\). If \(\ch V=\ch V_L^+\), then Theorem~\ref{thm:VLplus-all} gives \(V\cong V_L^+\).

Suppose instead that \(\ch V=\ch V_{A_1}^{H}\) for some \(H\in\{A_4,S_4,A_5\}\). Since the hypotheses of Theorem~\ref{thm:main} imply that $V$ is simple, self-contragredient, of CFT type, and of central charge one, Theorem~\ref{thm:exceptional-rigidity} applies and yields \(V\cong V_{A_1}^{H}\).

These cases cover every character allowed by Theorem~\ref{thm:character-reduction}, and hence $V$ is isomorphic to one of the vertex operator algebras listed in Theorem~\ref{thm:main}. Finally, Theorem~\ref{thm:character-reduction} and the three rigidity theorems invoked above do not assume that $V$ is completely reducible as an $L(1,0)$-module, or that $V$ is a direct sum of Virasoro highest-weight modules. Thus no such additional hypothesis is used, which proves the final assertion of Theorem~\ref{thm:main}.
\end{proof}

\begin{cor}\label{cor:no-Vir-splitting}
Under the hypotheses of Theorem~\ref{thm:main}, the classification conclusion holds without assuming that $V$ is a direct sum of highest-weight $L(1,0)$-modules. In particular, the additional Virasoro-splitting hypothesis appearing in earlier characterization arguments such as \cite{DJplusI,DJplusII,DJA4} is not needed here.
\end{cor}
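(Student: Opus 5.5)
This corollary restates the last clause of Theorem~\ref{thm:main}, so the plan is a provenance audit rather than new mathematics. I will follow the proof of Theorem~\ref{thm:main} branch by branch and check that no step assumes $V\downarrow L(1,0)$ is semisimple or is a sum of highest-weight modules. I will then record that the conclusion is the same classification.

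The audit has four stages.

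\textbf{Character reduction.} Theorem~\ref{thm:character-reduction} uses only Theorem~\ref{thm:DJ-modularity} and the Kiritsis completeness result. Both concern the irreducible $V$-modules and their characters, and neither says anything about the Virasoro decomposition of $V$.

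\textbf{Lattice branch.} Theorem~\ref{thm:lattice-rigidity} uses Theorem~\ref{thm:DM-rank}. Its inputs are strong rationality and the identity $L(1)V_1=0$ from Proposition~\ref{prop:Li-invariant-package}, so no Virasoro hypothesis enters.

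\textbf{Exceptional branches.} Theorem~\ref{thm:exceptional-rigidity} and the $V_L^+$ branch (Theorems~\ref{thm:VLplus-kgt4}, \ref{thm:k1}--\ref{thm:k4}) never decompose $V$ globally. They only use:
\begin{enumerate}[label=\textnormal{(\roman*)}]
\item cyclic Virasoro submodules generated by specific primary vectors, identified through Lemma~\ref{lem:first-singular}, Lemma~\ref{lem:intervening} and Theorem~\ref{thm:k4-Szero};
\item subVOAs generated by those submodules, which are shown to lie in $\IndCMY(\Csq)$ or $\IndCMY(\Dk)$ by Theorem~\ref{thm:square-core} and Proposition~\ref{prop:CMY-O2}.
\end{enumerate}
In each case the conclusion $W=V$ comes from comparing characters, not from splitting $V$.

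\textbf{Cited uniqueness theorems.} This is the step I expect to need the most care, because \cite{DJplusI,DJplusII,DJA4} are cited for other inputs. The external uniqueness results actually invoked are Theorem~\ref{thm:MY-L1-uniqueness}, \cite[Theorem~4.7]{ZhangDong} and \cite{DM}. Theorem~\ref{thm:MY-L1-uniqueness} is applied only to $U_{\mathrm{reg}}$, whose Virasoro decomposition is constructed explicitly. \cite[Theorem~4.7]{ZhangDong} and \cite{DM} have hypotheses only of moonshine type or rationality type. From \cite{DJA4} only the modularity and character results are used, and these need no splitting hypothesis.

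Once these checks are done, the classification in Theorem~\ref{thm:main} holds exactly as stated, and the corollary follows.
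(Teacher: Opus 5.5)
Your proposal is correct and follows the paper's own proof, which is the same branch-by-branch provenance check. It covers the character reduction, the lattice branch via Dong--Mason, and the $V_L^+$ and polyhedral branches, where only generated semisimple subcores and a final character comparison are used. Your separate stage on the external inputs (Theorem~\ref{thm:MY-L1-uniqueness} used only for $U_{\mathrm{reg}}$, and \cite{DM}, \cite{ZhangDong} for the cases $k=1,2$) is a slightly more careful version of that audit, since the paper's proof folds those low-norm cases into the phrase ``semisimple subcores''.
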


\begin{proof}
The proof of Theorem~\ref{thm:character-reduction} uses the modularity input of Theorem~\ref{thm:DJ-modularity}, the CFT-type hypothesis, and the condition $c=\widetilde c=1$, but nowhere assumes that the ambient $L(1,0)$-module $V$ is semisimple or a direct sum of highest-weight modules. The lattice branch is then handled by Theorem~\ref{thm:lattice-rigidity}. Theorem~\ref{thm:VLplus-all} treats the rank-one orbifold branch by constructing only the semisimple subcores needed for reconstruction, and Theorem~\ref{thm:exceptional-rigidity} treats the three exceptional branches in the same way. Hence none of the branches in the proof of Theorem~\ref{thm:main} requires a global Virasoro decomposition of $V$.
\end{proof}

\begin{rmk}
The vertex operator algebra $V_L$ of a positive-definite even lattice $L$ is simple, regular, of CFT type, and self-contragredient; in particular, its standard invariant bilinear form is nondegenerate. The same statements hold for $V_{A_1}$. Thus the hypotheses of Carnahan--Miyamoto's solvable-orbifold theorem are satisfied \cite[Corollary~5.25]{CarnahanMiyamoto}. If $\theta$ denotes lattice inversion, then $\langle\theta\rangle$ has order two, while $A_4$ and $S_4$ are finite solvable groups. Consequently, \(V_L^+=V_L^{\langle\theta\rangle}, V_{A_1}^{A_4}, V_{A_1}^{S_4}\) are simple and regular, have nonnegative $L(0)$-spectrum, and are self-contragredient. Since the automorphism groups fix the vacuum and the degree-zero space of the ambient lattice VOA is $\C\one$, each fixed-point algebra has degree-zero space $\C\one$ and hence is of CFT type. For a VOA of CFT type, regularity is equivalent to rationality together with $C_2$-cofiniteness \cite[Theorem~4.5]{ABD}. It follows that $V_L$, $V_L^+$, $V_{A_1}^{A_4}$, and $V_{A_1}^{S_4}$ are strongly rational.

For the remaining icosahedral model, Xu proves directly that $V_{A_1}^{A_5}$ is strongly rational \cite{A5new}. Thus every vertex operator algebra occurring in Theorem~\ref{thm:main} is strongly rational.
\end{rmk}

\appendix
\section{Why ambient Virasoro semisimplicity cannot be used as a formal shortcut}\label{app:nonsplit}

For completeness, we record a module-theoretic obstruction explaining why the generated-core arguments used above cannot be replaced by a formal appeal to integral conformal weights or trivial categorical twist. Even at central charge one, semisimplicity of $L(0)$, integral conformal weights, and trivial twist do not imply complete reducibility as a Virasoro module.

\begin{prop}\label{prop:Em}
For every $m\in\Z_{\ge0}$, choose a nonzero singular-vector embedding \(\iota_m:V(1,(m+1)^2)\hookrightarrow V(1,m^2)\) whose image is the unique maximal proper submodule from Proposition~\ref{prop:c1-verma}, and set \(E_m:=V(1,m^2)\big/ (\iota_m\circ\iota_{m+1})V(1,(m+2)^2).\) Then $E_m$ is an object of $\Ovir$ and fits into a nonsplit exact sequence of $L(1,0)$-modules \(0\longrightarrow X_{m+1} \longrightarrow E_m \longrightarrow X_m \longrightarrow0, X_j=L(1,j^2)\). Moreover, $L(0)$ acts semisimply on $E_m$, all conformal weights of $E_m$ are integers, and its standard categorical twist \(\theta_{E_m}=e^{2\pi iL(0)}\) is the identity. Finally, \(\grch E_m=\grch X_m+\grch X_{m+1}.\)
\end{prop}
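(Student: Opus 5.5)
The plan is to work entirely inside the Verma module $V(1,m^2)$ using Proposition~\ref{prop:c1-verma}, applied twice. Write $N_1:=\iota_m V(1,(m+1)^2)$ for the maximal proper submodule of $V(1,m^2)$. Inside it sits $N_2:=(\iota_m\circ\iota_{m+1})V(1,(m+2)^2)$, which is the image under $\iota_m$ of the maximal proper submodule of $V(1,(m+1)^2)$. Then $E_m=V(1,m^2)/N_2$ has the submodule $N_1/N_2$. Since $\iota_m$ is injective, $N_1/N_2\cong V(1,(m+1)^2)/\iota_{m+1}V(1,(m+2)^2)\cong X_{m+1}$, and $E_m/(N_1/N_2)\cong V(1,m^2)/N_1\cong X_m$. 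This gives the exact sequence.

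The character identity then follows from exactness, since all modules involved are $L(0)$-graded. Semisimplicity of $L(0)$ holds because $E_m$ is a quotient of a Verma module, on which $L(0)$ acts diagonally on the PBW basis. The weights lie in $m^2+\Z_{\ge0}\subset\Z$, so $e^{2\pi iL(0)}=\id$.

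Membership in $\Ovir$ has two parts. First, $E_m$ is a highest-weight module of central charge one, hence a $\Vir_1=L(1,0)$-module by Proposition~\ref{prop:CJORY-package}(i). Second, it is a proper quotient of $V(1,m^2)$, hence $C_1$-cofinite by Proposition~\ref{prop:CJORY-package}(ii). Alternatively, it has finite length with composition factors $X_m,X_{m+1}$, whose weights lie in $H_1$, so it lies in $\mathcal O_1^{\mathrm{fin}}$ by Proposition~\ref{prop:CJORY-package}(iii).

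The main point is non-splitness. Suppose $E_m\cong X_m\oplus X_{m+1}$. Then $E_m$ would be generated by the image $\bar v$ of the highest-weight vector, since it is a quotient of $V(1,m^2)$. The $L(0)$-eigenspace of weight $m^2$ is one-dimensional and contained in the $X_m$ summand, because $X_{m+1}$ has no weight $m^2$. So $\bar v$ lies in the $X_m$ summand and generates only that summand, contradicting the fact that $\bar v$ generates all of $E_m$ while $X_{m+1}\neq 0$. The one subtlety to check is that the weight-$m^2$ line of $E_m$ lies in the $X_m$ summand under any splitting. This holds because any complement isomorphic to $X_{m+1}$ has lowest weight $(m+1)^2>m^2$.

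Equivalently, the image of the singular vector $\iota_m(\one)$ is a nonzero vector of $E_m$ at weight $(m+1)^2$ (it does not lie in $N_2$, whose lowest weight is $(m+2)^2$), and it belongs to $U(\Vir)\bar v$. This also shows that $E_m$ is cyclic and indecomposable.
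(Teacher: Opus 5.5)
Your proposal is correct and follows essentially the same route as the paper: the chain $V(1,m^2)\supset N_1\supset N_2$ with the isomorphism theorems gives the exact sequence, the PBW grading gives semisimplicity of $L(0)$ and the trivial twist, and non-splitness comes from the one-dimensional weight-$m^2$ line generating all of $E_m$. The only cosmetic difference is that you obtain membership in $\Ovir$ mainly from $C_1$-cofiniteness of proper Verma quotients (Proposition~\ref{prop:CJORY-package}(ii)), whereas the paper uses the finite-length description in Proposition~\ref{prop:CJORY-package}(iii), which you also give as an alternative.
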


\begin{proof}
Write $M_j:=V(1,j^2)$ for $j\ge0$. By Proposition~\ref{prop:c1-verma}, $\iota_jM_{j+1}$ is the unique maximal proper submodule of $M_j$ and \(M_j/\iota_jM_{j+1}\cong X_j\). Hence \(M_m\supset \iota_mM_{m+1} \supset (\iota_m\circ\iota_{m+1})M_{m+2},\) and injectivity of $\iota_m$ gives \(\frac{\iota_mM_{m+1}} {(\iota_m\circ\iota_{m+1})M_{m+2}} \cong \frac{M_{m+1}}{\iota_{m+1}M_{m+2}} \cong X_{m+1}\). Together with \(M_m/\iota_mM_{m+1}\cong X_m\), the third isomorphism theorem therefore yields an exact sequence of Virasoro modules \(0\longrightarrow X_{m+1}\longrightarrow E_m \longrightarrow X_m\longrightarrow0.\)

The quotient $E_m$ is a highest-weight Virasoro module of central charge one: the image $\bar v_m$ of a highest-weight vector $v_m\in M_m$ is nonzero and generates $E_m$. Proposition~\ref{prop:CJORY-package}(i), together with the identification $\Vir_1=L(1,0)$ established immediately after that proposition, therefore equips $E_m$ with its natural $L(1,0)$-module structure. The simple modules $X_m$ and $X_{m+1}$ are already $L(1,0)$-modules. Since $L(1,0)$ is generated by its conformal vector, the Virasoro homomorphisms in the preceding exact sequence are $L(1,0)$-module homomorphisms. Thus the sequence is exact in the category of $L(1,0)$-modules.

It is nonsplit already as a sequence of Virasoro modules. Indeed, the submodule $(\iota_m\circ\iota_{m+1})M_{m+2}$ has lowest conformal weight $(m+2)^2$, whereas the weight-$m^2$ subspace of $M_m$ is the one-dimensional highest-weight line $\C v_m$. Hence \((E_m)_{m^2}=\C\bar v_m\). If the sequence split as Virasoro modules, there would be a Virasoro submodule $S\subset E_m$ mapping isomorphically onto $X_m$. Its highest-weight line has weight $m^2$, so it must equal $\C\bar v_m$. Thus $\bar v_m\in S$. Since $\bar v_m$ generates $E_m$, this forces $S=E_m$, contradicting the nonzero kernel $X_{m+1}$. Therefore no Virasoro splitting exists, and a fortiori no $L(1,0)$-module splitting exists.

Now \(m^2=h_{2m+1,1}(1)\) and \((m+1)^2=h_{2m+3,1}(1)\), so both simple composition factors lie in the degenerate set $H_1$. The exact sequence above shows that $E_m$ has finite length with composition factors $X_{m+1}$ and $X_m$. Hence Proposition~\ref{prop:CJORY-package}(iii) gives $E_m\in\Ovir$.

Finally, the PBW grading gives an algebraic direct-sum decomposition
\[
M_m=\bigoplus_{N\ge0}(M_m)_{m^2+N}
\]
into finite-dimensional $L(0)$-eigenspaces. Because $\iota_m\circ\iota_{m+1}$ commutes with $L(0)$, its image is a graded submodule, so the quotient $E_m$ inherits an algebraic direct sum of $L(0)$-eigenspaces. Consequently, $L(0)$ acts semisimply on $E_m$ and \(\operatorname{Spec}_{E_m}L(0)\subset m^2+\Z_{\ge0}\subset\Z\). In the braided tensor category underlying a vertex tensor category of VOA modules, the standard twist is the module automorphism \(\theta_W=e^{2\pi iL(0)}\); see \cite[Section~3.3]{CKM}. Therefore \(\theta_{E_m}=\id_{E_m}\). The exact sequence preserves the $L(0)$-grading, so additivity of dimensions in each finite-dimensional homogeneous subspace gives \(\grch E_m=\grch X_m+\grch X_{m+1}\), as claimed.
\end{proof}

\begin{rmk}
The module $E_m$ in Proposition~\ref{prop:Em} satisfies all of the following simultaneously: $L(0)$ acts semisimply, all conformal weights are integral, the categorical twist is trivial, and its graded character is the sum of the graded characters of its two simple composition factors. Nevertheless, $E_m$ is a nonsplit Virasoro extension. Thus these formal properties, even taken together, do not imply complete reducibility as a Virasoro module. Proposition~\ref{prop:Em} is only a module-theoretic obstruction; it does not assert that $E_m$ occurs inside any of the strongly rational vertex operator algebras classified above. In Sections~\ref{sec:squarecore}--\ref{sec:low}, we instead extract only the simple submodules needed for reconstruction and prove that the vertex subalgebras they generate remain in explicitly semisimple tensor subcategories.
\end{rmk}

\subsection*{AI Use Statement}

OpenAI's GPT-5.6 Sol model was used as an auxiliary tool for literature exploration, conceptual brainstorming, and language editing during the preparation of this manuscript. All mathematical arguments, references, and final formulations were independently checked by the author, who takes full responsibility for the content of the paper.

\begingroup
\linespread{1.00}\selectfont

\endgroup


\begin{thebibliography}{99}

\bibitem{ABD}
T.~Abe, G.~Buhl and C.~Dong,
\emph{Rationality, regularity, and $C_2$-cofiniteness},
Trans. Amer. Math. Soc. \textbf{356} (2004), 3391--3402.

\bibitem{ALY}
D.~Adamovi\'c, X.~Lin and J.~Yang,
\emph{Tensor category of $\mathbb Z_2$-orbifold of Heisenberg vertex operator algebra and its applications},
arXiv:2604.12120v1 (2026).

\bibitem{CarnahanMiyamoto}
S.~Carnahan and M.~Miyamoto,
\emph{Regularity of fixed-point vertex operator subalgebras},
arXiv:1603.05645 (2016), revised 2018.

\bibitem{CJORY}
T.~Creutzig, C.~Jiang, F.~Orosz Hunziker, D.~Ridout and J.~Yang,
\emph{Tensor categories arising from the Virasoro algebra},
Adv. Math. \textbf{380} (2021), 107601.

\bibitem{CKLR}
T.~Creutzig, S.~Kanade, A.~R.~Linshaw and D.~Ridout,
\emph{Schur--Weyl duality for Heisenberg cosets},
Transform. Groups \textbf{24} (2019), no.~2, 301--354.

\bibitem{CKM}
T.~Creutzig, S.~Kanade and R.~McRae,
\emph{Tensor categories for vertex operator superalgebra extensions},
Mem. Amer. Math. Soc. \textbf{295} (2024), no.~1472.

\bibitem{CMY}
T.~Creutzig, R.~McRae and J.~Yang,
\emph{Direct limit completions of vertex tensor categories},
Commun. Contemp. Math. \textbf{24} (2022), no.~2, 2150033.

\bibitem{DG98}
C.~Dong and R.~L.~Griess Jr.,
\emph{Rank one lattice type vertex operator algebras and their automorphism groups},
J. Algebra \textbf{208} (1998), 262--275.

\bibitem{DongNagatomo}
C.~Dong and K.~Nagatomo,
\emph{Classification of irreducible modules for the vertex operator algebra $M(1)^+$},
J. Algebra \textbf{216} (1999), 384--404.

\bibitem{DJA4}
C.~Dong and C.~Jiang,
\emph{A characterization of the vertex operator algebra $V_{L_2}^{A_4}$},
in: Conformal Field Theory, Automorphic Forms and Related Topics,
Contrib. Math. Comput. Sci. \textbf{8}, Springer, Heidelberg, 2014, 55--74.

\bibitem{DJplusI}
C.~Dong and C.~Jiang,
\emph{A characterization of vertex operator algebras $V_{\mathbb Z\alpha}^{+}$: I},
J. Reine Angew. Math. \textbf{709} (2015), 51--79.

\bibitem{DJplusII}
C.~Dong and C.~Jiang,
\emph{A characterization of the rational vertex operator algebra $V_{\mathbb Z\alpha}^{+}$: II},
Adv. Math. \textbf{247} (2013), 41--70.

\bibitem{DM}
C.~Dong and G.~Mason,
\emph{Rational vertex operator algebras and the effective central charge},
Int. Math. Res. Not. 2004, no.~56, 2989--3008.

\bibitem{FLM}
I.~B.~Frenkel, J.~Lepowsky and A.~Meurman,
\emph{Vertex Operator Algebras and the Monster},
Pure and Applied Mathematics, vol.~134,
Academic Press, Boston, 1988.

\bibitem{Hartshorne}
R.~Hartshorne,
\emph{Algebraic Geometry},
Graduate Texts in Mathematics, vol.~52,
Springer-Verlag, New York--Heidelberg, 1977.

\bibitem{SpringerLAG}
T.~A.~Springer,
\emph{Linear Algebraic Groups},
2nd ed., Birkh\"auser, Boston, 1998.

\bibitem{HKL}
Y.-Z.~Huang, A.~Kirillov Jr. and J.~Lepowsky,
\emph{Braided tensor categories and extensions of vertex operator algebras},
Comm. Math. Phys. \textbf{337} (2015), 1143--1159.

\bibitem{Kiritsis}
E.~Kiritsis,
\emph{Proof of the completeness of the classification of rational conformal theories with $c=1$},
Phys. Lett. B \textbf{217} (1989), 427--430.

\bibitem{LepowskyLi}
J.~Lepowsky and H.~Li,
\emph{Introduction to Vertex Operator Algebras and Their Representations},
Progress in Mathematics, vol.~227,
Birkh\"auser, Boston, 2004.

\bibitem{LiInvariant}
H.~Li,
\emph{Symmetric invariant bilinear forms on vertex operator algebras},
J. Pure Appl. Algebra \textbf{96} (1994), 279--297.

\bibitem{KR}
V.~Kac and A.~Raina,
\emph{Bombay Lectures on Highest Weight Representations of Infinite Dimensional Lie Algebras},
World Scientific, Singapore, 1987.

\bibitem{Milas}
A.~Milas,
\emph{Fusion rings for degenerate minimal models},
J. Algebra \textbf{254} (2002), no.~2, 300--335.

\bibitem{MFK}
D.~Mumford, J.~Fogarty and F.~Kirwan,
\emph{Geometric Invariant Theory},
3rd ed., Ergebnisse der Mathematik und ihrer Grenzgebiete, vol.~34,
Springer, Berlin, 1994.

\bibitem{Matsushima}
Y.~Matsushima,
\emph{Espaces homog\`enes de Stein des groupes de Lie complexes},
Nagoya Math. J. \textbf{16} (1960), 205--218.

\bibitem{BHC}
A.~Borel and Harish-Chandra,
\emph{Arithmetic subgroups of algebraic groups},
Ann. of Math. (2) \textbf{75} (1962), 485--535.

\bibitem{McRae}
R.~McRae,
\emph{On the tensor structure of modules for compact orbifold vertex operator algebras},
Math. Z. \textbf{296} (2020), 409--452.

\bibitem{MY}
R.~McRae and J.~Yang,
\emph{An $\mathfrak{sl}_2$-type tensor category for the Virasoro algebra at central charge $25$ and applications},
Math. Z. \textbf{303} (2023), Article 32.

\bibitem{A5new}
S.~Xu,
\emph{$C_2$-cofiniteness and rationality of the icosahedral orbifold $V_{L_2}^{A_5}$},
arXiv:2608.13872v1 (2026).

\bibitem{Zhu}
Y.~Zhu,
\emph{Modular invariance of characters of vertex operator algebras},
J. Amer. Math. Soc. \textbf{9} (1996), no.~1, 237--302.

\bibitem{ZhangDong}
W.~Zhang and C.~Dong,
\emph{$W$-algebra $W(2,2)$ and the vertex operator algebra $L(\frac12,0)\otimes L(\frac12,0)$},
Comm. Math. Phys. \textbf{285} (2009), 991--1004.

\end{thebibliography}
\end{document}